%
%

\documentclass{memo-l}


\usepackage{tipa}
\usepackage{relsize}
\usepackage{cite}
\usepackage{amssymb}
\usepackage{graphicx}
\usepackage{caption}
\usepackage{subcaption}
\usepackage{lmodern}
\usepackage{bm}
\usepackage{url}

\newtheorem{theorem}{Theorem}[chapter]
\newtheorem{lemma}[theorem]{Lemma}

\theoremstyle{definition}
\newtheorem{definition}[theorem]{Definition}
\newtheorem{example}[theorem]{Example}

\theoremstyle{remark}
\newtheorem{remark}[theorem]{Remark}

\numberwithin{section}{chapter}
\numberwithin{equation}{chapter}

\theoremstyle{plain}
\newtheorem{prop}[theorem]{Proposition}

\newtheorem{cor}[theorem]{Corollary}

\newtheorem{theorem_without}{Theorem}
\newtheorem{theorem_without_2}{Theorem}

\newcommand*{\N}{\mathbb{N}}  
\newcommand*{\Z}{\mathbb{Z}}  
\newcommand*{\Q}{\mathbb{Q}}  
\newcommand*{\R}{\mathbb{R}}
\newcommand*{\eps}{\varepsilon}
\newcommand{\cA}{\mathcal A}
\newcommand{\cB}{\mathcal B}
\newcommand{\cC}{\mathcal C}
\newcommand{\cD}{\mathcal D}
\newcommand{\cG}{\mathcal G}
\newcommand{\cH}{\mathcal H}
\newcommand{\cI}{\mathcal I}
\newcommand{\cJ}{\mathcal J}
\newcommand{\cL}{\mathcal L}
\newcommand{\cM}{\mathcal M}
\newcommand{\cU}{\mathcal U}
\newcommand{\cW}{\mathcal W}
\newcommand*{\norm}[1]{\left\lVert#1\right\rVert}
\newcommand{\Cext}[1]{\Pi_E^{\mathrm{ext}}(#1)}
\newcommand{\Xext}[1]{\Xi_{#1}^{\mathrm{ext}}(E_\eps)}
\newcommand{\Pext}[1]{\mathcal{P}_{#1}^{\mathrm{ext}}(E_\eps)}
\newcommand{\Unp}[1]{\mathrm{Unp_\eps}(#1)}
\newcommand{\Sing}{S(E_\eps)}

\hyphenation{neigh-bour-hood}
\hyphenation{neigh-bour-hoods}

\makeindex

\begin{document}

\frontmatter

\title[On Boundaries of {\larger{\textepsilon}}-neighbourhoods]{On Boundaries of {\larger{\textepsilon}}-neighbourhoods of Planar Sets: Singularities, Global Structure, and Curvature}

\author[Lamb]{{J.} {S.} {W.} Lamb}
\address{Jeroen Lamb \\ Department of Mathematics \\ Imperial College London \\ 180 Queen's Gate, South Kensington, London SW7 2AZ, United Kingdom\\ \& 
International Research Center for Neurointelligence (IRCN), The University of Tokyo, 7-3-1 Hongo Bunkyo-ku, Tokyo, 
113-0033 Japan\\ \& Centre for Applied Mathematics and Bioinformatics, Department of Mathematics and Natural Sciences, Gulf University for Science and Technology, Halwally, 32093 Kuwait.}

\author[Rasmussen]{{M.} Rasmussen}
\address{Martin Rasmussen \\ Department of Mathematics \\ Imperial College London \\ 180 Queen's Gate, South Kensington, London SW7 2AZ, United Kingdom}

\author[Timperi]{{K.} {G.} Timperi}
\address{Kalle Timperi \\ Center for Ubiquitous Computing \\ University of Oulu \\ Erkki Koiso-Kanttilan katu 3, door E, Oulu, Finland}
\email{kalle.timperi@oulu.fi}

\thanks{
This paper is based on the PhD thesis~\cite{PhD_Kalle_Timperi} of KT under the supervision of JSWL and MR at Imperial College London, supported by the EU Marie Sklodowska-Curie ITN Critical Transitions in Complex Systems (H2020-MSCA-ITN-2014 643073
CRITICS).
JSWL and MR have been supported by the EPSRC grants EP/W009455/1 and  EP/Y020669/1.  JSWL also acknowledges support from the EPSRC Centre for Doctoral Training in Mathematics of Random Systems: Analysis, Modelling and Simulation (EP/S023925/1).  JSWL is grateful for support from JST Moonshot R \& D Grant Number JPMJMS2021 (IRCN, University of Tokyo) and thanks GUST (Kuwait) for their support.
}
\thanks{The authors express their gratitude to Gabriel Fuhrmann, Konstantinos Kourliouros, Tuomas Orponen, Sebastian van Strien, Dmitry Turaev and Vadim Weinstein for many useful discussions and valuable comments regarding earlier versions of this paper.}


\subjclass[2020]{Primary 51F30; Secondary 57K20, 54C50, 58K40.}

\keywords{\textepsilon-neighbourhoods, singularities, non-smooth geometry, two-dimensional topology}


\begin{abstract}
We study the geometry, topological properties and smoothness of the boundaries of closed $\eps$-neighbourhoods $E_\eps = \{x \in \R^2 \, : \, \textrm{dist}(x, E) \leq \eps \}$ of compact planar sets $E \subset \R^2$. We develop a novel technique for analysing the boundary, and use this to obtain a classification of singularities (i.e.~non-smooth points) on $\partial E_\eps$ into eight categories. We show that the set of singularities is either countable or the disjoint union of a countable set and a closed, totally disconnected, nowhere dense set. Furthermore, we characterise, in terms of local geometry, those $\eps$-neighbourhoods whose complement $\overline{\R^2 \setminus E_\eps}$ is a set with positive reach. It is known that for all bounded $E \subset \R^d$ and all $\eps > 0$, the boundary $\partial E_\eps$ is $(d-1)$-rectifiable. Improving on this, we identify a sufficient condition for the boundary to be uniformly rectifiable, and provide an example of a planar $\eps$-neighbourhood that is not Ahlfors regular. In terms of the topological structure, we show that for a compact set $E$ and $\eps > 0$ the boundary $\partial E_\eps$ can be expressed as a disjoint union of an at most countably infinite union of Jordan curves and a possibly uncountable, totally disconnected set of singularities. Finally, we show that curvature is defined almost everywhere on the Jordan curve subsets of the boundary.
\end{abstract}

\maketitle

\tableofcontents


\mainmatter
\chapter{Introduction}
\section{Motivation} \label{Sec_Motivation}
For a given set $E \subset \R^d$ and radius $\eps > 0$, the closed \emph{$\eps$-neigh\-bour\-hood} of $E$ is the set
\begin{equation} \label{Def_Tubular_Neighbourhood}
E_\eps := \overline{B_\eps(E)} := \overline{ \bigcup_{x \in E} B_\eps(x)},                 
\end{equation}
where the overline denotes closure and $B_\eps(\cdot)$ is an open ball of radius $\eps$ in the Euclidean metric.
The sets $E_\eps$ are also known in the literature as \emph{tubular neighbourhoods} \cite{Fu_Tubular_neighborhoods}, \emph{collars} \cite{Przeworski_An_Upper_Bound} or \emph{parallel sets} \cite{Rataj_Winter_On_Volume, Stacho_On_the_volume}. The boundary $\partial E_\eps$ is a subset of the set $\partial E_{<\eps} := \partial \left( \bigcup_{x \in E} B_\eps(x) \right)$, which is sometimes referred to as the \emph{$\eps$-boundary} \cite{Ferry_When_epsilon_boundaries} or \emph{$\eps$-level set} \cite{Oleksiv_Pesin_Finiteness} of $E$.\footnote{Apart from the exceptions of~\cite{Fu_Tubular_neighborhoods} and~\cite{Rataj_Winter_On_Volume} which explicitly discuss closed $\eps$-neighbourhoods, the existing literature~\cite{Erdos_Some_remarks, Brown_Sets_of_constant, Ferry_When_epsilon_boundaries, Gariepy_Pepe_On_the_level, Blokh_Misiurewicz_Oversteegen_Set_of_Constant, Rataj_Zajicek_Critical_Values_and_Level_Sets_of_Distance} concerns the properties of $\eps$-level sets (open $\eps$-neighbourhoods). The difference between the two definitions is illustrated in Figure~\ref{Figure_Extremal_Interior}.}

The central question addressed in this paper concerns the geometric and topological properties of such sets $E_\varepsilon$, with a focus on properties of its boundary $\partial E_\varepsilon$. This is not only a very natural and fundamental question in (Euclidean) geometry, but it is also relevant in specific settings where $\varepsilon$-neighbourhoods naturally arise. For instance, we are motivated by the classification and bifurcation of minimal invariant sets in random dynamical systems with bounded noise \cite{TopoBif_of_MinInvSets}, but $\varepsilon$-neighbourhoods also naturally feature for instance in control theory \cite{Colonius_Kliemann_Dynamics_of_Control}.

In spite of significant progress in charting the theoretical properties of $\eps$-neigh\-bour\-hoods during the last decades~\cite{Ferry_When_epsilon_boundaries, Fu_Tubular_neighborhoods, R-S-M_Approximations_of, Rataj_Schmidt_Spodarev_On_The_Expected, Rataj_Winter_On_Volume, Rataj_Zajicek_Critical_Values_and_Level_Sets_of_Distance,  Rataj_Zajicek_Properties_of_distance, Rataj_Zajicek_Smallness}, the geometric classification of possible boundaries $\partial E_\eps$ has remained open, even in dimension two.

\section{Main results}

\begin{figure}[ht]
      \centering  \vspace{-1mm}
      \captionsetup{margin=0.75cm}
                \includegraphics[width = \textwidth]{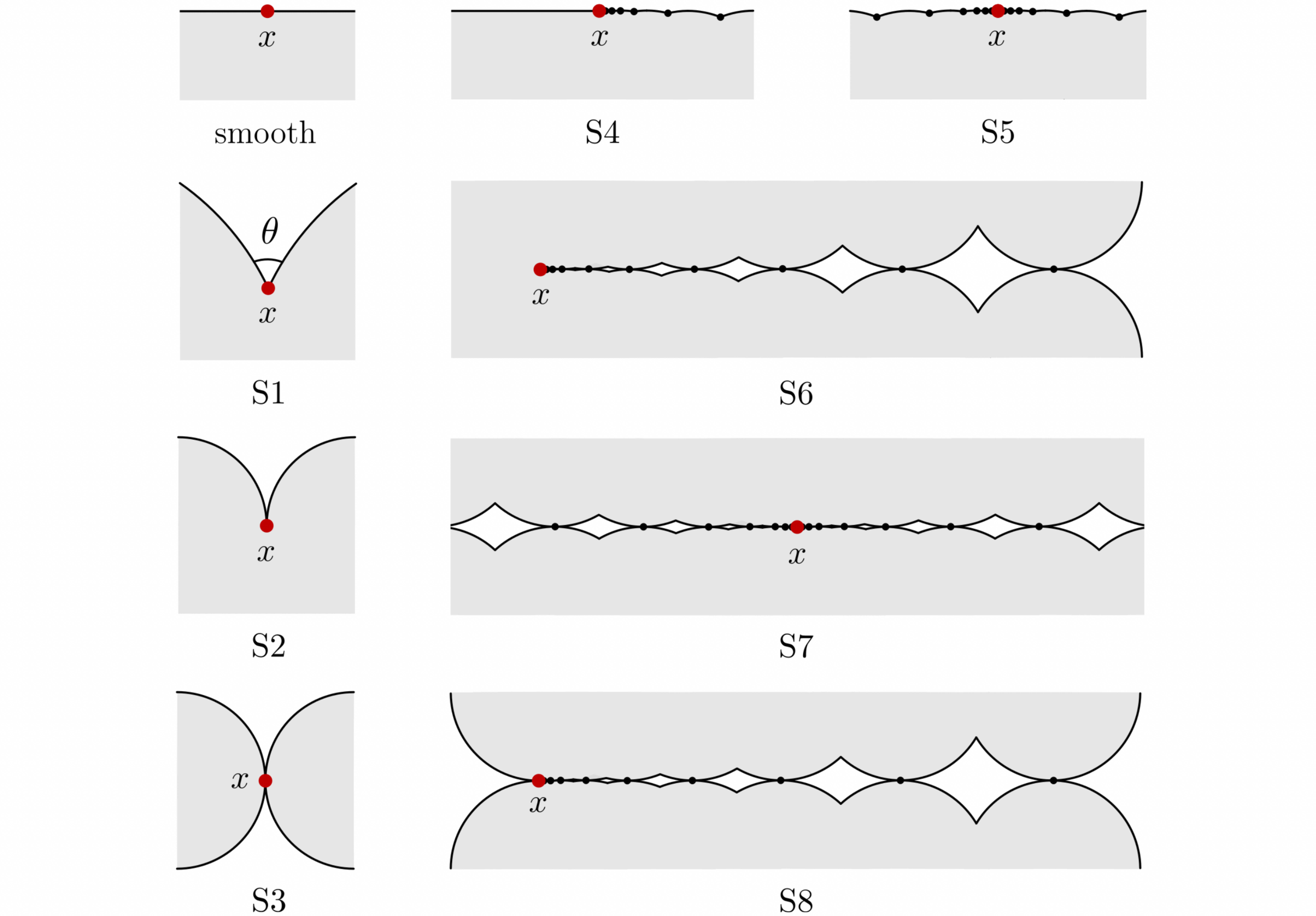} \\[0mm]   
                \caption{Schematic illustration of the types of singularities mentioned in Theorem~\ref{Thm_Main_1}. The grey area represents the $\eps$-neighbourhood $E_\eps$ and the white area the complement $\R^2 \setminus E_\eps$. Every boundary point $x \in \partial E_\eps$ either is a smooth point or belongs to exactly one of eight categories of singularities. At a \emph{wedge} (type S1) the one-sided tangents form an angle $0 < \theta < \pi$. A \emph{sharp singularity} (type S2) and a \emph{sharp-sharp singularity} (type S3) can be thought of as extremal cases of a wedge, with $\theta = 0$. A \emph{shallow singularity} (type S4) and a \emph{shallow-shallow singularity} (type S5) have a well-defined tangent, but they are accumulation points (from one or two directions, respectively) of sequences of increasingly obtuse wedges (black dots). A \emph{chain singularity} (type S6), a \emph{chain-chain singularity} (type S7) and a \emph{sharp-chain singularity} (type S8) share the geometric property of being accumulation points of sequences of increasingly acute wedges (black dots). This turns out to be equivalent (see Proposition~\ref{Prop_Topological_Characterisation_of_Chain_Singularities}) to the topological property of being the limit with respect to Hausdorff distance 
                of a sequence of disjoint connected components of the complement $\R^2 \setminus E_\eps$. See also Figure~\ref{Figure_Three_Basic_Cases}.
                 }
                \label{Figure_Types_of_Singularities}
\end{figure}

Our main achievement is the development of a novel technique for analysing the geometric properties of the boundary $\partial E_\eps$. This approach enables a local representation of the boundary around every boundary point $x \in \partial E_\eps$ using graphs of Lipschitz-continuous functions. We employ this representation to obtain a classification of the boundary points of $\varepsilon$-neighbourhoods of compact planar sets. The strength of our elementary approach compared to existing literature is that it requires no restrictions on the radius $\eps$. See Section~\ref{Section_Context} for a discussion of previous work on $\eps$-neighbourhoods.

Our first main result establishes that for any compact set $E \in\R^2$ and $\eps > 0$, each boundary point $ x \in \partial E_\eps$ is either a smooth point (in the sense that the restriction of $\partial E_\eps$ into a neighbourhood of $x$ is a $C^1$-curve) or falls into exactly one of eight distinct categories of singularities.
\begin{theorem_without}  \label{Thm_Main_1}
Let $E \subset \R^2$ be compact, $\eps > 0$, and let $x \in \partial E_\eps$ be a boundary point of $E_\varepsilon$ that is not smooth.
Then $x$ belongs to precisely one of the following eight categories:

\vspace{1.5mm}
\begin{tabular}{llll}
 (S1) & wedge, & (S5) & shallow-shallow singularity,  \\
 (S2) & sharp singularity, &  (S6) & chain singularity,  \\
 (S3) & sharp-sharp singularity, & (S7) & chain-chain singularity,   \\
 (S4) & shallow singularity, & (S8) & sharp-chain singularity.   \\
\end{tabular}
\end{theorem_without}
The precise definitions of these categories are given in Definition~\ref{Def_classification_of_singularities}. For indicative sketches of the different types of singularities, see Figure~\ref{Figure_Types_of_Singularities}.

The proof of Theorem~\ref{Thm_Main_1} is based on the construction of a local boundary representation, given in Proposition~\ref{Prop_local_representation_exists}, that allows us to treat small parts of the boundary $\partial E_\eps$ as finite unions of graphs of continuous functions. This representation is akin to the one provided in~\cite[Definition~6.3]{Rataj_Zajicek_On_the_structure_of_sets_with_PR} for the boundaries of sets with positive reach. Our representation is independent of this, and instead directly utilises the specific geometry of the $\eps$-neighbourhood boundary. It relies on a local contribution property, Proposition~\ref{Prop_local_contribution}, which states that the geometry of the boundary near each boundary point $x \in \partial E_\eps$ essentially depends on contributions from points $y \in E$ in at most two directions.

Building further on these ideas, we establish our second main result regarding the cardinalities of the different types of singularities.

\begin{figure}
      \centering
      \captionsetup{margin=0.75cm}
                \includegraphics[width = \textwidth]{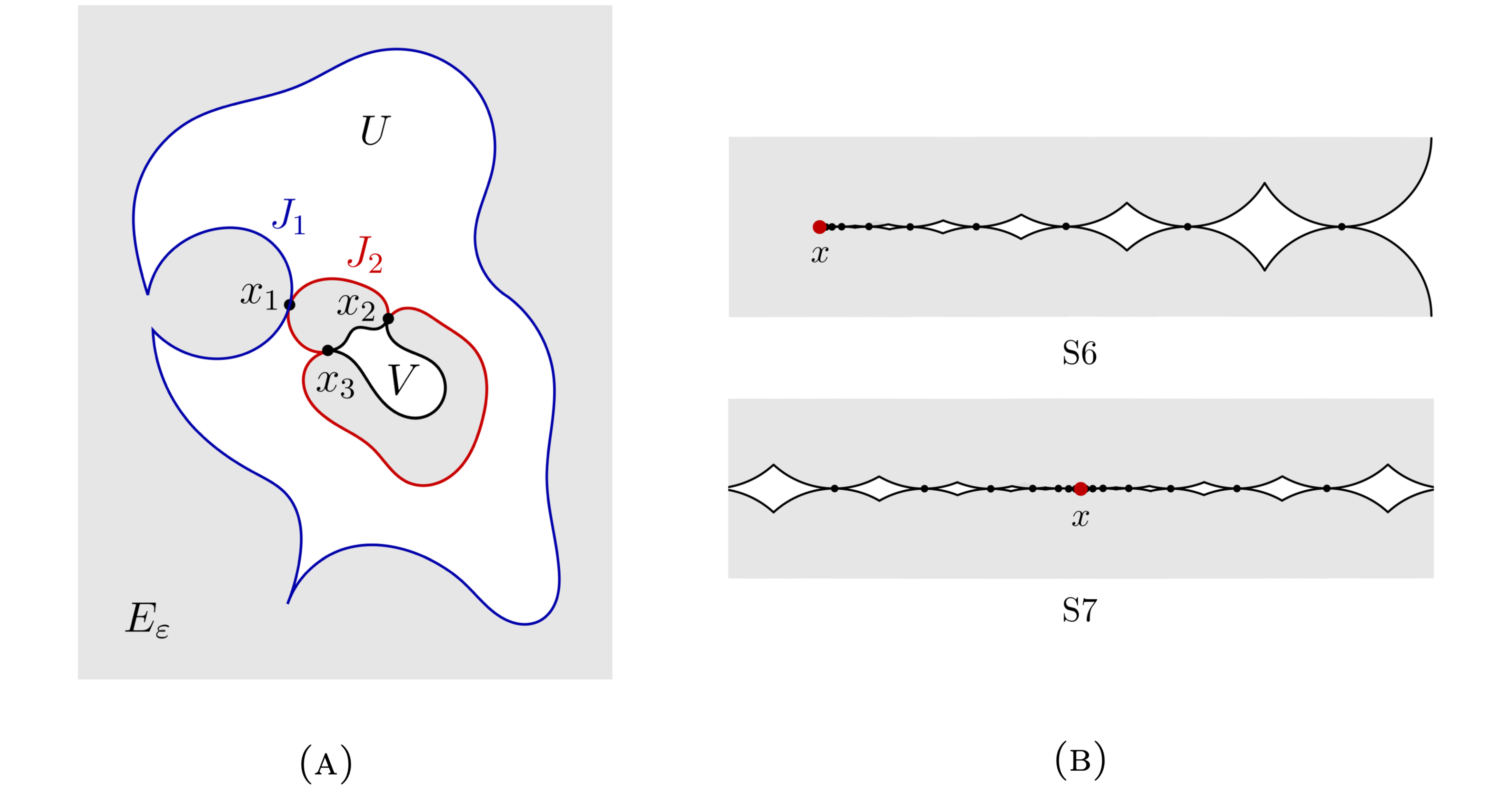}
                \caption{Schematic illustration of Theorem~\ref{Thm_global_structure}.
                ({\footnotesize A}): Jordan curve subsets of the boundary of a connected component $U$ of the complement $\R^2 \setminus E_\eps$. Here $\partial U = J_1 \cup J_2$ and $J_1 \cap J_2 = \{x_1\}$. At the centre of the figure there is another connected component $V \subset \R^2 \setminus E_\eps$, with $\partial V \cap \partial U = \{x_2, x_3\}$. Note that only the sharp-sharp singularities $x_1, x_2$ and $x_3$ present a choice regarding how to continue the boundary curves.
                ({\footnotesize B}): The two types of inaccessible singularities. A chain singularity (type S6) is the limit (in terms of Hausdorff distance) of a sequence of mutually disjoint connected components of the complement. Such a sequence approaches a chain-chain singularity (type S7) on both sides. These are the only types of boundary points that do not lie on Jordan curve subsets on the boundary.}
                \label{Figure_Theorem_1}
\end{figure}

\begin{theorem_without} \label{Thm_Main_2}
For any compact set $E \subset \R^2$, the corresponding $\eps$-neighbourhood boundary $\partial E_\eps$ contains at most a countably infinite number of wedges (type S1), sharp singularities (types S2, S3, S8), one-sided shallow singularities (type S4) and chain singularities (type S6).
\end{theorem_without}

In addition, we present examples which illustrate that the sets of shallow-shallow singularities (type S5) and chain-chain singularities (type S7) may be uncountable on $\partial E_\eps$, see Examples~\ref{Ex_Shallow_can_be_dense} and~\ref{Example_Pos_Measure_Set_of_Chain_Sing}. We refer to the union of categories S6, S7 and S8 as \emph{chain singularities} and denote the set of chain singularities on $\partial E_\varepsilon$ by $\cC(\partial E_\varepsilon)$. Even though the set of chain-chain singularities (type S7) may in general be uncountable and can have a positive Hausdorff measure on the boundary $\partial E_\varepsilon$, our third main finding is that $\cC(\partial E_\varepsilon)$ is closed and totally disconnected.

\begin{theorem_without} \label{Thm_Main_3}
For any compact set $E \subset \R^2$ and $\eps > 0$, the set $\cC(\partial E_\varepsilon)$ of chain singularities is closed and totally disconnected.
\end{theorem_without}
As a corollary, Theorem~\ref{Thm_Main_3} implies that $\cC(\partial E_\varepsilon)$ is nowhere dense on $\partial E_\eps$, and is hence small in the topological sense.

Our fourth main result characterises, in terms of local boundary geometry, those $\eps$-neighbourhoods $E_\eps$ whose complement $\overline{\R^2 \setminus E_\eps}$ is a set with positive reach, see Definition~\ref{Def_Positive_Reach}.

\begin{theorem_without} \label{Thm_Main_4}
Let $E \subset \R^2$ be compact and $\eps > 0$. Then the following are equivalent:
\begin{itemize}
\item[(i)] the complement $\overline{\R^2 \setminus E_\eps}$ is a set with positive reach,
\item[(ii)] for all $x \in \partial E_\eps$, there exists some radius $r(x) > 0$ for which the set $B_{r(x)}(x) \cap \overline{\R^2 \setminus E_\eps}$ is connected,
\item[(iii)] for all chain singularities $x \in \cC(\partial E_\eps)$, there exists some radius $r(x) > 0$ for which the set $B_{r(x)}(x) \cap \overline{\R^2 \setminus E_\eps}$ is connected.
\end{itemize}
\end{theorem_without}

In particular, this result implies that the complement $\overline{\R^2 \setminus E_\eps}$ may have positive reach even when the boundary $\partial E_\eps$ contains critical points of the distance function $d_E(x) := \inf_{y \in E} \norm{x - y}$. This is an improvement to the results in the existing literature~\cite{Fu_Tubular_neighborhoods}.

Building on the techniques developed in Chapters~\ref{Sec_Epsilon_Neighbourhoods}--\ref{Sec_Singularities} allows us to make the topology of $\partial E_\eps$ more precise in two ways.
First, it is possible to give a description that applies for all $\eps > 0$, without the need to ignore a countable set of $\eps$-values, as was done in previous studies of open\footnote{The boundaries $\partial E_{<\eps}$ of \emph{open} $\eps$-neighbourhoods $E_{<\eps} :=  \bigcup_{x \in E} B_\eps(x)$ have been referred to as the \emph{$\eps$-boundaries} \cite{Ferry_When_epsilon_boundaries} or \emph{$\eps$-level sets} \cite{Oleksiv_Pesin_Finiteness} of $E$. We note that $\partial E_{<\eps} = \{ x \in \R^2 \, : \, \mathrm{dist}(x, E) = \eps \}$ and that in general $\partial E_\eps$ is a closed subset of $\partial E_{<\eps}$. In this article we deal exclusively with the closed $\eps$-neighbourhoods~\eqref{Def_Tubular_Neighbourhood}.} $\eps$-neighbourhoods~\cite{Blokh_Misiurewicz_Oversteegen_Set_of_Constant, Brown_Sets_of_constant}. Second, the local geometry of the boundaries of closed $\eps$-neighbourhoods reveals that no simple curves exist on $\partial E_\eps$ for any $\eps > 0$, and that point components appear only in a very specific way.

Point components correspond to boundary points $x \in \partial E_\eps$ that do not lie on the boundary of any connected component of the complement $\R^2 \setminus E_\eps$. 
We call such points \emph{inaccessible singularities}. Our fifth main result is that, apart from the set of inaccessible singularities, the boundary $\partial E_\eps$ consists of a countable (possibly finite) union of Jordan curves.

\begin{theorem_without} \label{Thm_global_structure}
For any compact set $E \subset \R^2$ and $\eps > 0$, the boundary $\partial E_\eps$ is a disjoint union
\[
\partial E_\eps = \cI \cup J,
\]
where $\cI$ is the set of inaccessible singularities and $J = \bigcup_{i \in I} J_i$ is a countable (possibly finite) union of Jordan curves $J_i$. Furthermore there is a unique representation with the property that each Jordan curve $J_i$ satisfies $J_i \subset \partial U$ for some connected component $U$ of the complement $\R^2 \setminus E_\eps$.
\end{theorem_without}

In general, the Jordan curves $J_i$ in Theorem~\ref{Thm_global_structure} need not be disjoint. However, the bounded regions in $\R^2$ defined by these Jordan curves are mutually disjoint, and the intersection of any two Jordan curves $J_i$ is either empty, or contains finitely many points.

According to Theorem~\ref{Thm_Main_1}, each boundary point $x  \in \partial E_\eps$ is either smooth (in the sense that, in a neighbourhood of $x$, $\partial E_\eps$ is a $C^1$-curve) or belongs to exactly one of eight distinct types of singularities. From the topological point of view, the singularities can further be divided into three groups:
\begin{itemize}
\item[(i)] Point components of the boundary: the \emph{inaccessible singularities}.
\item[(ii)] Boundary points, around which the boundary $\partial E_\eps$ can be uniquely represented as a simple curve: these are the smooth points as well as the so-called \emph{wedges}, \emph{sharp} and \emph{sharp-chain singularities}\footnote{More precisely, the boundary of the unique connected component of the complement that touches the sharp-chain singularity can be uniquely represented by a simple curve in any sufficiently small neighbourhood, cf. Lemma~\ref{Lemma_Finite_Repr_for_Boundary}.} and \emph{shallow singularities}.
\item[(iii)] Boundary points, around which there is more than one way to locally represent the boundary as a union of simple curves: the so-called \emph{sharp-sharp singularities}.
\end{itemize}
Whenever sharp-sharp singularities exist on the boundary $\partial E_\eps$, there are several ways of representing the boundary as a union of curves. It turns out, however, that the boundary of each individual connected component of the complement $\R^2 \setminus E_\eps$ has a unique representation as a finite union of Jordan curves. In this article we show how to construct such representations, and use them to arrive at a global representation of the boundary $\partial E_\eps$ as a union of Jordan curves and point components. See~Figure~\ref{Figure_Theorem_1} for a schematic illustration of such decompositions of the boundary.

In Chapter~\ref{Sec_Ahlfors_regularity}, we recall the definitions of rectifiability and Ahlfors regularity, and show that the Jordan curve subsets of the boundary are
uniformly rectifiable.
We also give an example of an $\eps$-neighbourhood whose boundary is not an Ahlfors-regular set.
\begin{theorem_without} \label{Thm_suff_cond_for_uni_rect_intro}
Let $E \subset \R^2$ be compact, let $\eps > 0$, and assume that the boundary $\partial E_\eps$ contains no chain or chain-chain singularities. Then $\partial E_\eps$ is uniformly rectifiable.
\end{theorem_without}

Another fundamental property of interest is the smoothness of the boundary. It is known that for a fixed compact set $E \in \R^2$, the components of the boundary $\partial E_\eps$ are Lipschitz manifolds except for a zero-measure set of radii $\eps > 0$~\cite{Ferry_When_epsilon_boundaries, Fu_Tubular_neighborhoods, Rataj_Zajicek_Smallness}. In light of Theorem~\ref{Thm_global_structure}, and the geometric results obtained in Chapters~\ref{Sec_Epsilon_Neighbourhoods}--\ref{Sec_Singularities}, this property can be interpreted as the existence of well-defined tangents almost everywhere on the Jordan curve components of the boundary. This holds true despite the fact that singularities of wedge type may be dense on subsets of $\partial E_\eps$ that have positive one-dimensional Hausdorff measure, see Example~\ref{Ex_Shallow_can_be_dense}.

We analyse the curvature of the boundary $\partial E_\eps$ by using local representations in the form of graphs of Lipschitz-continuous functions. The construction of such \emph{local boundary representations} around all boundary points $x \in \partial E_\eps$ is one of the key technical results of this paper, and is presented in Chapter~\ref{Sec_Local_Structure}. According to Corollary~\ref{Prop_LBR_Differentiable_AE}, the first derivative of a local boundary representation exists almost everywhere, and it follows from Propositions~\ref{Prop_structure_of_set_of_outward_directions} and~\ref{Prop_Tangents_are_Defined} that one-sided derivatives exist at every point. We establish the existence of the second derivatives of these functions almost everywhere by showing that the right and left first derivatives are functions of bounded variation.

\begin{theorem_without} \label{Thm_existence_of_curvature}
Let $E \subset \R^2$ be compact and let $\cJ = \partial E_\eps \setminus \cI$ where $\cI$ is the set of inaccessible singularities. Then (signed) curvature $\kappa(x)$ exists for $\cH^1$-almost all $x \in \cJ$, where $\cH^1$ denotes the one-dimensional Hausdorff measure.
\end{theorem_without}

This paper has arisen from our interest in bifurcations of minimal invariant sets of random dynamical systems with bounded noise, which naturally appear as dynamically defined $\varepsilon$-neighbourhoods. In this context, the aim is to develop a theory which allows
the classification of qualitative changes of minimal invariant sets in (generic) parametrised families of random dynamical systems with bounded noise. The results in this paper provide a characterisation of boundaries at fixed values of the parameters, including $\varepsilon$, which is a first step towards this more general direction.

\section{Context} \label{Section_Context}
Systematic investigations into the regularity of the boundary $\partial E_\eps$ were initiated in the landmark papers of Ferry~\cite{Ferry_When_epsilon_boundaries} and Fu~\cite{Fu_Tubular_neighborhoods}, and were continued in the recent work of Rataj and {Z}aj\'{i}\v{c}ek~\cite{Rataj_Zajicek_Critical_Values_and_Level_Sets_of_Distance, Rataj_Zajicek_Smallness}. These efforts have focused on developing analytical techniques, such as the theory of semiconcave and DC functions. A particular aim has been to narrow down the exceptional set of radii $\eps > 0$ for which the boundary $\partial E_\eps$ contains critical points of the distance function
\[
d_E(x) := \inf_{y \in E} \norm{x - y}.
\]

According to a key result of Fu~\cite[Theorem 4.1]{Fu_Tubular_neighborhoods}, the absence of critical points on the boundary $\partial E_\eps$ implies that the complement $\overline{\R^2 \setminus E_\eps}$ has positive reach, as defined by Federer in~\cite{Federer_Curvature_measures}. In addition, the set of critical values of the distance function $d_E$ is known to be small in measure~\cite{Fu_Tubular_neighborhoods, Rataj_Zajicek_Smallness}, although in general uncountable~\cite[Example 1]{Ferry_When_epsilon_boundaries}. Combining these results implies that the general properties enjoyed by boundaries of sets with positive reach also apply to the boundary $\partial E_\eps$ for most values of the radius $\eps$.

Such properties were recently investigated by Rataj and {Z}aj\'{i}\v{c}ek~\cite{Rataj_Zajicek_On_the_structure_of_sets_with_PR}. For instance, it is shown in~\cite[Theorem 5.9]{Rataj_Zajicek_On_the_structure_of_sets_with_PR} that the boundary of a set in $\R^d$ with positive reach can be locally covered with finitely many semiconvex hypersurfaces. Our Proposition~\ref{Prop_local_representation_exists} analogously states that each $x \in \partial E_\eps$ has a neighbourhood in which the boundary can be represented using a finite number of graphs of Lipschitz-continuous functions.\footnote{It follows from~\cite[Theorem 5.9]{Rataj_Zajicek_On_the_structure_of_sets_with_PR} that these functions are in fact semiconvex.} In addition, it is known~\cite[Theorem 6.4]{Rataj_Zajicek_On_the_structure_of_sets_with_PR} that the possible geometries of the boundaries of sets with positive reach can be classified into three distinct categories. Our classification of singularities given in Theorem~\ref{Thm_Main_1} is the counterpart of this result in the context of $\eps$-neighbourhoods.

The analysis in~\cite{Rataj_Zajicek_On_the_structure_of_sets_with_PR} is based on the theory of semiconcave and DC functions and enables a more general level of applicability compared to the present paper. The drawback in the context of $\eps$-neighbourhoods is that~\cite[Theorem 6.4]{Rataj_Zajicek_On_the_structure_of_sets_with_PR} provides information about $\partial E_\eps$ only for almost all $\eps > 0$ and therefore gives an incomplete description of the boundary geometry. In contrast, our approach directly relies on the characteristic geometry of $\eps$-neighbourhoods. This brings the key benefit that we do not need to assume that the complement $\overline{\R^2 \setminus E_\eps}$ has positive reach, or that the boundary $\partial E_\eps$ contains no critical points of the distance function $d_E$. We are thus able to prove, for the first time, a complete classification of possible local boundary geometries of $\partial E_\eps$ for all $\eps > 0$.

In~\cite[Theorem 6.4]{Rataj_Zajicek_On_the_structure_of_sets_with_PR} a classification of local boundary geometries was provided in the case where the complement $\overline{\R^2 \setminus E_\eps}$ has positive reach.  Here, we extend these results beyond this assumption. When the complement $\overline{\R^2 \setminus E_\eps}$ is a set with positive reach, our classification of singularities given in Definition~\ref{Def_classification_of_singularities} can be directly compared with~\cite[Definition 6.3]{Rataj_Zajicek_On_the_structure_of_sets_with_PR}. The latter defines the possible boundary geometries in terms of hypo- and epigraphs of semiconcave and semiconvex functions, whereas our definition is based on the characteristic geometry of $\eps$-neighbourhoods and the local connectedness properties of the complement. However, these two definitions relate closely to each other via the local boundary representations established in Proposition~\ref{Prop_local_representation_exists}. The classification given in Definition~\ref{Def_classification_of_singularities} is a refinement of the one in~\cite[Definition 6.3]{Rataj_Zajicek_On_the_structure_of_sets_with_PR}, and extends it in the context of $\eps$-neighbourhoods. A detailed correspondence between the two is given at the end of Section~\ref{Subsect_Types_of_Singularities}. To clarify the scope of the different approaches, we identify in Theorem~\ref{Thm_Main_4} a local connectedness property that characterises those $\eps$-neighbourhoods whose complement $\overline{\R^2 \setminus E_\eps}$ is a set with positive reach.

{\larger{\larger{\textepsilon}}}-neighbourhoods arise in many branches of mathematics from convex analysis and manifold theory \cite{Przeworski_An_Upper_Bound} to fractal geometry \cite{Kaenmaki_Lehrback_Vuorinen_Dimensions}. In stochastic processes, so-called Wiener sausages represent smoothed-out Brownian motion trajectories \cite{Donsker_Varadhan_Wiener_Asymptotics, Hamana_On_The_Expected, Klartag_Eldans_Stochastic_Localization, R-S-M_Approximations_of, Rataj_Schmidt_Spodarev_On_The_Expected}, with applications in theoretical physics \cite{Kac-Luttinger_Bose-Einstein, Nekovar_Pruessner_A_Field_Theoretic_Approach, Spitzer_Electrostatic}. The relations between the surface area, volume and dimension of $\eps$-neighbourhoods \cite{Kaenmaki_Lehrback_Vuorinen_Dimensions, Rataj_Winter_On_Volume, Stacho_On_the_volume}, as well as the dependence of the manifold structure of the boundary $\partial  E_\eps$ on the radius $\eps$~\cite{Federer_Curvature_measures, Ferry_When_epsilon_boundaries, Fu_Tubular_neighborhoods, R-S-M_Approximations_of, Rataj_Schmidt_Spodarev_On_The_Expected, Rataj_Winter_On_Volume, Rataj_Zajicek_Critical_Values_and_Level_Sets_of_Distance,  Rataj_Zajicek_Properties_of_distance, Rataj_Zajicek_Smallness,Vellis_Sets_of_constant} have all received attention in the last decades.

The earliest reference to $\eps$-neighbourhoods
appears to be Erd\H{o}s's brief note \cite{Erdos_Some_remarks}. Here it was shown that for any $E \subset \R^d$, the $d$-dimensional Hausdorff measure of the boundary $\cH^d (\partial E_\eps)$ vanishes and that for any compact set $E$, the $(d-1)$-dimensional measure of the boundary is finite, i.e.~$\cH^{d-1} (\partial E_\eps) < \infty$. This result was also later documented in~\cite{Oleksiv_Pesin_Finiteness}.

After this early work, two branches of investigation into the structure of the boundary $\partial E_\eps$ developed, largely independently of each other. We outline here the main results of each line of inquiry, which we call the \emph{topological approach} and the \emph{analytical approach}.

\subsection*{The topological approach} The topological properties of the boundaries of $\eps$-neigh\-bour\-hoods were first studied in the 1970s~\cite{Brown_Sets_of_constant}. This work contains an analysis of the structure of the connected components of the \emph{$\eps$-boundary}
\begin{equation}  \label{Eq_Def_Eps_boundary}
\partial E_{<\eps} := \partial \big\{x \in \R^2 \, : \, d_E(x) < \eps \big\} = \big\{x \in \R^2 \, : \, d_E(x) = \eps \big\}.
\end{equation}
It was shown~\cite[Theorems 2 and 3]{Brown_Sets_of_constant} that for any compact planar set $E \subset \R^2$, the connected components of the $\eps$-boundary $\partial E_{<\eps}$ are locally connected and for all but countably many $\eps > 0$, each component of $\partial E_{<\eps}$ is either a point, a simple curve, or a simple closed curve (a Jordan curve).

More recently, it was shown in~\cite{Blokh_Misiurewicz_Oversteegen_Set_of_Constant} that the above result can be generalised to 2-manifolds with a geodesic metric (with or without boundary). This result too relies on~\cite{Moore_Concerning_triods}, but the first part of the above proof is replaced by an argument showing that the set $\partial E_{<\eps}$ is finitely Suslinian\footnote{A compactum is said to be \emph{finitely Suslinian} if for each $\eps > 0$, each collection of pairwise disjoint subcontinua of diameter larger than $\eps$ is finite.}, which implies local and arcwise connectedness.

\begin{figure}[ht]
  \begin{minipage}[c]{0.4\textwidth}
  \mbox{}\\[-\baselineskip] \vspace{-6mm}
    \includegraphics[width=1.19\textwidth]{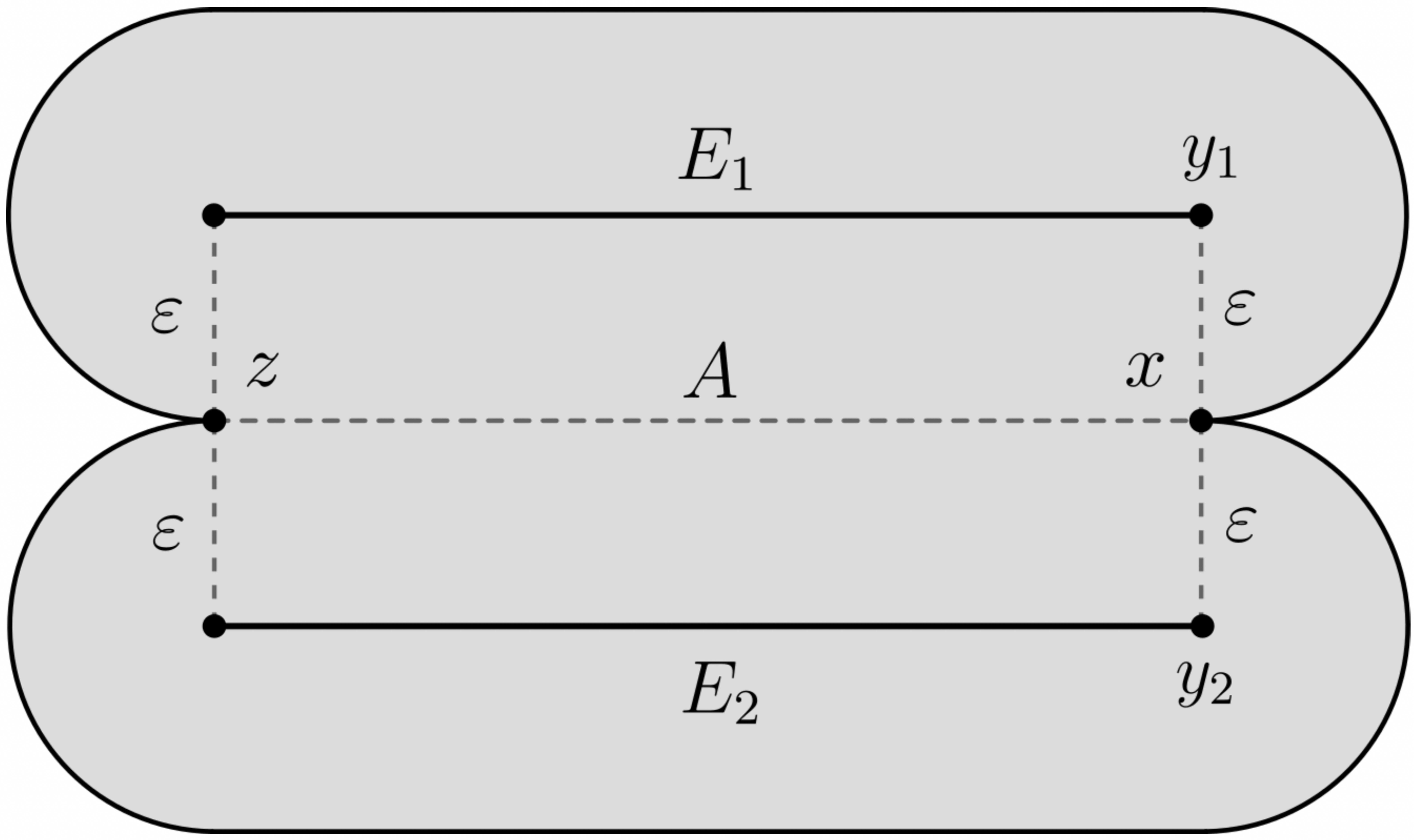}
  \end{minipage}
  \begin{minipage}[c]{0.05\textwidth}
   \mbox{}\\[-\baselineskip] \vspace{2mm}
  \end{minipage}
  \begin{minipage}[c]{0.54\textwidth}
  \mbox{}\\[-\baselineskip]  
    \caption[Difference between the $\eps$-boundary $\partial E_{<\eps}$ and the boundary of the closed $\eps$-neighbourhood $\partial E_\eps$.]{Schematic illustration of the difference between the $\eps$-boundary $\partial E_{<\eps}$ and the boundary of the closed $\eps$-neighbourhood $\partial E_\eps$. For $E = E_1 \cup E_2$, the set $A := \partial E_{<\eps} \setminus \partial E_\eps$ is non-empty. See also the discussion following Lemma~\ref{Lemma_OD_versus_dot_product_ineq} and~\cite[Example 2.1]{Rataj_Winter_On_Volume}.
                 } \label{Figure_Extremal_Interior}
  \end{minipage}
\end{figure}
\vspace{2mm}

\subsection*{The analytical approach} This approach uses tools from (set-valued) analysis in order to study, for a given set $E \in \R^d$, the \emph{critical points} and \emph{critical values} of the distance function $d_E : \R^d \to \R$,
\begin{equation} \label{Eq_Def_Distance_Function}
d_E(x) := \inf_{y \in E} \norm{x - y}.
\end{equation}
The rationale of this approach is that even for an arbitrary set $E \in \R^2$, the distance function $d_E$ only has few critical values $\eps$, and that for regular (non-critical) values, the $\eps$-boundary $\partial E_{<\eps}$ turns out to be a geometrically well-behaved set.

The first step in this direction was the clarification of the manifold structure of the $\eps$-boundary~\eqref{Eq_Def_Eps_boundary} in~\cite{Ferry_When_epsilon_boundaries}. The main result~\cite[Corollary 2.3]{Ferry_When_epsilon_boundaries}\footnote{In addition to his main result, Ferry also provides counterexamples showing that~\cite[Corollary 2.3]{Ferry_When_epsilon_boundaries} cannot be  improved to hold true apart from a countable set of radii \hspace{-0.1mm}\cite[Example 1]{Ferry_When_epsilon_boundaries} or in dimensions $d > 3$ \hspace{-0.1mm}\cite[Example 3]{Ferry_When_epsilon_boundaries}, and that Brown's results~\cite[Theorems 2 and 3]{Brown_Sets_of_constant} cannot be generalised to $d = 3$ \hspace{-0.1mm}\cite[Example 2]{Ferry_When_epsilon_boundaries}.}  is that for $d \in \{2,3\}$ and $E \subset \R^d$, the $\eps$-boundary $\partial E_{<\eps}$ is a $(d-1)$-manifold for almost all $\eps > 0$. In the case $d = 2$, this result had been obtained a few years earlier~\cite{Gariepy_Pepe_On_the_level}. Although this result provides a more specific statement compared to~\cite{Brown_Sets_of_constant}, this comes at the expense of replacing 'for all but countably many $\eps > 0$' in~\cite[Theorems 2 and 3]{Brown_Sets_of_constant} with 'for almost all $\eps > 0$'. In terms of the cardinality of the exceptional set of radii, the results are shown to be optimal, see~\cite[Examples 1 and 2]{Ferry_When_epsilon_boundaries}. Thus, it has become a central aim in subsequent work to obtain more refined measure theoretic and other bounds on the size of the set of exceptional radii in~\cite[Corollary 2.3]{Ferry_When_epsilon_boundaries}.

Such a bound was provided roughly a decade later in 1985 by Fu~\cite{Fu_Tubular_neighborhoods} as part of his doctoral dissertation. Let $E \subset \R^d$ be compact and let $\mathrm{cv}(d_E)$ denote the set of critical values of the distance function $d_E$, see Definition~\ref{Def_Critical_points_and_critical_values} below. Then, for each regular value $\eps \notin \mathrm{cv} (d_E)$, the boundary $\partial E_\eps$ is a Lipschitz manifold and the complement $\overline{\R^d \setminus E_\eps}$ is a set with positive reach~\cite[Theorem 4.1]{Fu_Tubular_neighborhoods}. Furthermore, the set of critical values $\mathrm{cv} (d_E)$ is compact and is contained in the interval $\big[0, \sqrt{d / 2(d + 1)} \mathrm{diam} (E)\big]$, in which $\textrm{diam}(E)$ denotes the diameter of the set $E$.

In~\cite{Fu_Tubular_neighborhoods}, the critical points and critical values of $d_E$ are defined in terms of the \emph{generalised (Clarke) gradient} $\partial d_E$, which is a set-valued generalisation of the classical gradient.\footnote{For almost everywhere differentiable functions $f$, the generalised gradient $\partial f(x)$ is the convex hull of the set of limits of the form $\lim_{i \to \infty} \nabla f(x + h_i)$, where $h_i \to 0$ as $i \to \infty$, and $\nabla f$ denotes the classical gradient, see~\cite[Definition~1.1]{Clarke_Generalized}. However, the importance of this notion stems from the fact that it can be defined for a much larger class of functions, see for instance~\cite{Clarke_Generalized,Clarke_Optimization}.} We provide here the definition of critical points and critical values. We also provide a geometric characterisation of critical points that we will refer to in Chapter~\ref{Sec_Pos_Reach} as we analyse conditions that guarantee that the complement $\overline{\R^2 \setminus E_\eps}$ of the $\eps$-neighbourhood is a set with positive reach.

\begin{definition}[Critical points, critical values, and regular points of the distance function~{\cite[Definition 3.2]{Fu_Tubular_neighborhoods}}] \label{Def_Critical_points_and_critical_values}
The set of \emph{critical points} of the distance function $d_E$ in~\eqref{Eq_Def_Distance_Function} is given by
\begin{equation} \label{Eq_Def_Critical_Points_Clarke}
\mathrm{crit}(d_E) := \big\{x \in \R^d \, : \, 0 \in \partial d_E(x) \big\}.
\end{equation}
The set of \emph{critical values} of $d_E$ is given by
\begin{equation} \label{Eq_Def_Critical_Values}
\mathrm{cv} (d_E) := d_E(\mathrm{crit}(d_E)) \subset \R_+ .
\end{equation}
All other points $x \in \mathrm{reg}(d_E) := \R^d \setminus \mathrm{crit}(d_E)$ are called \emph{regular points} of $d_E$.
\end{definition}

It turns out that $x \in \R^d \setminus E$ is a critical point of $d_E$ in the sense of Definition~\ref{Eq_Def_Critical_Points_Clarke} if and only if it satisfies the geometric property
\begin{equation} \label{Eq_Def_Critical_Points_Ferry}
x \in \mathrm{Conv} (\Pi_E(x)),
\end{equation}
where $\mathrm{Conv}(\cdot)$ denotes the convex hull and $\Pi_E(x)$ is the metric projection
\begin{equation} \label{Def_metric_projection}
\Pi_E(x) := \{y \in E \, : \, d_E(x) = \norm{x - y} \}
\end{equation}
of $x$ onto $E$, see~\cite[Lemma 4.2]{Fu_Tubular_neighborhoods}.
In the earlier work~\cite[Definition 1.3]{Ferry_When_epsilon_boundaries} the same geometric property~\eqref{Eq_Def_Critical_Points_Ferry} was used directly as the definition of a critical point.

A key result in~\cite{{Fu_Tubular_neighborhoods}} is the connection between the regular values of the distance function $d_E$ and the geometric property of the closure of the complement $\overline{\R^2 \setminus E_\eps}$ being a \emph{set with positive reach}.\footnote{This concept was introduced in~\cite{Federer_Curvature_measures} as a natural generalisation of both convex sets and smooth manifolds.
For sets with positive reach, it is possible to formulate and prove generalised analogues for the Steiner and kinematic formulas from convex analysis via the notion of \emph{curvature measures}.} The reach of a set $E \subset \R^d$ is defined in terms of points $x \in \R^d$ whose metric projection~\eqref{Def_metric_projection} onto $E$ is a singleton. We denote the set of such points by $\mathrm{Unp}(E)$.
\begin{definition}[The reach of a set {\hspace{-0.1mm}{\cite[Definition 4.1]{Federer_Curvature_measures}}}] \label{Def_Positive_Reach}
For $E \subset \R^d$ the (local) reach at $y \in E$ is given by
\begin{equation} \label{Eq_Def_Reach}
\mathrm{reach} (E, y) := \sup\{r \geq 0 \, : \, B(y,r) \subset \mathrm{Unp}(E)\}.
\end{equation}
The \emph{reach} of the set $E \subset \R^d$ is given by
\begin{equation} \label{Eq_Def_Positive_Reach}
\mathrm{reach} \, E := \inf_{y \in E} \mathrm{reach} (E, y).
\end{equation}
\end{definition}

In light of~\cite[Theorem 4.1]{Fu_Tubular_neighborhoods} it is natural to inquire into the structure and size of the set of critical values $\mathrm{cv} (d_E)$ of the distance function. Recent work~\cite{Rataj_Zajicek_Smallness} introduced the notion of $BT_\alpha$-sets for subsets of the real line, to further quantify the size of this set.

\begin{definition}[{\hspace{-0.1mm}{\cite[Definition 2.2]{Rataj_Zajicek_Smallness}}}] \label{Def_BT_alpha_sets}
Given a compact set $K \subset \R$ and $\alpha > 0$, the \emph{degree-$\alpha$-gap sum} of $K$ is
\[
G_\alpha(K) := \sum_{I \subset \cG_K} |I|^\alpha,
\]
where $\cG_K$ is the set of all bounded components of $\R \setminus K$. The set $K$ is a \emph{$BT_\alpha$-set} if it is compact, has zero Lebesgue measure and $G_\alpha(K) < \infty$.
\end{definition}
Let $L_E \subset \R$ be the set of radii $\eps > 0$ for which the $\eps$-boundary $\partial E_{<\eps}$ fails to be a Lipschitz manifold. The first main result of~\cite{Rataj_Zajicek_Smallness} states that for any $\delta > 0$ and $A \subset [\delta, \infty)$ the properties
\begin{itemize}
\item[(i)] $A \subset L_E$ for some compact set $E \subset \R^2$;
\item[(ii)] $A \subset \mathrm{cv}(d_E)$ for some compact set $E \subset \R^2$;
\item[(iii)] $\overline{A}$ is a $BT_{1/2}$-set
\end{itemize}
are equivalent~\cite[Theorem~1.1]{Rataj_Zajicek_Smallness}. This result thus provides a characterisation of the smallness of the set $\mathrm{cv}(d_E) \cap [\delta, \infty)$, i.e.~the set of critical values of the distance function bounded away from $0$. In addition to this, the authors provide a characterisation of the non-truncated set of critical values~\cite[Theorem~1.2]{Rataj_Zajicek_Smallness}, and show these results to be optimal. Notably, the methods used in the proofs are independent of those employed in~\cite{Fu_Tubular_neighborhoods}, and in fact provide an alternative proof for~\cite[Theorem~4.1]{Fu_Tubular_neighborhoods}. The argument used in~\cite{Rataj_Zajicek_Smallness} is instead based on the analysis of critical points of DC functions (i.e., differences of convex functions) and an inequality due to Ferry~\cite[Proposition~1.5]{Ferry_When_epsilon_boundaries}.

\section{Structure of the paper}
The rest of the paper is structured as follows. Chapter~\ref{Sec_Epsilon_Neighbourhoods} sets up the conceptual framework and terminology that will be used throughout the paper. In Section~\ref{Subsec_Properties_of_Outward_Dir}, we introduce the notions of \emph{contributors} (Definition~\ref{Def_Contributor}) and \emph{outward directions} (Definition~\ref{Def_outward_direction}), and their basic properties. We also establish their relationship with the tangential properties of the boundary $\partial E_\eps$. 

In Chapter~\ref{Sec_Local_Structure}, we investigate the local properties of boundary points $x \in \partial E_\eps$ in small neighbourhoods $\overline{B_r(x)}$. The key result is Proposition~\ref{Prop_local_contribution}, which states that for $E \subset \R^2$ the boundary geometry near each $x \in \partial E_\eps$ is defined solely by those $y \in \partial E$ that lie near the \emph{extremal contributors} (Definition~\ref{Def_Extremal_Outward_Directions_and_Contributors}) of $x$. Building on this observation and a related approximation scheme (Definition~\ref{Def_Finite_Approximation_Scheme}), we show that the boundary $\partial E_\eps$ can be represented locally as a finite union of continuous graphs (Proposition~\ref{Prop_local_representation_exists}). This representation plays a central role in the proofs of the subsequent results. Section~\ref{Section_Local_Structure_of_the_Complement} contains an analysis of the topological and geometric structure of the complement $\R^2 \setminus E_\eps$ near smooth points, wedges (type S1) and shallow singularities (types S4 and S5).

Chapters~\ref{Sec_Singularities} and~\ref{Sect_Topo_Structure} contain the first main results of this paper. We present the different types of singularities encountered on the boundary $\partial E_\eps$ (Definition~\ref{Def_classification_of_singularities}) and show how these can be characterised in terms of the local topological structure of the complement $\R^2 \setminus E_\eps$ and the geometric properties of the boundary $\partial E_\eps$ (Propositions~\ref{Prop_Sharp_Singularity_Types} and~\ref{Prop_Topological_Characterisation_of_Chain_Singularities}, Corollary~\ref{Cor_Inaccessible_Singularities}). Chapter~\ref{Sec_Singularities} concludes with the proof of our first main result, Theorem~\ref{Thm_Main_1}, which states that the classification given in Definition~\ref{Def_classification_of_singularities} defines a partition of $\partial E_\eps$.

In Chapter~\ref{Sect_Topo_Structure}, we study the cardinalities of the subsets on the boundary $\partial E_\eps$ that correspond to different types of singularities. We show that the number of singularities of types S1--S4, S6 and S8 on the boundary $\partial E_\eps$ is at most countably infinite (Theorem~\ref{Thm_Main_2}). Furthermore, we show that the set of chain singularities is closed and totally disconnected (Theorem~\ref{Thm_Main_3}).

Chapter~\ref{Sec_Pos_Reach} contains an analysis of the relationship between the local geometry of the boundary $\partial E_\eps$ and the reach of the complement $\overline{\R^2 \setminus E_\eps}$. In particular, we show that $\textrm{reach}\big( \overline{\R^2 \setminus E_\eps}, x \big) = 0$ if and only if $x$ is a chain-singularity for which the set $B_r(x) \cap \overline{\R^2 \setminus E_\eps}$ is disconnected for all $r > 0$. The chapter concludes with a characterisation of those $\eps$-neighbourhoods whose complement is a set with positve reach (Theorem~\ref{Thm_Main_4}).

In Chapter~\ref{Sec_Boundaries_as_Jordan_Curves}, we turn our attention to the global topological structure of the boundary $\partial E_\eps$. We utilise the local boundary representations (Proposition~\ref{Prop_local_representation_exists}) in order to construct Jordan curve covers for the boundaries of the connected components of the complement $\R^2 \setminus E_\eps$ (Lemma~\ref{Lemma_Finite_Repr_for_Boundary}) and demonstrate that these covers always have a subcover of order two (Lemma~\ref{Lemma_Order_Two_Cover}). We use these partial results to prove Proposition~\ref{Prop_Jordan_curve_boundaries}, which states that the boundary of each connected component of the complement $\R^2 \setminus E_\eps$ can be expressed as a finite union of Jordan curves. We close this chapter with the proof of Theorem~\ref{Thm_global_structure}, which provides a complete topological description of the boundary $\partial E_\eps$ in terms of Jordan curve subsets and inaccessible singularities.

In Chapter~\ref{Sec_Ahlfors_regularity} we recall the definitions of rectifiability (\ref{Def_Rectifiability}), Ahlfors regularity (\ref{Def_Ahlfors_Regularity}), and uniform rectifiability (\ref{Def_uniform_rect}). We show that all Jordan curve subsets of the boundary $\partial E_\eps$ are Ahlfors regular (Proposition~\ref{Prop_Jordan_curves_are_Ahlfors_regular}). The main result of this chapter is Theorem~\ref{Thm_suff_cond_for_uni_rect_intro}, which states that those $\eps$-neighbourhoods whose boundary does not contain any chain singularities, are uniformly rectifiable. To augment this result, in Example~\ref{Ex_counterexample_to_Ahlfors} we provide a construction of an $\eps$-neighbourhood whose boundary contains a chain singularity and is not an Ahlfors-regular set, and hence not uniformly rectifiable.

In the final Chapter~\ref{Sec_Existence_of_Curvature}, we investigate the existence of curvature on the Jordan curve subsets of the boundary. We again make use of the local boundary representations given by Proposition~\ref{Prop_local_representation_exists}, this time as a way of analysing the local smoothness of the boundary. 
We formulate an elementary sufficient condition for bounded variation, which we apply to the local boundary representations. This enables us to prove our last main result, Theorem~\ref{Thm_existence_of_curvature}, which states that curvature is well-defined almost everywhere on $\partial E_\eps$ with respect to the one-dimensional Hausdorff measure. The result is based on three technical lemmas,
which allow us to show that the functions constituting the local boundary representations satisfy a specific lower bound for tangential directions.

\section{Outlook}
In this paper we have discussed a number of properties of boundaries of $\eps$-neighbourhoods of compact sets in two dimensions. However, some interesting problems remain open. For instance, in Chapter~5, measure theoretical and topological characterisations have been obtained for the sets of different types of singularities on the boundary. However, it remains open to clarify how the properties of the underlying set $E$ lead to different categories of singularities on the boundary $\partial E_\eps$.

This work was motivated by our interest in two-dimensional set-valued dynamical systems with spherical reach, in particular the singularity structure of attractor boundaries in this setting. While the classification of singularities in this paper applies to this problem, it does not address the likelihood (co-dimension) with which these singularities are observed. Preliminary numerical studies indicate that -- as expected -- wedge singularities can arise in a persistent manner, but surprisingly so can the more exotic-looking shallow singularity~\cite{Lamb_Rasmussen_Tey_Henon}.

The dynamical systems angle also provides a natural connection with singularity theory \cite{Kourliouros2023_Persistence}. Indeed, the singularities we find are Legendrian  \cite{Arnold_Singularities}.
However, the singularity-theoretical classification does not align with ours: some Legendrian singularities do not arise on $\partial E_\varepsilon$ and singularities on $\partial E_\varepsilon$ that are accumulated by wedges have co-dimension infinity from the singularity theory point of view (and are normally not really discussed in singularity theory). 

Finally, it is of obvious interest how our results extend to three- and higher- dimensional settings. Ideally, an approach may be developed that is less dependent on the ambient dimension. 
\chapter{{\larger{\textepsilon}}-neighbourhoods} \label{Sec_Epsilon_Neighbourhoods}
The object of our study is the \emph{$\eps$-neighbourhood} $E_\eps = \overline{B_\eps(E)}$ of a closed subset $E \subset \R^d$. The main results of this paper concern $\eps$-neighbourhoods of planar sets $E \subset \R^2$, but we provide the basic definitions in a more general $d$-dimensional setting, for $d \in \N_+$. Throughout the paper we make the assumption that the underlying set $E \subset \R^d$ is closed\footnote{Note that this is not an actual restriction, since $d_E(x) = d_{\overline{E}}(x)$ for all $x \in \R^d$ and $E \subset \R^d$.} and $\eps > 0$. Many of the results require the stronger assumption of compactness; where necessary, this will be explicitly stated in the formulation of each result.

The most immediate observation regarding the structure of the set $E_\eps$ is that each $x \in \partial E_\eps$ necessarily lies on the boundary of a closed ball $\overline{B_\eps(y)}$ of radius $\eps$, centered at some $y \in \partial E$. On the other hand, for each $x \in \partial E_\eps$ there may exist more than one $y \in \partial E$ with $\norm{y - x} = \eps$. These considerations motivate the following definition.

\begin{definition}[{Contributor}] \label{Def_Contributor}
Let $E \subset \R^d$ be closed. For each $x \in \partial E_\eps$ we define the set of \emph{contributors} as the collection
\[
\Pi_E(x) = \big\{y \in \partial E \,\, : \,\, \norm{y - x} = \eps \big\}.
\]
Boundary points $x \in \partial E_\eps$ with only one contributor constitute the set
\[
\Unp{E} := \big\{x \in \partial E_\eps \, : \, \Pi_E(x) = \{y\} \,\, \textrm{for some} \,\, y \in \partial E \big\},
\]
where $\mathrm{Unp}$ stands for \emph{unique nearest point}, see \cite[Definition 4.1]{Federer_Curvature_measures}.
\end{definition}

The set of contributors $\Pi_E(x)$ consists of those points on $\partial E$ that minimise the distance from $\partial E$ to $x$. Hence, $\Pi_E$ can be interpreted as a restriction onto $\partial E_\eps$ of the metric projection $\mathrm{proj}_E: \R^2 \to E$, given by
\[
\mathrm{proj}_E(x) := \{y \in E \, : \, \norm{y - x} = \mathrm{dist}(x, E) \}.
\]
In our classification of boundary points, the set $\Unp{E}$ consists of smooth points and shallow singularities (types S4 and S5), see Definition~\ref{Def_classification_of_singularities} and Figures~\ref{Figure_Types_of_Singularities} and~\ref{Figure_Three_Basic_Cases}.

\begin{definition}[{Smooth point, singularity}] \label{Def_Smooth_Points}
A boundary point $x \in \partial E_\eps$ is $\emph{smooth}$, if there exists a neighbourhood $B_r(x)$ for which $\partial E_\eps \cap B_r(x) \subset \Unp{E}$.
If $x$ is not smooth, it is a \emph{singularity} and we write $x \in \Sing$.
\end{definition}

The rationale for 
the above formulation stems from the observation that each $x \in \partial E_\eps$ is smooth in terms of Definition~\ref{Def_Smooth_Points} if and only if $x$ has a neighbourhood $B_r(x)$ in which the boundary is a $C^1$-smooth curve, see Proposition~\ref{Prop_Characterisation_of_smooth_points}.

\section{Tangents via Outward Directions}
Our first objective is to analyse the tangential properties of individual boundary points $x \in \partial E_\eps$. Acknowledging that classical tangents do not necessarily exist everywhere on the boundary, we adopt a set-valued definition of tangency which allows for several tangential directions to exist at each point. Our definition is a restriction of \cite[Definition 4.3]{Federer_Curvature_measures} to the boundary $\partial E_\eps$.

\begin{definition}[{Tangent set}] \label{Def_one-sided-tangent}
Let $E \subset \R^d$ be closed and $x \in \partial E_\eps$. We define the set $T_x(E_\eps)$ of unit \emph{tangent vectors of $E_\eps$ at $x$} as all those points $v \in S^{d-1}$ for which there exists a sequence $(x_n)_{n=1}^\infty \subset \partial E_\eps$ of boundary points satisfying $x_n \to x$ and
\[
\frac{x_n - x}{\norm{x_n - x}} \rightarrow v, \quad \textrm{as } \, n \to \infty.
\]
\end{definition}

In order to study the existence of tangential directions at boundary points $x \in \partial E_\eps$, we relate the set $T_x(E_\eps)$ to what we call outward directions. Intuitively, the set of outward directions at each $x$ contains the angles at which $x$ can be approached from the complement $\R^d \setminus E_\eps$. It turns out that for an $\eps$-neighbourhood $E_\eps$, the extremal values of these angles coincide with the tangential directions as defined in Definition~\ref{Def_one-sided-tangent}.
The existence of tangents at each $x \in \partial E_\eps$ thus depends on the properties of the corresponding set of outward directions at $x$. These turn out to be easier to study due to their geometric relationship with the contributors $y \in \Pi_E(x)$.

We define outward directions as points on the unit sphere $S^{d-1} \subset \R^d$ but think of them rather as directional vectors in the ambient space $\R^d$, since we want to operate with them using the Euclidean scalar product $\langle \cdot, \cdot \rangle \, : \, \R^d \times \R^d \to \R$.

\begin{definition}[{Outward direction}] \label{Def_outward_direction}
Let $E \subset \R^d$ be closed, and let $\eps > 0$. We say that a point $\xi \in S^{d-1}$ is an \emph{outward direction} from $E_\eps$ at a boundary point $x \in \partial E_\eps$, if there exists a sequence $(x_n)_{n=1}^\infty \subset \R^d \setminus E_\eps$, for which
$x_n \to x$ and
\[
\xi_n := \frac{x_n - x}{\norm{x_n - x}} \longrightarrow \xi \in S^{d-1} \subset \R^d,
\]
as $n \to \infty$. We denote by $\Xi_x(E_\eps)$ the set of outward directions from $E_\eps$ at $x$.
\end{definition}

\begin{figure}
      \centering
      \captionsetup{margin=0.75cm}
                \includegraphics[width = \textwidth]{Basic_concepts_combined_v01.eps}
                \caption{Illustration of the relationship between outward directions and extremal contributors. ({\footnotesize A}) A singularity $x \in \partial E_\eps$ with $\Cext{x} = \{y_1, y_2\}$ and $\Xext{x} = \{\xi_1, \xi_2\}$. ({\footnotesize B}) The set $\Xi_x(E_\eps) \subset S^1$ of outward directions is geodesically convex with boundary $\partial_{S^1} \Xi_x(E_\eps) = \{\xi_1, \xi_2\}$.}
                \label{Figure_Outward_Directions_and_Contributors}
\end{figure}

\begin{remark} \label{Remark_Bouligand}
Our definition of the set of outward directions is a variation of the well-known contingent cone, introduced by Bouligand, see for instance \cite{Aubin_Frankowska_Set-valued_Analysis, Bigolin_Greco_Geometric_Characterization, Rockafellar_Variational_Analysis} and Bouligand's original work in \cite{Bouligand_Sur_les_surfaces, Bouligand_Introduction}. For a boundary point $x \in \partial E_\eps$, the \emph{contingent cone} or \emph{Bouligand cone} $C_x(E_\eps)$ consists of those vectors $v \in \R^d$, for which
there exist sequences $(h_n)_{n=1}^\infty \subset \R_+$ and $(v_n)_{n=1}^\infty \subset \R^d$ for which  $x + h_n v_n \in E_\eps$ for all $n \in \N$ and
\[
h_n \to 0, \quad \textrm{and} \quad v_n \to v
\]
as $n \to \infty$. In fact, it is easy to verify that for each $x \in \partial E_\eps$, the contingent cone $C_x\big(\R^2 \setminus E_\eps\big)$ for the complement coincides with the \emph{outward cone}
\[
\cW_x(E_\eps) := \left\{ s \xi  \, : \, \xi \in \Xi_x(E_\eps), \, s \geq 0 \right\}
\]
at $x$. We do not make use of this correspondence. For further information on tangent cones, see for instance \cite{Clarke_Generalized, Rockafellar_Clarke_tangent}.
\end{remark}

For each $x \in \partial E_\eps$, we single out those outward directions $\xi \in \Xi_x(E_\eps)$ that are perpendicular to some contributor $y \in \Pi_E(x)$. We call these the \emph{extremal outward directions} and \emph{extremal contributors}, respectively, see Figure~\ref{Figure_Outward_Directions_and_Contributors}. Definition~\ref{Def_Extremal_Outward_Directions_and_Contributors} below emphasises this geometric relationship. Proposition~\ref{Prop_structure_of_set_of_outward_directions} in Section~\ref{Subsec_Properties_of_Outward_Dir} confirms that extremal outward directions can equivalently be defined through the topological property of constituting the boundary of the set of outward directions $\Xi_x(E_\eps)$ on $S^{d-1}$.

\begin{definition}[{Extremal contributor, extremal outward direction}] \label{Def_Extremal_Outward_Directions_and_Contributors}
Let $E \subset \R^d$ be closed, $\eps > 0$, and $x \in \partial E_\eps$. If an outward direction $\xi \in \Xi_x(E_\eps)$ and a contributor $y \in \Pi_E(x)$ satisfy
\[
\langle y - x, \xi \rangle = 0,
\]
we call $\xi$ an \emph{extremal outward direction} and $y$ an \emph{extremal contributor}  at $x$.
For each $x \in \partial E_\eps$, we write $\Xext{x}$ and $\Cext{x}$ for the sets of extremal outward directions and extremal contributors, respectively.
\end{definition}

\section{Properties of Contributors and Outward Directions} \label{Subsec_Properties_of_Outward_Dir}

We collect in this section the basic properties of contributors and outward directions that are needed repeatedly in the rest of the paper. The existence of outward directions at each $x \in \partial E_\eps$ is established in Proposition~\ref{Prop_outward_exists_and_is_closed}. Their geometric relationship with the contributors is clarified in Lemma~\ref{Lemma_OD_versus_dot_product_ineq} and Proposition~\ref{Prop_structure_of_set_of_outward_directions}. The observation that the set of tangential directions $T_x(E)$ coincides with the set of extremal outward directions $\Xext{x}$ is established in Proposition~\ref{Prop_Tangents_are_Defined}.
As before, we assume that the set $E \subset \R^d$ is closed, and that $\eps > 0$.

\begin{prop}[{The set of outward directions is non-empty and closed}] \label{Prop_outward_exists_and_is_closed}
Let $E \subset \R^d$ be closed and $x \in \partial E_\eps$. Then the set $\Xi_x(E_\eps)$ of outward directions is non-empty and closed.
\end{prop}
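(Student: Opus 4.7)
The plan is to establish the two properties separately, both via standard sequential compactness arguments on the unit sphere $S^{d-1}$.

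For non-emptiness, I would use that $x \in \partial E_\eps$ implies every open ball around $x$ meets the complement $E_\eps^c$. Picking $x_n \in E_\eps^c$ with $x_n \to x$ (and $x_n \neq x$), the normalised vectors $\xi_n := (x_n - x)/\|x_n - x\|$ all lie on the compact set $S^{d-1}$. By the Bolzano--Weierstrass theorem there is a convergent subsequence $\xi_{n_k} \to \xi \in S^{d-1}$, and the corresponding subsequence $(x_{n_k})$ still converges to $x$ and still lies in $E_\eps^c$, so $\xi \in \Xi_x(E_\eps)$.

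For closedness, I would argue by a diagonal extraction. Take any sequence $(\xi^{(k)})_{k=1}^\infty \subset \Xi_x(E_\eps)$ with $\xi^{(k)} \to \xi$ in $S^{d-1}$. By definition, for each $k$ there is a sequence $(x_n^{(k)})_{n=1}^\infty \subset E_\eps^c$ with $x_n^{(k)} \to x$ and $(x_n^{(k)} - x)/\|x_n^{(k)} - x\| \to \xi^{(k)}$ as $n \to \infty$. For each $k$ I would then choose $n_k$ large enough that
\[
\|x_{n_k}^{(k)} - x\| < 1/k \qquad \text{and} \qquad \left\| \frac{x_{n_k}^{(k)} - x}{\|x_{n_k}^{(k)} - x\|} - \xi^{(k)} \right\| < 1/k.
\]
Setting $y_k := x_{n_k}^{(k)} \in E_\eps^c$, the first inequality gives $y_k \to x$ and the triangle inequality combined with $\xi^{(k)} \to \xi$ gives $(y_k - x)/\|y_k - x\| \to \xi$. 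Hence $\xi \in \Xi_x(E_\eps)$, proving closedness.

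Neither step presents a substantial obstacle, as both rely only on compactness of $S^{d-1}$ and the definition of $\partial E_\eps$; the mild technicality is simply the diagonal extraction in the closedness argument, which requires being careful to control both the base-point convergence and the directional convergence simultaneously via the two inequalities above.
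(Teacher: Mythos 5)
Your proposal is correct and follows essentially the same route as the paper: a Bolzano--Weierstrass argument on $S^{d-1}$ for non-emptiness (using that $x \in \partial E_\eps$ forces points of $E_\eps^c$ arbitrarily close to $x$, with $x_n \neq x$ since $E_\eps$ is closed), and the same diagonal extraction for closedness, controlling base-point and directional convergence simultaneously with the two $1/k$ inequalities. No gaps.
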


\begin{proof}
This follows from the basic properties of contingent cones, see for instance~\cite[p.~121]{Aubin_Frankowska_Set-valued_Analysis} and~\cite[Section 3.1.21]{Federer_Geometric_Measure_Theory}.
\end{proof}

The following simple Lemmas~\ref{Lemma_limit_of_contributing_points} and~\ref{Lemma_Basic_Properties_of_Sequences_Converging_onto_Boundary} regarding the properties of contributors play an important role in the proofs of many of the subsequent results.

\begin{lemma}[{Convergence of contributors}] \label{Lemma_limit_of_contributing_points}
Let $E \in \R^d$ be compact, $(x_n)_{n=1}^\infty \subset E_\eps$ with $x_n \to x \in \partial E_\eps$, and $(y_n)_{n=1}^\infty \subset E$ with $x_n \in \overline{B_\eps(y_n)}$ for all $n \in \N$. Then there exists some $y \in \Pi_E(x)$ and a convergent subsequence $(y_{n_k})_{k=1}^\infty$, for which $y_{n_k} \to y$ as $k \to \infty$.
\end{lemma}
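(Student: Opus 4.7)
The plan is to extract a convergent subsequence of $(y_n)$ via compactness of $E$, and then verify that its limit is an extremal contributor by combining the distance condition $\lVert x_n - y_n\rVert \leq \eps$ with the fact that $x$ sits on the boundary of $E_\eps$.

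First, I would note that since $(y_n)_{n=1}^\infty \subset E$ and $E \subset \R^d$ is compact, we may pass to a subsequence $(y_{n_k})$ converging to some $y \in E$. The norm is continuous, so taking $k \to \infty$ in the inequality $\lVert x_{n_k} - y_{n_k}\rVert \leq \eps$ yields $\lVert x - y\rVert \leq \eps$.

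Next, I would pin down equality. Since $x \in \partial E_\eps$, every open ball around $x$ meets $E_\eps^c$, so in particular $\dist(x, E) \geq \eps$ (any point $z \in E$ with $\lVert z - x\rVert < \eps$ would force an open ball around $x$ to be contained in $E_\eps$, contradicting $x \in \partial E_\eps$). Combined with $\lVert x - y \rVert \leq \eps$ and $y \in E$, this gives $\lVert x - y\rVert = \eps$, and simultaneously $\dist(x, E) = \eps$.

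Finally, it remains to upgrade $y \in E$ to $y \in \partial E$, so that $y \in \Pi_E(x)$ as demanded by Definition~\ref{Def_Contributor}. If $y$ were an interior point of $E$, there would exist $r \in (0, \eps)$ with $\overline{B_r(y)} \subset E$, and then the point $y' := y + r(x - y)/\lVert x - y\rVert$ would lie in $E$ with $\lVert y' - x\rVert = \eps - r < \eps$, contradicting $\dist(x, E) = \eps$ established above. Hence $y \in \partial E$ and $y \in \Pi_E(x)$, completing the proof. The argument is essentially a compactness-plus-continuity routine; the only subtle point is ensuring the limit lies on $\partial E$ rather than merely in $E$, which I expect to be the main (small) obstacle.
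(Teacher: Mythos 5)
Your proposal is correct and follows essentially the same route as the paper: extract a convergent subsequence by compactness of $E$, pass to the limit in $\lVert x_{n_k} - y_{n_k}\rVert \leq \eps$, and combine with $\dist(x,E) = \eps$ to get equality. The only difference is that you explicitly verify $y \in \partial E$ (needed since $\Pi_E(x)$ is defined as a subset of $\partial E$), a small point the paper leaves implicit; your argument for it is sound.
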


\begin{proof}
Since $E$ is compact, there exists a convergent subsequence $(y_{n_k})_{k=1}^\infty$ with $y_{n_k} \to y \in E$ as $k \to \infty$. We need to show that $y \in \Pi_E(x)$. For each $k \in \N$,
\[
\norm{y - x} \leq \norm{y - y_{n_k}} + \norm{y_{n_k} - x_{n_k}} + \norm{x_{n_k} - x}, 
\]
which implies $\norm{y - x} \leq \eps$ since $x_{n_k} \to x$, $y_{n_k} \to y$, and $\lim_{k\to\infty} \norm{y_{n_k} - x_{n_k}} \leq \eps$. On the other hand $\norm{y - x} \geq d_E(x) = \eps$. Hence $\norm{y - x} = \eps$ so that $y \in \Pi_E(x)$.
\end{proof}

\begin{lemma}[{Tails of directed sequences}] \label{Lemma_Basic_Properties_of_Sequences_Converging_onto_Boundary}
Let $E \subset \R^d$ be compact and let $(x_n)_{n=1}^\infty \subset \R^d$ with $x_n \to x \in \partial E_\eps$ and $x_n \neq x$ for all $n \in \N$. Assume that
\[
v_n := \frac{x_n - x}{\norm{x_n - x}} \longrightarrow v \in S^{d-1} 
\]
as $n \to \infty$. Then
\begin{enumerate}
\item[(i)] if $\langle y - x, v \rangle > 0$ for some $y \in \Pi_E(x)$, then there exists some $N \in \N$, for which $x_n \in B_\eps(y)$ 
for all $n \geq N$,
\item[(ii)] if $\langle y - x, v \rangle < 0$ for all $y \in \Pi_E(x)$, then there exists some $N \in \N$, for which $x_n \in \R^d \setminus E_\eps$ for all $n \geq N$.
\end{enumerate}
\end{lemma}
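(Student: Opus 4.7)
The plan is to reduce both statements to a single algebraic identity comparing $\|x_n - y\|$ to $\eps$ for an appropriately chosen contributor $y$. Since $x_n = x + \|x_n - x\|\, v_n$ and $\|y - x\| = \eps$ for any $y \in \Pi_E(x)$, expanding the square of $\|x_n - y\|$ gives
\begin{equation*}
\|x_n - y\|^2 - \eps^2 \;=\; \|x_n - x\|\Bigl(\|x_n - x\| - 2\langle v_n, y - x\rangle\Bigr).
\end{equation*}
Because $\|x_n - x\| \to 0$, the sign of the right-hand side is eventually dictated by $-\langle v_n, y - x\rangle$, which by continuity of the inner product inherits the sign of $-\langle v, y - x\rangle$. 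This is the common engine behind both (i) and (ii).

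For (i), I fix the given $y$ and set $c := \tfrac{1}{2}\langle v, y - x\rangle > 0$. Choosing $N$ so that $\langle v_n, y - x\rangle \geq c$ and $\|x_n - x\| < c$ for all $n \geq N$, the parenthesised factor above becomes strictly negative. Hence $\|x_n - y\| < \eps$, i.e., $x_n \in B_\eps(y)$, for all $n \geq N$.

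For (ii), the argument must be indirect, since no particular contributor is singled out by the hypothesis. I suppose towards a contradiction that there is a subsequence $(x_{n_k})$ with $x_{n_k} \in E_\eps$. For each $k$, compactness of $E$ lets me choose $y_{n_k} \in E$ realising $\|x_{n_k} - y_{n_k}\| = \dist(x_{n_k}, E) \leq \eps$. Applying Lemma~\ref{Lemma_limit_of_contributing_points} along a further subsequence (still indexed by $k$ for brevity), I obtain $y_{n_k} \to y$ for some $y \in \Pi_E(x)$; by the hypothesis of (ii) this $y$ satisfies $\langle y - x, v \rangle < 0$. A variant of the identity above, expanded around the moving point $y_{n_k}$ and using $\|x - y_{n_k}\| \geq \eps$, gives
\begin{equation*}
\|x_{n_k} - y_{n_k}\|^2 \;\geq\; \eps^2 + \|x_{n_k} - x\|\Bigl(\|x_{n_k} - x\| + 2\langle v_{n_k}, x - y_{n_k}\rangle\Bigr).
\end{equation*}
Since $\langle v_{n_k}, x - y_{n_k}\rangle \to -\langle v, y - x\rangle > 0$, the parenthesised factor is eventually bounded below by a positive constant, forcing $\|x_{n_k} - y_{n_k}\| > \eps$ for large $k$ and contradicting the choice of $y_{n_k}$.

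The routine step is the expansion; the main obstacle is in (ii), where one has to produce the limiting contributor $y$ from the assumption that $x_{n_k}$ stays inside $E_\eps$ before the sign hypothesis becomes usable. Invoking Lemma~\ref{Lemma_limit_of_contributing_points} at the right moment, and being careful to use the moving $y_{n_k}$ rather than a fixed contributor in the expansion, is what makes the contradiction uniform over all possible directions of approach of the contributors.
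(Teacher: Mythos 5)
Your proof is correct and follows essentially the same route as the paper's: part (i) is the identical quadratic expansion with the same choice of threshold, and part (ii) runs the same contradiction argument, extracting the limiting contributor via Lemma~\ref{Lemma_limit_of_contributing_points} and then using the same expansion, merely written as a lower bound for $\norm{x_{n_k}-y_{n_k}}^2$ (contradicting $x_{n_k}\in E_\eps$) instead of the paper's upper bound for $\norm{y_k-x}^2$ (contradicting $\dist(x,E)=\eps$). There are no gaps.
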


\begin{proof} {(i)} Assume $\langle y - x, v \rangle = p > 0$ for some $y \in \Pi_E(x)$. Then $\langle y - x, v_n \rangle \to p$ due to the continuity of the scalar product.
Hence there exists some $N \in \N$ for which $\langle y - x, v_n \rangle > \frac{1}{2}p$ and $\norm{x_n - x} < p$, whenever $n \geq N$. This implies
\[
\norm{x_n - x} < p < 2 \langle y - x, v_n \rangle
\]
for all $n \geq N$ so that
\begin{align*}
\norm{y - x_n}^2 &= \norm{(y - x) - (x_n - x)}^2 \\
&= \varepsilon^2 + \norm{x_n - x} \left( \norm{x_n - x} - 2 \langle y - x, v_n \rangle \right)
< \varepsilon^2.
\end{align*}

{(ii)} Let $\langle y - x, v \rangle < 0$ for all $y \in \Pi_E(x)$, and assume contrary to the claim that there exists a subsequence $(x_{n_k})_{k=1}^\infty \subset E_\eps$. Then, for each $k$ there exists some $y_k \in E$ for which $\norm{x_{n_k} - y_k} \leq \eps$. Since $E$ is compact, Lemma~\ref{Lemma_limit_of_contributing_points} implies the existence of some $y^* \in \Pi_E(x)$ for which $y_k \to y^*$ (if necessary, one can switch to a further convergent subsequence). By assumption $y^* \in \Pi_E(x)$ implies $\langle y^* - x, v \rangle < 0$. Let $q := |\langle y^* - x, v \rangle|$. Due to the continuity of the scalar product there exists some $K \in \N$, for which $\langle y_k - x_{n_k}, v_{n_k} \rangle \leq -q/2$ and $\norm{x_{n_k} - x} < q$, whenever $k \geq K$.
Then
\[
\norm{x_{n_k} - x}+ 2 \langle y_k - x_{n_k}, v_{n_k} \rangle < 0
\]
for all $k \geq K$, and applying the triangle-inequality with respect to the points $x_{n_k}$ yields
\begin{align*}
\norm{y_k - x}^2 
&\leq \varepsilon^2 + \norm{x_{n_k} - x} \left( \norm{x_{n_k} - x} + 2 \langle y_k - x_{n_k}, v_{n_k} \rangle \right) 
< \varepsilon^2.
\end{align*}
This implies the contradiction $x \in \textrm{int} (E_\eps)$.
\end{proof}


Lemma~\ref{Lemma_OD_versus_dot_product_ineq} below provides a partial characterisation of outward directions $\Xi_x(E_\eps)$ in terms of the contributors $\Pi_E(x)$. Geometrically, it implies that outward directions point away from the vectors $y - x$ for all $y \in \Pi_E(x)$, see Figure~\ref{Figure_Outward_Directions_and_Contributors}.

\begin{lemma}[{Orientation of outward directions relative to contributors}] \label{Lemma_OD_versus_dot_product_ineq}
Let $E \subset \R^d$ be compact, $x \in \partial E_\eps$ and $\xi \in S^{d-1}$. Then
\begin{enumerate}
\item[(i)] if $\xi \in \Xi_x(E_\eps)$, then $\langle y - x, \xi \rangle \leq 0$ for all $y \in \Pi_E(x)$,
\item[(ii)] if $\langle y - x, \xi \rangle < 0$ for all $y \in \Pi_E(x)$, then $\xi \in \Xi_x(E_\eps)$.
\end{enumerate}
\end{lemma}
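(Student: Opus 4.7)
\bigskip

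\noindent\textbf{Proof plan for Lemma~\ref{Lemma_OD_versus_dot_product_ineq}.} Both parts of the statement fall out of Lemma~\ref{Lemma_Basic_Properties_of_Sequences_Converging_onto_Boundary} almost immediately: part (i) is a contrapositive reading of statement (i) of that lemma, and part (ii) follows by testing with the canonical radial sequence $x + \frac{1}{n}\xi$ and invoking statement (ii) of the same lemma.

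For part (i), the plan is to argue by contradiction. Suppose $\xi \in \Xi_x(E_\eps)$ but $\langle y - x, \xi \rangle > 0$ for some $y \in \Pi_E(x)$. By Definition~\ref{Def_outward_direction}, there is a sequence $(x_n)_{n=1}^\infty \subset E_\eps^c$ with $x_n \to x$ and $v_n := (x_n - x)/\norm{x_n - x} \to \xi$. Applying Lemma~\ref{Lemma_Basic_Properties_of_Sequences_Converging_onto_Boundary}(i) with $v = \xi$, one obtains an index $N$ beyond which $x_n \in B_\eps(y) \subset E_\eps$, contradicting $x_n \in E_\eps^c$. Hence $\langle y - x, \xi \rangle \leq 0$ for every $y \in \Pi_E(x)$.

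For part (ii), the plan is to construct an explicit witnessing sequence. Set $x_n := x + \frac{1}{n}\xi$ for each $n \in \N$, so that $x_n \to x$ and $v_n := (x_n - x)/\norm{x_n - x} = \xi$ for every $n$ (in particular $v_n \to \xi$). The hypothesis $\langle y - x, \xi \rangle < 0$ for all $y \in \Pi_E(x)$ is precisely the condition needed for Lemma~\ref{Lemma_Basic_Properties_of_Sequences_Converging_onto_Boundary}(ii), which then yields some $N \in \N$ with $x_n \in E_\eps^c$ for all $n \geq N$. The tail $(x_n)_{n \geq N}$ is therefore a sequence in $E_\eps^c$ converging to $x$ with associated direction $\xi$, so $\xi \in \Xi_x(E_\eps)$ by Definition~\ref{Def_outward_direction}.

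There is essentially no technical obstacle here; the content of the lemma is really just a repackaging of the preceding one, and the only points worth double-checking are that the strict inequality in part (ii) is genuinely needed (the boundary case $\langle y - x, \xi \rangle = 0$ corresponds exactly to the extremal outward directions of Definition~\ref{Def_Extremal_Outward_Directions_and_Contributors}, which may or may not lie in $\Xi_x(E_\eps)$) and that in part (i) no compactness issue arises with $\Pi_E(x)$ since the argument is pointwise in $y$.
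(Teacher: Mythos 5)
Your proposal is correct and follows essentially the same route as the paper: part (i) by contradiction via Lemma~\ref{Lemma_Basic_Properties_of_Sequences_Converging_onto_Boundary}(i), and part (ii) by testing the radial sequence $x_n = x + \frac{1}{n}\xi$ against Lemma~\ref{Lemma_Basic_Properties_of_Sequences_Converging_onto_Boundary}(ii). No gaps; your remark that the strict inequality in (ii) is genuinely needed is also borne out by the counterexample the paper gives immediately after the lemma.
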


\begin{proof}
{(i)} If $\xi \in \Xi_x(E_\eps)$, there exists a sequence $(x_n)_{n=1}^\infty \subset \R^d \setminus E_\eps$, for which $x_n \to x$ and
\[
\xi_n := \frac{x_n - x}{\norm{x_n - x}} \to \xi
\] as $n \to \infty$. Assume contrary to the claim tha there exists some $y \in \Pi_E(x)$ with $\langle y - x, \xi \rangle > 0$.
Substituting $v_n = \xi_n$ and $v = \xi$ in Lemma~\ref{Lemma_Basic_Properties_of_Sequences_Converging_onto_Boundary}(i) implies the existence of some $N \in \N$ for which $x_n \in \textrm{int}(E_\eps)$ for all $n \geq N$. This contradicts the claim.

{(ii)} Write $\xi_n = \frac{1}{n}\xi$, and define $x_n := x + \xi_n$, so that $(x_n - x) / \norm{x_n - x} = \xi$ for all $n \in \N$. Substituting $v := \xi$ and $v_n := \xi_n$ in Lemma~\ref{Lemma_Basic_Properties_of_Sequences_Converging_onto_Boundary}(ii) implies the existence of some $N \in \N$ for which $x_n \in \R^d \setminus E_\eps$ for all $n \geq N$. The sequence $(x_n)_{n = N}^\infty$ now defines the outward direction $\xi \in \Xi_x(E_\eps)$.
\end{proof}

Note that assuming the weaker condition $\langle y - x, \xi \rangle \leq 0$ for all contributors $y \in \Pi_E(x)$ is not sufficient in Lemma~\ref{Lemma_OD_versus_dot_product_ineq}(ii). For example, let $E := [2,3] \times \{0,1\}$ and consider the set $E_\eps$ with $\eps = 1/2$. Then $x := (3,1/2) \in \partial E_\eps$ with $\Pi(x) = \{(3,0), (3,1)\}$ and has only one outward direction $\xi = (1,0)$. Here also $\eta := (-1,0)$ satisfies $\langle y - x, \eta \rangle = 0$ for $y \in \{(3,0), (3,1)\}$, and yet $\eta \notin \Xi_{x} (E_\eps)$. This example illustrates the difference between $\partial E_\eps$ and the $\eps$-boundary $\partial E_{<\eps}$, since here $\partial E_{<\eps} \setminus \partial E_\eps = (2,3) \times \{1/2\} \subset \mathrm{int} \, E_\eps$. See also~\cite[Example 2.1]{Rataj_Winter_On_Volume}.

In order to describe the geometry of the sets of outward directions $\Xi_x(E_\eps)$ on the circle $S^1$, we introduce the concept of a geodesic arc-segment. Intuitively, a geodesic arc-segment is the shortest curve on $S^1$ that connects two points $v,w \in S^1$.

\begin{definition}[{Geodesic arc-segment}] \label{Def_geodesic_arc-segment}
Let $v, w \in S^1 \subset \R^2$ and let
\begin{equation} \label{Eq_Def_closed_geodesic_arc-segment}
[v, w]_{S^1} := \left\{u \in S^1 \, : \, u = a v + b w\, \textrm{ for some } a,b \geq 0\right\}.
\end{equation}
For $w \neq -v$, the set $[v, w]_{S^1}$ defines a \emph{geodesic arc-segment} between $v$ and $w$.
We also define the corresponding \emph{open} geodesic arc-segment $(v, w)_{S^1} \subset S^1$  as
\begin{equation} \label{Eq_Def_open_geodesic_arc-segment}
(v, w)_{S^1} := [v, w]_{S^1} \setminus \{v, w\}.
\end{equation}
\end{definition}
We use the notations $[v, w]_{S^1}$ and $(v, w)_{S^1}$ in accordance with~\eqref{Eq_Def_closed_geodesic_arc-segment} and~\eqref{Eq_Def_open_geodesic_arc-segment} also for the cases $v = w$ and $v = -w$, even though the corresponding sets in these cases are not arc-segments.

Unlike the previous results in this section, we formulate and prove the statements in Proposition~\ref{Prop_structure_of_set_of_outward_directions} and Lemma~\ref{Lemma_Convergence_of_Contributing_Points_of_Sequences_of_Boundary_Points} below only for the two-dimensional case. Note also that Proposition~\ref{Prop_structure_of_set_of_outward_directions} is formulated for a compact set $E \subset \R^2$, but essentially the same proof works for any closed set $E$ due to the local nature of the result.

\begin{prop}[{Structure of sets of outward directions}] \label{Prop_structure_of_set_of_outward_directions}
Let $E \subset \R^2$ be compact and $x \in \partial E_\eps$. Then the set of outward directions $\Xi_x(E_\eps)$ satisfies
\begin{enumerate}
    \item[(i)] if $x \in \Unp{E}$, then $\Xi_x(E_\eps) = \left\{\xi \in S^1 \, : \, \langle y - x, \xi \rangle \leq 0, \, \Pi_E(x) = \{y\} \right\}$,
    \item[(ii)] if $x \notin \Unp{E}$, then $\Xi_x(E_\eps) = [\xi_1, \xi_2]_{S^1}$, where $\xi_1, \xi_2$ are the \emph{only} extremal outward directions at $x$, possibly satisfying $\xi_1 = \xi_2$.
\end{enumerate}
\end{prop}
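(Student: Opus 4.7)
For part (i), the plan is straightforward. The inclusion $\Xi_x(E_\eps) \subseteq B := \{\xi \in S^1 : \langle y - x, \xi\rangle \leq 0\}$ is immediate from Lemma~\ref{Lemma_OD_versus_dot_product_ineq}(i), while the strict subset $B^\circ := \{\xi \in S^1 : \langle y - x, \xi\rangle < 0\}$ is contained in $\Xi_x(E_\eps)$ by Lemma~\ref{Lemma_OD_versus_dot_product_ineq}(ii). Since $y \neq x$, the closure of $B^\circ$ is the whole half-circle $B$, and closedness of $\Xi_x(E_\eps)$ (Proposition~\ref{Prop_outward_exists_and_is_closed}) upgrades the inclusion to $B \subseteq \Xi_x(E_\eps)$, giving equality.

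For part (ii) I would follow the same template with the two sets $B := \{\xi \in S^1 : \langle y - x, \xi\rangle \leq 0 \text{ for all } y \in \Pi_E(x)\}$ and $B^* := \{\xi \in S^1 : \langle y - x, \xi\rangle < 0 \text{ for all } y \in \Pi_E(x)\}$. Lemma~\ref{Lemma_OD_versus_dot_product_ineq} sandwiches the outward directions between these, $B^* \subseteq \Xi_x(E_\eps) \subseteq B$, while Proposition~\ref{Prop_outward_exists_and_is_closed} guarantees the middle set is closed and non-empty. I would then analyse $B$ directly: it is the intersection over $y \in \Pi_E(x)$ of closed half-circles in $S^1$, and by elementary planar geometry such an intersection is either empty, a single point, a pair of antipodal points, or a closed geodesic arc of length in $(0, \pi]$. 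Non-emptiness of $\Xi_x(E_\eps)$ rules out the empty option, and in each remaining case $B = [\xi_1, \xi_2]_{S^1}$ for suitable $\xi_1, \xi_2$ under the extended convention following Definition~\ref{Def_geodesic_arc-segment}. Compactness of $\Pi_E(x)$ (inherited from $E$) and continuity of the scalar product then let me identify $B^*$ with the relative interior of $B$ in $S^1$: the function $\xi \mapsto \max_{y \in \Pi_E(x)} \langle y - x, \xi\rangle$ is continuous, so the set on which it is strictly negative is open in $S^1$.

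In the generic case where $B = [\xi_1, \xi_2]_{S^1}$ is an arc of positive length with $\xi_1 \neq \pm \xi_2$, one has $B^* = (\xi_1, \xi_2)_{S^1}$, and closedness of $\Xi_x(E_\eps)$ promotes $B^* \subseteq \Xi_x(E_\eps)$ to $[\xi_1, \xi_2]_{S^1} = \overline{B^*} \subseteq \Xi_x(E_\eps) \subseteq B$, yielding equality; interior points of this arc satisfy strict inequalities with every contributor and so cannot be extremal, pinning down $\xi_1, \xi_2$ as the only extremal outward directions. The main obstacle I anticipate is the degenerate case where $B$ collapses to a single point or to a pair of antipodal points --- both really occur, the latter being exhibited by the example immediately following Lemma~\ref{Lemma_OD_versus_dot_product_ineq}. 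Here $B^* = \emptyset$ and the interior-plus-closure argument breaks down, so one must argue directly: $\Xi_x(E_\eps)$ is a non-empty closed subset of a set of size at most two, hence equal to $\{\xi_1\}$ or $\{\xi_1, \xi_2\}$, and the extended convention of Definition~\ref{Def_geodesic_arc-segment} again lets us write $\Xi_x(E_\eps) = [\xi_1, \xi_2]_{S^1}$. Since $B$ has empty interior in $S^1$ in this degenerate setting, the same continuity argument shows every $\xi \in B$ must satisfy $\langle y - x, \xi\rangle = 0$ for some $y \in \Pi_E(x)$, so every element of $\Xi_x(E_\eps)$ is automatically an extremal outward direction.
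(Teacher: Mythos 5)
Your proposal is correct. Part (i) is essentially identical to the paper's argument. For part (ii) you use the same sandwich $B^* \subseteq \Xi_x(E_\eps) \subseteq B$ coming from Lemma~\ref{Lemma_OD_versus_dot_product_ineq} together with closedness from Proposition~\ref{Prop_outward_exists_and_is_closed}, but you obtain the arc structure by a different route: the paper proves geodesic convexity of $\Xi_x(E_\eps)$ \emph{intrinsically}, computing $P_y(t) = t\langle y-x,\eta\rangle + (1-t)\langle y-x,\xi\rangle$ to show that any direction strictly between two outward directions satisfies a strict inequality against every contributor, and then characterizes $\mathrm{int}_{S^1}\Xi_x(E_\eps)$ as the set of uniformly strictly negative directions; you instead read the convexity off the outer set $B$ as an intersection of closed half-circles and identify $B^*$ with its relative interior via the same compactness/continuity device ($\max_{y}$ over the compact set $\Pi_E(x)$). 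The two routes are equivalent in substance, yours replacing the convex-combination computation with elementary planar geometry, and you are more explicit than the paper about the degenerate case $\mathrm{int}_{S^1}\Xi_x(E_\eps) = \varnothing$ (singleton or antipodal pair), which the paper's step (c) passes over quickly. One small point to tighten: you allow the arc $B$ to have length in $(0,\pi]$, but a full closed half-circle would \emph{not} be expressible as $[\xi_1,\xi_2]_{S^1}$ with $\xi_1 = -\xi_2$ under Definition~\ref{Def_geodesic_arc-segment} (that set is just the antipodal pair); you should note that this case cannot arise in part (ii), since $x \notin \Unp{E}$ yields two distinct contributors $y_1 \neq y_2$ with $\norm{y_i - x} = \eps$, hence two distinct half-circle constraints and an arc of length strictly less than $\pi$.
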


\begin{proof}
{(i)} Since $\Pi_E(x) = \{y\}$, Lemma~\ref{Lemma_OD_versus_dot_product_ineq}(ii) implies
\[
X := \{\xi \in S^1 \, : \, \langle y - x, \xi \rangle < 0 \} \subset \Xi_x(E_\eps).
\]
Then
\[
\partial_{S^1} X = \{\xi \in S^1 \, : \, \langle y - x, \xi \rangle = 0 \}
\]
due to continuity of the scalar product, and Lemmas~\ref{Prop_outward_exists_and_is_closed} and~\ref{Lemma_OD_versus_dot_product_ineq}(i) imply that $\overline{X} = \Xi_x(E_\eps)$, as claimed.

{(ii)} Assume then that $\Pi_E(x)$ contains at least two points. We assert that
\begin{enumerate}
\item[(a)]if $\xi, \eta \in \Xi_x(E_\eps)$ and $\gamma \in (\xi, \eta)_{S^1}$, then $\langle y - x, \gamma \rangle < 0$ for all $y \in \Pi_E(x)$,
\item[(b)]$\xi \in \textrm{int}_{S^1} \Xi_x(E_\eps)$ if and only if $\langle y - x, \xi \rangle < 0$ for all $y \in \Pi_E(x)$,
\item[(c)] $\Xi_x(E_\eps) = [\xi_1, \xi_2]_{S^1}$.
\end{enumerate}

{(a)} If $\Xi_x(E_\eps) = \{\xi\}$ or $\Xi_x(E_\eps) = \{\xi, -\xi\}$ for some $\xi \in S^1$, the claim is true since $(\xi, \xi)_{S^1} = (\xi, -\xi)_{S^1} = \varnothing$. Let then $\xi, \eta \in \Xi_x(E_\eps)$ with $\eta \notin \{\xi, -\xi\}$ and define a parametrised curve $\gamma: [0,1] \to S^1$ by
\[
\gamma(t) := \frac{t\eta + (1-t)\xi}{\norm{t\eta + (1-t)\xi}}.
\]
Clearly, $\gamma(0) = \xi, \gamma(1) = \eta$, and $\gamma((0,1)) = (\xi, \eta)_{S^1}$. For each $y \in \Pi_E(x)$, consider the scalar product
\begin{align}
P_y(t) := \langle y - x, t\eta + (1-t)\xi \rangle &= t \langle y - x, \eta \rangle + (1 - t) \langle y - x, \xi \rangle \label{P(t)_line_1} \\
 &= \langle y - x, \xi \rangle + t\langle y - x, \eta - \xi \rangle. \label{P(t)_line_2} 
\end{align}
We will show that $P_y(t) < 0$ for every $y \in \Pi_E(x)$ and all $t \in (0,1)$. We have $P_y(t) \leq 0$ for all $t \in [0,1]$ and all $y \in \Pi_E(x)$, since Lemma~\ref{Lemma_OD_versus_dot_product_ineq}(i) guarantees
\[
\langle y - x, \eta \rangle \leq 0 \quad \textrm{and} \quad \langle y - x, \xi \rangle \leq 0
\]
for all $y \in \Pi_E(x)$.
For $t \in (0,1)$, equation~\eqref{P(t)_line_1} implies $P_y(t) < 0$ when $\langle y - x, \xi \rangle < 0$. On the other hand, if $\langle y - x, \xi \rangle = 0$, equation~\eqref{P(t)_line_2} implies
\[
t\langle y - x, \eta - \xi \rangle = P_y(t) \leq 0.
\]
Hence the inequality $P_y(t) < 0$ is satisfied if and only if $\eta \notin \{\xi, -\xi\}$. This shows that $P_y(t) < 0$ for arbitrary $y \in \Pi_E(x)$, whenever $t \in (0,1)$.

{(b)} If $\xi \in S^1$ satisfies $\langle y - x, \xi \rangle < 0$ for all $y \in \Pi_E(x)$, then there exists some $n \in \N$ for which $\langle y - x, \xi \rangle < -1/n$ for all $y \in \Pi_E(x)$.
To show this, assume to the contrary that for each $n \in \N$ there exists some $y_n \in \Pi_E(x)$ for which
\[
\langle y_n - x, \xi \rangle \geq -1/n.
\]
Since $S^1$ is compact and $E$ is closed, the sequence $(y_n)_{n=1}^{\infty}$ has a convergent subsequence $(y_{n_k})_{k=1}^{\infty}$ for which $y_{n_k} \to y^* \in \Pi_E(x)$ as $k \to \infty$. On the other hand, due to the continuity of the scalar product, we have
\[
\langle y^* - x, \xi \rangle = \lim_{k \to \infty} \langle y_{n_k} - x, \xi \rangle \geq 0,
\]
which contradicts the assumption that $\langle y - x, \xi \rangle < 0$ for all $y \in \Pi_E(x)$.

Assume now that $\langle y - x, \xi \rangle < 0$ for all $y \in \Pi_E(x)$ and that $n \in \N$ has been chosen so that $\langle y - x, \xi \rangle < -1/n$ for all $y \in \Pi_E(x)$. The continuity of the scalar product implies that for some $\delta > 0$, 
one has $\big\langle y - x, \widehat{\xi} \big\rangle < 0$ for all $\widehat{\xi}$ that satisfy $\big\Vert \widehat{\xi} - \xi \big \Vert < \delta$. Hence, $\xi$ has an open neighbourhood $B_\delta(\xi)$ satisfying $B_\delta(\xi) \cap S^1 \subset \Xi_x(E_\eps)$, which implies $\xi \in \textrm{int}_{S^1} \Xi_x(E_\eps)$.

For the other direction, assume $\xi \in \textrm{int}_{S^1} \Xi_x(E_\eps)$. Then there exist $\eta_1, \eta_2 \in \Xi_x(E_\eps)$, for which
\[
\xi \in (\eta_1, \eta_2)_{S^1} \subset \textrm{int}_{S^1} \Xi_x(E_\eps).
\]
Step (a) consequently implies $\langle y - x, \xi \rangle < 0$ for all $y \in \Pi_E(x)$.

{(c)} It follows from steps (a) and (b) that $\textrm{int}_{S^1} \Xi_x(E_\eps) = (\xi_1, \xi_2)_{S^1}$. This in turn implies $\Xext{x} = \{\xi_1, \xi_2\}$, when $\textrm{int}_{S^1} \Xi_x(E_\eps) \neq \varnothing$, and $\Xi_x(E_\eps) = \{\xi\}$ (singleton), when $\xi_1 = \xi_2$.
\end{proof}

Proposition~\ref{Prop_structure_of_set_of_outward_directions} thus gives the following geometric picture of the set of extremal outward directions: in the case of a sharp singularity (type S2) or a chain singularity (type S6), the set of extremal outward directions is a singleton $\Xext{x} = \{\xi\}$ for some $\xi \in S^1$, see Figure~\ref{Figure_Types_of_Singularities} and Definition~\ref{Def_classification_of_singularities}. 
Otherwise $\Xext{x}$ contains two points, which may point directly away from each other or form an acute or obtuse angle.

Lemma~\ref{Lemma_Convergence_of_Contributing_Points_of_Sequences_of_Boundary_Points} below summarises the limiting behaviour of outward directions $\xi_n$ and contributors $y_n$ of points $x_n$ that appear in convergent sequences on the $\eps$-neighbourhood boundary. In particular, Lemma~\ref{Lemma_Convergence_of_Contributing_Points_of_Sequences_of_Boundary_Points}(ii)(a) establishes that for each $x \in \partial E_\eps$ the set of tangent vectors $T_x(E_\eps)$ is a subset of the set $\Xext{x}$ of extremal outward directions. We show in Proposition~\ref{Prop_Tangents_are_Defined} below that these sets in fact coincide for all $x \in \partial E_\eps$.

\begin{lemma}[{Orientation in converging sequences of boundary points}] \label{Lemma_Convergence_of_Contributing_Points_of_Sequences_of_Boundary_Points}
Let $E \subset \R^2$ be compact and let $x \in \partial E_\eps$. Furthermore, let $(x_n)_{n=1}^\infty$ be a sequence on $\partial E_\eps$ with $x_n \to x$ and define $\xi_n := (x_n - x)/\norm{x_n - x}$ for all $n \in \N$. Then
\begin{enumerate}
\item[(i)] the sequence $(\xi_n)_{n=1}^\infty$ can be split into two disjoint, convergent subsequences $(\xi_{1,k})_{k=1}^\infty$ and $(\xi_{2,k})_{k=1}^\infty$. For $i \in \{1,2\}$, the limit $\xi^{(i)} := \lim_{k \to \infty} \xi_{i,k}$ satisfies $\xi^{(i)} \in \Xext{x}$,
\item[(ii)] if the limit $\xi := \lim_{n\to\infty} \xi_n \in S^1$ exists, then
 \begin{enumerate}
 \item[(a)] every sequence $(y_n)_{n=1}^\infty$ in $E$ with $y_n \in \Pi(x_n)$ for all $n \in \N$ has a convergent subsequence $(y_{n_k})_{k=1}^\infty$ for which $y := \lim_{k\to\infty} y_{n_k} \in \Cext{x}$.
 Furthermore $\langle y - x, \xi \rangle = 0$ and consequently $\xi \in \Xext{x}$.
  \item[(b)] every sequence $(\eta_n)_{n=1}^\infty$ in $S^1$ with $\eta_n \in \Xext{x_n}$ for all $n \in \N$ satisfies
\[
\lim_{n \to \infty} \norm{\langle \eta_n, \xi \rangle} = 1.
\]
  \end{enumerate}
\end{enumerate}
\end{lemma}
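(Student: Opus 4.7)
The plan is to treat $(\xi_n)$ as a sequence in the compact set $S^1$ and to pin down its accumulation points using the fact that $x_n \in \partial E_\eps$, which forbids $x_n$ from lying in either $\mathrm{int}(E_\eps)$ or $E_\eps^c$. Combined with Lemma~\ref{Lemma_Basic_Properties_of_Sequences_Converging_onto_Boundary}, this dichotomy forces every accumulation point of $(\xi_n)$ into the finite set $\Xext{x}$, and the three statements then follow.

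For part (i), I would start with an arbitrary convergent subsequence $\xi_{n_k} \to \zeta$. If $\langle y - x, \zeta\rangle > 0$ held for some $y \in \Pi_E(x)$, Lemma~\ref{Lemma_Basic_Properties_of_Sequences_Converging_onto_Boundary}(i) would place $x_{n_k}$ eventually in $B_\eps(y) \subset \mathrm{int}(E_\eps)$; if $\langle y - x, \zeta\rangle < 0$ held for every $y \in \Pi_E(x)$, part (ii) of the same lemma would place $x_{n_k}$ eventually in $E_\eps^c$. Both contradict $x_{n_k} \in \partial E_\eps$, so some $y \in \Pi_E(x)$ must satisfy $\langle y - x, \zeta\rangle = 0$. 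Approximating each $x_{n_k}$ by a point $z_k \in E_\eps^c$ with $\norm{z_k - x_{n_k}} \leq \norm{x_{n_k} - x}/k$ yields $(z_k - x)/\norm{z_k - x} \to \zeta$, hence $\zeta \in \Xi_x(E_\eps)$ and therefore $\zeta \in \Xext{x}$. By Proposition~\ref{Prop_structure_of_set_of_outward_directions}, $\Xext{x} \subseteq \{\xi^{(1)}, \xi^{(2)}\}$ has at most two elements, and the required partition of $\N$ is obtained by assigning each $n$ to the index $i$ minimising $\norm{\xi_n - \xi^{(i)}}$ (with any tie-breaking rule).

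For part (ii)(a), given a sequence $y_n \in \Pi(x_n)$, Lemma~\ref{Lemma_limit_of_contributing_points} provides a subsequence $y_{n_k} \to y \in \Pi_E(x)$. The approximation argument from part (i), applied to the full sequence $\xi_n \to \xi$, shows $\xi \in \Xi_x(E_\eps)$, so Lemma~\ref{Lemma_OD_versus_dot_product_ineq}(i) gives $\langle y - x, \xi\rangle \leq 0$. For the reverse inequality, expand
\[
\eps^2 = \norm{y_{n_k} - x_{n_k}}^2 = \norm{y_{n_k} - x}^2 - 2\norm{x_{n_k} - x}\langle y_{n_k} - x, \xi_{n_k}\rangle + \norm{x_{n_k} - x}^2
\]
and use $y_{n_k} \in E$, which forces $\norm{y_{n_k} - x} \geq \eps$, to deduce $\langle y_{n_k} - x, \xi_{n_k}\rangle \geq \tfrac{1}{2}\norm{x_{n_k} - x} \geq 0$. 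Passing to the limit yields $\langle y - x, \xi\rangle \geq 0$, so in total $\langle y - x, \xi\rangle = 0$. This proves $y \in \Cext{x}$ and therefore $\xi \in \Xext{x}$.

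For part (ii)(b), each $\eta_n \in \Xext{x_n}$ comes with a contributor $\tilde y_n \in \Pi(x_n)$ satisfying $\langle \tilde y_n - x_n, \eta_n\rangle = 0$. Given any subsequence of $(\eta_n)$, I would pass to a further sub-subsequence along which simultaneously $\eta_{n_k} \to \zeta \in S^1$ and $\tilde y_{n_k} \to y$, where part (ii)(a) ensures $y \in \Cext{x}$ with $\langle y - x, \xi\rangle = 0$. Continuity of the scalar product gives $\langle y - x, \zeta\rangle = 0$; as we are in the plane, the unit vectors perpendicular to the nonzero vector $y - x$ are precisely $\pm \xi$, forcing $\zeta \in \{\pm\xi\}$ and $\abs{\langle \zeta, \xi\rangle} = 1$. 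Since every convergent subsequence of $\bigl(\abs{\langle \eta_n, \xi\rangle}\bigr)$ has limit $1$, the whole sequence converges to $1$. The main obstacle in the proof is part (ii)(a): obtaining $\langle y - x, \xi\rangle = 0$ for the specific $y$ supplied by the sequence $(y_n)$, rather than only for some extremal contributor produced abstractly by part (i), requires combining the boundary constraint $x_n \in \partial E_\eps$ (via Lemmas~\ref{Lemma_Basic_Properties_of_Sequences_Converging_onto_Boundary} and~\ref{Lemma_OD_versus_dot_product_ineq}) with the metric identity $\norm{y_{n_k} - x_{n_k}} = \eps$ expanded as above.
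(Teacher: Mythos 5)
Your proof is correct and follows essentially the same route as the paper's: the same dichotomy supplied by Lemma~\ref{Lemma_Basic_Properties_of_Sequences_Converging_onto_Boundary}, the same quadratic expansion of $\norm{y_{n_k}-x_{n_k}}^2$ to pin down the sign of $\langle y-x,\xi\rangle$, and the same approximation of the boundary points $x_{n_k}$ by points of $E_\eps^c$ to place the limit direction in $\Xi_x(E_\eps)$. The only differences are organisational---you prove (i) directly rather than deducing it from (ii)(a), and you phrase the second inequality of (ii)(a) and part (ii)(b) as direct (sub)sequence arguments instead of arguments by contradiction---which changes nothing of substance.
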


\begin{proof}
We proceed by first proving the statements (ii)(a) and (ii)(b), and conclude with the proof of (i).

{(ii)(a)} Due to Lemma~\ref{Lemma_limit_of_contributing_points}, there exists some $y \in \Pi_E(x)$ and a convergent subsequence $(y_{n_k})_{k=1}^\infty \subset (y_n)_{n=1}^\infty$, for which $y_{n_k} \to y$. We break the proof into three steps.

\emph{Step 1.} $\langle y - x, \xi \rangle \leq 0$:
Assume contrary to the claim that $\langle y - x, \xi \rangle > 0$. Then substituting $x_k := x_{n_k}$ and $v := \xi$ in Lemma~\ref{Lemma_Basic_Properties_of_Sequences_Converging_onto_Boundary}(i) implies that for some $K \in \N$ we have $x_{n_k} \in \textrm{int} (E_\eps)$ for all $k \geq K$. This contradicts the assumption $x_n \in \partial E_\eps$ for all $n \in \N$.

\emph{Step 2.} $\langle y - x, \xi \rangle \geq 0$:
Assume contrary to the claim that $\langle y - x, \xi \rangle < 0$. The continuity of the scalar product then implies that there exists some $K \in \N$ for which
\[
\norm{x_{n_k} - x} + 2 \langle y_{n_k} - x_{n_k}, \xi_{n_k} \rangle < 0
\]
for all $k \geq K$. Applying the triangle-inequality with respect to the points $x_{n_k}$ yields
\begin{align*}
\norm{y_{n_k} - x}^2 
&= \varepsilon^2 + \norm{x_{n_k} - x} \left( \norm{x_{n_k} - x} + 2 \langle y_{n_k} - x_{n_k}, \xi_{n_k} \rangle \right)
< \varepsilon^2
\end{align*}
for all $k \geq K$. This implies the contradiction $x \in \textrm{int} (E_\eps)$.

\emph{Step 3.} $\xi \in \Xext{x}$:
For each $n \in \N$, write $r_n := \norm{x_n - x}$. Since $(x_n)_{n=1}^\infty \subset \partial E_\eps$ there exists for each $n \in \N$ some $z_n \in B_{r_n^2}(x_n) \setminus E_\eps$. Then $\xi_n^z := (z_n - x)/\norm{z_n - x} \to \xi$ so that $\xi \in \Xi_x(E_\eps)$. Steps 1 and 2 together imply $\langle y - x, \xi \rangle = 0$ so that $\xi \in \Xext{x}$, see Definition~\ref{Def_Extremal_Outward_Directions_and_Contributors}. This concludes the proof of (ii)(a).

{(ii)(b)} Assume contrary to the claim that there exists some $\delta > 0$ and a subsequence $(\eta_{n_k})_{k=1}^\infty$, for which $\norm{\langle \eta_{n_k}, \xi \rangle} < 1 - \delta$ for all $k \in \N$. This implies that if $y_k \in \Cext{x_{n_k}}$ with $\langle y_k - x_{n_k}, \eta_{n_k} \rangle = 0$, then there exists some $r > 0$, depending on $\delta$, for which
\begin{equation} \label{Eq_far_away_on_one_side_1}
\norm{\langle y_k - x_{n_k}, \xi \rangle} \geq r
\end{equation}
for infinitely many $k \in \N$. On the other hand, (ii)(a) implies the existence of a subsequence $(y_{k_j})_{j=1}^\infty$ for which the limit $y := \lim_{j\to\infty} y_{k_j} \in \Cext{x}$ exists and satisfies $\langle y - x, \xi \rangle = 0$. Inequality~\eqref{Eq_far_away_on_one_side_1} now leads to the contradiction
\begin{equation*}
\norm{\langle y - x, \xi \rangle} = \lim_{j \to \infty} \big \Vert \big\langle y_{k_j} - x_{n_{k_j}}, \xi \big\rangle \big \Vert \geq r > 0.
\end{equation*}

{(i)} According to (ii)(a), every convergent subsequence $(\xi_{n_k})_{k=1}^\infty$ satisfies $\xi_{n_k} \to \xi \in \Xext{x}$ as $k \to \infty$. For $\Xext{x} = \{\xi\}$ (a singleton) this implies $\xi_n \to \xi$ and the claim follows. In case $\Xext{x} = \left\{\xi^{(1)}, \xi^{(2)}\right\}$ for some $\xi^{(1)} \neq \xi^{(2)}$, write $r = \big\Vert \xi^{(1)}, \xi^{(2)} \big\Vert$. The compactness of $\partial E_\eps$ implies that there exists some $N \in \N$, for which
\[
\xi_n \in B_{r/3} \big(\xi^{(1)}\big) \cup B_{r/3} \big(\xi^{(2)}\big)
\]
for all $n \geq N$. For each $i \in \{1,2\}$, define $N_i := \left\{n \in \N \, : \, \xi_n \in B_{r/3} \left(\xi^{(i)}\right)\right\}$.
If $N_i$ is finite for some $i \in \{1,2\}$, we have $\lim_{n\to\infty} \xi_n = \xi^{(j)}$ for $j \in \{1,2\} \setminus \{i\}$ and the claim follows. Otherwise the sequences $(\xi_{1,k})_{k \in N_1}$ and $(\xi_{2,k})_{k \in \N \setminus N_1}$ are disjoint and satisfy $\lim_{k \to \infty} \xi_{i,k} = \xi^{(i)}$ for $i \in \{1,2\}$.
\end{proof}

The following result, combined with Proposition~\ref{Prop_structure_of_set_of_outward_directions}, implies that one-directional tangent vectors exist at every point on the boundary $\partial E_\eps$.

\begin{prop}[{Extremal outward directions coincide with tangents}] \label{Prop_Tangents_are_Defined}
Let $E \subset \R^2$ be compact and let $x \in \partial E_\eps$. Then $T_x(E_\eps) = \Xext{x}$.
\end{prop}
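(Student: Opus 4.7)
The plan is to prove the two inclusions separately. The inclusion $T_x(E_\eps) \subset \Xext{x}$ is immediate from Lemma~\ref{Lemma_Convergence_of_Contributing_Points_of_Sequences_of_Boundary_Points}~(ii)(a): any $v \in T_x(E_\eps)$ is the limit of $(x_n - x)/\norm{x_n - x}$ for some $(x_n) \subset \partial E_\eps$ with $x_n \to x$, and the lemma forces $v \in \Xext{x}$. All the work lies in the reverse inclusion. I would fix $\xi \in \Xext{x}$ with associated extremal contributor $y \in \Cext{x}$ satisfying $\langle y - x, \xi \rangle = 0$, and split into two cases according to the structural dichotomy of Proposition~\ref{Prop_structure_of_set_of_outward_directions}.

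\textbf{Case A:} $\textrm{int}_{S^1} \Xi_x(E_\eps) \neq \varnothing$ (covering $x \in \Unp{E}$ as well as $x \notin \Unp{E}$ with $\xi_1 \neq \xi_2$). The plan is an intermediate-value argument on small circles around $x$. Directions $\theta^-$ close to $\xi$ in $\textrm{int}_{S^1} \Xi_x(E_\eps)$ satisfy $\langle y' - x, \theta^-\rangle < 0$ for every $y' \in \Pi_E(x)$ (as established in step (b) of the proof of Proposition~\ref{Prop_structure_of_set_of_outward_directions}), and Lemma~\ref{Lemma_Basic_Properties_of_Sequences_Converging_onto_Boundary}~(ii) then places $x + r \theta^- \in E_\eps^c$ for all sufficiently small $r$. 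On the opposite side of $\xi$, outside $\Xi_x(E_\eps)$, one has $\langle y - x, \theta^+ \rangle > 0$, because the line $\langle y - x, \cdot\rangle = 0$ through $\xi$ splits $S^1$ into two arcs, the one intersected by $\Xi_x(E_\eps)$ being the $\leq 0$ side by Lemma~\ref{Lemma_OD_versus_dot_product_ineq}~(i); Lemma~\ref{Lemma_Basic_Properties_of_Sequences_Converging_onto_Boundary}~(i) then gives $x + r \theta^+ \in B_\eps(y) \subset \textrm{int}(E_\eps)$ for small $r$. Picking $\theta_n^\pm$ at angular distance at most $1/n$ from $\xi$ on the two sides and $r_n \to 0^+$ small enough that both sign conditions hold, the continuous map $\theta \mapsto \dist(x + r_n \theta, E) - \eps$ changes sign along the short arc through $\xi$ from $\theta_n^-$ to $\theta_n^+$. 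Taking $\theta_n$ to be the first zero encountered when traversing this arc from the $\theta_n^-$ side places $x_n := x + r_n \theta_n$ on $\partial E_\eps$, and since $\theta_n$ is trapped in a shrinking arc around $\xi$, $(x_n - x)/\norm{x_n - x} = \theta_n \to \xi$, giving $\xi \in T_x(E_\eps)$.

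\textbf{Case B:} $\Xi_x(E_\eps) = \{\xi\}$. Here $\textrm{int}_{S^1} \Xi_x(E_\eps) = \varnothing$, so the IVT argument is unavailable, and I would fall back on a non-isolation argument. First I claim that $x$ is not isolated in $\partial E_\eps$: if $B_r(x) \cap \partial E_\eps = \{x\}$, then the connected set $B_r(x) \setminus \{x\}$ decomposes as the disjoint union of the two open sets $B_r(x) \cap \textrm{int}(E_\eps)$ and $B_r(x) \cap E_\eps^c$, forcing one of them to be empty; emptiness of the second puts $x$ in $\textrm{int}(E_\eps)$, contradicting $x \in \partial E_\eps$, while emptiness of the first contradicts the fact that the open segment from $x$ toward any $y \in \Pi_E(x)$ enters $B_\eps(y) \subset E_\eps$. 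Given then a sequence $(x_n) \subset \partial E_\eps \setminus \{x\}$ with $x_n \to x$, Lemma~\ref{Lemma_Convergence_of_Contributing_Points_of_Sequences_of_Boundary_Points}~(i) splits the directions $(x_n - x)/\norm{x_n - x}$ into at most two subsequences, each converging into $\Xext{x} = \{\xi\}$, which forces convergence of the full sequence to $\xi$ and hence $\xi \in T_x(E_\eps)$.

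The main obstacle I would expect is the degenerate Case B: the IVT technique used in Case A fundamentally requires an interior direction of $\Xi_x(E_\eps)$ near $\xi$, which is absent when $\Xi_x(E_\eps)$ is a singleton. Working around this requires combining the topological non-isolation of $\partial E_\eps$ at $x$ with the strong directional information supplied by Lemma~\ref{Lemma_Convergence_of_Contributing_Points_of_Sequences_of_Boundary_Points}~(i).
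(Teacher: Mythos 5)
Your first inclusion and the overall strategy (an intermediate-value argument on small circles centred at $x$) are sound and close in spirit to the paper's proof, and your Case~A and Case~B arguments are individually correct as far as they go. The problem is that your two cases do not exhaust the possibilities, and the missing case is exactly the delicate one. By Proposition~\ref{Prop_structure_of_set_of_outward_directions}, when $x \notin \Unp{E}$ one has $\Xi_x(E_\eps) = [\xi_1,\xi_2]_{S^1}$, and if $\xi_1 = -\xi_2$ this set equals the two-point set $\{\xi_1,-\xi_1\}$, which has \emph{empty} interior in $S^1$ even though $\xi_1 \neq \xi_2$. So your parenthetical claim that Case~A covers ``$x \notin \Unp{E}$ with $\xi_1 \neq \xi_2$'' is wrong, and the configuration $\Xi_x(E_\eps) = \{\xi,-\xi\}$ — which occurs precisely at sharp-sharp (S3), chain-chain (S7) and sharp-chain (S8) singularities, so it is not a vacuous case — is handled by neither Case~A nor Case~B. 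Case~A fails there because there are no interior directions $\theta^-$ near $\xi$ to serve as the ``outside'' endpoint of your arc; Case~B as written assumes $\Xi_x(E_\eps)$ is a singleton, and its non-isolation argument does not extend: Lemma~\ref{Lemma_Convergence_of_Contributing_Points_of_Sequences_of_Boundary_Points}~(i) only tells you that the directions of \emph{some} sequence of nearby boundary points accumulate in $\{\xi,-\xi\}$, so a priori they could all converge to $-\xi$, establishing $-\xi \in T_x(E_\eps)$ but saying nothing about the particular $\xi$ you fixed. Closing this case needs an additional idea, since the naive fix (project the defining sequence $w_n \in E_\eps^c$ of $\xi$ onto $E_\eps$) does not control directions: $\dist(w_n, E_\eps)$ need not be $o(\norm{w_n - x})$.

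The paper avoids the case split entirely with one uniform construction that you may want to compare against: given $\xi \in \Xext{x}$ with extremal contributor $y$, it takes the sequence $x_n = x + \varphi_n \in E_\eps^c$ defining $\xi$ as an outward direction (this replaces your $\theta^-$ and is available in every case), and manufactures the ``inside'' endpoint as a point $x + H_n \in \partial B_\eps(y) \subset E_\eps$ on the circle $\partial B_{\norm{\varphi_n}}(x)$ whose direction also tends to $\xi$; the short arc between the two endpoints then yields $z_n \in \partial E_\eps$ with $(z_n - x)/\norm{z_n-x} \to \xi$. Your Case~A is essentially this argument with a different choice of endpoints; adopting the paper's choice of endpoints would make your argument case-free. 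Two smaller remarks: your observation that one must take the \emph{first} zero of $\dist(x + r_n\theta, E) - \eps$ along the arc is exactly right, since $\dist(\cdot,E) = \eps$ alone does not place a point on $\partial E_\eps$ (it may lie in $\mathrm{int}\,E_\eps$, as in the example following Lemma~\ref{Lemma_OD_versus_dot_product_ineq}); and your non-isolation argument in Case~B is correct and is a point the paper leaves implicit.
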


\begin{proof}
Lemma~\ref{Lemma_Convergence_of_Contributing_Points_of_Sequences_of_Boundary_Points}(ii)(a) implies $T_x(E_\eps) \subset \Xext{x}$, so we are left with proving the other direction.
Assume $\xi \in \Xext{x}$. Then there exists a sequence $(x_n)_{n=1}^\infty \subset \R^2 \setminus E_\eps$, for which
\[
\frac{\varphi_n}{\norm{\varphi_n}} := \frac{x_n - x}{\norm{x_n - x}} \longrightarrow \xi,
\]
as $n \to \infty$. Since $\xi$ is an extremal outward direction, there exists some extremal contributor $y \in \Cext{x}$ for which $\langle y - x, \xi \rangle = 0$. Let $\widehat{y} := (y - x)/\norm{y - x} = (y - x)/\eps$ and define $H_n := h_n \xi + r_n \widehat{y}$, where
\[
h_n := \norm{\varphi_n} \sqrt{1 - \frac{\norm{\varphi_n}^2}{4\eps^2}}, \quad \textrm{and} \quad r_n := \frac{\norm{\varphi_n}^2}{2\eps}.
\]
It follows from the orthogonality of $\xi$ and $\widehat{y}$ that $\norm{H_n} = \norm{\varphi_n}$.
Furthermore $x + H_n \in E_\eps$ for all $n \in \N$, since $\norm{\left(x + H_n\right) - y} = \eps$. See figure \ref{Figure_Tangents}.

Consider now the $\norm{\varphi_n}$-radius circle $\partial B_{\norm{\varphi_n}}(x)$ centered at $x$. The geodesic arc-segment (shortest path) on this circle that connects the points $x + H_n \in E_\eps$ and $x + \varphi_n = x_n \in \R^2 \setminus E_\eps$ must necessarily contain a boundary point $z_n \in \partial B_{\norm{\varphi_n}}(x) \cap \partial E_\eps$.

\begin{figure}
      \centering
      \captionsetup{margin=0.75cm}
                \includegraphics[width = \textwidth]{Outward_is_tangent_combined_v01.eps}
                \caption{Construction of the sequence $(z_n)_{n=1}^\infty$. The singularity $x$ is here depicted as a wedge, but the procedure is the same for other types of boundary points. ({\footnotesize A}) 
                For each $n \in \N$, the variables $h_n, r_n$ satisfy $r_n = 
                \eps - \sqrt{\eps^2 - h_n^2}$ and $\norm{H_n} = \norm{\varphi_n}$. ({\footnotesize B}) The point $z_n \in \partial E_\eps$ lies on a geodesic arc-segment on $\partial B_{\norm{\varphi_n}}(x)$ that connects the points $x_n$ and $x + H_n$.}
                \label{Figure_Tangents}
\end{figure}

Let $\delta > 0$. Since $x_n \to x$ and $\varphi_n/\norm{\varphi_n} \to \xi$, there exists some $N \in \N$ for which
\begin{equation} \label{ineq_varphi_estimates}
\norm{\varphi_n} \leq \frac{\delta}{3} \quad \textrm{and} \quad \norm{\frac{\varphi_n}{\norm{\varphi_n}} - \xi} \leq \frac{\delta}{3}
\end{equation}
whenever $n \geq N$. Since
\begin{align*}
\norm{\frac{H_n}{\norm{H_n}} - \xi}^2 
					&= 2 - \sqrt{4 - \frac{\norm{\varphi_n}^2}{\eps^2}} \longrightarrow 0
\end{align*}
as $\varphi_n \to 0$, we can choose some $N^* \geq N$ for which the inequality $\norm{H_n / \norm{H_n} - \xi} \leq \delta / 3$ as well as the estimates~\eqref{ineq_varphi_estimates} hold for all $n \geq N^*$. It follows from the definition of the points $z_n$ that $\norm{z_n - (x + \varphi_n)} \leq \norm{\varphi_n - H_n}$ for all $n \in \N$. This allows us to obtain the estimate
\begin{align*}
\norm{\frac{z_n - x}{\norm{z_n - x}} - \xi} 
				&\leq \norm{\frac{H_n}{\norm{H_n}} - \xi} + 2\norm{\frac{\varphi_n}{\norm{\varphi_n}} - \xi}
				\leq \delta,
\end{align*}
which is valid for all $n \geq N^*$. Since also $0 < \norm{z_n - x} = \norm{\varphi_n} < \delta$ for all $n \geq N^*$, we see that $\xi$ fulfils the requirements of Definition~\ref{Def_one-sided-tangent}, so that $\xi \in T_x(E_\eps)$.
\end{proof}

\chapter{Local Structure of the Boundary} \label{Sec_Local_Structure}
In this chapter we analyse the local properties of the boundary $\partial E_\eps$ using the results obtained in Chapter~\ref{Sec_Epsilon_Neighbourhoods} regarding outward directions and contributors.

We begin by proving a local contribution property, Proposition~\ref{Prop_local_contribution}. Intuitively, this result shows that the local geometry of the boundary of the $\eps$-neighbourhood $\partial E_\eps$ near a boundary point $x \in \partial E_\eps$ only depends on the geometry of the underlying boundary $\partial E$ around the extremal contributors $y \in \Cext{x}$.

In Section~\ref{Section_Finite_Approximation}, we approximate the $\eps$-neighbourhood $E_\eps$ with finite collections of balls $\{B_\eps(d_n) \, : \, d_n \in D^n \}$ that correspond to certain finite subsets $D^n \subset E$. Combining this idea with Proposition~\ref{Prop_local_contribution}, we show in Proposition~\ref{Prop_local_representation_exists} that it is possible to represent the boundary $\partial E_\eps$ locally in terms of finite collections of curves, which in turn can be expressed as graphs of continuous functions on a compact interval.

As the first application of Proposition~\ref{Prop_local_representation_exists} we show in Lemma~\ref{Lemma_Unique_Connected_Component} that for every $x \in \Unp{E}$ and every wedge (singularity type S1)
there exists a unique connected component $V$ of the complement  $\R^2 \setminus E_\eps$ for which $x \in \partial V$.

\begin{figure}
      \centering
      \captionsetup{margin=0.75cm}
                \includegraphics[width = \textwidth]{LC_at_Wedge_v01.eps}
                \caption{Local contribution at a wedge $x \in \partial E_\eps$. ({\footnotesize A}) The $\eps$-boundaries $\partial B_\eps(y_1)$ and $\partial B_\eps(y_2)$ around the extremal contributors $y_1, y_2$ approximate the local geometry of the boundary $\partial E_\eps$ inside the ball $B_r(x)$. ({\footnotesize B}) For an exact representation, one needs to consider all the contributors within some radius $\delta > 0$ from $y_1$ and $y_2$.}
                \label{Figure_LBR_idea}
\end{figure}

\section{Local Contribution} \label{Sec_Local_Contribution}
Intuitively, one can give a crude approximation for the boundary around each $x \in \partial E_\eps$ by considering the boundaries $\partial B_\eps (y)$ centered at the contributors $y \in \Pi_E(x)$, and zooming in on a suitably small neighbourhood $B_r(x)$, in which the local boundary $\partial E_\eps \cap B_r(x)$ is well approximated by the set
\begin{equation*} \label{Eq_Boundary_Approximation_Intuition}
\partial \Bigg( \bigcup_{y \in \Pi_E(x)} B_\eps(y) \Bigg) \cap B_r(x).
\end{equation*}
However, inside any ball $B_r(x)$, the geometry of the $\eps$-neighbourhood $E_\eps$ is not defined solely by the geometry of the $\eps$-balls around the contributors $y \in \Pi_E(x)$, see Figures~\ref{Figure_LBR_idea} and~\ref{fig:Local_Contribution}. Hence, one needs to consider at least all the contributors in some neighbourhood of the set of extremal contributors $\Cext{x}$. Proposition~\ref{Prop_local_contribution} below confirms that this is indeed sufficient.

We introduce here the following notation for open $x$-centered half-balls, oriented in the direction of some $v \in S^1$:
\begin{equation} \label{Def_oriented_half_ball}
U_r(x, v) := \{ z \in B_r(x) \, : \, \langle z - x, v \rangle > 0\}.
\end{equation}

\begin{prop}[{Local contribution}] \label{Prop_local_contribution}
Let $E \subset \R^2$ and let $x \in \partial E_\eps$ with $\Xext{x} = \{\xi_1, \xi_2\}$, where we allow $\xi_1 = \xi_2$. Then for all $\delta > 0$ there exists some $r > 0$ such that given $z \in B_r(x)$, we have $z \in E_\eps$ if and only if either
\begin{align}
z &\notin  \overline{U_r(x, \xi_1)} \cup \overline{U_r(x, \xi_2)} , \mathrm{ or} \label{Eq_condition_1} \\
z &\in \overline{B_\eps\big(E \cap B_\delta(\Cext{x})\big)}. \label{Eq_condition_2}
\end{align}
\end{prop}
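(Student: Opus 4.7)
The plan is to prove both directions of the biconditional by contradiction, leveraging the structural results of Section~\ref{Subsec_Properties_of_Outward_Dir} (in particular Lemmas~\ref{Lemma_limit_of_contributing_points}, \ref{Lemma_Basic_Properties_of_Sequences_Converging_onto_Boundary}, \ref{Lemma_OD_versus_dot_product_ineq} and Proposition~\ref{Prop_structure_of_set_of_outward_directions}).

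\textbf{For the implication \eqref{Eq_condition_1} or \eqref{Eq_condition_2} $\Rightarrow z \in E_\eps$}, the case \eqref{Eq_condition_2} is immediate, since $\overline{B_\eps(S)} \subset E_\eps$ for every $S \subset E$. For \eqref{Eq_condition_1}, I would first note that the condition is vacuous whenever $\xi_1 = -\xi_2$ (including the case $x \in \Unp{E}$, by Proposition~\ref{Prop_structure_of_set_of_outward_directions}(i)); otherwise I argue by contradiction, extracting $r_n \to 0$ and $z_n \in B_{r_n}(x)$ satisfying \eqref{Eq_condition_1} with $z_n \notin E_\eps$, so that $\langle z_n - x, \xi_i \rangle < 0$ strictly for both $i$. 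Passing to a convergent subsequence of $v_n := (z_n - x)/\norm{z_n - x}$ with limit $v$ gives $\langle v, \xi_i \rangle \leq 0$ for both $i$; since $z_n \in E_\eps^c$, one has $v \in \Xi_x(E_\eps) = [\xi_1, \xi_2]_{S^1}$ by Proposition~\ref{Prop_structure_of_set_of_outward_directions}. A direct scalar-product calculation (writing $v \propto a\xi_1 + b\xi_2$ with $a, b \geq 0$ and summing $\langle v, \xi_1 \rangle + \langle v, \xi_2 \rangle$) then shows no such $v$ exists when $\xi_1 \neq -\xi_2$---the desired contradiction.

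\textbf{For the reverse implication}, I would equivalently show that $z \in E_\eps \cap (\overline{U_r(x, \xi_1)} \cup \overline{U_r(x, \xi_2)})$ implies $z \in \overline{B_\eps(E \cap B_\delta(\Cext{x}))}$. By contradiction, extract $r_n \to 0$ and $z_n \in B_{r_n}(x) \cap E_\eps$ with $z_n \in \overline{U_{r_n}(x, \xi_1)}$ (WLOG after a subsequence) and $z_n \notin \overline{B_\eps(E \cap B_\delta(\Cext{x}))}$. Pick $y_n \in E$ with $\norm{z_n - y_n} \leq \eps$; the exclusion forces $y_n \in E \setminus B_\delta(\Cext{x})$, and Lemma~\ref{Lemma_limit_of_contributing_points} together with the squeeze $\eps \leq \norm{y_n - x} \leq \eps + r_n$ yields $y_n \to y^* \in \Pi_E(x) \setminus \Cext{x}$ along a subsequence. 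Expanding $\norm{z_n - y_n}^2 \leq \eps^2$ and using $\norm{y_n - x} \geq \eps$ produces $\langle y_n - x, v_n \rangle \geq \norm{z_n - x}/2$ with $v_n := (z_n - x)/\norm{z_n - x}$; taking a further subsequence with $v_n \to v$ (satisfying $\langle v, \xi_1 \rangle \geq 0$) gives $\langle y^* - x, v \rangle \geq 0$.

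The contradiction then comes from a case split on $v$ relative to $\Xi_x(E_\eps)$. If $v \in \mathrm{int}_{S^1}\Xi_x(E_\eps)$, step (b) in the proof of Proposition~\ref{Prop_structure_of_set_of_outward_directions} yields $\langle y^* - x, v \rangle < 0$, a contradiction. If $v \in \Xext{x}$, Lemma~\ref{Lemma_OD_versus_dot_product_ineq}(i) combined with $\langle y^* - x, v \rangle \geq 0$ forces equality, making $y^* \in \Cext{x}$ and contradicting the setup. In the remaining case $v \notin \Xi_x(E_\eps)$, I would exploit the planar geometry by setting up an orthonormal frame adapted to $\xi_1$ and locating $\xi_2$ and the extremal-contributor directions $w_i := (y_i - x)/\eps$ (perpendicular to $\xi_i$, oriented so that $\langle w_i, \xi_{3-i} \rangle \leq 0$); the conditions $\langle v, \xi_1 \rangle \geq 0$ and $v \notin [\xi_1, \xi_2]_{S^1}$ then force $\langle w_i, v \rangle > 0$ for $i = 1$ or $i = 2$, and Lemma~\ref{Lemma_Basic_Properties_of_Sequences_Converging_onto_Boundary}(i) places $z_n \in B_\eps(y_i) \subset B_\eps(E \cap B_\delta(\Cext{x}))$ for large $n$---the desired contradiction. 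This final sub-case will be the main obstacle, requiring careful 2D geometric bookkeeping; the degenerate configurations ($\xi_1 = \xi_2$, $\xi_1 = -\xi_2$, or $x \in \Unp{E}$) demand minor adaptations but follow the same template.
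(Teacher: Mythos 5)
Your proposal is correct and takes essentially the same route as the paper's proof: both argue by contradiction along a sequence $z_n \to x$, use Lemma~\ref{Lemma_limit_of_contributing_points} to extract a non-extremal limit contributor, Lemma~\ref{Lemma_Basic_Properties_of_Sequences_Converging_onto_Boundary} for the scalar-product estimates, and Proposition~\ref{Prop_structure_of_set_of_outward_directions} to locate the limit direction $v$ relative to $[\xi_1,\xi_2]_{S^1}$, with the same convex-combination contradiction for the implication from~\eqref{Eq_condition_1}. The only difference is organizational: the paper forces $v \in \Xi_x(E_\eps)$ up front and then derives $\norm{y^*-x}<\eps$, whereas you front-load the estimate $\langle y^*-x,v\rangle \geq 0$ and reach the contradiction through a trichotomy on the position of $v$ --- a contrapositive rearrangement of the same argument.
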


\begin{proof}
Assume to the contrary that there exists some $\delta > 0$, for which the claim fails. This means that for all $r > 0$ there exists some $z \in B_r(x)$, for which either
\begin{enumerate}
\item $z \in E_\eps$ and both~\eqref{Eq_condition_1} and~\eqref{Eq_condition_2} fail, or
\item $z \notin E_\eps$ and one of the conditions~\eqref{Eq_condition_1} or~\eqref{Eq_condition_2} holds true.
\end{enumerate}
This implies that there exists a sequence $(z_k)_{k=1}^\infty$ with $z_k \in B_{1/k}(x)$, for which either condition (1) or (2) holds true for $z = z_k$ for infinitely many indices $k \in \N$. Some subsequence $(z_n)_{n=1}^\infty$ of $(z_k)_{k=1}^\infty$ then necessarily satisfies $z_n \to x$ as $n \to \infty$ and either
\begin{enumerate}
\item[(a)] $z_n \in E_\eps$ for all $n \in \N$, and the conditions~\eqref{Eq_condition_1} and~\eqref{Eq_condition_2} both fail for each $z = z_n$, or
\item[(b)] $z_n \notin E_\eps$ for all $n \in \N$, and either condition~\eqref{Eq_condition_1} or condition~\eqref{Eq_condition_2} is satisfied for each $z = z_n$.
\end{enumerate}
We proceed by showing that both of these statements lead to a contradiction. In both cases one can assume, without loss of generality, the existence of the limit
\[
v_z := \lim_{n \to \infty} (z_{n} - x) / \norm{z_{n} - x}.
\]

\begin{figure}
      \centering
      \captionsetup{margin=0.75cm}
                \includegraphics[width = \textwidth]{Local_Boundary_Ball_v01.eps}
                \caption{Schematic illustration of Proposition~\ref{Prop_local_contribution}. For a sufficiently small $r>0$, the balls $B_\delta(y_1)$ and $B_\delta(y_2)$ contain all the contributors $y \in \Pi_E(z)$ (red dotted lines) of those boundary points $z \in \partial E_\eps$ that lie inside the ball $B_r(x)$ (red solid lines). For $\rho > r$ the boundary segment generated by the point $y^* \in E$ would lie inside the larger ball $B_\rho(x)$ and would need to be accounted for separately. Note that the geometry of the boundary $\partial E_\eps$ inside $B_r(x)$ is not affected by whether or not the points $y$ on the blue dotted line satisfy $y \in E$.}
                \label{fig:Local_Contribution}
\end{figure}

{(i)} Assume first that (a) holds true. This means that
\begin{align} \label{Eq_counter_assumption}
z_n & \in E_\eps \cap \left( \overline{U_{1/n}(x, \xi_1)} \cup \overline{U_{1/n}(x, \xi_2)} \right), \\ \nonumber
z_n & \notin \overline{B_\eps\big(E \cap B_\delta(\Cext{x})\big)}
\end{align}
for all $n \in \N$. Since $z_n \in E_\eps$ for all $n \in \N$, one can choose a sequence $(y_n)_{n=1}^\infty \subset E$ with $z_n \in \overline{B_\eps(y_n)}$. In addition, since $z_n \to x$, Lemma~\ref{Lemma_limit_of_contributing_points} guarantees the existence of a convergent subsequence $(y_{n_k})_{k=1}^\infty$ with the limit $\widehat{y} := \lim_{k \to \infty} y_{n_k} \in \Pi_E(x)$. Note that it follows from~\eqref{Eq_counter_assumption} that $y_n \notin B_\delta(\Cext{x})$ for all $n \in \N$, which implies $\widehat{y} \notin \Cext{x}$, and consequently $\Pi_E(x) \setminus \Cext{x} \neq \varnothing$. This rules out the possibility $\xi_1 = -\xi_2$ for the extremal outward directions $\xi_1, \xi_2 \in \Xext{x}$, and therefore
\begin{equation} \label{Eq_two-sided_ruled_out}
\langle \xi_1, \xi_2 \rangle > -1.
\end{equation}
The relations~\eqref{Eq_counter_assumption} guarantee that $z_{n_k} \notin B_\eps(\Cext{x})$. Together with Lemma~\ref{Lemma_Basic_Properties_of_Sequences_Converging_onto_Boundary}(i), this implies $\langle y - x, v_z \rangle \leq 0$ for all extremal contributors $y \in \Cext{x}$. Combined with Proposition~\ref{Prop_structure_of_set_of_outward_directions} and the assumption that
\[
z_{n_k} \in \overline{U_{1/k}(x, \xi_1)} \cup \overline{U_{1/k}(x, \xi_2)}
\]
for all $k \in \N$, this further implies that $v_z \in [\xi_1, \xi_2]_{S^1} = \Xi_x(E_\eps)$, where $[\xi_1, \xi_2]_{S^1}$ is a geodesic arc-segment due to~\eqref{Eq_two-sided_ruled_out}. It follows that $\langle \widehat{y} - x, v_z \rangle < 0$, since $\widehat{y} \notin \Cext{x}$. But now a computation analogous to that presented in the proof of Lemma~\ref{Lemma_Basic_Properties_of_Sequences_Converging_onto_Boundary}(ii) leads to the contradiction $\norm{\widehat{y} - x} < \eps$.

{(ii)} Assume then that condition (b) is satisfied. Now $z_n \notin E_\eps$ for all $n \in \N$ so that $v_z \in \Xi_x(E_\eps)$. In addition, given that either~\eqref{Eq_condition_1} or~\eqref{Eq_condition_2} is satisfied for each $z_n$ and~\eqref{Eq_condition_2} implies $z_n \in E_\eps$, condition~\eqref{Eq_condition_1} necessarily holds true for all $z_n$.
Then $\xi_1 \neq -\xi_2$, since $\xi_1 = -\xi_2$ leads to the contradiction
\[
z_n \in B_{1/n}(x) \setminus \bigcup_{i \in \{1,2\}} \overline{U_{1/n}(x, \xi_i)} = B_{1/n}(x) \setminus B_{1/n}(x) = \varnothing.
\]
On the other hand, if $\xi_1 \neq -\xi_2$, Proposition~\ref{Prop_structure_of_set_of_outward_directions} states that $v_z \in \Xi_x(E_\eps)$ can be written as a convex combination $v_z = a\xi_1 + b\xi_2$. Note that at least one of the coefficients $a, b$ must be strictly positive, since $v_z \in S^1$. However, \eqref{Eq_condition_1} implies $\langle z_n - x, \xi_i \rangle < 0$ for $i \in \{1,2\}$ and all $n \in \N$, which leads to the contradiction $\langle v_z ,\xi_i \rangle \leq 0$ for $i \in \{1,2\}$.
\end{proof}

\section{Approximating the Boundary with Continuous Graphs} \label{Section_Finite_Approximation}
In order to study the properties of the boundary $\partial E_\eps$, we utilise a finite approximation scheme as a technical aid. The idea is to generate a non-decreasing sequence $(D^n)_{n=1}^\infty$ of finite subsets $D^n \subset E$ and use their $\eps$-boundaries $\partial B_\eps(D^n)$ to approximate the boundary $\partial E_\eps$ in small neighbourhoods around individual boundary points $x \in \partial E_\eps$.


Let $E \subset \R^2$ be compact, let $\eps > 0$ and let $n_0\in\N$ be sufficiently large so that $2^{-n_0} < \eps / 4$. Consider for natural numbers $n> n_0$ the partitions of $\R^2$ into squares
\begin{equation} \label{Eq_grid_sets}
  \cC^n := \left\{ C_{k,\ell}^n := \left[\frac{k}{2^n}, \frac{k+1}{2^n}\right] \times \left[\frac{\ell}{2^n},\frac{\ell+1}{2^n}\right] : k,\ell\in\Z \right\}.
\end{equation}
It follows from the axiom of choice 
that there exists a non-decreasing sequence $D^{n_0} \subset D^{n_0+1} \subset \dots$ of subsets of $\R^2$, in which the sets $D^n$ are defined for each $n \in \N$ by the property
  \begin{equation} \label{Eq_sets_D_n}
    D^n \cap C_{k,\ell}^n = \left\{\begin{array}{c@{\quad\text{if}\,\,}l}
      \big\{d_{k,\ell}^n\big\}, & E\cap C_{k,\ell}^n\not=\emptyset,\\
      \emptyset, & E\cap C_{k,\ell}^n=\emptyset,
    \end{array}\right.
  \end{equation}
  where the points $d_{k,\ell}^n$ are chosen arbitrarily from $E\cap C_{k,\ell}^n$.

\begin{figure}[h]
      \centering
      \captionsetup{margin=0.75cm}
                \includegraphics[width = \textwidth]{Finite_Approximating_Sets_Combined_v01.eps} \\[0mm]
               \caption{Schematic illustration of two consecutive finite approximating sets for the boundary $\partial E_\eps$ around a wedge $x$. ({\footnotesize A}) The boundaries $\partial B_\eps(d_{k, \ell}^n)$ for $d_{k,\ell}^n \in D^n$ (red dots) provide an approximation (red curve) for $\partial E_\eps$. ({\footnotesize B}) For $D^{n+1}$ the approximation improves, and the sets $\partial B_\eps(D^n) \cap B_r(x)$ converge to $\partial E_\eps \cap B_r(x)$ in Hausdroff distance as $n \to \infty$. 
                }
                \label{Figure_Finite_Approximating_sets}
\end{figure}

\begin{definition}[{Finite approximating sets}] \label{Def_Finite_Approximation_Scheme}
Let $E \subset \R^2$ be compact and let $(D^n)_{n=1}^\infty$ be a sequence of subsets $D^n \subset E$ as defined in~\eqref{Eq_grid_sets} and~\eqref{Eq_sets_D_n} above. The sets $D^n$ are then called \emph{finite approximating sets} for the set $E$.
\end{definition}

It is immediate from the definition of the sets $D^n$ that
$E_\eps = \overline{\bigcup_{n\ge n_0} B_\eps(D^n)}$. In fact, the sets $B_\eps(D^n)$ converge to $E_\eps$ in Hausdroff distance.\footnote{The \emph{Hausdorff distance} $\mathrm{dist}_H(X, Y)$ between two sets $X, Y \subset \R^d$ is defined as 
$\mathrm{dist}_H(X, Y) := \mathrm{inf} \{ \delta > 0 \, : \, X \subset Y_\delta \,\,\, \textrm{\emph{and}} \,\,\, Y \subset X_\delta \}$,
where $X_\delta := \overline{\bigcup_{x \in X} B_\delta(x)}$.}
However, one needs to exercise some caution because the boundaries $\partial B_\eps(D^n)$ do not in general converge to $\partial E_\eps$ in Hausdorff distance. This can be seen for instance by considering the $\eps$-neighbourhoods of finite approximating sets for the example given in Figure~\ref{Figure_Extremal_Interior} in Section~\ref{Sec_Motivation}. 

We now show that the sequence of boundaries $\partial B_\eps(D^n)$ 
can nevertheless be used to approximate the boundary $\partial E_\eps$ locally around individual boundary points $x \in \partial E_\eps$. 
%
%
%
%
The local contribution property, Proposition~\ref{Prop_local_contribution}, implies that in order to describe the boundary $\partial E_\eps$ near any $x \in \partial E_\eps$, one only needs to consider the boundaries of $\eps$-balls centered at contributors that lie close to the extremal contributors $y \in \Cext{x}$. To make this precise, we establish a local coordinate system that allows us to represent the boundary $\partial E_\eps$ locally as a union of function graphs.
\begin{definition}[{Extremal pairs}] \label{Def_Extremal_Pairs}
Let $E \subset \R^d$ be closed, let $x \in \partial E_\eps$ and denote by $\Xext{x}$ and  $\Cext{x}$ the sets of extremal outward directions and extremal contributors, respectively. The set of \emph{extremal pairs} at $x$ is the collection
\[
\Pext{x} := \big \{(\xi, y) \in \Xext{x} \times \Cext{x} \, : \, \langle x - y, \xi \rangle = 0 \big \}.
\]
\end{definition}
The idea of extremal pairs is that the vectors $\xi$ and $(x-y)/\eps$ serve as an orthonormal basis for a local coordinate system, aligned with the local boundary geometry.
Now, let $x \in \partial E_\eps$, 
%
and consider for each extremal pair $(\xi, y) \in \Pext{x}$ and $n \in \N$ the sets 
\begin{equation} \label{Eq_sets_D_n_xi_y}
E^{\xi,y} := E \cap U_{\eps/2}(y, \xi), \quad D_n^{\xi, y} := D^n \cap U_{\eps/2}(y, \xi),
\end{equation}
where $D^n$ is a finite approximating set for $E$ and $U_{\eps/2}(y, \xi)$ is a half-ball of radius $\eps/2$ centered at $y$, oriented towards $\xi$, see~\eqref{Def_oriented_half_ball}. 
Using the sets $D_n^{\xi, y}$, we may now define a sequence of functions $f_n^{\xi, y} : [0, \eps/2] \to \R$ by setting
\begin{equation} \label{Eq_definition_of_f_n_xi_y}
f_n^{\xi, y}(s) := \textrm{max} \left\{ t \in \R \, : \, \mathrm{dist}\left(x + s \xi + t\frac{x - y}{\eps},  D_n^{\xi, y} \right) \leq \eps \right\},
\end{equation}
where $\textrm{dist}\left(\cdot, D_n^{\xi, y}\right)$ denotes the Euclidean distance from the set $D_n^{\xi, y}$. Since $\overline{B_\eps\big(D_n^{\xi, y}\big)}$ is compact, the maximum in~\eqref{Eq_definition_of_f_n_xi_y} exists for all $s \in [0, \eps/2]$. Compactness furthermore implies that the functions $f_n^{\xi, y}$ are bounded for all $n \in \N$. In addition, it follows from the definition of the functions $f_n^{\xi, y}(s)$ that 
\begin{equation}
x + s \xi + f_n^{\xi, y}(s)(x - y)\eps^{-1} \in \partial \overline{B_\eps\big(D_n^{\xi, y}\big)}
\end{equation}
for all $s \in [0, \eps/2]$. Since for each $n \in \N$ the boundary $\partial \overline{B_\eps\big(D_n^{\xi, y} \big)}$ is composed of a finite collection of arc-segments, the corresponding functions $f_n^{\xi, y}$ are continuous.

\begin{lemma}[{Convergence of approximating functions}] \label{Lemma_Convergence_of_appr_functions}
Let $E$ be compact and let $\eps > 0$. Then, for each $x \in \partial E_\eps$
and each extremal pair $(\xi,y) \in \Pext{x}$,
the functions $f_n^{\xi,y}$ defined 
in~\eqref{Eq_definition_of_f_n_xi_y}, converge monotonically and uniformly to the continuous function $f^{\xi, y}: [0, \eps/2] \to \R$, given by
\begin{equation} \label{Eq_Def_f_xi_y}
f^{\xi,y}(s):= \mathrm{max} \left\{ t \in \R \, : \, \mathrm{dist}\left(x + s \xi + t\frac{x - y}{\eps},  E^{\xi, y} \right) \leq \eps \right\}
\end{equation}
as $n \to \infty$. Furthermore, 
\begin{equation} \label{Eq_g_xi_y_boundary}
\big\{ x + s \xi + f^{\xi, y}(s)\eps^{-1}(x - y) \, : \, s \in [0, \eps/2] \big \} = \partial \overline{B_\eps(E_{\xi,y})} \cap U_{\eps/2}(x, \xi),
\end{equation}
where $E_{\xi,y}$ is as in~\eqref{Eq_sets_D_n_xi_y}.
%
\end{lemma}

\begin{proof}
Let $x \in \partial E_\eps$ and $(\xi,y) \in \Pext{x}$. Since the sets $D_n^{\xi,y}$ form a non-decreasing sequence, it is clear that $f_{n+1}^{\xi,y}(s) \geq f_n^{\xi,y}(s)$ for all $s \in [0, \eps/2]$ and $n \in \N$. On the other hand, the value $f_n^{\xi, y}(s)$ is bounded from above by

Hence, the functions $f_n^{\xi,y}$ converge to a limiting function pointwise and monotonically on $[0, \eps/2]$. We now argue that the convergence is uniform and that $\lim_{n\to\infty} f_n^{\xi,y}(s) = f^{\xi,y}(s)$ for all $s \in [0, \eps/2]$.

Let $\delta > 0$ and consider a point $x_s := f^{\xi,y}(s)$ for some $s \in [0, \eps/2]$. Then $x_s \in \partial \overline{B_{\eps/2}(E^{\xi,y})}$ by~\eqref{Eq_Def_f_xi_y}. Let $y \in \Pext{x}$ be an extremal contributor of $x_s$. It follows from the definition of the finite approximating sets that there exists for each $n \in \N$ some $y_n \in D_n^{\xi,y}$ for which $\norm{y - y_n} \leq \sqrt{2}/n$.
Denote by $\textrm{proj}_y(x_s, \partial B_\eps(y_n))$ the projection of $x_s$ onto $\partial B_\eps(y_n)$ along the $x-y$ axis. Then, since the difference in the $\xi$-coordinate between $x_s$ and $y_n$ is less than $\eps/2$, we furthermore have
\begin{align*}
\big|f^{\xi, y}(s) - f_n^{\xi,y}(s)\big| &< \big|f^{\xi, y}(s) - \textrm{proj}_y(x_s, \partial B_\eps(y_n))\big| \\
&< \frac{2}{\sqrt{3}} \big(\norm{x_s - y_n} - \eps \big) \\
&\leq \frac{2}{\sqrt{3}} \norm{y - y_n}
\leq \frac{2\sqrt{2}}{\sqrt{3}n},
\end{align*}
so that $|f^{\xi, y}(s) - f_n^{\xi,y}(s)| < \delta$ whenever $n > 2\sqrt{2}/\sqrt{3}\delta$. Since this estimate is independent of $s$, it follows that $f_n^{\xi,y} \to f^{\xi,y}$ uniformly, as claimed. The last claim follows directly from the definition of $f^{\xi, y}$ in~\eqref{Eq_Def_f_xi_y}.
\end{proof}


We now apply Lemma~\ref{Lemma_Convergence_of_appr_functions} to obtain local representations for the boundary $\partial E_\eps$ around each boundary point $x \in \partial E_\eps$. The proof of Proposition~\ref{Prop_local_representation_exists} below makes use of Lemma~\ref{Lemma_Open_Cone_Touches_x} which we have placed in the subsequent Section~\ref{Section_Local_Structure_of_the_Complement}, together with other results on the geometry of the complement $\R^2 \setminus E_\eps$.

\begin{prop}[{Local boundary representation}] \label{Prop_local_representation_exists}
Let $E \subset \R^2$ be closed and let $x \in \partial E_\eps$. For each extremal pair $(\xi, y) \in \Pext{x}$ there exists a continuous function $f^{\xi, y}: [0, \eps/2] \to \R$ and a corresponding function $g_{\xi, y} : [0, \eps/2] \to \R^2$, given by
\begin{equation} \label{Eq_Canonical_LBR}
g_{\xi, y}(s) := x + s \xi + f^{\xi, y}(s)\frac{x - y}{\eps},
\end{equation}
so that the collection of functions $\cG(x) := \big\{ g_{\xi, y} \, : \, (\xi, y) \in \Pext{x} \big\}$ satisfies
\begin{equation} \label{Eq_local_boundary_repr_condition}
\partial E_\eps \cap \overline{B_r(x)} = \bigcup_{(\xi, y) \in \Pext{x}} g_{\xi, y}\left(A_{\xi,y}\right)
\end{equation}
for some $r > 0$ and some closed $A_{\xi, y} \subset [0, \eps/2]$. We call the collection $\cG(x)$ a \emph{local boundary representation (of radius $r$)} at $x$.  For each extremal pair $(\xi, y) \in \Pext{x}$ the corresponding subset $A_{\xi, y} \subset [0, \eps/2]$ is either
\begin{enumerate}
\item[(a)] an interval $[0, s_{\xi, y}]$ for some $0 < s_{\xi, y} \leq \eps/2$, or
\item[(b)] a closed set whose complement in $[0, \eps/2]$ contains a sequence of disjoint open intervals with $0$ as an accumulation point.
\end{enumerate}
For wedges (singularities of type S1) and $x \in \Unp{E}$, case (a) above holds true for all $(\xi, y) \in \Pext{x}$.
\end{prop}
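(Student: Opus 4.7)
The plan is to build, for each extremal pair $(\xi, y) \in \Pext{x}$, a canonical graph-type parametrisation of the boundary in the orthonormal frame $\{\xi, (x-y)/\eps\}$ anchored at $x$ (the orthogonality $\langle y - x, \xi \rangle = 0$ is built into Definition~\ref{Def_Extremal_Pairs}), and then to verify that the finite union of such graphs covers $\partial E_\eps \cap \overline{B_r(x)}$. The starting point is Proposition~\ref{Prop_local_contribution}: I would fix $\delta > 0$ and then $r > 0$ small enough that for every $z \in B_r(x)$, membership in $E_\eps$ is determined purely by contributors in $E \cap B_\delta(\Cext{x})$. This localisation lets me replace $E$ locally by the finite sets $D^n \cap B_\delta(\Cext{x})$ from Definition~\ref{Def_Finite_Approximation_Scheme}, for which $\partial B_\eps(D^n \cap B_\delta(\Cext{x}))$ is a finite union of circular arcs.

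To construct $f^{\xi,y}$, I would define the upper envelope $f_n^{\xi,y}(s)$ as the pointwise maximum over $d \in D^n \cap B_\delta(y)$ of the explicit graphs $t = h_d(s)$ obtained by solving $\norm{s\xi + t(x-y) - (d - x)} = \eps$. Restricting to $s \in [0, \eps/2]$ keeps these envelopes uniformly Lipschitz — a bound of the form $\tfrac{1}{\sqrt{3}}$ coming from $|\tfrac{d}{ds}\sqrt{\eps^2 - s^2}|$ on $[0, \eps/2]$ — so Arzel\`a--Ascoli, combined with the Hausdorff convergence $\dist_H(E_\eps, B_\eps(D^n)) \to 0$ from~\eqref{rel_2}, produces a subsequential uniform limit $f^{\xi,y} \in C([0, \eps/2])$ with $f^{\xi,y}(0) = 0$. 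Setting $g_{\xi,y}$ as in~\eqref{Eq_Canonical_LBR} and $A_{\xi,y} := \{s \in [0, \eps/2] : g_{\xi,y}(s) \in \partial E_\eps\}$, closedness of $\partial E_\eps$ makes $A_{\xi,y}$ closed with $0 \in A_{\xi,y}$.

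The coverage relation~\eqref{Eq_local_boundary_repr_condition} would then follow from another application of Proposition~\ref{Prop_local_contribution} combined with Lemma~\ref{Lemma_Convergence_of_Contributing_Points_of_Sequences_of_Boundary_Points}: any $z \in \partial E_\eps \cap \overline{B_r(x)} \cap U_r(x,\xi)$ has, for sufficiently small $r$, an outward direction and nearest contributor arbitrarily close to some extremal pair $(\xi, y) \in \Pext{x}$, forcing $z$ to lie on the corresponding envelope graph. Boundary points outside both half-balls $\overline{U_r(x,\xi_1)} \cup \overline{U_r(x,\xi_2)}$ reduce to $s = 0$ in the parametrisation (by condition~\eqref{Eq_condition_1} of Proposition~\ref{Prop_local_contribution}).

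For the final dichotomy, case (a) for wedges and $x \in \Unp{E}$ would be obtained by invoking Lemma~\ref{Lemma_Open_Cone_Touches_x} (from Section~\ref{Section_Local_Structure_of_the_Complement}) to exhibit an open cone contained in $E_\eps^c$ that touches $x$ along $\xi$; this cone forces the envelope $g_{\xi,y}$ to coincide with $\partial E_\eps$ on a full initial interval $[0, s_{\xi,y}]$. In the general situation, the alternative (b) must be established: if $0$ is not an interior point of $A_{\xi,y}$ in $[0, \eps/2]$, then $[0, \eps/2] \setminus A_{\xi,y}$ is open and accumulates at $0$, which automatically yields a sequence of disjoint open subintervals of the complement with $0$ as accumulation point. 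The hard part — and the main obstacle — is the application of Lemma~\ref{Lemma_Open_Cone_Touches_x} in the wedge/$\Unp{E}$ case, since it requires identifying the ``correct'' graph $g_{\xi,y}$ among possibly several extremal pairs sharing the same $\xi$ and showing that a genuine open cone in $E_\eps^c$ touches $x$ from the relevant side; this in turn rests on the topological analysis of $E_\eps^c$ carried out in Section~\ref{Section_Local_Structure_of_the_Complement}, together with the convergence results for contributors in Lemmas~\ref{Lemma_limit_of_contributing_points} and~\ref{Lemma_Convergence_of_Contributing_Points_of_Sequences_of_Boundary_Points}.
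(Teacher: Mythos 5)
Your proposal is correct and follows essentially the same route as the paper: upper envelopes $f_n^{\xi,y}$ built from the finite approximating sets $D^n$, passage to a continuous limit, coverage via Proposition~\ref{Prop_local_contribution} together with the tangent/outward-direction correspondence, and Lemma~\ref{Lemma_Open_Cone_Touches_x} to secure case (a) for wedges and $x \in \Unp{E}$. The only cosmetic differences are that the paper obtains the limit $f^{\xi,y}$ from monotone uniform convergence rather than Arzel\`a--Ascoli, and in the opposing-contributor case it analyses the sign of $\alpha_\xi(s) = f^{\xi,y_1}(s) + f^{\xi,y_2}(s)$ explicitly to identify $A_{\xi,y}$ and to note that $0$ is an accumulation point of $A_{\xi,y}$ itself (because $\xi \in T_x(E_\eps)$), a point your dichotomy argument leaves implicit.
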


\begin{figure}[h]
      \centering
      \captionsetup{margin=0.75cm}
                \includegraphics[width = \textwidth]{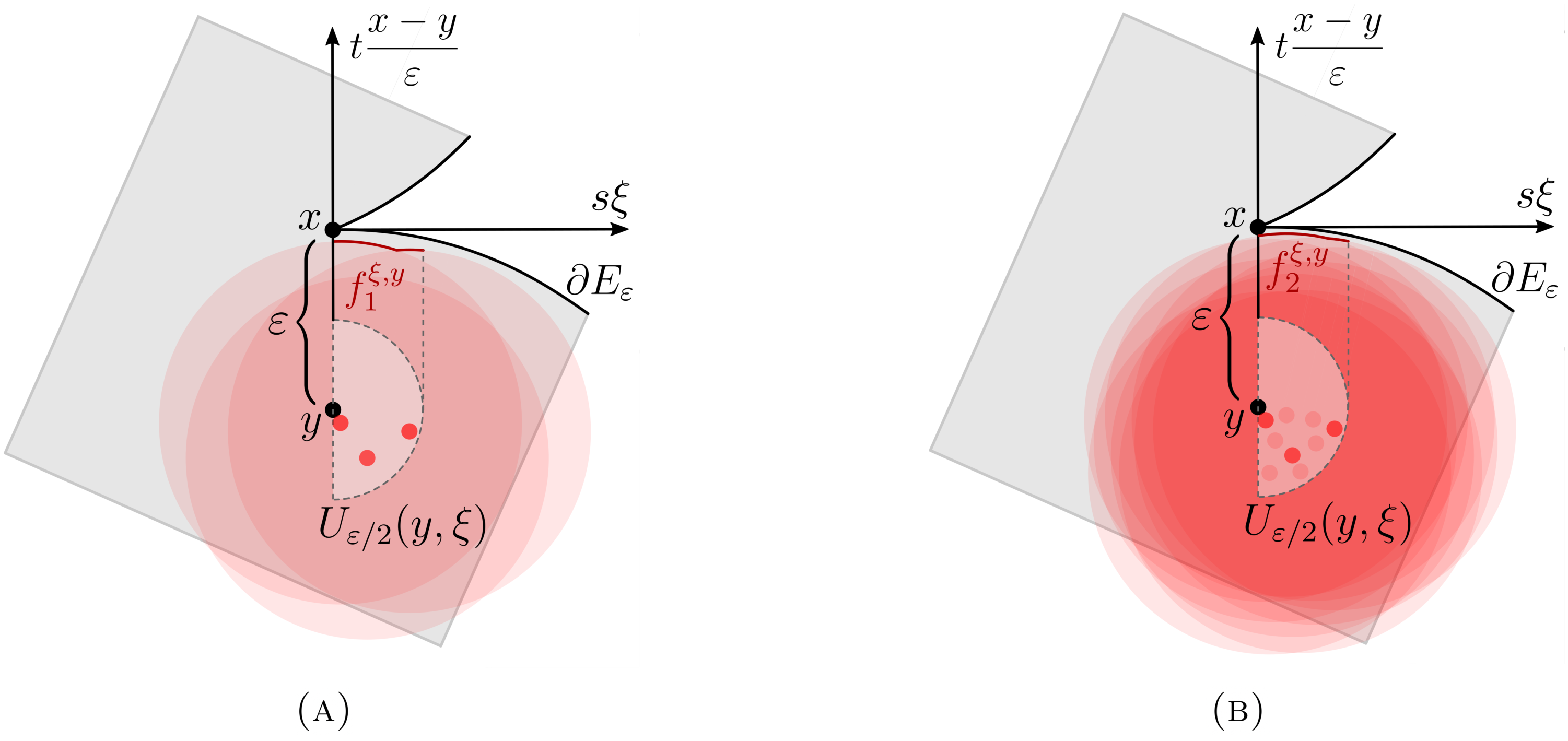} \\[0mm]
               \caption{Schematic illustration of the first functions in the sequence $\left(f_n^{\xi, y}\right)_{n=1}^\infty$. The red dots in ({\footnotesize A}) and ({\footnotesize B}) represent the points in two consecutive finite approximating sets $D_1^{\xi, y}$ and $D_2^{\xi,y}$, respectively.}
\end{figure}

\begin{proof}
Consider an extremal pair $(\xi, y) \in \Pext{x}$ and let $f^{\xi,y} : [0, \eps/2] \to \R$ be as in~\eqref{Eq_Def_f_xi_y}. Define $g_{\xi, y} : [0, \eps/2] \to \R^2$ by
\begin{equation} \label{Eq_definition_of_functions_g_xi_y}
g_{\xi, y}(s) := x + s \xi + f^{\xi, y}(s)\frac{x - y}{\eps}.
\end{equation}
%
%
%
%
%
%
%
Substituting $\delta = \eps / 2$ in Proposition~\ref{Prop_local_contribution} guarantees the existence of some $r > 0$, for which $z \in E_\eps \cap B_r(x)$ if and only if either
\[
\textrm{(i) } z \notin \overline{U_r(x, \xi)} \textrm{ for each } \xi \in \Xext{x}, \quad \textrm{or} \quad
\textrm{(ii) } z \in \overline{B_\eps\left(E \cap B_{\eps/2}(\Cext{x})\right)}.
\]
According to Proposition~\ref{Prop_Tangents_are_Defined}, the set of tangential directions $T_x(E_\eps)$
coincides with the set of extremal outward directions $\Xext{x}$. This implies that condition~(i) necessarily fails for all boundary points $z \in \partial E_\eps \cap B_r(x)$. Therefore, condition~(ii) holds true whenever $z \in \partial E_\eps \cap B_r(x)$, and in this case $z \in \partial \overline{B_\eps(E_{\xi, y})} \cap \overline{U_r(x, \xi)}$ for some $(\xi, y) \in \Pext{x}$. On the other hand, according to Lemma~\ref{Lemma_Convergence_of_appr_functions} we have
\begin{equation}
g_{\xi,y}([0, \eps/2]) = \partial \overline{B_\eps(E_{\xi,y})} \cap U_{\eps/2}(x, \xi),
\end{equation}
see~\eqref{Eq_g_xi_y_boundary}. Hence, there exists some $s_z \in [0, r]$, for which $z = g_{\xi, y}(s_z)$. In other words, every boundary point $z \in \partial E_\eps$ inside the neighbourhood $B_r(x)$ lies on a graph $g_{\xi, y}([0, r])$ for some $(\xi, y) \in \Pext{x}$.

It remains to be verified, for each $(\xi, y) \in \Pext{x}$, which arguments $s \in [0, r]$ satisfy $g_{\xi, y}(s) \in \partial E_\eps \cap B_r(x)$. We divide the rest of the proof into two cases, depending on whether or not there exist extremal contributors $y_1, y_2 \in \Cext{x}$ for which
\begin{equation} \label{Eq_opposing_contributors_1}
y_1 - x = -(y_2 - x).
\end{equation}
Condition~\eqref{Eq_opposing_contributors_1} is schematically illustrated in Figure~\ref{Figure_Three_Basic_Cases}({\footnotesize C}), in the beginning of Chapter~\ref{Sec_Singularities}.

{(i)} Assume~\eqref{Eq_opposing_contributors_1} is not satisfied by any $y_1, y_2 \in \Cext{x}$. Then there is either only one contributor so that $x \in \Unp{E}$, or else $x$ is a wedge, see Definition~\ref{Def_classification_of_singularities}. In both cases we have $\Pext{x} = \{(\xi_1, y_1), (\xi_2, y_2)\}$ for some $\xi_1, \xi_2 \in \Xi_{\textrm{ext}}(x)$ and $y_1, y_2 \in \Cext{x}$, where we allow for $y_1 = y_2$ in case $x \in \Unp{E}$. It follows from Lemma~\ref{Lemma_Open_Cone_Touches_x} below that for any outward directions $\eta_1, \eta_2 \in (\xi_1, \xi_2)_{S^1}$ there exists some radius $\rho$ and a corresponding neighbourhood $B_\rho(x) \subset B_r(x)$ in which the graphs of $g_{\xi_1, y_1}$ and $g_{\xi_2, y_2}$ are separated by the cone $V_\rho(x, \eta_1, \eta_2)$, see~\eqref{Eq_Def_Truncated_Cone}. Hence
\begin{equation} \label{Eq_graphs_dont_intersect}
g_{\xi_1, y_1}([0, r) \cap g_{\xi_2, y_2}([0, r]) \cap B_\rho(x) = \varnothing.
\end{equation}
For $i \in \{1,2\}$ we define the upper bounds
\[
s_{\xi_i, y_i} := \max \big\{s \in [0, r] \, : \, g_{\xi_i, y_i}\left([0,s]\right) \in \overline{B_{\rho}(x)} \big\}
\]
and the corresponding sets $A_{\xi_i, y_i} := \left[0, s_{\xi_i, y_i}\right]$. It follows then from~\eqref{Eq_graphs_dont_intersect} and Proposition~\ref{Prop_local_contribution} that $g_{\xi_i, y_i}(A_{\xi_i, y_i}) \subset \partial E_\eps$ for $i \in \{1,2\}$, which implies
\[
\partial E_\eps \cap \overline{B_\rho(x)} = \bigcup_{i \in \{1,2\}} g_{\xi_i, y_i}\left(A_{\xi_i,y_i}\right).
\]

{(ii)} Assume then that~\eqref{Eq_opposing_contributors_1} holds true for $y_1, y_2 \in \Cext{x}$
and consider some $\xi \in \Cext{x}$. In this case the graphs $g_{\xi, y_i}([0,s])$, $i \in \{1,2\}$ may generally intersect for arbitrarily small $s \in (0, r]$.
Due to~\eqref{Eq_opposing_contributors_1} one can write for $\{i,j\} = \{1,2\}$
\begin{align*}
g_{\xi, y_i}(s) &= x + s\xi + \left(f^{\xi, y_j}(s) - \alpha_\xi(s) \right) \frac{x - y_j}{\eps} \\
                    &= g_{\xi, y_j}(s) - \alpha_\xi(s) \frac{x - y_j}{\eps},
\end{align*}
where $\alpha_\xi(s) := f^{\xi, y_1}(s) + f^{\xi, y_2}(s)$. Hence it follows for $i \in \{1,2\}$ from Proposition~\ref{Prop_local_contribution} and the definitions of the functions $f_n^{\xi, y}$ and $f^{\xi, y}$ in~\eqref{Eq_definition_of_f_n_xi_y} and~\eqref{Eq_Def_f_xi_y} that
\begin{align*}
g_{\xi, y_i}(s) &\notin \partial E_\eps  \,\, \mathrm{whenever} \,\, \alpha_\xi(s) > 0, \, \mathrm{and} \\
g_{\xi, y_i}(s) &\in \partial E_\eps  \,\, \mathrm{whenever} \,\, \alpha_\xi(s) < 0.
\end{align*}
For $\alpha_\xi(s) = 0$ one has $g_{\xi, y_1}(s) = g_{\xi, y_2}(s) \in \partial E_\eps$ if and only if there exists a sequence $(s_n)_{n=1}^\infty$ with $s_n \to s$, for which $\alpha_\xi(s_n) < 0$ for all $n \in \N$.
This follows from the fact that
\[
\tau g_{\xi, y_1}(s) + (1 - \tau) g_{\xi, y_2}(s) \in \R^2 \setminus E_\eps
\]
whenever $s \in (0,r)$, $\alpha_\xi(s) < 0$ and $\tau \in (0,1)$. Equation~\eqref{Eq_local_boundary_repr_condition} is therefore satisfied for the neighbourhood $B_r(x)$ and the sets
\begin{align*}
A_{\xi, y_i} := A_\xi &:= \overline{\left\{s \in [0, r] \, : \, \alpha_\xi(s) < 0 \right\}} \\ \nonumber
  &\phantom{:}= \left\{s \in [0, r] \, : \, g_{\xi, y_1}(s), g_{\xi, y_2}(s) \in \partial E_\eps \right\},
\end{align*}
where $i \in \{1,2\}$ and $\xi \in \Xext{x}$.
Now either $[0, s_\xi] \subset A_\xi$ for some $s_{\xi} \in (0, r]$ 
or otherwise $[0, T] \setminus A_\xi \neq \varnothing$ for all $T > 0$. In either case $0$ is an accumulation point of $A_\xi$, since $x \in \partial E_\eps$ and $\xi \in T_x(E_\eps)$, see Proposition~\ref{Prop_Tangents_are_Defined}.
\end{proof}

Consider a boundary point $x \in \partial E_\eps$ and the corresponding local boundary representation $\cG(x)$ with radius $r>0$, and let $z \in \partial E_\eps \cap B_r(x)$ with
\[
z = g_{\xi, y}(s_z) = x + s_z\xi + f^{\xi,y}(s_z)\frac{x-y}{\eps}
\]
for some $(\xi, y) \in \Pext{x}$ and $s_z \in [0, r]$. The construction given in Proposition~\ref{Prop_local_representation_exists}
guarantees that, when written in the $(\xi, y)$-coordinates, the $\xi$-coordinate $s_w$ of each contributor $w \in \Pi_E(z)$ satisfies $0 \leq s_w \leq r \leq \eps/2$. This implies a lower and upper bound for the local growth-rate of the function $f^{\xi,y}$.
Combining this observation with Proposition~\ref{Prop_Tangents_are_Defined}
allows us to deduce that the functions $f^{\xi, y}$ in \eqref{Eq_Canonical_LBR} are in fact Lipschitz continuous on $[0, r]$ for all $(\xi, y) \in \Pext{x}$.

\begin{prop}[{Local boundary representation is Lipschitz}] \label{Prop_LBR_Lipschitz}
Let $E \subset \R^2$, let $x \in \partial E_\eps$ and let $\cG(x)$ be a local boundary representation at $x$ with radius $r>0$. Then, for each extremal pair $(\xi, y) \in \Pext{x}$, the function $f^{\xi, y}$ in \eqref{Eq_Canonical_LBR} is $1/\sqrt{3}$-Lipschitz, and the function $g_{\xi, y} \in \cG(x)$ is $2/\sqrt{3}$-Lipschitz on the interval $[0, r]$.
\end{prop}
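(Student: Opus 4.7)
My plan is to work in the orthonormal basis $(\xi,\hat u)$ with $\hat u := (x-y)/\eps$, so that $x$ is the origin, $y=(0,-\eps)$, and the parametrised boundary point $g_{\xi,y}(s)$ has coordinates $(s,\eps f^{\xi,y}(s))$. The strategy is to first establish the Lipschitz estimate for $f^{\xi,y}$ on $[0,r]$, and then read off the bound for $g_{\xi,y}$ by a one-line calculation: since $\xi\perp(x-y)$ and $\|x-y\|=\eps$, one has $\|g_{\xi,y}(s_2)-g_{\xi,y}(s_1)\|^2=(s_2-s_1)^2+\eps^2(f^{\xi,y}(s_2)-f^{\xi,y}(s_1))^2$, so a Lipschitz constant $1/(\sqrt{3}\,\eps)$ for $f^{\xi,y}$ promotes directly to $2/\sqrt{3}$ for $g_{\xi,y}$ via $1+1/3=4/3$.

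For the bound on $f^{\xi,y}$, I would exploit the fact that by the construction in Proposition~\ref{Prop_local_representation_exists}, $f^{\xi,y}$ is the uniform limit of the circular upper envelopes $f_n^{\xi,y}$, hence equals the upper envelope of the arc-functions $\phi_w(s):=t_w+\sqrt{1-(s-s_w)^2/\eps^2}$, one for each $w=(s_w,\eps t_w)\in E_{\xi,y}$. Given $s_1<s_2$ in $[0,r]$, for each $i\in\{1,2\}$ the boundary point $z_i:=g_{\xi,y}(s_i)$ admits a contributor $w_i\in\Pi_E(z_i)\cap E_{\xi,y}$ (using $z_i\in\partial\overline{B_\eps(E_{\xi,y})}$ together with compactness of $E_{\xi,y}$). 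Writing $w_i=(s_{w_i},\eps t_{w_i})$ with $t_{w_i}\le -1/2$, the identity $\|z_i-w_i\|=\eps$ together with the lower bound $f^{\xi,y}(s_i)\ge\phi_y(s_i)>-1/2\ge t_{w_i}$ (available because $y\in E_{\xi,y}$ and $s_i\le\eps/2$) pins down the positive sign of the square root, so that $f^{\xi,y}(s_i)=\phi_{w_i}(s_i)$. Combined with the envelope inequality $f^{\xi,y}(s_j)\ge\phi_{w_i}(s_j)$ for $j\ne i$, the two choices $i\in\{1,2\}$ sandwich the difference as
\[
\phi_{w_1}(s_2)-\phi_{w_1}(s_1)\;\le\;f^{\xi,y}(s_2)-f^{\xi,y}(s_1)\;\le\;\phi_{w_2}(s_2)-\phi_{w_2}(s_1).
\]

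It remains to bound each $|\phi_{w_i}(s_2)-\phi_{w_i}(s_1)|$ by $(s_2-s_1)/(\sqrt{3}\,\eps)$. Differentiating gives $|\phi_w'(s)|=|s-s_w|/(\eps\sqrt{\eps^2-(s-s_w)^2})$, which is at most $1/(\sqrt{3}\,\eps)$ precisely when $|s-s_w|\le\eps/2$. This is where the remark preceding the proposition enters: the construction of Proposition~\ref{Prop_local_representation_exists} guarantees that every contributor of a boundary point in the local representation has $\xi$-coordinate in $[0,\eps/2]$; together with $s_1,s_2\in[0,r]\subset[0,\eps/2]$ (shrinking $r$ if necessary), this yields $|s-s_{w_i}|\le\eps/2$ throughout $[s_1,s_2]$ for $i\in\{1,2\}$, hence the arc-function bound. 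Verifying this geometric constraint is the main obstacle of the argument: it requires tracing through how the cone $C_{\xi,y}$ in the definition of $f_n^{\xi,y}$ restricts the admissible contributors, combined with the fact (via Proposition~\ref{Prop_local_contribution}) that for $z$ close to $x$ every contributor of $z$ lies near $\Cext{x}$, whose elements satisfy the extremality condition $\langle y-x,\xi\rangle=0$. Once this bound is secured, the Lipschitz estimate for $f^{\xi,y}$ falls out of the sandwich above, and the estimate for $g_{\xi,y}$ follows from the identity in the first paragraph.
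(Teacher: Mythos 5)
Your argument is correct and arrives at the same constants, but it is a genuinely different proof from the one in the paper. The paper argues by contradiction: it supposes the bound fails with some $p>1/\sqrt{3}$, runs a bisection to localise the violation at a single parameter $a$, identifies the resulting one-sided derivatives $D^{\pm}h^{\xi,y}(a)$ with tangential directions of $\partial E_\eps$ at $g_{\xi,y}(a)$ via Proposition~\ref{Prop_Tangents_are_Defined}, and only then bounds the tangent slope by $1/\sqrt{3}$ using the same geometric fact you isolate (the $\xi$-coordinate of the relevant contributor lies within $\eps/2$ of that of the boundary point). Your route replaces the bisection and the tangent machinery with the observation that $f^{\xi,y}$ is the upper envelope of the concave arc-functions $\phi_w$, touching one arc at each endpoint, which yields the two-sided sandwich directly; the rest is elementary calculus on a single arc. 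This buys something real: the paper's appeal to Proposition~\ref{Prop_Tangents_are_Defined} at $g_{\xi,y}(a)$ implicitly needs $g_{\xi,y}(a)\in\partial E_\eps$, which Proposition~\ref{Prop_local_representation_exists} only guarantees for $a\in A_{\xi,y}$, whereas your touching-arc structure is available at every $s\in[0,r]$ because $g_{\xi,y}(s)$ always lies on $\partial\overline{B_\eps(E_{\xi,y})}$ by construction; you also never need the one-sided derivatives of $f^{\xi,y}$ to exist. Two caveats. First, the point $w_i$ should be described as a nearest point of $E_{\xi,y}$ to $z_i$ (extracted by compactness through the sets $D_n^{\xi,y}$) rather than as an element of $\Pi_E(z_i)$, since $z_i$ need not lie on $\partial E_\eps$. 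Second, the step you rightly flag as the crux, $|s-s_{w_i}|\le\eps/2$ on $[s_1,s_2]$, is precisely the assertion the paper itself makes in the remark preceding the proposition without further proof; note that applying Proposition~\ref{Prop_local_contribution} with $\delta=\eps/2$ only gives $s_{w_i}\in[-\eps/2,\eps/2]$ and hence $|s-s_{w_i}|\le\eps/2+r$, so to land exactly on $1/(\sqrt{3}\eps)$ one must additionally argue that $s_{w_i}\ge s-\eps/2$ (e.g.\ that $s_{w_i}\ge 0$). Your proof stands or falls with the paper's on exactly this point, so I count it as a shared, not a new, gap.
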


\begin{proof}

\begin{figure}
  \begin{minipage}[c]{0.42\textwidth}
  \mbox{}\\[-\baselineskip] \vspace{2mm}
    \includegraphics[width=1.05\textwidth]{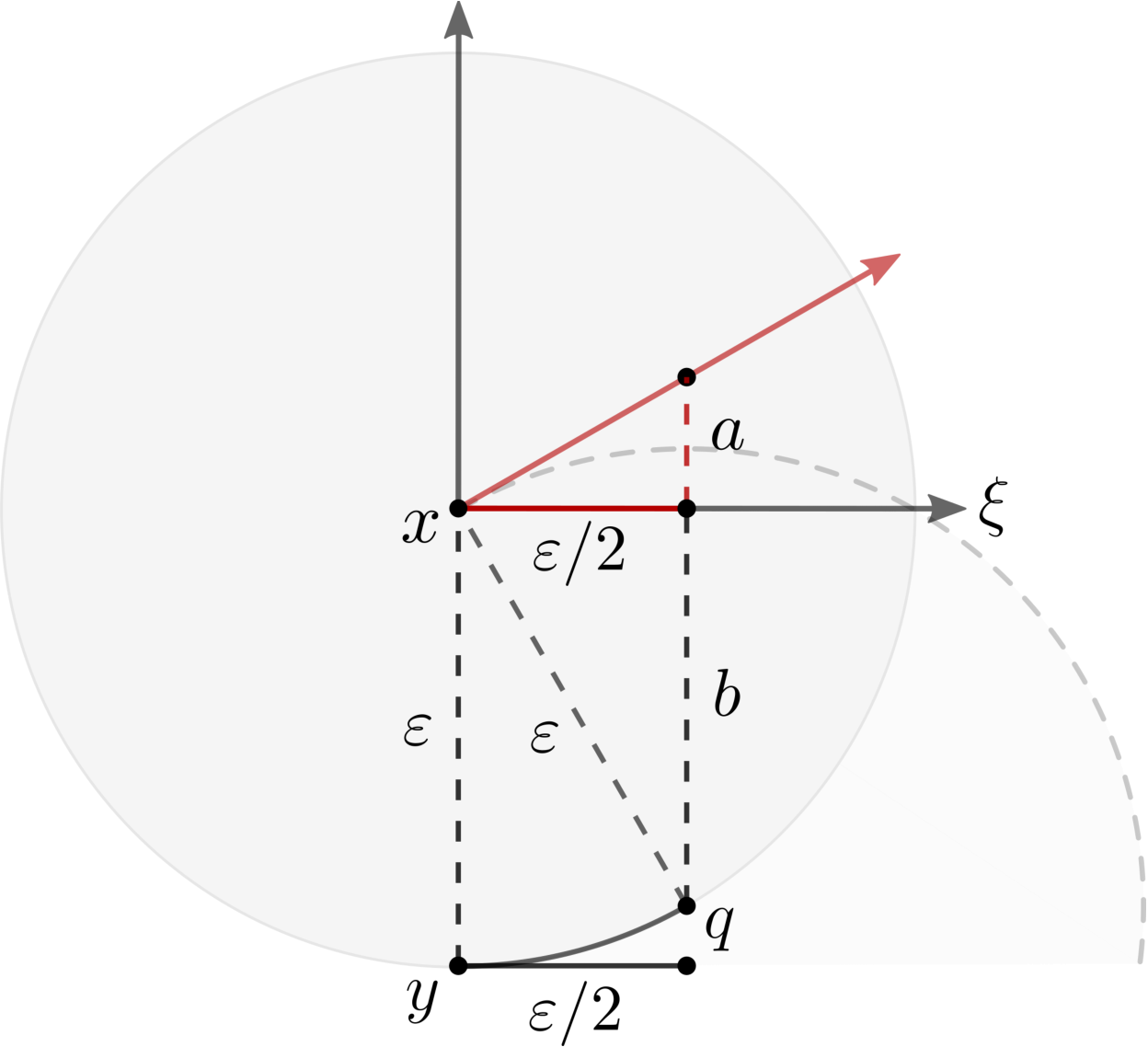}
  \end{minipage}
  \begin{minipage}[c]{0.57\textwidth}
  \mbox{}\\[-\baselineskip]  
    \caption{For all $s \in (0, r) \subset (0, \eps/2)$, the slopes $D^\pm f^{\xi, y}(s)$ are bounded from above by the ratio $D_\mathrm{max} = a/(\eps/2) = (\eps/2)/b = 1/\sqrt{3}$.
    Similarly, a lower bound is given by $D^\pm f^{\xi, y}(s) \geq D_\mathrm{min} = - 1/\sqrt{3}$. \vspace{2mm}}
  \label{Figure_Maximal_Tangent}
  \end{minipage}
\end{figure}

Assume contrary to the claim that for some $(\xi, y) \in \Pext{x}$ there exist some $p > 1/\sqrt{3}$ and $s, w \in [0, r]$ for which $|f^ {\xi, y}(s) - f^ {\xi, y}(w)| \geq p|s - w|$.  We present the argument for the case $w < s$ and $f^{\xi, y}(s) - f^ {\xi, y}(w) \geq p(s - w)$. Assuming $f^ {\xi, y}(s) - f^ {\xi, y}(w) \leq -p(s - w)$ leads to a contradiction through similar reasoning.
Write $m := (s + w)/2$ and define
  \begin{displaymath}
    (w_1, s_1) := \left\{\begin{array}{c@{,\quad\text{if}\,\,\,}l}
      (m,  s) & f^ {\xi, y}(s) - f^ {\xi, y}(m) > p(s - m)\\
      (w, m) & f^ {\xi, y}(s) - f^ {\xi, y}(m) \leq p(s - m)
    \end{array}\right.
  \end{displaymath}
so that $f^ {\xi, y}(s_1) - f^ {\xi, y}(w_1) \geq p(s_1 - w_1)$. For $n \in \N$ we define inductively $m_n := (s_n + w_n)/2$ and
  \begin{displaymath}
    (w_{n+1}, s_{n+1}) := \left\{\begin{array}{c@{,\quad\text{if}\,\,\,}l}
      (m_n,  s_n) & f^ {\xi, y}(s_n) - f^ {\xi, y}(m_n) > p(s_n - m_n)\\
      (w_n, m_n) & f^ {\xi, y}(s_n) - f^ {\xi, y}(m_n) \leq p(s_n - m_n)
    \end{array}\right.
  \end{displaymath}
so that for all $n \in \N$
\begin{equation} \label{Eq_inequality_1}
f^ {\xi, y}(s_n) - f^ {\xi, y}(w_n) \geq p(s_n - w_n).
\end{equation}
By construction, the sequence $(s_n)_{n=1}^\infty$ is non-increasing, and the sequence $(w_n)_{n=1}^\infty$ non-decreasing. Since $|s_n - w_n| = 2^{-n}|s-w| \to 0$ as $n \to \infty$ and $s_n > w_n$ for all $n \in \N$, there exists a unique limit $a = \lim_{n\to\infty} s_n = \lim_{n\to\infty} w_n$. From~\eqref{Eq_inequality_1} it follows that
\begin{equation} \label{Eq_estimate_1}
p \leq D_n := k_s(n) \frac{f^ {\xi, y}(s_n) - f^ {\xi, y}(a)}{s_n - a} + k_r(n) \frac{f^ {\xi, y}(a) - f^ {\xi, y}(w_n)}{a - w_n},
\end{equation}
where the coefficients
\[
k_s(n) := \frac{s_n - a}{s_n - w_n}, \quad k_w(n) := \frac{a - w_n}{s_n - w_n}
\]
satisfy $k_s(n) + k_w(n) = 1$ for all $n \in \N$. In case $s_N = a$ (resp.\ $w_N = a$) for some $N \in \N$, the coefficient $k_s(n)$ (resp.\ $k_w(n)$) vanishes for all $n \geq N$. Hence at least one and possibly both of the right and left derivatives of $f^ {\xi,y}$ at $a$ are given by
\begin{align*}
D^+f^ {\xi,y}(a) &:= \lim_{n \to \infty} \frac{f^ {\xi, y}(s_n) - f^ {\xi, y}(a)}{s_n - a}, \\ \nonumber 
D^-f^ {\xi,y}(a) &:= \lim_{n \to \infty} \frac{f^ {\xi, y}(a) - f^ {\xi, y}(w_n)}{a - w_n}
\end{align*}
and correspond to the tangential directions on $\partial E_\eps$ at
\[
x(a) := x + a\xi + f^ {\xi, y}(a)\frac{x - y}{\eps}.
\]
According to Proposition~\ref{Prop_Tangents_are_Defined} these in turn coincide with the extremal outward directions $\xi_a^+$ and $\xi_a^-$ at $x(a)$. Note that the $\xi$-coordinates of the corresponding extremal contributors $y_a^+, y_a^-$ lie on the interval $[0, \eps/2]$, see Figure~\ref{Figure_Maximal_Tangent}. Thus, the directional derivatives necessarily satisfy the upper bound
\[
D^\pm f^ {\xi,y}(a) \leq \frac{\frac{\eps}{2}}{\sqrt{\eps^2 - \big(\frac{\eps}{2}\big)^2}} = \frac{1}{\sqrt{3}}.
\]
The contradiction $p \leq \lim_{n\to\infty} D_n = 1/\sqrt{3} < p$ then follows by taking the limit $n \to \infty$ in~\eqref{Eq_estimate_1}.

It follows from the above reasoning that for all $x \in \partial E_\eps$ and each $(\xi, y) \in \Pext{x}$, the functions $g_{\xi, y} \in \cG(x)$ are $2/\sqrt{3}$-Lipschitz, since for any $w,s \in [0, r]$
\begin{align*}
\Vert g_{\xi, y}(s) - g_{\xi,y}(w) \Vert &=  \sqrt{|s - w|^2 + |f^{\xi, y}(s) - f^{\xi, y}(w)|^2 } \leq \frac{2}{\sqrt{3}} |s - w|.  \qedhere
\end{align*}
\end{proof}

Since a Lipschitz-function is differentiable almost everywhere, we obtain the following corollary.

\begin{cor}[{Local boundary representation is differentiable almost everywhere}] \label{Prop_LBR_Differentiable_AE}
Let $E \subset \R^2$, let $x \in \partial E_\eps$ and let $\cG(x)$ be a local boundary representation at $x$ with radius $r>0$. For each extremal pair $(\xi, y) \in \Pext{x}$, the function $f^{\xi, y}$ in \eqref{Eq_Canonical_LBR} is differentiable almost everywhere.
\end{cor}

\section{Local Structure of the Complement} \label{Section_Local_Structure_of_the_Complement}
In this section we analyse the connectedness of the complement $\R^2 \setminus E_\eps$ near points $x \in \Unp{E}$ and wedges. For these points, Lemma~\ref{Lemma_Unique_Connected_Component} guarantees the existence of a unique connected component $V \subset \R^2 \setminus E_\eps$ for which $x \in \partial V$, while Proposition \ref{Prop_regular_and_wedge_complement_cone} makes the stronger statement that in fact $B_r(x) \setminus E_\eps = V \cap B_r(x)$ for some connected set $V \subset \R^2 \setminus E_\eps$ and a neighbourhood $B_r(x)$.

\begin{lemma}[{Approximations of the outward cone}] \label{Lemma_Open_Cone_Touches_x}
Let $x \in \partial E_\eps$, let $\xi_1, \xi_2 \in \textrm{\emph{int}}_{S^1}\Xi_x(E_\eps)$ and define for each $r > 0$ the truncated cone
\begin{equation} \label{Eq_Def_Truncated_Cone}
V_r(x, \xi_1, \xi_2) := \left\{x + s v \, : \, v \in (\xi_1, \xi_2)_{S^1}, \, 0 < s < r \right\}. 
\end{equation}
Then there exists $r > 0$ for which
$V_r(x, \xi_1, \xi_2) \subset \R^2 \setminus E_\eps$.
\end{lemma}

\begin{proof}
Assume to the contrary that for each $n \in \N$ there exists $z_n \in E_\eps \cap V_{1/n}(x, \xi_1, \xi_2)$. Then $z_n \to x$ as $n \to \infty$ and we may assume without loss of generality that
\[ 
\frac{z_n - x}{\norm{z_n - x}} \to \xi \in [\xi_1, \xi_2]_{S^1}.
\]
Since $\xi_1, \xi_2 \in \textrm{int}_{S^1}\Xi_x(E_\eps)$, Proposition~\ref{Prop_structure_of_set_of_outward_directions} implies $\xi \in \textrm{int}_{S^1}\Xi_x(E_\eps)$, which together with Lemma~\ref{Lemma_OD_versus_dot_product_ineq}(i) gives $\langle y - x, \xi \rangle < 0$ for all $y \in \Pi_E(x)$. It follows then from Lemma~\ref{Lemma_Basic_Properties_of_Sequences_Converging_onto_Boundary}(ii) that there exists $N \in \N$, for which $z_n \notin E_\eps$ for all $n \geq N$, which contradicts the assumption.
\end{proof}

\begin{lemma}[{Unique connected component}] \label{Lemma_Unique_Connected_Component}
Let $E \subset \R^2$ and let $x \in \partial E_\eps$ either be a wedge (type S1) or $x \in \Unp{E}$. Then there exists a unique connected component $V \subset \R^2 \setminus E_\eps$, for which $x \in \partial V$.
\end{lemma}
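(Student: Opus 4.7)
The plan is to leverage the local boundary representation from Proposition~\ref{Prop_local_representation_exists} to reduce the statement to a planar topology argument: I will show that near $x$ the complement $E_\eps^c$ locally has a single connected component, and then promote this to global uniqueness.

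First I will fix the local geometry. In both admissible cases---$x \in \Unp{E}$, where $\Pext{x} = \{(\xi, y), (-\xi, y)\}$ for the unique contributor $y$ and some $\xi \perp (y-x)$, and $x$ a wedge, where $\Pext{x} = \{(\xi_1, y_1), (\xi_2, y_2)\}$ with $\xi_1 \neq \pm\xi_2$---the set of extremal pairs has exactly two elements, $\xi_1 \neq \xi_2$, and the open arc $(\xi_1, \xi_2)_{S^1} = \textrm{int}_{S^1}\Xi_x(E_\eps)$ is non-empty (by Proposition~\ref{Prop_structure_of_set_of_outward_directions}). By Proposition~\ref{Prop_local_representation_exists}(a) together with Proposition~\ref{Prop_LBR_Lipschitz}, I can choose $r > 0$ so that
\[
\partial E_\eps \cap \overline{B_r(x)} = \Gamma_1 \cup \Gamma_2,
\]
where each $\Gamma_i := g_{\xi_i, y_i}([0, s_i])$ is a simple Lipschitz arc emanating from $x$ with tangent direction $\xi_i$. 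The cone-separation argument inside the proof of Proposition~\ref{Prop_local_representation_exists}, applied via Lemma~\ref{Lemma_Open_Cone_Touches_x} to some $\eta_1, \eta_2 \in (\xi_1, \xi_2)_{S^1}$, provides (after possibly shrinking $r$) an open cone $V_r(x, \eta_1, \eta_2) \subset E_\eps^c$ separating the two graphs, so that $\Gamma_1 \cap \Gamma_2 = \{x\}$ and $\Gamma := \Gamma_1 \cup \Gamma_2$ is a simple Lipschitz arc through $x$ with its two endpoints on $\partial B_r(x)$.

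Next I apply the Jordan curve theorem. Closing $\Gamma$ with one of the two subarcs of $\partial B_r(x)$ joining its endpoints yields a simple closed curve in $\overline{B_r(x)}$, so $B_r(x) \setminus \Gamma$ has exactly two open connected components. Since $\partial E_\eps \cap B_r(x) \subset \Gamma$, the set $B_r(x) \setminus \Gamma$ is the disjoint union of the open sets $\textrm{int}(E_\eps) \cap B_r(x)$ and $E_\eps^c \cap B_r(x)$. Both are non-empty: the latter because $x \in \partial E_\eps$ (and the cone $V_r(x, \eta_1, \eta_2)$ explicitly witnesses this), the former because for any $y \in \Pi_E(x)$ one has $B_\eps(y) \subset \textrm{int}(E_\eps)$ with $x \in \partial B_\eps(y)$, hence $B_\eps(y) \cap B_r(x) \neq \emptyset$. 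Therefore each of these two sets coincides with one of the two connected components of $B_r(x) \setminus \Gamma$; in particular $E_\eps^c \cap B_r(x)$ is connected.

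For the global conclusion, let $V$ be the connected component of the open set $E_\eps^c$ containing the connected set $E_\eps^c \cap B_r(x)$. Every neighbourhood of $x$ meets $V$ while $x \notin V$, so $x \in \partial V$. Conversely, if $V' \subset E_\eps^c$ is any connected component with $x \in \partial V'$, then $V' \cap B_r(x) \neq \emptyset$, and the resulting intersection lies in $E_\eps^c \cap B_r(x) \subset V$; disjointness of components of the open set $E_\eps^c$ then forces $V' = V$, yielding the claimed uniqueness. The only step that requires care beyond bookkeeping is the pair of planar-topology assertions: the simple Jordan-arc structure of $\Gamma_1 \cup \Gamma_2$---which I expect to be the main obstacle, though it is already essentially handled by the cone-separation argument from Proposition~\ref{Prop_local_representation_exists}---and the subsequent invocation of the Jordan curve theorem to split $B_r(x)$ into exactly two components.
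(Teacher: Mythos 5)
Your proof is correct in substance, but it follows a genuinely different route from the paper's. The paper never invokes the Jordan curve theorem: it obtains existence directly from Lemma~\ref{Lemma_Open_Cone_Touches_x} (the truncated cone $V_r(x,\eta_1,\eta_2)\subset E_\eps^c$ lies in some component $V$ with $x\in\partial V$), and it proves uniqueness by a coordinate argument---if $W\neq V$ had $x\in\partial W$, a point $w(s)=x+s\xi+t_W(s)(x-y)\in W$ with $t_W(s)>f^{\xi,y}(s)$ could be joined to a cone point $v(s)$ with $t_V(s)>t_W(s)$ by the vertical segment $\{x+s\xi+t(x-y): t\in[t_W(s),t_V(s)]\}\subset E_\eps^c$, contradicting disjointness of components. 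You instead prove the stronger local statement that $E_\eps^c\cap B_r(x)$ is itself connected, by exhibiting $\partial E_\eps\cap\overline{B_r(x)}$ as a crosscut $\Gamma=\Gamma_1\cup\Gamma_2$ of the disc and splitting $B_r(x)\setminus\Gamma$ into exactly two components, which must be $\mathrm{int}(E_\eps)\cap B_r(x)$ and $E_\eps^c\cap B_r(x)$. What you prove is essentially the content of Proposition~\ref{Prop_regular_and_wedge_complement_cone}, which the paper derives \emph{after} (and using) Lemma~\ref{Lemma_Unique_Connected_Component}; your argument buys that stronger conclusion in one step at the cost of importing the crosscut form of the Jordan curve theorem, whereas the paper's argument is more elementary and stays entirely within the local-boundary-representation machinery.

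Two small points to tighten. First, in the case $x\in\Unp{E}$ the extremal outward directions satisfy $\xi_1=-\xi_2$, so by Definition~\ref{Def_geodesic_arc-segment} the set $(\xi_1,\xi_2)_{S^1}$ is empty and is \emph{not} equal to $\mathrm{int}_{S^1}\Xi_x(E_\eps)$ (which is an open half-circle); you should choose $\eta_1,\eta_2\in\mathrm{int}_{S^1}\Xi_x(E_\eps)$ directly, as the paper does. Second, for $\Gamma$ to be a genuine crosscut you need each arc $\Gamma_i$ to meet $\partial B_r(x)$ only at its terminal point; this requires either shrinking $r$ so that $s\mapsto\norm{g_{\xi_i,y_i}(s)-x}$ is strictly increasing (as the paper arranges in the proof of Proposition~\ref{Prop_regular_and_wedge_complement_cone}, via Proposition~\ref{Prop_Tangents_are_Defined} and Lemma~\ref{Lemma_Convergence_of_Contributing_Points_of_Sequences_of_Boundary_Points}~(i)) or truncating each arc at its first hitting time of $\partial B_r(x)$. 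Neither issue affects the validity of the overall strategy.
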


\begin{proof}
Since $\textrm{int}_{S^1} \Xi (x) \neq \varnothing$ whenever $x$ is a wedge or $x \in \Unp{E}$, there exist outward directions $\xi_1, \xi_2 \in \textrm{int}_{S^1} \Xi_x(E_\eps)$. Lemma~\ref{Lemma_Open_Cone_Touches_x} then guarantees the existence of some $r > 0$ for which the truncated open cone
\begin{equation} \label{Eq_Open_r_cone}
V_r(x, \xi_1, \xi_2) := \left\{x + s v \, : \, v \in (\xi_1, \xi_2)_{S^1}, \, 0 < s < r \right\}
\end{equation}
satisfies $V_r(x, \xi_1, \xi_2) \subset \R^2 \setminus E_\eps$. Consequently, there exists a connected component $V$ of the complement $\R^2 \setminus E_\eps$ that satisfies $V_r(x, \xi_1, \xi_2) \subset V$ and $x \in \partial V$.

For each $(\xi, y) \in \Pext{x}$, let $f^{\xi, y} : [0, r] \to \R$ be the continuous function corresponding to the local boundary representation $\cG(x)$ with radius $0 < r < \eps/2$, see~\eqref{Eq_Canonical_LBR}. 
To prove uniqueness, assume contrary to the claim that there exists another connected component $W \subset \R^2 \setminus E_\eps$ with $W \neq V$ and $x \in \partial W$. Then for at least one extremal outward direction $\xi \in \Xext{x}$ there exists 
a sequence $\big(t_n^W\big)_{n=0}^\infty$ of coefficients and some $N \in \N$ for which
\[
w(n) := x + n^{-1} \xi + t_n^W \frac{x-y}{\eps} \in W
\]
and $t_n^W > f^{\xi,y}(n^{-1})$ for all $n > N$. Note that the outward directions $\xi_1, \xi_2$ in the definition of the cone $V_r(x, \xi_1, \xi_2)$ above may be chosen arbitrarily close to the extremal outward directions. Hence, it follows that there exist some outward directions $\xi_1', \xi_2'$, a corresponding radius $r' > 0$ and $N' \in \N$, such that for all $n > N'$ there exist coefficients $t_n^V > f^{\xi,y}(n^{-1})$ satisfying
\begin{equation} \label{Eq_points_v(n)}
v(n) := x + n^{-1} \xi + t_n^V \frac{x-y}{\eps} \in V_{r'}(x, \xi_1', \xi_2').
\end{equation}
Since $V_{r'}(x, \xi_1', \xi_2') \cap V_r(x, \xi_1, \xi_2) \neq \varnothing$, \eqref{Eq_points_v(n)} implies $v(n) \in V$ for all $n > N'$. 
It follows then from the definition of the local boundary representation that
\[
z(n, \alpha) := x + n^{-1} \xi + \big(\alpha t_n^W + (1 - \alpha) t_n^V \big) \frac{x-y}{\eps} \notin E_\eps
\]
for all $\alpha \in (0, 1)$. But then the line segment from $z(n, 0)$ to $z(n, 1)$ connects the sets $W$ and $V$ in $\R^2 \setminus E_\eps$, which contradicts the assumption that $V$ and $W$ are disjoint connected components of the complement $\R^2 \setminus E_\eps$.
\end{proof}

\begin{prop}[{Geometry of the complement}] \label{Prop_regular_and_wedge_complement_cone}
Let $E \subset \R^2$ and let $x \in \partial E_\eps$ either be a wedge or $x \in \Unp{E}$. Then there exists some $r > 0$, for which
\begin{equation} \label{Eq_Cone_repr_nro_1}
B_r(x) \setminus E_\eps = V \cap B_r(x) = \bigcup_{0 < \rho < r} x + A(\rho),
\end{equation}
where $V \subset \R^2 \setminus E_\eps$ is connected and 
for each $\rho \in (0,r)$ either $A(\rho) = \rho(\alpha_\rho, \beta_\rho)_{S^1}$ or $A(\rho) = \rho \big(S^1 \setminus [\alpha_\rho, \beta_\rho]_{S^1}\big)$ for $\alpha_\rho, \beta_\rho \in S^1$ 
and $\alpha_\rho \to \xi_\alpha$, $\beta_\rho \to \xi_\beta$, where $\Xi_{\textrm{ext}}(x) = \{\xi_\alpha, \xi_\beta\}$.
\end{prop}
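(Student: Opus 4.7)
My approach exploits the local boundary representation $\cG(x) = \{g_{\xi_\alpha, y_\alpha}, g_{\xi_\beta, y_\beta}\}$ from Proposition \ref{Prop_local_representation_exists}. Since $x$ is a wedge or lies in $\Unp{E}$, no two extremal contributors are diametrically opposed, so case (a) of Proposition \ref{Prop_local_representation_exists} applies and the proof of that proposition shows that in a sufficiently small ball $\overline{B_{r_0}(x)}$ the two graphs are separated by an open cone and meet only at $x$; thus $\partial E_\eps \cap \overline{B_{r_0}(x)}$ is a Lipschitz Jordan arc with $x$ as an interior point. I would then upgrade Proposition \ref{Prop_LBR_Lipschitz} to strict radial monotonicity of $s \mapsto \norm{g_{\xi, y}(s) - x}$: writing $\phi(s) := s^2 + \eps^2 f^{\xi, y}(s)^2$, the bounds $|f^{\xi, y}(s)| \leq s/(\sqrt{3}\eps)$ and $|(f^{\xi, y})'(s)| \leq 1/(\sqrt{3}\eps)$ yield $\phi'(s) \geq 4s/3$ almost everywhere, so $\phi$ is strictly increasing. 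Choosing $r \in (0, r_0)$ so that both graphs reach $\partial B_r(x)$, for every $\rho \in (0, r]$ each graph crosses $\partial B_\rho(x)$ at exactly one point, whence $\partial E_\eps \cap \partial B_\rho(x) = \{p_\alpha(\rho), p_\beta(\rho)\}$ and this pair divides $\partial B_\rho(x)$ into two complementary open arcs.

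Next I would show that one of these two arcs lies in $E_\eps^c$ and the other in $\textrm{int}(E_\eps)$. For any $\xi_1', \xi_2' \in \textrm{int}_{S^1}\Xi_x(E_\eps)$, Lemma \ref{Lemma_Open_Cone_Touches_x} together with Lemma \ref{Lemma_Unique_Connected_Component} places the truncated cone $V_r(x, \xi_1', \xi_2')$ inside the unique connected component $V \subset E_\eps^c$ with $x \in \partial V$, so the arc of $\partial B_\rho(x) \setminus \{p_\alpha, p_\beta\}$ that meets $V_r$ sits entirely in $V$. Conversely, for every $\eta \in S^1 \setminus \Xi_x(E_\eps)$, Lemma \ref{Lemma_OD_versus_dot_product_ineq}(i) produces some $y \in \Pi_E(x)$ with $\langle y - x, \eta \rangle > 0$, and Lemma \ref{Lemma_Basic_Properties_of_Sequences_Converging_onto_Boundary}(i) then puts $x + s\eta$ in $\textrm{int}(E_\eps)$ for small $s > 0$. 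A compactness argument on the closed inward subarc bounded away from $\xi_\alpha, \xi_\beta$ (using that $v \mapsto \max_{y \in \Pi_E(x)} \langle y - x, v \rangle$ is continuous in $v$ and strictly positive off $\Xi_x(E_\eps)$) makes this uniform, so after possibly shrinking $r$, the opposite arc of $\partial B_\rho(x) \setminus \{p_\alpha, p_\beta\}$ lies entirely in $\textrm{int}(E_\eps)$. Setting $A(\rho) := (V \cap \partial B_\rho(x)) - x \subset \rho S^1$ with endpoints $\rho\alpha_\rho := p_\alpha(\rho) - x$ and $\rho\beta_\rho := p_\beta(\rho) - x$ where $\alpha_\rho, \beta_\rho \in S^1$, unioning over $\rho$ gives $E_\eps^c \cap B_r(x) = V \cap B_r(x) = \bigcup_{0 < \rho < r}(x + A(\rho))$, which takes the form $\rho(\alpha_\rho, \beta_\rho)_{S^1}$ or $\rho(S^1 \setminus [\alpha_\rho, \beta_\rho]_{S^1})$ depending on whether the outward sector at $x$ subtends less than or more than $\pi$.

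For the convergence $\alpha_\rho, \beta_\rho \to \xi_\alpha, \xi_\beta$, Proposition \ref{Prop_Tangents_are_Defined} yields immediately that every accumulation point of $\alpha_\rho$ or $\beta_\rho$ as $\rho \to 0^+$ lies in $T_x(E_\eps) = \Xext{x} = \{\xi_\alpha, \xi_\beta\}$. To exclude both endpoints accumulating at the same $\xi_\ast \in \Xext{x}$, I use that the previous paragraph gives $\rho(\xi_1', \xi_2')_{S^1} \subset A(\rho)$ for any $\xi_1', \xi_2' \in \textrm{int}_{S^1}\Xi_x(E_\eps)$ and all sufficiently small $\rho > 0$; choosing such $\xi_1', \xi_2'$ arbitrarily close to $\xi_\alpha$ and $\xi_\beta$ respectively forces $\alpha_\rho$ and $\beta_\rho$ to converge to distinct elements of $\Xext{x}$. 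I expect the main obstacle to be exactly this endpoint-separation step, since the radial monotonicity and the Jordan arc structure handle the rest rather routinely; the statement's flexibility in allowing either $(\alpha_\rho, \beta_\rho)_{S^1}$ or $S^1 \setminus [\alpha_\rho, \beta_\rho]_{S^1}$ uniformly accommodates both the wedge case and the degenerate $\Unp{E}$ case $\xi_\alpha = -\xi_\beta$.
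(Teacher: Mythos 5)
Your overall strategy is essentially the paper's: both arguments start from the local boundary representation of Proposition~\ref{Prop_local_representation_exists}, reduce the claim to the statement that each circle $\partial B_\rho(x)$ meets $\partial E_\eps$ in exactly two points whose directions converge to the extremal outward directions, and use Lemmas~\ref{Lemma_Open_Cone_Touches_x} and~\ref{Lemma_Unique_Connected_Component} to place the outward arc inside the unique component $V$. Your explicit derivative estimate $\phi'(s)\geq 4s/3$ for $\phi(s)=\norm{g_{\xi,y}(s)-x}^2$ is a welcome sharpening of the paper's bare assertion that the radial distances $\cD_1,\cD_2$ may be assumed strictly increasing, and your endpoint-separation argument in the last paragraph plays the same role as the cone $V_r(x,\xi_1,\xi_2)$ and the bisector $\xi_{\mathrm{av}}$ in the paper's proof.

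The one step that does not go through as written is the claim that the inward arc of $\partial B_\rho(x)\setminus\{p_\alpha(\rho),p_\beta(\rho)\}$ lies in $\mathrm{int}(E_\eps)$. Your uniform lower bound on $v\mapsto\max_{y\in\Pi_E(x)}\langle y-x,v\rangle$ is available only on closed sets of directions bounded away from $\Xi_x(E_\eps)$, whereas the inward arc has endpoints whose directions $\alpha_\rho,\beta_\rho$ converge to $\xi_\alpha,\xi_\beta$ (and, since the graphs may bend into the outward cone, the inward arc can even contain directions lying inside $\Xi_x(E_\eps)$, where the maximum is $\leq 0$). Points of the inward arc near $p_\alpha(\rho)$ need not lie in $B_\eps(y)$ for any $y\in\Pi_E(x)$; they belong to $E_\eps$ only on account of contributors near, but distinct from, the extremal contributors --- which is precisely why the paper invokes Proposition~\ref{Prop_local_contribution} at this stage. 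The gap is easy to close with an ingredient you already use for the outward arc: the inward arc is connected and disjoint from $\partial E_\eps$ (by your two-point intersection claim), hence lies entirely in $\mathrm{int}(E_\eps)$ or entirely in $E_\eps^c$, and your scalar-product argument does produce a single interior point on it (take the direction $(y-x)/\eps$ in the $\Unp{E}$ case, or $-\xi_{\mathrm{av}}$ for a wedge, after shrinking $r$). With that observation added, the proof is complete.
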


\begin{proof}
Let $\cG(x)$ be a local boundary representation at $x$ with radius $r > 0$, see Proposition~\ref{Prop_local_representation_exists}. Then each $g_{\xi,y} \in \cG(x)$ is of the form
\[
g_{\xi, y}(s) = x + s\xi + f^{\xi, y}(s)\frac{x-y}{\eps}
\]
where the functions $f^{\xi, y} : [0, r] \to \R$ are continuous. We divide the proof into two parts according to whether (i) $x \in \Unp{E}$ or (ii) $x$ is a wedge.

{(i)} Assume first that $x \in \Unp{E}$ with $\Pi_E(x) = \{y\}$. Then $\cG(x) = \left\{g_{\xi_1, y}, g_{\xi_2, y}\right\}$ where $g_{\xi_i, y}  \, : \, [0,r] \to \R^2$ and
\[
g_{\xi_i, y}(s) = x + s\xi_i + f^{\xi_i, y}(s)\frac{x-y}{\eps}
\]
 for $i \in \{1,2\}$.
Consider for each $s \in (0,r)$ the distances
\[
\cD_1(s) := \norm{g_{\xi_1, y}(s) - x}, \quad \cD_2(s) := \norm{g_{\xi_2, y}(s) - x}.
\]
Due to Proposition~\ref{Prop_Tangents_are_Defined} and Lemma~\ref{Lemma_Convergence_of_Contributing_Points_of_Sequences_of_Boundary_Points}(i) we can assume $r$ to be small enough so that both $\cD_1$ and $\cD_2$ are strictly increasing on $(0, r)$.
One can thus define for each $\rho \in (0, r)$ the points $\alpha_\rho, \beta_\rho \in S^1$ by
\begin{align*}
\alpha_\rho := \frac{g_{\xi_1, y}\left(\cD_1^{-1}(\rho)\right) - x }{\rho}, \qquad
\beta_\rho := \frac{g_{\xi_2, y}\left(\cD_2^{-1}(\rho)\right) - x}{\rho}.
\end{align*}
According to Lemma~\ref{Lemma_Unique_Connected_Component} there exists a unique connected component $V$ of the complement $\R^2 \setminus E_\eps$ for which $x \in \partial V$. Due to Proposition~\ref{Prop_local_contribution}, we can assume $r$ to be sufficiently small so that
\[
B_r(x) \cap E \subset \overline{B_\eps \left(E \cap B_{\eps/2}(y) \right)}.
\]
This implies that for each $\rho \in (0, r)$ the geodesic curve segment
\[
A(\rho) := \rho (\alpha_\rho, (x - y) / \eps)_{S^1} \cup \{\rho (x - y) / \eps\}\cup \rho ((x - y) / \eps, \beta_\rho)_{S^1} \subset \rho S^1
\]
satisfies
\[
x + A(\rho) \subset V \cap B_r(x) \quad \mathrm{and} \quad
x + \rho S^1 \setminus A(\rho) \subset \overline{B_\eps \left(E \cap B_{\eps/2} (y)\right)} \subset E_\eps.
\]
Hence,
\[
B_r(x) \setminus E_\eps = V \cap B_r(x) = \bigcup_{0 < \rho < r} x + A(\rho).
\]

{(ii)} Let then $x$ be a wedge. As above, one can assume that the distances 
\[
\cD_1(s) := \norm{g_{\xi_1, y_1}(s) - x}, \quad \cD_2(s) := \norm{g_{\xi_2, y_2}(s) - x}
\]
are strictly increasing in $(0, r)$. Furthermore, since $\langle \xi_1, \xi_2 \rangle > -1$, one may define the average $\xi_{\mathrm{av}} := (\xi_1 + \xi_2)/\norm{\xi_1 + \xi_2}$. Due to Lemma~\ref{Lemma_Convergence_of_Contributing_Points_of_Sequences_of_Boundary_Points}(i) and the fact that $\xi_{\mathrm{av}} \notin \Xext{x}$, we may assume $r$ to be small enough so that the boundary segments represented by the functions $g_{\xi_i, y_i}$ are separated by the line segment $\{ x + \rho \xi_{\mathrm{av}} \, : \, \rho \in (0, r) \}$ in the neighbourhood $B_r(x)$.

For each $\rho \in (0, r)$, we once again define the points $\alpha_\rho, \beta_\rho \in S^1$ by
\begin{align*}
\alpha_\rho := \frac{g_{\xi_1, y}\left(\cD_1^{-1}(\rho)\right) - x}{\rho} \qquad \textrm{and} \qquad
\beta_\rho := \frac{g_{\xi_2, y}\left(\cD_2^{-1}(\rho)\right) - x}{\rho}.
\end{align*}
Analogously to the reasoning in the case $x \in \Unp{E}$, Proposition~\ref{Prop_local_contribution} and Lemma~\ref{Lemma_Unique_Connected_Component} guarantee that for all $\rho \in (0, r)$ the geodesic curve segment
\[
\widehat{A}(\rho) := \rho (\alpha_\rho, \xi_{\mathrm{av}})_{S^1} \cup \{\rho \xi_{\mathrm{av}}\}\cup \rho (\xi_{\mathrm{av}}, \beta_\rho)_{S^1} \subset \rho S^1
\]
satisfies
\[
x + \widehat{A}(\rho) \subset V \cap B_r(x) \quad \mathrm{and} \quad
x +  \rho S^1 \setminus \widehat{A}(\rho) \subset \overline{B_\eps \left(E \cap B_{\eps/2} (\{y_1,y_2\})\right)} \subset E_\epsilon,
\]
where $V$ is the unique connected component of the complement $\R^2 \setminus E_\eps$ for which $B_r(x) \setminus E_\eps = B_r(x) \cap V$. Hence
\[
B_r(x) \setminus E_\eps = V \cap B_r(x) = \bigcup_{0 < \rho < r} x + \widehat{A}(\rho). \qedhere
\]
\end{proof}
\chapter{Classification of Boundary Points} \label{Sec_Singularities}
In this chapter we present a classification of the boundary points $x \in \partial E_\eps$ based on their local geometric and topological properties. Using the results obtained in Chapters~\ref{Sec_Epsilon_Neighbourhoods} and~\ref{Sec_Local_Structure} above, we prove our first main result, Theorem~\ref{Thm_Main_1}, which states that the classification given in Definition~\ref{Def_classification_of_singularities} defines a partition of the boundary $\partial E_\eps$ into disjoint subsets.

The geometric aspect of the classification scheme relies on the orientation of the extremal contributors $y \in \Cext{x}$ at each boundary point $x \in \partial E_\eps$. In the planar case, there are essentially three different ways this orientation can be realised, depicted schematically in Figure~\ref{Figure_Three_Basic_Cases} below. The defining property $y_1 - x = -(y_2 - x)$ for the extremal contributors $y_1, y_2 \in \Cext{x}$ in case (c) can be equivalently expressed by $\big \langle (y_1 - x)/\eps, (y_2 - x)/\eps \big \rangle = -1$,
and we will make use of both formulations in what follows. Significantly, it follows from~\eqref{Eq_Def_Critical_Points_Ferry} and~\cite[Lemma 4.2]{Fu_Tubular_neighborhoods} that $x \in \partial E_\eps$ is a critical point of the distance function $d_E$ if and only if it is of type ({\footnotesize C}) in Figure~\ref{Figure_Three_Basic_Cases}.

\begin{figure}[h]
      \centering
      \captionsetup{margin=0.75cm}
                \includegraphics[width = \textwidth]{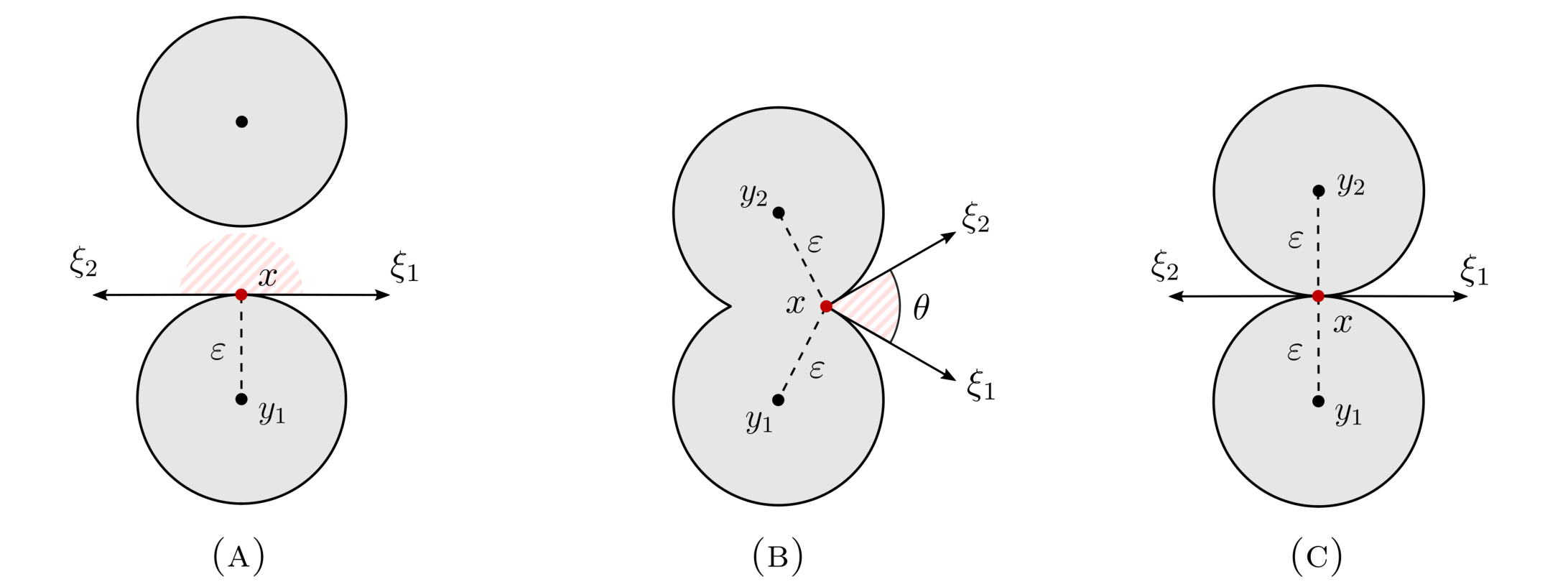} \\[0mm]
               \caption{Three basic scenarios for local boundary geometry in terms of the relative positions of the extremal contributors. In our classification of boundary points (Definition~\ref{Def_classification_of_singularities}), case ({\footnotesize A}) corresponds to smooth points and singularity types S4 and S5, case ({\footnotesize B}) to type S1, and case ({\footnotesize C}) to types S2, S3 and S6--S8. ({\footnotesize A}) At each $x \in \Unp{E}$ the extremal outward directions satisfy $\xi_1 = - \xi_2$, and the set $\Xi_x(E_\eps)$ spans a half-circle. ({\footnotesize B}) Each wedge $x$ has two extremal contributors $y_1, y_2$ and two extremal outward directions $\xi_1, \xi_2$, forming an angle $\theta = \sphericalangle(\xi_1, \xi_2)$. ({\footnotesize C}) For $\Cext{x} = \{y_1, y_2\}$ with $y_1 - x = - (y_2 - x)$, the set of extremal outward directions satisfies $\Xext{x} \subset \{\xi_1, \xi_2\}$.}
                \label{Figure_Three_Basic_Cases}
\end{figure}

\section{Types of Singularities} \label{Subsect_Types_of_Singularities}
Definition~\ref{Def_classification_of_singularities} below provides a classification of all possible boundary singularities into eight categories. For a schematic illustration of the different types of singularities, see Figure~\ref{Figure_Types_of_Singularities} in the Introduction. Recall that $U_r(x, v)$ denotes an open $x$-centered half-ball of radius $r$, oriented in the direction of $v \in S^1$, see~\eqref{Def_oriented_half_ball}. We denote by $\Sing$ the set of singularities on the boundary $\partial E_\eps$.

\begin{definition}[{Types of singularities}] \label{Def_classification_of_singularities}
Let $E \subset \R^2$ be closed, let $x \in \Sing$ and let $\Xext{x} = \{\xi_1, \xi_2\}$ be the set of extremal outward directions, where we allow for the possibility $\xi_1 = \xi_2$. We define the following eight types of singularities.

\begin{enumerate}
\item[\textbf{S1:}] $x$ is a \emph{wedge},
if $\xi_1 \notin \{\xi_2, -\xi_2\}$, i.e.~$0 < \theta < \pi$ for the angle $\theta$ between the vectors $\xi_1, \xi_2$;

\item[\textbf{S2:}] $x$ is a \emph{(one-sided) sharp singularity}, if $\xi_1 = \xi_2$, and there exists some $\delta > 0$ for which $B_\delta(x) \setminus E_\eps$ is a connected set;

\item[\textbf{S3:}] $x$ is a \emph{sharp-sharp singularity}, if $\xi_1 = -\xi_2$ and for each $i \in \{1,2\}$ there exists some $\delta_i > 0$ for which $U_{\delta_i}(x, \xi_i) \setminus E_\eps$ is a connected set;

\item[\textbf{S4:}] $x$ is a \emph{(one-sided) shallow singularity} if $x \in \Unp{E}$ and
\begin{enumerate}
\item[(i)]  $U_{\delta_1}(x, \xi_1) \cap \partial E_\eps \subset \Unp{E}$ for some $\delta_1 > 0$, and
\item[(ii)] $U_{\delta_2}(x, \xi_2) \cap \partial E_\eps \not \subset \Unp{E}$ for all $\delta_2 > 0$.
\end{enumerate}

\item[\textbf{S5:}] $x$ is a \emph{shallow-shallow singularity} if $x \in \Unp{E}$ and $U_\delta(x, \xi_i) \cap \partial E_\eps \not \subset \Unp{E}$ for all $\delta > 0$ and $i \in \{1,2\}$.

\item[\textbf{S6:}] $x$ is a \emph{(one-sided) chain singularity}, if $\xi_1 = \xi_2$ and
there exists a sequence of singularities $(x_n)_{n=1}^\infty \subset \Sing$,
for which $x_n \to x$ and
\[
\left\langle \frac{y_n^{(1)} - x_n}{\eps}, \frac{y_n^{(2)} - x_n}{\eps} \right \rangle \rightarrow -1,
\]
where $\big \{y_n^{(1)}, y_n^{(2)} \big \} = \Cext{x_n}$ is the set of extremal contributors at $x_n$;

\item[\textbf{S7:}] $x$ is a \emph{chain-chain singularity}, if $\xi_1 = -\xi_2$ and
for each $i \in \{1,2\}$ there exists some $\delta_i > 0$ and a sequence $(x_{i,n})_{n=1}^\infty \subset U_{\delta_i}(x, \xi_i) \cap \Sing$, for which $x_{i,n} \to x$ and
\[
\left\langle \frac{y_{i,n}^{(1)} - x_{i,n}}{\eps}, \frac{y_{i,n}^{(2)} - x_{i,n}}{\eps} \right \rangle \rightarrow -1,
\]
where $\big \{y_{i,n}^{(1)}, y_{i,n}^{(2)} \big \} = \Cext{x_{i,n}}$ is the set of extremal contributors at $x_{i,n}$;

\item[\textbf{S8:}] $x$ is a \emph{sharp-chain singularity}, if $\xi_1 = -\xi_2$ and
\begin{enumerate}
\item[(i)] there exists a $\delta_1 > 0$ for which $U_{\delta_1}(x, \xi_1) \setminus E_\eps$ is a connected set, and
\item[(ii)] there exists some $\delta_2 > 0$ and a sequence $(x_n)_{n=1}^\infty \subset U_{\delta_2}(x, \xi_2) \cap \Sing$, for which $x_n \to x$ and
\[
\left\langle \frac{y_n^{(1)} - x_n}{\eps}, \frac{y_n^{(2)} - x_n}{\eps} \right \rangle \rightarrow -1,
\]
where $\big \{y_n^{(1)}, y_n^{(2)} \big \} = \Cext{x_n}$ is the set of extremal contributors at $x_n$.
\end{enumerate}
\end{enumerate}
\end{definition}

Note that S8 may be interpreted both as a sharp singularity and as a chain singularity. Theorem~\ref{Thm_Main_3} below states that the set of \emph{chain singularities}
\[
\cC(\partial E_\eps) := \{x \in \partial E_\eps \, : \, x \textrm{ is of type S6--S8}\}
\]
is closed. On the other hand, the singularities of type S8 share an important property with those of types S1--S5: they all lie on the boundary $\partial V$ of some connected component $V$ of the complement $\R^2 \setminus E_\eps$. We show in Corollary~\ref{Cor_Inaccessible_Singularities} that this is exactly the property that is lacking from singularities of type S6 and S7. See also Remark~\ref{Remark_Inaccessible_Points}.

Motivated by these considerations we define a boundary point to be
\begin{itemize}
\item[(i)] a \emph{sharp singularity}, if it is of type S2, S3 or S8,
\item[(ii)] a \emph{chain singularity}, if it is of type S6, S7 or S8, and
\item[(iii)] an \emph{inaccessible singularity}, if it is of type S6 or S7.
\end{itemize}

The typology presented above is neither strictly topological nor strictly geometric. If one wanted to accomplish a strictly topological classification for neighbourhoods $\partial E_\eps \cap B_\delta(x)$ for some $\delta := \delta(x) > 0$, types S6--S8 would necessitate an infinite tree-like classification scheme, in order to account for the potentially accumulating chain and shallow singularities in arbitrarily small neighbourhoods $B_r(x)$ with $0 < r < \delta$, see Section \ref{Subsec_Topology_of_Chain} and Theorem \ref{Thm_Main_3}.

When the complement $\overline{\R^2 \setminus E_\eps}$ has positive reach, Definition~\ref{Def_classification_of_singularities} can be directly compared with~\cite[Definition 6.3]{Rataj_Zajicek_On_the_structure_of_sets_with_PR}.\footnote{We show in Chapter~\ref{Sec_Pos_Reach} that only the existence of certain kinds of chain singularities can cause the complement $\overline{\R^2 \setminus E_\eps}$ not to have positive reach.} The latter defines three distinct types of geometries $T^i$, for $i \in \{1,2,3\}$. Since these apply to the complement $\overline{\R^2 \setminus E_\eps}$, they represent 'mirror images' of the corresponding local geometry of $E_\eps$. This classification describes the geometry at a different level of detail compared to Definition~\ref{Def_classification_of_singularities}, which can be seen as a refinement of the former. Out of the types defined in~\cite[Definition 6.3]{Rataj_Zajicek_On_the_structure_of_sets_with_PR}, $T^1$ corresponds in our classification to smooth points, wedges (type S1) and shallow singularities (types S4 and S5), $T^2$ corresponds to sharp-sharp (type S3), chain-chain (type S7) and sharp-chain (type S8) singularities and $T^3$ corresponds to one-sided sharp (type S2) and chain (type S6) singularities.

\section{Classification of Singularities} \label{subsection_proof}
The main goal of this section is to prove that our classification of singularities in fact defines a partition on the boundary $\partial E_\eps$. 
We begin by characterising the topology and geometry of the complement $\R^2 \setminus E_\eps$ near those singularities $x \in \Sing$ whose extremal contributors $y_1,y_2 \in \Cext{x}$ satisfy $y_1 - x = -(y_2 - x)$. Geometrically these correspond to case ({\footnotesize C}) in Figure~\ref{Figure_Three_Basic_Cases}.

\begin{prop}[{Difference between sharp-type and chain-type geometry}] \label{Prop_Sharp_Singularity_Types}
Let $E \subset \R^2$, $x \in \partial E_\eps$ and $\Cext{x} = \{y_1, y_2\}$ with $y_1 - x = -(y_2 - x)$. Furthermore, let $\cG(x)$ be a local boundary representation at $x$ with radius $r > 0$, let $\xi \in \Xext{x}$ be an extremal outward direction, and let $g_{\xi, y_1}, g_{\xi, y_2} \in \cG(x)$ be as in~\eqref{Eq_Canonical_LBR}.
Then exactly one of the cases (i) and (ii) below holds true:
\begin{itemize}
  \item[(i)] \emph{(sharp-type)} There exists some $r > 0$, for which $g_{\xi, y_1}(s) \neq g_{\xi, y_2}(s)$ for all $s \in (0, r)$, and
  \begin{equation} \label{Eq_Directed_Cone_Repr}
    U_{r}(x, \xi) \setminus E_\eps = V_\xi \cap U_{r}(x, \xi) = \bigcup_{0 < s < r} x + s \big(\alpha(s), \beta(s) \big)_{S^1},
  \end{equation}
  where $V_\xi$ is the unique connected component of $\R^2 \setminus E_\eps$ intersecting $U_{r}(x, \xi)$, $\alpha(s), \beta(s) \in S^1$ for all $s \in (0, r)$ and $\alpha(s), \beta(s) \to \xi$ as $s \to 0$.
  \item[(ii)] \emph{(chain-type)} There exists a sequence $(s_n)_{n=1}^\infty \subset \R_+$ with the following properties:
  \begin{itemize}
  \item[(a)] $s_n \to 0$ and $g_{\xi, y_1}(s_n) = g_{\xi, y_2}(s_n)$ for all $n \in \N$. We denote this common value by $x_n$.
  \item[(b)]
      There exists some $r > 0$ and a sequence $(V_n)_{n=1}^\infty \subset U_r(x, \xi)$ of disjoint connected components of $\R^2 \setminus E_\eps$ with $\mathrm{dist}_H (x, V_n) \to 0$ as $n \to \infty$
      and $x_n \in \partial V_n$ for all $n \in \N$.
   \item[(c)]
     $x_n \in \Sing$ for each $n \in \N$, with
    \begin{equation} \label{Eq_Limits_of_contributor_vectors}
    \lim_{n \to \infty} \left \langle \frac{y_n^{(1)} - x_n}{\eps}, \frac{y_n^{(2)} - x_n}{\eps} \right \rangle = -1,
    \end{equation}
    where $\Cext{x_n} = \big\{y_n^{(1)}, y_n^{(2)}\big\}$ for all $n \in \N$.
  \end{itemize}
\end{itemize}
\end{prop}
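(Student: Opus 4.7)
The approach is to introduce the continuous function $\alpha_\xi(s) := f^{\xi,y_1}(s) + f^{\xi,y_2}(s)$ already used in the proof of Proposition~\ref{Prop_local_representation_exists}. Because $y_2 - x = -(y_1 - x)$, we have the identity
\[
g_{\xi,y_2}(s) - g_{\xi,y_1}(s) = -\alpha_\xi(s)\,(x - y_1),
\]
so $g_{\xi,y_1}(s) = g_{\xi,y_2}(s)$ precisely when $\alpha_\xi(s) = 0$, and $\alpha_\xi(0) = 0$. The governing dichotomy is: either (A) there exists $r>0$ with $\alpha_\xi(s)<0$ on $(0,r)$, or (B) the zeros of $\alpha_\xi$ accumulate at $0$; these are mutually exclusive and jointly exhaustive. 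The plan is to show that (A) delivers case (i) and (B) delivers case (ii). Note that (A) corresponds to the ``interval'' alternative in Proposition~\ref{Prop_local_representation_exists}(a) for the pair $(\xi,y_1)$, possibly after shrinking $r$ to exclude interior zeros of $\alpha_\xi$.

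In case (A), both restricted curves $g_{\xi,y_i}|_{[0,r]}$ lie on $\partial E_\eps$ by Proposition~\ref{Prop_local_representation_exists}, and local contribution (Proposition~\ref{Prop_local_contribution}) together with the very definition of $f^{\xi,y_i}$ identifies $E_\eps^c \cap U_r(x,\xi)$ with exactly those points $x + s\xi + t(x - y_1)$ for which $t \in (f^{\xi,y_1}(s), -f^{\xi,y_2}(s))$. I define $\alpha(s), \beta(s) \in S^1$ as the unit directions from $x$ to the points at which $g_{\xi,y_1}, g_{\xi,y_2}$ meet $\partial B_s(x)$ (unique for small $s$ by the Lipschitz estimate of Proposition~\ref{Prop_LBR_Lipschitz}); Proposition~\ref{Prop_Tangents_are_Defined} then forces $\alpha(s), \beta(s) \to \xi$ as $s \to 0$. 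The resulting open region $\bigcup_{0 < s < r} x + s(\alpha(s),\beta(s))_{S^1}$ is path-connected and contained in $E_\eps^c$, and its uniqueness as a connected component of $E_\eps^c \cap U_r(x,\xi)$ follows by an argument analogous to Lemma~\ref{Lemma_Unique_Connected_Component}: no further component could approach $x$ from the $\xi$-side without crossing a boundary piece not present in $U_r(x,\xi)$.

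In case (B), I pick $s_n \to 0$ with $\alpha_\xi(s_n) = 0$ and set $x_n := g_{\xi,y_1}(s_n) = g_{\xi,y_2}(s_n)$, establishing (a). For (b), I restrict attention to those $s_n$ that are endpoints of a maximal open interval on which $\alpha_\xi < 0$; over such an interval the gap $\{x + s\xi + t(x - y_1) : t \in (f^{\xi,y_1}(s), -f^{\xi,y_2}(s))\}$ is a non-empty open subset of $E_\eps^c$ bounded by two arcs meeting at $x_n$, and the unique connected component of $E_\eps^c$ containing it serves as $V_n$. Disjointness holds because distinct such bubbles are separated by arcs lying on $\partial E_\eps \subset E_\eps$, and $\mathrm{dist}_H(x, V_n) \to 0$ is immediate from $s_n \to 0$. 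For (c), each $x_n \in \partial B_\eps(E_{\xi,y_1}) \cap \partial B_\eps(E_{\xi,y_2})$ carries contributors both near $y_1$ and near $y_2$, so $x_n \in \Sing$; the structural result of Proposition~\ref{Prop_structure_of_set_of_outward_directions} then forces the two extremal contributors $y_n^{(1)}, y_n^{(2)}$ to sit on opposite sides of the outward cone at $x_n$, so Lemma~\ref{Lemma_Convergence_of_Contributing_Points_of_Sequences_of_Boundary_Points}(ii)(a) forces one to converge to $y_1$ and the other to $y_2$; since $(y_1-x)/\eps = -(y_2-x)/\eps$, continuity of the inner product yields the limit $-1$.

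The main obstacle is part (b) of case (ii). One must rule out the degenerate configuration in which every zero of $\alpha_\xi$ accumulating at $0$ sits between two $\alpha_\xi > 0$ intervals with no intervening $\alpha_\xi < 0$ region, which would preclude the bubble construction; this requires a careful subsequential extraction coupled to the structural information in Proposition~\ref{Prop_local_representation_exists}(b) concerning the $\alpha_\xi$-signs. A secondary delicate point, in (c), is excluding the possibility that both extremal contributors of $x_n$ accumulate on the same $y_i$, for which the geometric interpretation of extremal outward directions in Proposition~\ref{Prop_structure_of_set_of_outward_directions} is essential.
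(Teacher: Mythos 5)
Your proposal follows essentially the same route as the paper's proof: the dichotomy governed by the zeros of $\alpha_\xi(s) := f^{\xi,y_1}(s)+f^{\xi,y_2}(s)$, the identification of the gap between the two graphs with $E_\eps^c\cap U_r(x,\xi)$ in the sharp case (the paper uses the midpoint curve $h_\xi = (g_{\xi,y_1}+g_{\xi,y_2})/2$ to exhibit the component $V_\xi$), the bubble regions over maximal intervals where $\alpha_\xi<0$ in the chain case, and the convergence of extremal contributors via Lemma~\ref{Lemma_Convergence_of_Contributing_Points_of_Sequences_of_Boundary_Points}\,(ii)(a).

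The one point you flag as the main obstacle---ruling out a degenerate accumulation of zeros of $\alpha_\xi$ at $0$ with no intervening intervals where $\alpha_\xi<0$---is not actually an obstacle, and the observation that disposes of it also closes a small gap in your stated dichotomy: as written, alternatives (A) and (B) are not jointly exhaustive, since you never exclude $\alpha_\xi>0$ on an entire interval $(0,r)$. Both issues are settled at once by the hypothesis $\xi\in\Xext{x}$. Indeed, there is then a sequence in $E_\eps^c$ converging to $x$ whose directions converge to $\xi$; by Proposition~\ref{Prop_local_contribution} and the definition of the functions $f^{\xi,y_i}$ in~\eqref{Eq_definition_of_f_n_xi_y}--\eqref{Eq_definition_of_limiting_f}, any such point sufficiently close to $x$ has the form $x+s\xi+t(x-y_1)$ with $f^{\xi,y_1}(s)<t<-f^{\xi,y_2}(s)$, hence $\alpha_\xi(s)<0$ at its $\xi$-coordinate. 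Consequently every interval $(0,\delta)$ contains a point where $\alpha_\xi<0$. This rules out $\alpha_\xi>0$ near $0$ (completing the dichotomy), and in case (B) it guarantees that below any prescribed zero one can choose a point with $\alpha_\xi<0$ and take the maximal open interval $(p_n,s_n)$ around it on which $\alpha_\xi<0$; its endpoints are zeros of $\alpha_\xi$ and the open region between the two graphs over $(p_n,s_n)$ is the required bubble $V_n$ with $x_n=g_{\xi,y_1}(s_n)=g_{\xi,y_2}(s_n)\in\partial V_n$. With this observation inserted, your argument is complete and coincides with the paper's.
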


\begin{proof}
Clearly, either there exists some $r > 0$, for which $g_{\xi, y_1}(s) \neq g_{\xi, y_2}(s)$ for all $s \in (0, r)$, or else there exists a sequence $(q_n)_{n=1}^\infty \subset \R_+$ with $q_n \to 0$, for which $g_{\xi, y_1}(q_n) = g_{\xi, y_2}(q_n)$ for all $n \in \N$, and these cases are mutually exclusive. The proof amounts to showing that in the former case, representation~\eqref{Eq_Directed_Cone_Repr} is valid for some connected component $V_\xi \subset \R^2 \setminus E_\eps$ and arc-segments $(\alpha(s), \beta(s))_{S^1}$, and in the latter, to identifying the prescribed sequences $(s_n)_{n=1}^\infty \subset \R_+$ and $(V_n)_{n=1}^\infty \subset U_{r}(x, \xi)$, as well as confirming the limit~\eqref{Eq_Limits_of_contributor_vectors} and that $x_n \in \partial V_n$ for all $n \in \N$.

Consider for $i \in \{1,2\}$ the continuous functions $f^{\xi, y_i}: [0, r] \to \R$ for which
\begin{align*}
g_{\xi, y_1}(s) &= x + s\xi + f^{\xi, y_1}(s)\frac{x - y_1}{\eps}, \\
g_{\xi, y_2}(s) &= x + s\xi + f^{\xi, y_2}(s)\frac{x - y_2}{\eps},
\end{align*}
see Proposition~\ref{Prop_local_representation_exists}.
The assumption $x - y_1 = -(x - y_2)$ implies that the vector 
representing the difference at $s \in (0, r)$ between the graphs $g_{\xi, y_1}([0, r])$ and $g_{\xi, y_2}([0, r])$, is given by
\begin{equation} \label{Eq_Difference_between_boundary_graphs}
g_{\xi, y_2}(s) - g_{\xi, y_1}(s) = - \left( f^{\xi, y_1}(s) + f^{\xi, y_2}(s) \right) \frac{x - y_1}{\eps}.
\end{equation}

{(i)} We start by assuming that there exists some $r > 0$, for which $g_{\xi, y_1}(s) \neq g_{\xi, y_2}(s)$ for all $s \in (0, r)$ which implies $\alpha_\xi(s) := f^{\xi, y_1}(s) + f^{\xi, y_2}(s) \neq 0$ for all $s \in (0, r)$. Due to continuity, this implies either
\begin{equation*}
\textrm{(1)} \quad \alpha_\xi(s) > 0 \,\, \textrm{for all} \,\, s \in (0, r), \qquad \textrm{or} \qquad
\textrm{(2)} \quad \alpha_\xi(s) < 0 \,\, \textrm{for all} \,\, s \in (0, r).
\end{equation*}
Note that (1) would contradict the assumption $\xi \in \Xext{x}$, so that (2) necessarily holds true. Furthermore, by the definition of the local boundary representation, and since $y_1 - x = -(y_2 - x)$, it follows that $z \in U_r(x, \xi) \cap \big( \R^2 \setminus E_\eps \big)$ if and only if $z = x + s\xi + t(x - y_1)/\eps$ where $s > 0$, $s^2 + t^2 < r^2$ and
\begin{equation} \label{Eq_complement_inequality_first}
f^{\xi, y_1}(s) < t < -f^{\xi, y_2}(s).
\end{equation}
Hence, the average $h_{\xi} : [0, r] \to \R^2$, given by
\begin{equation} \label{Eq_average_of_functions_g}
h_{\xi}(s) := \frac{g_{\xi, y_1}(s) + g_{\xi, y_2}(s)}{2} = x + s\xi + \left( f^{\xi, y_1}(s) - \frac{\alpha_\xi(s)}{2} \right) \frac{x - y_1}{\eps},
\end{equation}
satisfies $h_{\xi}(s) \in \R^2 \setminus E_\eps$ for all $s \in (0, r)$.
Although we have defined the function $h_\xi$ in~\eqref{Eq_average_of_functions_g} in terms of the contributor $y_1$, we could have equally well chosen $y_2$ due to symmetry. Noting that $h_\xi$ is continuous, that $h_\xi((0, r)) \subset \R^2 \setminus E_\eps$, and $x = h_\xi(0)$, it follows that there exists a connected component $V_\xi \subset \R^2 \setminus E_\eps$ for which $h_\xi((0, r)) \subset V_\xi$ and $x \in \partial V_\xi$. Emulating the reasoning presented in the proof of Lemma~\ref{Lemma_Unique_Connected_Component} we may confirm that $V_\xi$ is the only connected component of $\R^2 \setminus E_\eps$ that intersects $U_{r}(x, \xi)$.

To obtain the representation in~\eqref{Eq_Directed_Cone_Repr}, consider for each $s \in (0, r)$ the unit vectors $h(s), \alpha(s), \beta(s) \in S^1$, given by
\begin{equation} \label{Eq_Def_Unit_Vectors}
h(s) := \frac{h_\xi(s) - x}{\norm{h_\xi(s) - x}}, \quad \alpha(s) := \frac{g_{\xi, y_1}(s) - x}{\norm{g_{\xi, y_1}(s) - x}}, \quad \beta(s) := \frac{g_{\xi, y_2}(s) - x}{\norm{g_{\xi, y_2}(s) - x}}.
\end{equation}
Proposition~\ref{Prop_Tangents_are_Defined} and Lemma~\ref{Lemma_Convergence_of_Contributing_Points_of_Sequences_of_Boundary_Points}(i) imply that the tangential directions on the boundary $\partial E_\eps$ coincide with the extremal outward directions $\xi \in \Xext{x}$ at each boundary point $x \in \partial E_\eps$. We can hence assume $r$ to be small enough such that the distances
\[
\cD_h(s) := \norm{h_\xi(s) - x}, \quad \cD_\alpha(s) := \norm{g_{\xi, y_1}(s) - x}, \quad \cD_\beta(s) := \norm{g_{\xi, y_2}(s) - x}
\]
appearing in the divisors in~\eqref{Eq_Def_Unit_Vectors} are all strictly increasing functions of $s$ on the interval $(0, r)$. By definition, we furthermore have
\[
\mathrm{max} \, \big \{\cD_h^{-1}(s), \cD_\alpha^{-1}(s), \cD_\beta^{-1}(s) \big \} \leq s
\]
for all $s \in (0, r)$. It follows that for each $s \in (0, r)$
\begin{align*}
h_\xi\big(\cD_h^{-1}(s)\big) &= x + s h\big(\cD_h^{-1}(s)\big) \\
&\in x + s \big(\alpha(\cD_\alpha^{-1}(s)), \beta(\cD_\beta^{-1}(s)) \big)_{S^1} \subset V_\xi \cap U_{r}(x, \xi).
\end{align*}
In addition, we can assume Proposition~\ref{Prop_local_contribution} to apply 
with $\delta := \eps/2$. From this it follows for $C_s := s \left(S^1 \setminus \big(\alpha(s), \beta(s) \big)_{S^1} \right) \cap U_{r}(x, \xi)$ that
\[
x + C_s \subset \overline{B_\eps \left(E \cap B_{\eps/2} (\Cext{x})\right)} \subset E_\eps
\]
for all $s \in (0, r)$. Hence
\[
U_{r}(x, \xi) \setminus E_\eps = V_\xi \cap U_{r}(x, \xi) = \bigcup_{0 < s < r} x + s \big(\alpha(s), \beta(s) \big)_{S^1}.
\]

{(ii)} Assume then that there exists a sequence $(q_n)_{n=1}^\infty \subset \R_+$ for which $g_{\xi, y_1}(q_n) = g_{\xi, y_2}(q_n)$ for all $n \in \N$ and $q_n \to 0$. This situation corresponds to the chain-type geometry characteristic of chain singularities (types S6--S8; see Definition~\ref{Def_classification_of_singularities} and Figure~\ref{Figure_Types_of_Singularities}).
Note that since $x \in \partial E_\eps$ and $\xi \in \Xext{x}$, there exists for all $s > 0$ some $0 < \lambda < s$, for which
\[
\alpha_\xi(\lambda) = f^{\xi, y_1}(\lambda) + f^{\xi, y_2}(\lambda) < 0.
\]
One can thus define two new sequences $(s_n)_{n=1}^\infty \subset \R_+$ and $(p_n)_{n=1}^\infty \subset \R_+$ inductively as follows. First, choose some $\lambda_1 \in (0, q_1)$ with $f^{\xi, y_1}(\lambda_1) + f^{\xi, y_2}(\lambda_1) < 0$ and define
\begin{align*}
s_1 &:= \sup \left\{s \, : \, s > \lambda_1 \,\, \textrm{and} \,\, f^{\xi, y_1}(\lambda) + f^{\xi, y_2}(\lambda) < 0 \,\, \textrm{for all} \,\, \lambda \in [\lambda_1, s] \right\}, \\
p_1 &:= \inf \left\{s \, : \, s < \lambda_1 \,\, \textrm{and} \,\, f^{\xi, y_1}(\lambda) + f^{\xi, y_2}(\lambda) < 0 \,\, \textrm{for all} \,\, \lambda \in [s, \lambda_1] \right\}.
\end{align*}
For the induction step, assume we have already chosen the points $s_1, \ldots, s_{n-1}$ and $p_1, \ldots, p_{n-1}$ for some $n \in \N$. One can then choose some $\lambda_n \in (0, \textrm{min}\{p_{n-1}, q_n\})$ with $f^{\xi, y_1}(\lambda_n) + f^{\xi, y_2}(\lambda_n) < 0$, and define
\begin{align*}
s_n &:= \sup \left\{s \, : \, s > \lambda_n \,\, \textrm{and} \,\, f^{\xi, y_1}(\lambda) + f^{\xi, y_2}(\lambda) < 0 \,\, \textrm{for all} \,\, \lambda \in [\lambda_n, s] \right\}, \\
p_n &:= \inf \left\{s \, : \, s < \lambda_n \,\, \textrm{and} \,\, f^{\xi, y_1}(\lambda) + f^{\xi, y_2}(\lambda) < 0 \,\, \textrm{for all} \,\, \lambda \in [s, \lambda_n] \right\}.
\end{align*}
Then $p_n < s_n \leq p_{n-1} < s_{n-1}$ for all $n \in \N$ and $f^{\xi, y_1}(s) + f^{\xi, y_2}(s) = 0$ for all $s \in (s_n)_{n=1}^\infty \cup (p_n)_{n=1}^\infty$. Also, by definition, $f^{\xi, y_1}(s) + f^{\xi, y_2}(s) < 0$ for all $s \in (p_n, s_n)$ and $n \in \N$. This implies that for each $n \in \N$ the open set
\[
V_n := \left\{ \tau g_{\xi, y_1}(s) +  (1 - \tau) g_{\xi, y_2}(s) \, : \, \tau \in (0,1), \, s \in (p_n, s_n) \right\}
\]
is connected and satisfies $V_n \subset \R^2 \setminus E_\eps$ and $x_n := g_{\xi, y_1}(s_n) = g_{\xi, y_2}(s_n) \in \partial V_n$. In addition $V_n \cap V_m = \varnothing$ whenever $n \neq m$, and for every $r > 0$ there exists some $N \in \N$, for which $0 < p_n < s_n < r$ for all $n \geq N$. It thus follows from Propositions~\ref{Prop_Tangents_are_Defined} and Lemma~\ref{Lemma_Convergence_of_Contributing_Points_of_Sequences_of_Boundary_Points}(i) that $\mathrm{dist}_H (x, V_n) \to 0$ as $n \to \infty$.

Since $x_n := g_{\xi, y_1}(s_n) = g_{\xi, y_2}(s_n)$ for each $n \in \N$, there exist for $i \in \{1,2\}$ extremal contributors $y_n^{(i)} \in \Cext{x_n}$, for which $y_n^{(i)} \in B_{\eps/2}(y_i)$. This, together with Lemma~\ref{Lemma_Convergence_of_Contributing_Points_of_Sequences_of_Boundary_Points}(ii)(a), implies $y_n^{(i)} \to y_i$ for $i \in \{1,2\}$ as $n \to \infty$, and consequently
\[
  \lim_{n \to \infty} \left \langle \frac{y_n^{(1)} - x_n}{\eps}, \frac{y_n^{(2)} - x_n}{\eps} \right \rangle = -1. \qedhere
\]
\end{proof}

As a consequence of Proposition~\ref{Prop_Sharp_Singularity_Types} we obtain the following characterisation for inaccessible boundary points $x$, which are defined by the property that $x \notin \partial V$ for all connected components $V$ of the complement $\R^2  \setminus E_\eps$.

\begin{cor}[{Inaccessible singularities}] \label{Cor_Inaccessible_Singularities}
Let $E \subset \R^2$ and $x \in \partial E_\eps$. Then $x \notin \partial V$ for all connected components $V$ of the complement $\R^2 \setminus E_\eps$ if and only if $x$ is a one-sided chain singularity (type S6) or a chain-chain singularity (type S7).
\end{cor}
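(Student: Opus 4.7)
The plan is to prove both implications of the equivalence by dispatching the eight singularity types using the structural results already established. For the forward direction, I would show that if $x$ is not of type S6 or S7, then some connected component of $E_\eps^c$ has $x$ on its boundary. If $x$ is smooth or of type S4 or S5, then $x \in \Unp{E}$, and if $x$ is of type S1, then $x$ is a wedge, so in all these cases Lemma~\ref{Lemma_Unique_Connected_Component} directly yields a unique connected component $V \subset E_\eps^c$ with $x \in \partial V$. For $x$ of type S2, S3, or S8, the classification in Definition~\ref{Def_classification_of_singularities} supplies a ball or half-ball $U$ with $U \cap E_\eps^c$ connected and non-empty; this set lies in a single connected component of $E_\eps^c$ whose boundary contains $x$. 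Equivalently, at least one extremal outward direction $\xi \in \Xext{x}$ falls under the sharp-type case (i) of Proposition~\ref{Prop_Sharp_Singularity_Types}, which directly produces the desired component $V_\xi$ with $x \in \partial V_\xi$.

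For the reverse implication, assume $x$ is of type S6 or S7. Then every $\xi \in \Xext{x}$ falls under the chain-type case (ii) of Proposition~\ref{Prop_Sharp_Singularity_Types}. I would observe that the connected components of $E_\eps^c$ meeting the half-ball $U_r(x, \xi)$ correspond, via the local boundary representation, to the maximal open intervals $(a, b) \subset (0, r)$ on which $\alpha_\xi(s) := f^{\xi, y_1}(s) + f^{\xi, y_2}(s) < 0$. In the chain-type case, the chain-point arguments with $\alpha_\xi = 0$ accumulate at $0$, so every such maximal interval satisfies $a > 0$; consequently the closure of the corresponding component is bounded away from $x$ by a positive distance. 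Combined with Proposition~\ref{Prop_local_contribution} applied with $\delta = \eps/2$, which confines $E_\eps^c$ near $x$ to the union of the half-balls $\overline{U_r(x, \xi)}$ for $\xi \in \Xext{x}$, this shows that no connected component of $E_\eps^c$ has $x$ on its boundary.

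The main obstacle I expect is the bookkeeping in the reverse direction: I must verify that \emph{every} connected component of $E_\eps^c$ accumulating near $x$ has its closure separated from $x$, not merely those in the explicit sequence $(V_n)$ listed in Proposition~\ref{Prop_Sharp_Singularity_Types}(ii). This reduces to showing that any component in the cone $U_r(x, \xi)$ occupies a single maximal interval of negativity of $\alpha_\xi$, and that the chain-type hypothesis forbids such an interval from having left endpoint $0$. Once this is in place, the corollary follows cleanly from the dichotomy furnished by Proposition~\ref{Prop_Sharp_Singularity_Types} together with the local-contribution principle.
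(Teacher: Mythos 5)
Your proposal is correct and follows essentially the same route as the paper's proof: both directions rest on the sharp-type/chain-type dichotomy of Proposition~\ref{Prop_Sharp_Singularity_Types}, with Proposition~\ref{Prop_regular_and_wedge_complement_cone} (equivalently Lemma~\ref{Lemma_Unique_Connected_Component}) disposing of wedges and points of $\Unp{E}$, and Proposition~\ref{Prop_local_contribution} confining $E_\eps^c$ near $x$ to the half-balls $U_r(x,\xi)$. The only cosmetic differences are that you argue the forward direction contrapositively (which additionally invokes the exhaustiveness of the classification, harmlessly, since Theorem~\ref{Thm_Main_1} does not depend on this corollary) and that in the reverse direction you replace the paper's path-crossing contradiction by the observation that each maximal interval of negativity of $\alpha_\xi$ yields a strip that is a full connected component of $E_\eps^c$ bounded away from $x$ — the same underlying fact that the zeros of $\alpha_\xi$ accumulate at $0$.
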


\begin{proof}
{(i)} Assume first that $x \notin \partial V$ for all connected components $V$ of the complement $\R^2 \setminus E_\eps$. Proposition~\ref{Prop_regular_and_wedge_complement_cone} then implies that $x$ is not a wedge and $x \notin \Unp{E}$, so that the extremal contributors $y_1, y_2 \in \Cext{x}$ satisfy $y_1 - x = -(y_2 - x)$.

In case $x$ has only one extremal outward direction $\xi \in \Xext{x}$, Proposition~\ref{Prop_local_contribution} implies the existence of some $r < \eps/2$ for which
\[
B_r(x) \setminus E_\eps \subset U_r(x, \xi).
\]
By assumption, there cannot exist a connected component $V_\xi$ described in case~(i) of Proposition~\ref{Prop_Sharp_Singularity_Types}, so that case~(ii) is necessarily satisfied. Hence, $x$ is a (one-sided) chain singularity.

Similarly, if there exist extremal outward directions $\xi_1, \xi_2$ with $\xi_1 = -\xi_2$, Proposition~\ref{Prop_Sharp_Singularity_Types} again rules out case (i) for each one of them. Consequently, $x$ fulfils the definition of a chain-chain singularity.

{(ii)} Assume then that $x$ is either a chain singularity (type S6) or a chain-chain singularity (type S7). In the former case $\Xext{x} = \{\xi\}$ for some $\xi \in S^1$, and Proposition~\ref{Prop_local_contribution} again implies the existence of some $r < \eps/2$ for which $B_r(x) \setminus E_\eps \subset U_r(x, \xi)$. We aim to deduce a contradiction by assuming there exists a connected $V \subset U_r(x, \xi) \setminus E_\eps$ for which $x \in \partial V$. Note that every $z \in V$ has a representation
\begin{equation} \label{Eq_representation}
z = x + s\xi + t(s) \frac{x - y_1}{\eps}
\end{equation}
for some $s = s(z) > 0$ and $t(s) \in \R$. Now, choose some $s_0 > 0$ and $t(s_0)$ so that
\[
z_0 := x + s_0\xi + t(s_0) \frac{x - y_1}{\eps} \in V.
\]
Given that $V$ is connected and $x \in \partial V$, it follows from $z_0 \in V$ that there exists a path $\gamma : [0,1] \to V$ for which $\gamma(0) = x$ and $\gamma(1) = z_0$. Following~\eqref{Eq_representation}, we may write
\[
\gamma(u) = x + s(u)\xi + t(s(u)) \frac{x - y_1}{\eps},
\]
where the coordinate $s(u)$ depends continuously on $u \in [0,1]$. Since $\gamma(u) \notin E_\eps$ for all $u \in [0,1]$, 
the coordinate $t(s(u))$ satisfies
\[
t(s(u)) \in \big(f^{\xi, y_1}(s(u)), -f^{\xi, y_2}(s(u))\big)
\]
for all $u \in [0,1]$, where the functions $f^{\xi, y_i}$ are as in Proposition~\ref{Prop_local_representation_exists}. However, Proposition~\ref{Prop_Sharp_Singularity_Types} implies the existence of some $0 < q < s_0$ for which $f^{\xi, y_1}(q) = -f^{\xi, y_2}(q)$, and since $s(\cdot)$ is continuous as a function of $u$, there exists some $u_q$ for which $s(u_q) = q$. For this coordinate we thus obtain the contradiction
\[
t(s(u_q)) = t(q) \in \big(f^{\xi, y_1}(q), -f^{\xi, y_2}(q)\big) = \varnothing.
\]

In case $x$ is a chain-chain singularity (type S7), one may again follow the above reasoning to deduce that the existence of a connected component $V$ of $\R^2 \setminus E_\eps$ with $x \in \partial V$ would contradict the existence of a sequence $s_n \to 0$ with $f^{\xi_i, y_1}(s_n) = -f^{\xi_i, y_2}(s_n)$, which is on the other hand guaranteed for both $\xi_1, \xi_2 \in \Xext{x}$ by Proposition~\ref{Prop_Sharp_Singularity_Types}.
\end{proof}

\begin{remark} \label{Remark_Inaccessible_Points}
Corollary~\ref{Cor_Inaccessible_Singularities} states that it is impossible for a chain singularity (type S6) or a chain-chain singularity (type S7) $x$ to lie on the boundary of any connected component $V \subset \R^2 \setminus E_\eps$, even though a sequence $(V_n)_{n=1}^\infty$ of connected components $V_n \subset \R^2 \setminus E_\eps$ converges to $x$ in Hausdorff distance. This is the motivation for the terminology of \emph{inaccessible} singularities and can be seen as an analogue of the distinction between accessible and inaccessible points in a Cantor set $C \subset [0,1]$, where inaccessible points do not lie on the boundary of any of the countably many removed open intervals $[a_n, b_n] \subset [0,1]$. See also Example~\ref{Example_Pos_Measure_Set_of_Chain_Sing} and the discussion after Definition~\ref{Def_classification_of_singularities} above.
\end{remark}

\begin{prop}[{Characterisation of chain singularities}] \label{Prop_Topological_Characterisation_of_Chain_Singularities}
Let $E \subset \R^2$, let $x \in \partial E_\eps$ and let $\cG(x)$ be a local boundary representation at $x$ with the functions $g_{\xi, y} \in \cG(x)$ as in~\eqref{Eq_Canonical_LBR}. Then the following are equivalent:
\begin{itemize}
  \item[(i)] $x$ is a chain singularity (type S6, S7 or S8).
  \item[(ii)] There exists a sequence $(V_n)_{n=1}^\infty$ of mutually disjoint connected components $V_n \subset \R^2 \setminus E_\eps$
  for which $\mathrm{dist}_H (x, V_n) \to 0$ as $n \to \infty$.
  \item[(iii)] There exists a sequence $(x_n)_{n=1}^\infty$ of singularities 
  on $\partial E_\eps$ for which $x_n \to x$ and
  \begin{equation} \label{Eq_Extremal_Contr_Separate_2}
    \lim_{n \to \infty} \left\langle \frac{y_n^{(1)} - x_n}{\eps}, \frac{y_n^{(2)} - x_n}{\eps} \right \rangle = -1,
  \end{equation}
  where $\Cext{x_n} = \big\{y_n^{(1)}, y_n^{(2)} \big\}$ for each $n \in \N$.
  \item[(iv)] The extremal contributors $\Cext{x} = \{y_1, y_2\}$ satisfy $y_1 - x = -(y_2 - x)$ and there exist some $\xi \in \Xext{x}$ and corresponding functions $g_{\xi, y_1},  g_{\xi, y_2} \in \cG(x)$ for which $g_{\xi, y_1}(s_n) = g_{\xi, y_2}(s_n)$ for all $n \in \N$ for some sequence $(s_n)_{n=1}^\infty \subset \R_+$ with $s_n \to 0$ as $n \to \infty$.
\end{itemize}
\end{prop}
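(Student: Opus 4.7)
The plan is to establish the cycle (i) $\Rightarrow$ (iii) $\Rightarrow$ (iv) $\Rightarrow$ (ii) $\Rightarrow$ (i), with Proposition~\ref{Prop_Sharp_Singularity_Types} serving as the central bridge between the analytic data in (iv) and the topological data in (ii). The implication (i) $\Rightarrow$ (iii) is immediate from Definition~\ref{Def_classification_of_singularities}, since each of the types S6, S7, S8 explicitly prescribes a sequence of singularities $x_n \to x$ whose extremal contributors satisfy the limit~\eqref{Eq_Extremal_Contr_Separate_2}. Similarly, (iv) $\Rightarrow$ (ii) is an immediate invocation of Proposition~\ref{Prop_Sharp_Singularity_Types}: the hypothesis $g_{\xi, y_1}(s_n) = g_{\xi, y_2}(s_n)$ for $s_n \to 0$ rules out case (i) of that proposition, so case (ii) applies and delivers the required sequence $(V_n)$ of disjoint connected components with $\dist_H(x, V_n) \to 0$.

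The heart of the argument is (iii) $\Rightarrow$ (iv). First, use Lemma~\ref{Lemma_limit_of_contributing_points} to extract, on a subsequence, limits $y_i := \lim_{n\to\infty} y_n^{(i)} \in \Pi_E(x)$; passing to the limit in the scalar product condition gives $y_1 - x = -(y_2 - x)$. To see that $y_1, y_2 \in \Cext{x}$, observe that Lemma~\ref{Lemma_OD_versus_dot_product_ineq}(i) gives $\langle y_i - x, \xi \rangle \leq 0$ for every $\xi \in \Xi_x(E_\eps)$ and each $i \in \{1,2\}$; since $y_2 - x = -(y_1 - x)$, both inequalities force $\langle y_i - x, \xi \rangle = 0$, and planarity then forces $\Cext{x} = \{y_1, y_2\}$ (any further extremal contributor would lie along the same line through $x$ at distance $\eps$). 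Next, pass to a further subsequence (via Lemma~\ref{Lemma_Convergence_of_Contributing_Points_of_Sequences_of_Boundary_Points}(i)) so that $(x_n - x)/\norm{x_n - x} \to \xi \in \Xext{x}$, and fix a local boundary representation $\cG(x)$ at $x$ with radius $r$. For $n$ large, $y_n^{(i)}$ is close enough to $y_i$ to lie in the generating set $E_{\xi, y_i}$ from the proof of Proposition~\ref{Prop_local_representation_exists}; since $x_n \in \partial E_\eps \cap \overline{B_\eps(y_n^{(i)})} \subset \overline{B_\eps(E_{\xi, y_i})}$ and every neighbourhood of $x_n$ meets $E_\eps^c$, we deduce $x_n \in \partial \overline{B_\eps(E_{\xi, y_i})}$, so $x_n = g_{\xi, y_i}(s_n^{(i)})$ for some parameter $s_n^{(i)}$. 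Crucially, the orthogonality $\xi \perp (x - y_1) = -(x - y_2)$ forces $s_n^{(i)} = \langle x_n - x, \xi \rangle$ independently of $i$, yielding a common value $s_n > 0$ with $s_n \to 0$.

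For (ii) $\Rightarrow$ (i), let $(V_n)$ be a sequence of mutually disjoint connected components of $E_\eps^c$ with $\dist_H(x, V_n) \to 0$. If $x \in \Unp{E}$ or $x$ were a wedge, Proposition~\ref{Prop_regular_and_wedge_complement_cone} would yield a neighbourhood $B_r(x)$ in which $E_\eps^c$ meets only a single connected component, forcing all but finitely many $V_n$ to coincide and contradicting disjointness. Hence $\Cext{x} = \{y_1, y_2\}$ with $y_1 - x = -(y_2 - x)$, and $x$ is of type S2, S3, S6, S7, or S8. Using Proposition~\ref{Prop_local_contribution}, each small $V_n$ is contained in $\overline{U_r(x, \xi_1)} \cup \overline{U_r(x, \xi_2)}$, where $\Xext{x} = \{\xi_1, \xi_2\}$. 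If both extremal directions fell under case (i) of Proposition~\ref{Prop_Sharp_Singularity_Types}, each side would host at most one connected component of $E_\eps^c$ near $x$, contradicting the infinite disjoint family $(V_n)$. Hence at least one extremal direction is chain-type, and the three subcases---$\xi_1 = \xi_2$ (forcing S6), $\xi_1 = -\xi_2$ with both sides chain-type (S7), and $\xi_1 = -\xi_2$ with one chain-type and one sharp-type side (S8)---exhaust the possibilities.

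The main obstacle is the parameter-matching in (iii) $\Rightarrow$ (iv): verifying that the same $s_n$ parametrises $x_n$ on both graphs $g_{\xi, y_1}$ and $g_{\xi, y_2}$. This hinges on tracking $y_n^{(i)}$ into the correct generating cone $E_{\xi, y_i}$ via the convergence $y_n^{(i)} \to y_i$, and then exploiting the antipodal-orthogonal structure of $(y_1, y_2)$ relative to $\xi$ to identify $s_n^{(i)} = \langle x_n - x, \xi \rangle$ as a single shared parameter on both graphs.
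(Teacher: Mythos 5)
Your proposal is correct and rests on the same machinery as the paper's own proof: Proposition~\ref{Prop_Sharp_Singularity_Types} as the dichotomy bridging the analytic condition (iv) and the topological condition (ii), Proposition~\ref{Prop_regular_and_wedge_complement_cone} to exclude wedges and points of $\Unp{E}$, and the convergence lemmas of Section~\ref{Subsec_Properties_of_Outward_Dir} to extract the antipodal extremal contributors; the only difference is that you arrange the single cycle (i)$\Rightarrow$(iii)$\Rightarrow$(iv)$\Rightarrow$(ii)$\Rightarrow$(i) whereas the paper proves (ii)$\Rightarrow$(iv)$\Rightarrow$(iii)$\Rightarrow$(ii) and then (iii)$\wedge$(iv)$\Rightarrow$(i), with each of your implications corresponding to a fragment of the paper's argument. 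Your explicit identification $s_n^{(i)} = \langle x_n - x, \xi\rangle$ in the step (iii)$\Rightarrow$(iv) is a welcome sharpening of a point the paper passes over more quickly.
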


\begin{proof}
We begin by showing that (ii) $\Rightarrow$ (iv) $\Rightarrow$ (iii) $\Rightarrow$ (ii). Since clearly (i) implies (iii), the result then follows by showing that (iii) $\wedge$ (iv) $\Rightarrow$ (i).

{(ii) $\Rightarrow$ (iv).} Assume there exists a sequence $(V_n)_{n=1}^\infty \subset \R^2 \setminus E_\eps$ of mutually disjoint connected components of the complement $\R^2 \setminus E_\eps$ with $\mathrm{dist}_H (x, V_n) \to 0$ as $n \to \infty$. It follows then from Proposition~\ref{Prop_regular_and_wedge_complement_cone} that $x \notin \Unp{E}$ and it cannot be a wedge, which implies $y_1 - x = -(y_2 - x)$ for the extremal contributors $\Cext{x} = \{y_1, y_2\}$.

For each $n \in \N$, choose a point $v_n \in V_n$, and define $\xi_n := (v_n - x)/\norm{v_n - x}$. Due to compactness, one can choose a subsequence $(v_{n_k})_{k=1}^\infty$ for which $\xi_{n_k} \to \xi \in S^1$. Since $v_n \in \R^2 \setminus E_\eps$ for all $n \in \N$, it follows from Lemma~\ref{Lemma_Basic_Properties_of_Sequences_Converging_onto_Boundary} that $\langle y_i - x, \xi \rangle = 0$ for $i \in \{1,2\}$, so that $\xi \in \Xext{x}$.

Assume contrary to the claim that there exists $\delta > 0$ for which $g_{\xi, y_1}(s) \neq g_{\xi, y_2}(s)$ for all $s \in (0, \delta)$. According to Proposition~\ref{Prop_Sharp_Singularity_Types} this implies
\begin{equation} \label{Eq_cone_repr}
U_{\delta}(x, \xi) \setminus E_\eps = V \cap U_{\delta}(x, \xi) = \bigcup_{0 < \rho < \delta} \rho (\alpha_\rho, \beta_\rho)_{S^1},
\end{equation}
where $V$ is the unique connected component of the complement $\R^2 \setminus E_\eps$ that intersects $U_\delta(x, \xi)$. Since $\xi_{n_k} \to \xi$ as $k \to \infty$, equation \eqref{Eq_cone_repr} now implies $v_{n_k} \in V$ for large $k \in \N$, which in turn contradicts the assumption that the sets $V_n$ are connected and mutually disjoint. Hence, no such $\delta$ can exist, and (iv) follows.

{(iv) $\Rightarrow$ (iii).} Case (ii) in Proposition~\ref{Prop_Sharp_Singularity_Types} now holds true, and directly implies (iii) here.

{(iii) $\Rightarrow$ (ii).} Write $\Cext{x} = \{y_1, y_2\}$ and define $\xi_n := (x_n - x)/\norm{x_n - x}$. Due to Lemma~\ref{Lemma_Convergence_of_Contributing_Points_of_Sequences_of_Boundary_Points}(i) there exists a subsequence $(x_{n_k})_{k=1}^\infty$, for which $\xi_{n_k} \to \xi \in \Xext{x}$ as $k \to \infty$. Furthermore, since $\Cext{x_n} = \big\{y_n^{(1)}, y_n^{(2)} \big\}$ for all $n \in \N$, Lemma~\ref{Lemma_Convergence_of_Contributing_Points_of_Sequences_of_Boundary_Points}(ii)(a) together with~\eqref{Eq_Extremal_Contr_Separate_2} implies the existence of a further subsequence $(x_m)_{m=1}^\infty \subset (x_{n_k})_{k=1}^\infty$ for which $y_m^{(i)} \to y_i \in \Cext{x}$ for $i \in \{1,2\}$. The extremal contributors $y_1, y_2$ at $x$ therefore satisfy $y_1 - x = -(y_2 - x)$.

To complete the argument, we show that for any $\delta > 0$ there exists some $0 < s_m  < \delta$ for which $x_m = g_{\xi, y_1}(s_m) = g_{\xi, y_2}(s_m)$. Once this is established, the statement follows from Proposition~\ref{Prop_Sharp_Singularity_Types}. Since $x_m \to x$ as $m \to \infty$, there exists for all $\delta > 0$ some $M \in \N$ for which $\norm{x_m - x} \leq \delta$ for all $m > M$. It follows that for each $m > M$ there exists some $s_m \leq \delta$ such that $x_m = g_{\xi, y_i}(s_m)$ for at least one of the contributors $y_1, y_2$. But since $y_1 - x = -(y_2 - x)$ and $y_m^{(i)} \to y_i \in \Cext{x}$ for $i \in \{1,2\}$ as $m \to \infty$, this implies $x_m = g_{\xi, y_1}(s_m) = g_{\xi, y_2}(s_m)$ for all sufficiently large $m > M$.

{(iii) $\wedge$ (iv) $\Rightarrow$ (i).} Since $y_1 - x = -(y_2 - x)$ for the extremal contributors $y_1, y_2 \in \Cext{x}$, it follows from Lemma~\ref{Lemma_OD_versus_dot_product_ineq} that the extremal outward directions $\xi_1, \xi_2 \in \Xext{x}$ satisfy either $\xi_1 = \xi_2$ or $\xi_1 = - \xi_2$. In the former case $x$ is a one-sided chain singularity (type S6). In the latter case we may assume without loss of generality that $\xi_n := (x_n - x) / \norm{x_n - x} \to \xi_1$. Proposition~\ref{Prop_Sharp_Singularity_Types} then states that for some $\delta > 0$, the boundary subset $\partial E_\eps \cap U_\delta(x, \xi_2)$ exhibits either sharp-type or chain-type geometry and that these cases are mutually exclusive. In the former case $x$ is a sharp-chain singularity (type S8), in the latter a chain-chain singularity (type S7).
\end{proof}

We employ Proposition~\ref{Prop_Topological_Characterisation_of_Chain_Singularities} to show that our characterisation of smooth points in Definition~\ref{Def_Smooth_Points} coincides with the property of lying on a $C^1$-smooth curve. By a \emph{curve} we mean the image $\Gamma = \gamma([0,1])$ of a continuous, injective map $\gamma : [0,1] \to \R^2$.

\begin{prop}[{Characterisation of smooth points}] \label{Prop_Characterisation_of_smooth_points}
Let $E \subset \R^2$ and $x \in \partial E_\eps$. Then $x$ is smooth in the sense of Definition~\ref{Def_Smooth_Points} if and only if there exists a $C^1$-curve $\Gamma$ for which $\Gamma = \partial E_\eps \cap \overline{B_\delta(x)}$ for some $\delta > 0$.
\end{prop}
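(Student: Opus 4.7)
The plan is to prove each direction separately, with the forward direction built on the continuous variation of the unique contributor map, and the reverse direction established by ruling out each of the eight singularity types of Definition~\ref{Def_classification_of_singularities}.

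For the forward direction, assume $x$ is smooth and pick $r > 0$ with $\partial E_\eps \cap B_r(x) \subset \Unp{E}$. Each $z$ in this set has a unique contributor $y(z)$; Lemma~\ref{Lemma_limit_of_contributing_points} together with the singleton property $\Pi_E(z) = \{y(z)\}$ makes $z \mapsto y(z)$ continuous, so that the outward unit normal $n(z) := (z - y(z))/\eps$ depends continuously on $z$ as well. By Propositions~\ref{Prop_local_representation_exists} and~\ref{Prop_LBR_Lipschitz}, there exist $0 < r' < r$ and a rotation of coordinates such that $\partial E_\eps \cap \overline{B_{r'}(x)}$ is the graph of a Lipschitz function $h \colon [-r', r'] \to \R$ in the orthonormal frame $(\xi_x, n(x))$ with $\xi_x \in \Xext{x}$. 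Writing $z(s) := x + s\xi_x + h(s) n(x)$, at any $s$ where $h'(s)$ exists the tangent vector $(1, h'(s))$ is perpendicular to $n(z(s))$, hence
\[
  h'(s) \,=\, -\frac{\langle n(z(s)), \xi_x\rangle}{\langle n(z(s)), n(x)\rangle}.
\]
The right-hand side is continuous in $s$ (well-defined since $\langle n(0), n(x)\rangle = 1$ and $n$ is continuous); as a Lipschitz function whose almost-everywhere derivative admits a continuous extension is $C^1$, $h$ is continuously differentiable, and $\Gamma := \partial E_\eps \cap \overline{B_{r'}(x)}$ is a $C^1$-curve.

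For the reverse direction, suppose $\Gamma = \partial E_\eps \cap \overline{B_r(x)}$ is a $C^1$-curve, interpreted as the image of a regular $C^1$-embedding $\gamma \colon [a,b] \to \R^2$, and derive a contradiction for each singularity type in Definition~\ref{Def_classification_of_singularities}. If $x$ is a wedge (S1), Proposition~\ref{Prop_Tangents_are_Defined} gives $T_x(E_\eps) = \Xext{x} = \{\xi_1, \xi_2\}$ with $\xi_1 \neq \pm \xi_2$, incompatible with a single tangent line at $x$. If $x$ is of type S2, S3 or S8, Proposition~\ref{Prop_Sharp_Singularity_Types}(i) furnishes two distinct branches $g_{\xi, y_1}, g_{\xi, y_2} \in \cG(x)$ both tangent to some $\xi \in \Xext{x}$; a regular parametrization $\gamma$ through $x$ must traverse one branch on each side of the parameter value mapped to $x$, forcing its one-sided derivatives into the directions $+\xi$ and $-\xi$, which cannot coincide for a nonvanishing continuous $\gamma'$. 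If $x$ is shallow (S4 or S5), Definition~\ref{Def_classification_of_singularities} provides a sequence $x_n \to x$ with $x_n \notin \Unp{E}$; each such $x_n$ is of type S1, S2, S3, S6, S7 or S8, and applying the arguments above at $x_n$ contradicts the $C^1$-regularity of $\Gamma$ at $x_n$. Finally, if $x$ is an inaccessible chain singularity (S6 or S7), Corollary~\ref{Cor_Inaccessible_Singularities} gives $x \notin \partial V$ for every connected component $V \subset E_\eps^c$, whereas a regular $C^1$-embedded arc locally separates $B_r(x)$ into two open regions, one of which lies in $E_\eps^c$ and has $x$ on its boundary---a contradiction. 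This exhausts the eight cases, so $x$ is smooth.

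The main obstacle is the Lipschitz-to-$C^1$ upgrade in the forward direction: Proposition~\ref{Prop_LBR_Lipschitz} supplies only Lipschitz regularity, and continuity of the almost-everywhere derivative has to be distilled from continuity of the contributor map via the tangent-normal orthogonality. In the reverse direction, the chain cases (S6, S7) are the ones that resist a purely local tangent-line argument and require the separation-theoretic input from Corollary~\ref{Cor_Inaccessible_Singularities}, while the sharp cases (S2, S3, S8) must instead be ruled out at $x$ itself through the cusp argument, since $x$ in those cases does lie on the boundary of a connected component of $E_\eps^c$ and is therefore invisible to the separation argument.
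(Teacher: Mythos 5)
Your proof is correct in substance and rests on the same two pillars as the paper's (the local boundary representation of Proposition~\ref{Prop_local_representation_exists} and the tangent/outward-direction correspondence of Proposition~\ref{Prop_Tangents_are_Defined}), but the execution differs in both directions. In the forward direction the paper only observes that $\Xext{z} = \{\xi, -\xi\}$ for all nearby $z$ forces differentiability of the graph functions and then asserts the claim; your explicit upgrade from Lipschitz to $C^1$ --- continuity of the contributor map $z \mapsto y(z)$ on $\Unp{E}$ via Lemma~\ref{Lemma_limit_of_contributing_points}, the orthogonality formula for $h'(s)$ at points of differentiability, and the fact that a Lipschitz function whose almost-everywhere derivative extends continuously is $C^1$ --- supplies exactly the step the paper leaves implicit, and is a genuine improvement in rigour. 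In the reverse direction the paper shows that \emph{every} $z$ in the neighbourhood lies in $\Unp{E}$, ruling out S1--S3 by the tangent correspondence and S6--S8 by connectedness of $\Gamma$ together with Proposition~\ref{Prop_Topological_Characterisation_of_Chain_Singularities}; you instead run a case analysis at $x$ itself, using a cusp argument for the sharp types and Corollary~\ref{Cor_Inaccessible_Singularities} plus local separation by a $C^1$ arc for the inaccessible types. Both routes work; your separation argument for S6/S7 is clean once one notes that a component of $B_\rho(x) \setminus \Gamma$ meeting $E_\eps^c$ is an open connected set disjoint from $\partial E_\eps$ and hence lies entirely in $E_\eps^c$, making $x$ accessible.

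One point to tighten: in the S3 and S8 cases the assertion that $\gamma$ must traverse one $\xi$-directed branch on each side of the parameter value $t_0$ mapped to $x$ is not automatic, since $\gamma$ could a priori pass from a $\xi$-directed branch to a $(-\xi)$-directed branch with matching one-sided derivatives, evading the cusp. The contradiction still arises, but from a different source: the remaining branches of $\partial E_\eps \cap \overline{B_r(x)} = \Gamma$ accumulate at $x$, so by injectivity and continuity of $\gamma^{-1}$ their parameters accumulate at $t_0$, which is impossible once both sides of $t_0$ are already occupied (equivalently, an arc cannot contain a triod, and on a chain-type side it would have to contain the Jordan curves $\partial V_n$ bounding the beads). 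This is an easy supplement rather than a fatal gap; the one-sided sharp case S2, where the two $\xi$-directed branches are the only branches at $x$, is precisely where your cusp argument is both necessary and complete as stated.
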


\begin{proof}
%
{(i)} Assume first that $x$ is smooth in the sense of Definition~\ref{Def_Smooth_Points}. Then $x \in \Unp{E}$, and Proposition~\ref{Prop_structure_of_set_of_outward_directions} implies that the set of extremal outward directions at $x$ satisfies $\Pext{x} = \{\xi, -\xi\}$ for some $\xi \in S^1$. In addition, let $\cG(x) = \{g_{\xi,y}, g_{-\xi,y}\}$ be a local boundary representation at $x$ with radius $r > 0$. Since $x$ is smooth, there exists some $0 < \delta \leq r$ for which $\overline{B_\delta(x)} \cap \partial E_\eps \subset \Unp{E}$. In particular, $x \in \Unp{E}$, and it follows from Proposition~\ref{Prop_local_representation_exists} that
\[
\Gamma := g_{\xi, y}([0, s_\delta]) \cup g_{-\xi, y}([0, s_\delta']) = \partial E_\eps \cap \overline{B_\delta(x)}
\]
for some $s_\delta, s_\delta' \in (0, \delta]$. We will show that the functions $g_{\xi, y}$ and $g_{-\xi, y}$ are differentiable on the above intervals.

To this end, let $s \in [0, s_\delta]$ and consider the point $z := g_{\xi, y}(s)$. Then the set of extremal outward directions at $z$ satisfies
\begin{equation} \label{Eq_extremal_directions_at_z}
\Xext{z} = \{\xi_{z}, -\xi_{z}\}
\end{equation}
for some $\xi_{z} \in S^1$. According to Proposition~\ref{Prop_Tangents_are_Defined}, the extremal outward directions coincide with tangential directions on the boundary. This, together with~\eqref{Eq_extremal_directions_at_z}, implies that $g_{\xi,y}$ is differentiable at $s$. Since the argument $s \in [0, s_\delta]$ above was arbitrary, it follows that $g_{\xi_x,y}$ is differentiable on $[0, s_\delta]$. The differentiability of $g_{-\xi_x,y}$ on $[0, s_\delta']$, as well as the differentiability of $\Gamma$ at $x$, follow with the same reasoning, and the claim follows.





{(ii)} Let then $\Gamma$ be a $C^1$-curve for which $\Gamma = \partial E_\eps \cap \overline{B_\delta(x)}$ for some $\delta > 0$. Consider now some $z \in \partial E_\eps \cap \overline{B_\delta(x)}$. Since $\Gamma$ is $C^1$-smooth, the correspondence between tangents and extremal outward directions given by Proposition~\ref{Prop_Tangents_are_Defined} implies that $z$ is not a wedge (type S1) or a sharp singularity (types S2--S3). On the other hand, since $z \in \Gamma$ and $\Gamma$ is connected as a curve, Proposition~\ref{Prop_Topological_Characterisation_of_Chain_Singularities} implies that $z$ cannot be a chain singularity (types S6--S8). Hence it follows from Theorem~\ref{Thm_Main_1} below that $z$ is either a smooth point or a shallow singularity (types S4--S5), which implies $z \in \Unp{E}$. The same argument applies to all $z \in \partial E_\eps \cap \overline{B_r(x)}$, which means that $x$ is smooth in the sense of Definition~\ref{Def_Smooth_Points}.
\end{proof}

\subsection{Proof of Theorem~\ref{Thm_Main_1}} \label{Sec_Proof_of_Thm_1}
We conclude this section with the proof of our first main result, a classification of boundary points on $\partial E_\eps$. We restate the result here for the convenience of the reader.

\begin{theorem_without_2}[{Classification of boundary points}] 
Let $E \subset \R^2$ be compact, $\eps > 0$ and let $x \in \partial E_\eps$ be a boundary point of $E_\varepsilon$ that is not smooth. Then $x$
belongs to precisely one of the eight categories of singularities given in Definition~\ref{Def_classification_of_singularities}.
\end{theorem_without_2}

\begin{proof}
Note first that for any $u,v \in S^1$ either $u = v$, $u = -v$, or $u \notin \{v, -v\}$. Hence we obtain the following categorisation of boundary point types according to the orientation of the extremal outward directions $\Xext{x} = \{\xi_1, \xi_2\}$:
\begin{align*}
\xi_1 &\notin \{\xi_2, -\xi_2\}:  &\textrm{type S1} \\
\xi_1 &= \xi_2: &\textrm{types S2 and S6} \\
\xi_1 &= -\xi_2:  &\textrm{smooth points and types S3--S5, S7--S8}
\end{align*}
These are due to Proposition~\ref{Prop_structure_of_set_of_outward_directions} for $x \in \Unp{E}$ and Definition~\ref{Def_classification_of_singularities} for $x \notin \Unp{E}$, and they correspond to the cases ({\footnotesize A})--({\footnotesize C}) illustrated in Figure~\ref{Figure_Three_Basic_Cases}. It follows immediately that if $x$ is a wedge (type S1), it cannot be of any other type, and vice versa. In addition, of all the defined types of boundary points, only the shallow singularities (types S4 and S5) and smooth points satisfy $\Cext{x} = \{y\}$ for some $y \in \partial E$, and these types are by definition mutually exclusive.

Hence, it suffices to show that the remaining types S2--S3 and S6--S8, corresponding to case ({\footnotesize C}) in Figure~\ref{Figure_Three_Basic_Cases}, are mutually exclusive. For all these types, the set of extremal contributors $\Cext{x} = \{y_1, y_2\}$ satisfies $y_1 - x = -(y_2 - x)$. Proposition~\ref{Prop_Sharp_Singularity_Types} then states that for each $\xi \in \Xext{x}$ either
\begin{itemize}
\item[(i)] there exists a connected component $V \subset \R^2 \setminus E_\eps$ and $r > 0$ for which $U_r(x, \xi) \setminus E_\eps = V \cap U_r(x, \xi)$ and $x \in \partial V$, or
\item[(ii)] there exists a sequence of singularities $(x_n)_{n=1}^\infty$ in $\Sing$ with $x_n \to x$ as $n \to \infty$ and $(x_n - x)/\norm{x_n - x} \to \xi$, and
\[
\lim_{n \to \infty }\left \langle \frac{y_n^{(1)} - x_n}{\eps}, \frac{y_n^{(2)} - x_n}{\eps} \right \rangle = -1,
\]
\end{itemize}
and that these situations are mutually exclusive. In other words, for each extremal outward direction $\xi \in \Xext{x}$, the intersection $\partial E_\eps \cap U_r(x, \xi)$ exhibits either sharp-type or chain-type geometry (see Definition~\ref{Def_classification_of_singularities}). In case $\xi_1 = \xi_2$, the point $x$ is hence either a sharp (type S2) or a chain (type S6) singularity, and in case $\xi_2 = -\xi_1$, it is either a sharp-sharp (type S3), a chain-chain (type S7), or a sharp-chain (type S8) singularity, and all these cases are mutually exclusive.
\end{proof}
\chapter{Topological Structure of the Set of Singularities} \label{Sect_Topo_Structure}
Since the categories of boundary points given in Definition~\ref{Def_classification_of_singularities} define a partition of the boundary, it makes sense to inquire on their cardinalities and topological structure. Our second main result, Theorem~\ref{Thm_Main_2}, states that for any compact $E \in \R^2$ and $\eps > 0$, the sets of wedges (type S1), sharp singularities (types S2, S3 and S8) and one-sided chain singularities (type S6) on $\partial E_\eps$ are at most countably infinite. This does not hold in general for the sets of shallow-shallow singularities (type S5) or chain-chain singularities (type S7), which may even have a positive one-dimensional Hausdorff measure on the boundary, see \cite{Ferry_When_epsilon_boundaries, Fu_Tubular_neighborhoods}. In Section~\ref{Subsec_Topology_of_Chain} we show that the set $\cC(\partial E_\eps)$ of chain singularities is nevertheless nowhere dense, and hence small in the topological sense.

\section{Cardinalities of Sets of Singularities} \label{Sec_Cardinality_of_Singularities}
In order to prove the above-mentioned results on the cardinalities of the sets of singularities, we proceed by treating one by one the cases of wedges, sharp singularities, and one-sided shallow and chain singularities. We begin with the following general result on the geometry of accumulating singularities. It is essentially a corollary of Lemma~\ref{Lemma_Convergence_of_Contributing_Points_of_Sequences_of_Boundary_Points} on the asymptotic behaviour of sequences of boundary points.

\begin{lemma}[{Geometry of accumulating singularities}] \label{Lemma_Accum_Sing_Shallow}
Let $E \subset \R^2$ and let $(x_n)_{n=1}^\infty \subset \Sing$ be a sequence of pair-wise disjoint singularities with $x_n \to x \in \partial E_\eps$. Let $\Xext{x_n} = \big \{\xi_n^{(1)}, \xi_n^{(2)} \big \}$ for each $n \in \N$. Then $\lim_{n\to\infty} \big | \big \langle \xi_n^{(1)}, \xi_n^{(2)} \big \rangle \big | = 1$.
\end{lemma}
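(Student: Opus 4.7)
The plan is to reduce the claim directly to Lemma~\ref{Lemma_Convergence_of_Contributing_Points_of_Sequences_of_Boundary_Points}~(ii)(b), which says that for any sequence of boundary points $x_n \to x$ with a well-defined limiting direction $v := \lim_{n\to\infty}(x_n - x)/\norm{x_n - x}$, any choice of $\eta_n \in \Xext{x_n}$ satisfies $|\langle \eta_n, v\rangle| \to 1$. Since the $x_n$ are pair-wise disjoint and converge to $x$, at most one $x_n$ equals $x$; dropping that term I may assume $x_n \neq x$ for all $n$, so that the unit vectors $v_n := (x_n - x)/\norm{x_n-x}$ are well-defined.

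First I would argue that it suffices to establish the conclusion along an arbitrary subsequence. Indeed, if the original claim failed there would exist a $\delta > 0$ and a subsequence along which $\big|\big\langle \xi_n^{(1)}, \xi_n^{(2)} \big\rangle\big| \leq 1 - \delta$, and deriving the limit $1$ along any further subsequence would yield a contradiction. Using the compactness of $S^1$, I would then pass to nested subsequences so that $v_n \to v \in S^1$ and $\xi_n^{(i)} \to \xi^{(i)} \in S^1$ for each $i \in \{1,2\}$. The hypotheses of Lemma~\ref{Lemma_Convergence_of_Contributing_Points_of_Sequences_of_Boundary_Points}~(ii)(b) are then met with $\xi := v$ and $\eta_n := \xi_n^{(i)}$, giving
\[
\big|\big\langle \xi^{(i)}, v \big\rangle\big| = \lim_{n \to \infty} \big|\big\langle \xi_n^{(i)}, v \big\rangle\big| = 1
\]
for each $i \in \{1,2\}$. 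Since all vectors involved are unit vectors in $\R^2$, this forces $\xi^{(i)} \in \{v, -v\}$ and consequently $\xi^{(1)} \in \{\xi^{(2)}, -\xi^{(2)}\}$, so $\big|\big\langle \xi^{(1)}, \xi^{(2)} \big\rangle\big| = 1$. By continuity of the scalar product this transfers to $\big|\big\langle \xi_n^{(1)}, \xi_n^{(2)} \big\rangle\big| \to 1$ along the chosen subsequence, closing the contradiction.

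There is no serious technical obstacle in this argument, since Lemma~\ref{Lemma_Convergence_of_Contributing_Points_of_Sequences_of_Boundary_Points}~(ii)(b) essentially encodes exactly what is needed: whenever a sequence on $\partial E_\eps$ approaches $x$ from a definite direction, every extremal outward direction at those points must become (anti-)parallel to that direction. The only bookkeeping subtleties are (i) handling the possibility $x_n = x$ for at most one index, and (ii) repeatedly thinning the sequence so that $v_n$, $\xi_n^{(1)}$ and $\xi_n^{(2)}$ converge simultaneously, before concluding for the full sequence via the subsequential principle.
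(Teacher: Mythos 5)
Your argument is correct and follows essentially the same route as the paper: both proofs reduce the claim to Lemma~\ref{Lemma_Convergence_of_Contributing_Points_of_Sequences_of_Boundary_Points}~(ii)(b) applied with $\eta_n := \xi_n^{(i)}$ for $i \in \{1,2\}$. The only difference is in the final bookkeeping step — you pass to convergent subsequences of the $\xi_n^{(i)}$ and invoke continuity of the scalar product, whereas the paper tracks signs $a_n^{(i)} \in \{1,-1\}$ and uses an explicit angle estimate $\cos\big(\theta_n^{(1)}+\theta_n^{(2)}\big) \geq \cos\big(2\theta_n^{\mathrm{max}}\big) \to 1$; both are valid.
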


\begin{proof}
Due to Lemma~\ref{Lemma_Convergence_of_Contributing_Points_of_Sequences_of_Boundary_Points}(i) we can assume that the limit
\[
\xi := \lim_{n\to\infty} (x_n - x) / \norm{x_n - x}
\]
exists. Note that depending on the geometry at the singularities $x_n$, each of the extremal outward directions $\xi_n^{(i)}$ for $i \in \{1,2\}$ may be more aligned with $\xi$ than $-\xi$, or vice versa.
However, for each $i \in \{1,2\}$, Lemma~\ref{Lemma_Convergence_of_Contributing_Points_of_Sequences_of_Boundary_Points}(ii)(b) implies that there exist sequences of coefficients $\big(a_n^{(i)}\big)_{n=1}^\infty \in \{1,-1\}^\N$ for which $\big | \big \langle \xi_n^{(i)}, \xi \big \rangle \big | = \big \langle a_n^{(i)} \xi_n^{(i)}, \xi \big \rangle$ and
\begin{equation} \label{Eq_scalar_prod_limits}
\lim_{n\to\infty} \big \langle a_n^{(1)} \xi_n^{(1)}, \xi \big \rangle = \lim_{n\to\infty} \big \langle a_n^{(2)} \xi_n^{(2)}, \xi \big \rangle = 1.
\end{equation}
Let $\theta_n^{(i)} \geq 0$ be the angle for which $\big \langle a_n^{(i)} \xi_n^{(i)}, \xi \big \rangle = \cos \big(\theta_n^{(i)} \big)$. Equation~\eqref{Eq_scalar_prod_limits} implies $\theta_n^{(i)} \to 0$ for $i \in \{1,2\}$, and in particular there exists some $N \in \N$ for which
\[
\big | \big \langle \xi_n^{(1)}, \xi_n^{(2)} \big \rangle \big | = \big \langle a_n^{(1)} \xi_n^{(1)}, a_n^{(2)} \xi_n^{(2)} \big \rangle
\]
for all $n \geq N$. For each $n \in \N$, define
\[
\theta_n^{\textrm{max}} := \max \big\{\theta_n^{(1)}, \theta_n^{(2)} \big\}.
\]
Then
\begin{equation} \label{Eq_Direction_Estimate}
\big \langle a_n^{(1)} \xi_n^{(1)}, a_n^{(2)} \xi_n^{(2)} \big \rangle = \cos \big( \theta_n^{(1)} +  \theta_n^{(2)} \big)
  \geq \cos \left( 2 \theta_n^{\textrm{max}} \right) \rightarrow 1,
\end{equation}
as $n \to \infty$, and the result follows.
\end{proof}

A particular consequence of Lemma~\ref{Lemma_Accum_Sing_Shallow} is that accumulating wedges (type S1) become increasingly acute/obtuse as they approach a limit point, with the angles
\[
\theta_n := \sphericalangle \big(\xi_n^{(1)}, \xi_n^{(2)} \big)
\]
between the extremal outward directions approaching an asymptotic value
\[
\theta = \lim_{n\to\infty} \theta_n \in \{0, \pi\}.
\]
It follows that for any fixed $p > 0$, there can only exist finitely many wedges whose sharpness deviates from these asymptotic values by more than $p$, which in turn implies that the total number of wedges can at most be countably infinite. Lemma~\ref{Lemma_Countable_Number_of_Wedges} below makes this argument precise.

\begin{lemma}[{Number of wedges}] \label{Lemma_Countable_Number_of_Wedges}
For any compact $E \subset \R^2$ and $\eps > 0$, the number of wedges (type S1) on $\partial E_\eps$ is at most countably infinite.
\end{lemma}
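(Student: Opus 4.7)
The plan is to partition the set of wedges according to how sharply (or obtusely) they open, and show that each piece of the partition is finite via a compactness argument. Specifically, for each integer $k \geq 2$ let
\[
W_k := \left\{ x \in \partial E_\eps \,:\, x \text{ is a wedge with angle } \theta_x \in [1/k,\, \pi - 1/k] \right\},
\]
where $\theta_x \in (0,\pi)$ denotes the angle between the two extremal outward directions $\xi_1, \xi_2 \in \Xext{x}$ (which exist and are distinct and non-antipodal by Definition~\ref{Def_classification_of_singularities}(S1)). The set of all wedges is then $W = \bigcup_{k \geq 2} W_k$, so to establish countability of $W$ it suffices to show that each $W_k$ is finite.

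Assume for contradiction that some $W_k$ is infinite. Since $E$ is compact, $E_\eps$ and hence $\partial E_\eps$ are compact, so $W_k$ has an accumulation point $x^* \in \partial E_\eps$ and we may extract a sequence $(x_n)_{n=1}^\infty \subset W_k$ of pairwise disjoint wedges with $x_n \to x^*$. Writing $\Xext{x_n} = \{\xi_n^{(1)}, \xi_n^{(2)}\}$, we have $\langle \xi_n^{(1)}, \xi_n^{(2)}\rangle = \cos \theta_{x_n}$ with $\theta_{x_n} \in [1/k, \pi - 1/k]$, so
\[
\bigl| \langle \xi_n^{(1)}, \xi_n^{(2)}\rangle \bigr| \leq \cos(1/k) < 1
\]
for every $n$. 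This directly contradicts Lemma~\ref{Lemma_Accum_Sing_Shallow}, which forces $|\langle \xi_n^{(1)}, \xi_n^{(2)}\rangle| \to 1$ along any such accumulating sequence of singularities. Hence $W_k$ must be finite.

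Taking the countable union $W = \bigcup_{k \geq 2} W_k$ yields that the set of wedges is at most countably infinite, as claimed. There is no real obstacle here beyond cleanly invoking Lemma~\ref{Lemma_Accum_Sing_Shallow}: the substantive geometric input — that extremal outward directions of accumulating singularities align or antialign in the limit — has already been established, and the proof essentially amounts to noticing that wedges with angle bounded away from both $0$ and $\pi$ are excluded from any such limiting configuration.
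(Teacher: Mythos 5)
Your proof is correct and follows essentially the same route as the paper: the paper defines $A(p) := \{x \in \partial E_\eps : |\langle \xi^{(1)}, \xi^{(2)}\rangle| \leq p\}$ for $p \in (0,1)$, shows each is finite by exactly your compactness-plus-Lemma~\ref{Lemma_Accum_Sing_Shallow} contradiction, and writes the set of wedges as $\bigcup_n A(1 - n^{-1})$. Your parametrisation by the angle $\theta_x \in [1/k, \pi - 1/k]$ is just the same decomposition expressed via $|\cos\theta_x| \leq \cos(1/k)$, so there is nothing to add.
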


\begin{proof}
We begin by showing that for each $p \in (0,1)$, the subset $A(p) \subset \partial E_\eps$ defined by
\[
A(p) := \big \{x \in \partial E_\eps \, : \, \big | \big \langle \xi^{(1)}, \xi^{(2)} \big \rangle \big | \leq p \,\,\,\, \textrm{for} \,\,\,\, \xi^{(1)}, \xi^{(2)} \in \Xext{x} \big \}
\]
contains only finitely many points. Assume contrary to this that for some $p \in (0,1)$ the set $A(p)$ contains infinitely many points. This implies that there exists a pair-wise disjoint sequence $(x_n)_{n=1}^\infty$ with $x_n \in A(p)$ for all $n \in \N$, and due to compactness of $\partial E_\eps$ we may assume that $x_n \to x \in \partial E_\eps$ as $n \to \infty$. Writing $\Xext{x_n} = \big \{\xi_n^{(1)}, \xi_n^{(2)} \big \}$ for each $n \in \N$, Lemma~\ref{Lemma_Accum_Sing_Shallow} then implies $\lim_{n\to\infty} \big | \big \langle \xi_n^{(1)}, \xi_n^{(2)} \big \rangle \big | = 1$, contradicting the assumption that $\big | \big \langle \xi_n^{(1)}, \xi_n^{(2)} \big \rangle \big | \leq p < 1$ for all $n \in \N$.

By definition, each wedge $x \in \partial E_\eps$ belongs to the set $A(p)$ for some $p \in (0,1)$. Hence the union
\[
A := \bigcup_{n = 1}^\infty A \big(1 - n^{-1} \big)
\]
contains all the wedges on $\partial E_\eps$. According to the reasoning above, each of the sets $A (1 - n^{-1} )$ contains only finitely many points, from which the result follows.
\end{proof}

We next show that for a given connected component $U$ of the complement $\R^2 \setminus E_\eps$, there can only exist finitely many sharp singularities on $\partial U$. This essentially follows from Propositions~\ref{Prop_local_representation_exists} and~\ref{Prop_Topological_Characterisation_of_Chain_Singularities} which imply that any convergent sequence of pairwise disjoint sharp singularities $x_n \in \partial U$ is associated with a sequence $(U_n)_{n=1}^\infty$ of pairwise disjoint connected components of the complement $\R^2 \setminus E_\eps$, aligned with one of the extremal outward directions $\xi \in \Xext{x}$, satisfying $x_n  \in \partial U_n$ for all $n \in \N$. 
Our proof by contradiction amounts to showing that there can be no fixed connected component $U \subset \R^2 \setminus E_\eps$ for which $x = \lim_{n \to \infty} x_n \in \partial U$ and $U_n \neq U$ for infinitely many $n \in \N$.

\begin{lemma}[{Number of sharp singularities}] \label{Lemma_Finitely_Many_Sharp_Singularities_per_Component}
Let $E \subset \R^2$ be compact. For any connected component $U$ of the complement $\R^2 \setminus E_\eps$, the number of sharp singularities (types S2, S3 and S8) on the boundary $\partial U$ is finite.
\end{lemma}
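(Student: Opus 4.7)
The plan is to argue by contradiction. Suppose $\partial U$ contains infinitely many sharp singularities; by compactness of $\partial E_\eps$, extract a pairwise distinct convergent sequence $(x_n)_{n=1}^\infty$ of such singularities on $\partial U$ with $x_n \to x \in \partial U$. Each $x_n$, being of type S2, S3 or S8, carries opposite extremal contributors $\Cext{x_n} = \big\{y_n^{(1)}, y_n^{(2)}\big\}$ with $y_n^{(1)} - x_n = -\big(y_n^{(2)} - x_n\big)$. Passing to subsequences, Lemma~\ref{Lemma_limit_of_contributing_points} and Lemma~\ref{Lemma_Convergence_of_Contributing_Points_of_Sequences_of_Boundary_Points} yield limits $y_n^{(i)} \to y^{(i)} \in \Cext{x}$ with $y^{(1)} - x = -\big(y^{(2)} - x\big)$, and a unit direction $\xi := \lim_{n\to\infty} (x_n-x)/\norm{x_n-x} \in \Xext{x}$.

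I would then identify the type of $x$. Since $\big\langle (y_n^{(1)}-x_n)/\eps,\, (y_n^{(2)}-x_n)/\eps \big\rangle = -1$ for every $n$, condition (iii) of Proposition~\ref{Prop_Topological_Characterisation_of_Chain_Singularities} is fulfilled, so $x$ is a chain singularity. Together with $x \in \partial U$, Corollary~\ref{Cor_Inaccessible_Singularities} forces $x$ to be of type S8. Hence $\Xext{x} = \{\xi_0, -\xi_0\}$ for some $\xi_0 \in S^1$, sharp-type in direction $\xi_0$ and chain-type in direction $-\xi_0$, and $U$ is the unique connected component of $E_\eps^c$ with $x \in \partial U$ (the sharp-side component provided by Proposition~\ref{Prop_Sharp_Singularity_Types}~(i)). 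The accumulation direction $\xi$ then belongs to $\{\xi_0, -\xi_0\}$, and I would split into two cases.

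If $\xi = \xi_0$, Proposition~\ref{Prop_Sharp_Singularity_Types}~(i) provides $r > 0$ with $g_{\xi_0, y^{(1)}}(s) \neq g_{\xi_0, y^{(2)}}(s)$ for every $s \in (0, r)$, and by Proposition~\ref{Prop_local_representation_exists} these two graphs together describe $\partial E_\eps$ inside $U_r(x, \xi_0)$. For large $n$ we have $x_n \in U_r(x, \xi_0)$, and the sharp-singularity structure at $x_n$ forces two distinct branches of $\partial E_\eps$ to emanate from $x_n$ with a common tangent $\xi_n$ close to $\xi_0$; since each $g_{\xi_0, y^{(i)}}$ is the graph of a Lipschitz function of the $\xi_0$-coordinate (Proposition~\ref{Prop_LBR_Lipschitz}) and hence has only one forward continuation from any of its points, these two branches must lie on two different graphs, forcing $g_{\xi_0, y^{(1)}}(t_n) = g_{\xi_0, y^{(2)}}(t_n) = x_n$ for some $t_n \to 0$ and contradicting the sharp-type inequality. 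If instead $\xi = -\xi_0$, Proposition~\ref{Prop_Sharp_Singularity_Types}~(ii) provides pairwise disjoint connected components $V_k \subset U_r(x, -\xi_0) \cap E_\eps^c$ parameterised by $s \in (p_k, s_k)$ with $p_k > 0$, so each $V_k$ and its boundary sit at distance at least $p_k$ from $x$. Every $x_n$ on this side is locally on $\partial V_{k_n}$ for some $k_n$, and $x_n \in \partial U$ combined with pairwise disjointness of the $V_k$ pins all $k_n$ to a single index $k_0$ with $U = V_{k_0}$; the resulting $x_n \in \partial V_{k_0}$ contradicts $x_n \to x$.

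I expect the main obstacle to be the sharp-side step, specifically the claim that the two branches meeting at each $x_n$ must come from two different graphs in the representation at $x$. This requires careful use of the injectivity of each $g_{\xi_0, y^{(i)}}$ as a function of its $\xi_0$-coordinate, together with the fact that both branches genuinely emanate from $x_n$ in the same $\xi_0$-forward direction; the chain-side step, by contrast, reduces cleanly to the positive separation of each $V_k$ from $x$ read off from the parameterisation in the proof of Proposition~\ref{Prop_Sharp_Singularity_Types}.
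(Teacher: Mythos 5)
Your proof is correct, and it reaches the paper's contradiction by a recognizably different organization. The paper neither identifies the type of $x$ nor splits on the accumulation direction: it extracts directly from the machinery of Proposition~\ref{Prop_Topological_Characterisation_of_Chain_Singularities} (the sub-argument for (iii) $\Rightarrow$ (ii) in its proof) that the pinch condition $x_n = g_{\xi,y_1}(s_n) = g_{\xi,y_2}(s_n)$ holds along the accumulation direction $\xi$ for some $s_n \to 0$ --- so that direction is automatically chain-type --- and then traps $U$ inside shrinking closed rectangles $R_n$ whose boundaries avoid $E_\eps^c$, obtaining the contradiction $U \subset \mathrm{int}\,R_n$ while $x \in \partial U \cap R_n^c$. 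Your chain-side case is essentially this trapping argument in a different guise: the lens components $V_k$ are parametrised over intervals $(p_k,s_k)$ with $p_k>0$, so once $U$ is identified with one of them it sits at positive distance from $x$, contradicting $x\in\partial U$ (note that $x_n\in\partial V_{k_n}\cap\partial U$ alone does not force $V_{k_n}=U$, since a point may bound two components; what pins $U$ down is that near $x_n$ the complement consists only of the adjacent lenses). Your sharp-side case is extra work that the paper's route makes unnecessary, since deriving the pinch condition already rules out accumulation from a sharp-type direction; the two-branches argument does go through --- each branch at $x_n$ is a connected subset of the disjoint union of the two graphs, each graph meets each line $\{\langle z-x,\xi_0\rangle = s\}$ exactly once, so the branches lie on different graphs and both graphs pass through $x_n$, contradicting sharpness at $s_n\in(0,r)$ --- but it could be replaced by a citation of the same sub-argument of Proposition~\ref{Prop_Topological_Characterisation_of_Chain_Singularities}. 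The identification of $x$ as type S8 via Corollary~\ref{Cor_Inaccessible_Singularities} is sound but is not actually needed for either route.
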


\begin{proof}
Assume that the claim fails for some connected $U \subset \R^2 \setminus E_\eps$. Since $\partial U$ is compact, this implies the existence of a pair-wise disjoint sequence $(x_n)_{n=1}^\infty \subset \partial U$ of sharp singularities with $x_n \to x \in \partial U$. We may furthermore assume that the sequence is ordered so that
\begin{equation} \label{Eq_Ordered_Sequence}
\norm{x_{n+1} - x} < \norm{x_n - x}
\end{equation}
for all $n \in \N$, and that the limit $\xi := \lim_{n \to \infty} (x_n - x) / \norm{x_n - x}$ exists. Write $\Cext{x_n} = \big\{y_n^{(1)}, y_n^{(2)}\big\}$. It follows from the definition of a sharp singularity that
\begin{equation} \label{Eq_Sequence_of_Singularities}
y_n^{(1)} - x_n = -\big( y_n^{(2)} - x_n \big)
\end{equation}
for all $n \in \N$. According to Proposition~\ref{Prop_Topological_Characterisation_of_Chain_Singularities}, also the extremal contributors $y_1, y_2 \in \Cext{x}$ satisfy $y_1 - x = -(y_2 - x)$, and $y_n^{(i)} \to y_i$ for $i \in \{1,2\}$ as $n \to \infty$. Let $\cG(x)$ be the local boundary representation at $x$, with radius $r > 0$, given by Proposition~\ref{Prop_local_representation_exists}. Consider for $i \in \{1,2\}$ the functions $g_{\xi, y_i} \in \cG(x)$,
\[
g_{\xi, y_i}(s) := x + s \xi + f^{\xi, y_i}(s)\frac{x - y_i}{\eps},
\]
where the functions $f^{\xi, y_i}: [0, r] \to \R$ are continuous. Due to \eqref{Eq_Sequence_of_Singularities}, and since $x_n \to x$, there exists a sequence $(s_n)_{n=1}^\infty \subset \R_+$ with $s_n \to 0$ and some $N \in \N$, for which $n > N$ implies $x_n = g_{\xi, y_1}(s_n) = g_{\xi, y_2}(s_n)$ and consequently $f^{\xi, y_1}(s_n) + f^{\xi, y_2}(s_n) = 0$. Inequality \eqref{Eq_Ordered_Sequence} implies $s_{n+1} < s_n < s_{n-1}$ for all $n > N$, and one can define the open sets
\begin{align*}
S_n &:= \left\{s \in (s_{n+1}, s_{n-1}) \, : \, f^{\xi, y_1}(s) + f^{\xi, y_2}(s) < 0 \right\}, \\
U_n &:= \left\{ \tau g_{\xi, y_1}(s) +  (1 - \tau) g_{\xi, y_2}(s) \, : \, \tau \in (0,1), \, s \in S_n \right\}.
\end{align*}
For each $n > N$, the set $U_n$ is contained in the interior $\textrm{int} \,R_n$ of the closed rectangle
\begin{align*}
R_n &:= \Big\{ x + s\xi + t\frac{x - y_1}{\eps} \, : \, s_{n+1} \leq s \leq s_{n-1}, \\
&\phantom{=} \qquad \quad \inf_{s \in S_n}\left\{f^{\xi, y_1}(s)\right\} \leq t \leq -\inf_{s \in S_n}\left\{f^{\xi, y_2}(s)\right\} \Big\}.
\end{align*}
By definition, $s_n \in S_n$ for all $n > N$. From this it follows that $x_n \notin \partial V$ for any open $V \subset R_n^c \setminus E_\eps$. On the other hand, $x_n \in \partial U$ for all $n \in \N$. This implies $U \subset \textrm{int} \,R_n$ for all $n \in \N$, since $U$ is connected and $\partial R_n \setminus E_\eps = \varnothing$ by definition. However, given that $x \in \partial U \cap R_n^c$, this leads to the contradiction $U \subset R_n \cap R_n^c$ for all $n > N$.
\end{proof}

In~\cite[Remark 6.6]{Rataj_Zajicek_On_the_structure_of_sets_with_PR} it is noted that for any planar compact set with positive reach, the number of points of type $T^3$ on the boundary is finite. For $\eps$-neighbourhoods, points of type $T^3$ correspond to sharp singularities (type S2), see~\cite[Definition 6.3]{Rataj_Zajicek_On_the_structure_of_sets_with_PR}. Lemma~\ref{Lemma_Finitely_Many_Sharp_Singularities_per_Component} can therefore be seen as an analogue of this result in the context of $\eps$-neighbourhoods.

Lemmas~\ref{Lemma_countably_many_S4_singularities} and~\ref{Lemma_countably_many_S6_singularities} below state that the sets of one-sided shallow singularities (type S4) and chain singularities (type S6) are both at most countably infinite. The argument in both cases rests on the observation that for a finite sum $M := \sum_{x \in A} m_x < \infty$ of non-negative real numbers $m_x$, indexed by some possibly uncountable set $A$, the index subset $A^+ := \{x \in A \, : \, m_x > 0\}$, corresponding to the positive elements in the sum, is at most countably infinite.\footnote{This follows from the observation that the set $A_n := \{x  \in A \, : \, m_x > 1/n \}$ is finite for each $n \in \N$ and hence $A^+ := \{x \in A \, : \, m_x > 0 \} = \bigcup_{n \in \N} A_n$ is countable as a countable union of finite sets.}
In the case of shallow singularities, the numbers being summed will represent lengths (one-dimensional Hausdorff measures) $m_x := \cH^1(I_x)$ of boundary segments $I_x \subset \partial E_\eps$. In the case of chain singularities they will stand for surface areas $m_x := \cH^2(A_x)$ of open subsets $A_x \subset \mathrm{int} \, E_\eps$. In each case, these numbers will be strictly positive for every $x$ by definition, which implies that the underlying index sets, corresponding to the sets of singularities in question, are themselves at most countably infinite.

\begin{lemma}[{Number of one-sided shallow singularities}] \label{Lemma_countably_many_S4_singularities}
For a compact set $E \subset \R^2$, the number of one-sided shallow singularities (type S4) on $\partial E_\eps$ is at most countably infinite.
\end{lemma}

\begin{proof}
Let $x \in \partial E_\eps$ be a one-sided shallow singularity. Then, by definition, there exists $\xi \in \Xext{x}$ and $\delta_x > 0$ for which $J_x(\delta_x) := U_{\delta_x}(x, \xi) \cap \partial E_\eps \subset \Unp{E}$. 
In particular, it follows that if $z \neq x$ is another one-sided shallow singularity, with the corresponding set $J_z(\delta_z)$, then the sets $J_x(\delta_x/2)$ and $J_z(\delta_z/2)$ satisfy
\begin{equation} \label{Eq_The_sets_J_x_are_disjoint}
J_x(\delta_x/2) \cap J_z(\delta_z/2) = \varnothing.
\end{equation}

Let $\cG(x)$ be a local boundary representation at $x$ with radius $r > 0$, and let $(\xi,y) \in \Pext{x}$ be such that $J_x(\delta_x/2) = g_{\xi, y}([0,s_x])$ for some $s_x \in (0, \delta)$, see Proposition~\ref{Prop_local_representation_exists}.
According to Proposition~\ref{Prop_LBR_Lipschitz}, the function $g_{\xi, y}$ is $2/\sqrt{3}$-Lipschitz, so that the length $\cH^1(J_x(\delta_x/2))$ of the curve $J_x(\delta_x/2)$ satisfies
\[
  0 < s_x \leq \cH^1(J_x(\delta_x/2)) \leq \frac{2}{\sqrt{3}}s_x,
\]
see for instance \cite[Proposition 2.49]{Ambrosio_et_al_Functions_of_Bounded_Variation}. Denote by $W$ the set of one-sided shallow singularities. Then, due to~\eqref{Eq_The_sets_J_x_are_disjoint}, the length of the collection $I := \bigcup_{x \in W} J_x(\delta_x/2)$ satisfies
\begin{equation} \label{Eq_Finite_Measure_on_the_Boundary}
  \cH^1(I) = \sum_{x \in W} \cH^1(J_x(\delta_x/2)) \leq \frac{2}{\sqrt{3}} \sum_{x \in W} s_x < \cH^1(E_\eps) < \infty.
\end{equation}
The finiteness of $\cH^1(E_\eps)$ was established already by Erd\H{o}s, see~\cite[Section 6]{Erdos_Some_remarks}. According to the counting argument preceding the statement of the result, the estimate \eqref{Eq_Finite_Measure_on_the_Boundary} implies that the set $W$ can be at most countably infinite.
\end{proof}

The following example demonstrates that the set of shallow-shallow singularities (type S5) can be dense and have positive Hausdorff measure on the boundary $\partial E_\eps$. The idea is to construct a suitably jagged function on the interval $[0,1]$, and interpret its graph as a subset of the boundary $\partial E_\eps$ of a certain set $E \subset \R^2$.

\begin{example}[{Dense, full measure set of shallow-shallow singularities}] \label{Ex_Shallow_can_be_dense}
Consider a bounded, increasing function $\alpha: [0, 1] \to \R$ that is discontinuous at every rational number $q \in \Q \cap [0,1]$ but continuous at every irrational number $p \in [0,1] \setminus \Q$.\footnote{One way to construct such a function is to write $\Q \cap [0,1] = \big \{ q_n \, : \, n \in \N \big \}$, define $N(x) := \{n  \, : \, q_n \leq x\}$, take any positive summable sequence $(a_n)_{n=1}^\infty$, and set $\alpha(x) := \sum_{n \in N(x)} a_n$ for all $x \in [0,1]$. This way $\alpha$ is increasing on $[0,1]$, has a jump of amplitude $a_n$ at each rational $x = q_n$, is continuous at every $x \in [0,1] \setminus \Q$, and satisfies $\alpha(1) = \sum_{n=1}^\infty a_n < \infty$.}
~As an almost everywhere continuous bounded function, every such $\alpha$ is Riemann-integrable, and its monotonicity implies that the integral function $I_\alpha(x) := \int_0^x \alpha(s) ds$ is convex. Most significantly for our example, $I_\alpha$ has a well-defined derivative at every irrational $p \in [0,1] \setminus \Q$, but not at any rational $q \in \Q \cap [0,1]$.

For any $\eps > 0$, one may thus interpret the graph $G := \big\{ (s, I_\alpha(s)) \, : \, s \in [0,1] \big\}$ as a subset of an boundary $\partial E_\eps$ as follows. Since the one-sided derivatives
\[
D^\pm I_\alpha(s) := \lim_{h \to \pm 0} \frac{I_\alpha(s + h) - I_\alpha(s)}{h}
\]
exist at every $s \in [0,1]$, one can define for each $x(s) := (s, I_\alpha(s))$ the corresponding contributors $y^-(s), y^+(s) \in \Pi_E(x(s))$ by setting
\[
y^\pm(s) := \big(s + a^\pm(s), \, I_\alpha(s) - b^\pm(s) \big),
\]
where for each $s \in [0,1]$
\[
a^\pm(s) := \frac{\eps D^\pm I_\alpha(s)}{\sqrt{1 + \big[D^\pm I_\alpha(s)\big]^2}} \qquad \textrm{and} \qquad b^\pm(s) := \frac{\eps}{\sqrt{1 + \big[D^\pm I_\alpha(s)\big]^2}}.
\]
A direct computation shows that $D^\pm I_\alpha(s) = b^\pm(s) / a^\pm(s)$ and $\norm{x(s) - y^\pm(s)} = \eps$ for all $s \in [0,1]$. Furthermore, the convexity of $I_\alpha$ implies that for each $y^\pm(s)$ the ball $\overline{B_\eps(y^\pm(s))}$ intersects $G$ only at the corresponding $x(s)$, which implies that $G \subset \partial E_\eps$ for the set $E := \{ y^\pm(s) \, : \, s \in [0,1] \}$.

By construction, $y^+(p) = y^-(p)$ for the irrational $p \in [0,1] \setminus \Q$. On the other hand, $y^+(q) \neq y^-(q)$ for all rational $q \in \Q \cap [0,1]$.
Hence, $x(q)$ is a wedge for each rational $q$, and $x(p) \in \Unp{E}$ for every irrational $p$. This implies that the points $x(p)$ are in fact shallow-shallow singularities (type S5). Due to the continuity of the integral function $I_\alpha$, these points form a dense set on $G$. Given that $I_\alpha$ is convex and absolutely continuous as an integral function, and the derivative $\alpha$ is bounded, $I_\alpha$ is in fact Lipschitz continuous. 
It then follows from the basic properties of Hausdorff measure (see for instance \cite[Proposition 2.49]{Ambrosio_et_al_Functions_of_Bounded_Variation}) and the rectifiability of $\partial E_\eps$ (see \cite[Proposition 2.3]{Rataj_Winter_On_Volume} and \cite[Corollary 3.3]{Falconer_The_Geometry}) that $\cH^1([0,1]) \leq \cH^1(G) < \infty$. Hence, the set $P := \big \{ (p, I_\alpha(p)) \, : \, p \in [0,1] \setminus \Q \big \}$ of shallow-shallow singularities has full measure on $G$.
\end{example}

\begin{lemma}[{Number of one-sided chain singularities}] \label{Lemma_countably_many_S6_singularities}
For a compact set $E \subset \R^2$, the number of one-sided chain singularities (type S6) on $\partial E_\eps$ is at most countably infinite.
\end{lemma}

\begin{proof}
Write $C$ for the set of one-sided chain singularities on $\partial E_\eps$. We argue that there exists a collection $\{A_x\}_{x \in C}$ of pair-wise disjoint open sets $A_x \subset \textrm{int}\,E_\eps$, indexed by $C$.
The result then follows from the counting argument discussed in the lead-up to Lemma~\ref{Lemma_countably_many_S4_singularities} above.

For each $x \in C$, the set of outward directions is a singleton $\Xi_x(E_\eps) = \{\xi\}$. Let $x \in C$ and let $\cG(x)$ be a local boundary representation at $x$ with radius $r > 0$, so that for $i \in \{1,2\}$ the functions $g_{\xi, y_i} \in \cG(x)$ are given by
\[
g_{\xi, y_i}(s) = x + s\xi + f^{\xi, y_i}(s)\frac{x - y_i}{\eps}
\]
for some continuous functions $f^{\xi, y_i}: [0, r] \to \R$. It follows from the definition of one-sided chain singularities that the extremal contributors $y_1, y_2 \in \Cext{x}$ satisfy $y_1 - x = -(y_2 - x)$  for all $x \in C$. Since $-\xi \notin \Xi_x(E_\eps)$, Proposition~\ref{Prop_local_contribution} implies that there are two possibilities:
\begin{itemize}
\item[(i)] there exists some non-extremal contributor $y \in \Pi_E(x)$ for which $\langle y - x, \xi \rangle < 0$, or else
\item[(ii)] 
the set
$\overline{B_\eps(E \cap U_\delta(y_1, -\xi))} \cap \overline{B_\eps(E \cap U_\delta(y_2, -\xi))} \setminus \{x\}$ is non-empty for all $\delta > 0$. 
\end{itemize}
In both cases, there exists some $p(x) < 0$ for which
\begin{equation} \label{Eq_One_Sided_Chain_Interior}
Q(x) := \Big \{x + s\xi + t\frac{x - y_1}{\eps} \, : \, (s, t) \in (p(x), 0) \times (-\eps/2, \eps/2) \Big \} \subset \mathrm{int} \, E_\eps .
\end{equation}
One can then define $A_x := U_{p(x)/3}(x, -\xi)$. Note that for all $x \in C$, the set $A_x$ is open and has a positive surface area $\cH^2(A_x) > 0$. Furthermore, $A_x \subset Q(x) \subset \textrm{int} \,E_\eps$ and it follows from the construction that if $z \neq x$ is any other one-sided chain singularity, its distance from $x$ satisfies $|z - x| \geq p(x)$, implying $A_x \cap A_z = \varnothing$. The sets $A_x$ are thus pair-wise disjoint, open and contained in some bounded ball $B_R(0)$ due to the compactness of $E_\eps$. Hence the sum $\sum_{x \in C} \cH^2(A_x)$ of their surface areas is finite, from which the result follows by the counting argument discussed in the lead-up to Lemma~\ref{Lemma_countably_many_S4_singularities}.
\end{proof}

\subsection{Proof of Theorem~\ref{Thm_Main_2}}
We conclude this chapter with the proof of Theorem~\ref{Thm_Main_2}, which combines Lemmas~\ref{Lemma_Countable_Number_of_Wedges}--\ref{Lemma_countably_many_S4_singularities} and~\ref{Lemma_countably_many_S6_singularities} into one statement.

\begin{theorem_without_2}[{Countable sets of singularities}]
Let $E \subset \R^2$ be compact and let $\eps > 0$. Then
the corresponding $\eps$-neighbourhood boundary $\partial E_\eps$ contains at most a countably infinite number of wedges (type S1), sharp singularities (types S2, S3, S8), one-sided shallow singularities (type S4) and chain singularities (type S6).
\end{theorem_without_2}

\begin{proof}
Consider the collection $\{U_i\}_{i \in I}$ of the connected components of the complement $\R^2 \setminus E_\eps$. Since $E$ is assumed to be compact, $E \subset B_R(0)$ for some $R > 0$. It follows that all but one, say $U_j$, of the connected components $U_i$ are bounded, so that
\[
\bigcup_{i \in I \setminus \{j\}} U_i \subset B_R(0).
\]
Following the counting argument discussed before the statement of Lemma~\ref{Lemma_countably_many_S4_singularities}, this implies that the index set $I$ is at most countably infinite. By definition, every sharp singularity $x \in \partial E_\eps$ (types S2, S3, S8) satisfies $x \in \bigcup_{i \in I} \partial U_i$. It follows then from Lemma~\ref{Lemma_Finitely_Many_Sharp_Singularities_per_Component} that the set of sharp singularities on $\partial E_\eps$ is countable as a countable union of finite sets. Finally, Lemmas~\ref{Lemma_Countable_Number_of_Wedges}, \ref{Lemma_countably_many_S4_singularities} and~\ref{Lemma_countably_many_S6_singularities} guarantee that the number of wedges (type S1) and one-sided shallow (type S4) and chain (type S6) singularities, respectively, are at most countably infinite.
\end{proof}

\section{Chain Singularities Form a Totally Disconnected Set} \label{Subsec_Topology_of_Chain}
We conclude our analysis of the singularities on the boundary by showing that the set $\cC(\partial E_\eps)$ of chain singularities (types S6--S8) is closed and totally disconnected. This implies that $\cC(\partial E_\eps)$ is nowhere dense, meaning that it is small in the topological sense, even though it may have a positive one-dimensional Hausdorff measure on the boundary.

Before presenting the proof of the above result, we provide a concrete example of a set $E \subset \R^2$ and $\eps > 0$ for which the one-dimensional Hausdorff measure of the set of chain-chain singularities on $\partial E_\eps$ is positive. Essentially, we analyse~\cite[Example 2.2]{Rataj_Winter_On_Volume} from the geometric point of view.

\begin{example}[{A set of chain singularities with positive measure}] \label{Example_Pos_Measure_Set_of_Chain_Sing}
Let $C \subset [0,1]$ be a 'fat' Cantor set (a Cantor set with positive one-dimensional Hausdorff measure) and consider the set $E := \big\{(s,t) \in \R^2 \, : \, s \in C, \, t \in \{0, 1\}  \big\}$. The Cantor set is obtained by removing from the interval $[0,1]$ a certain countable collection $\cI := \{I_n \, : \, n \in \N\}$ of open subintervals. By construction, $C$ is totally disconnected and thus contains no intervals. Since it is uncountable, most of the points $s \in C$ do not lie on the boundary of any of the removed intervals. Denote the collection of these points by $C^* := C \setminus \bigcup_{n  \in \N} \partial I_n$. By construction, every $s \in C^*$ is however an accumulation point of $\bigcup_{n\in\N}I_n$. Since the sets $I_n$ are open, it follows that for $\eps = 1/2$ the sets $V_n := \{ (s, 1/2) \, : \, s \in I_n \}$ satisfy $V_n \subset \R^2 \setminus E_\eps$ for all $n \in \N$. Thus, for every point $x \in A := \{ (s, 1/2) \, : \, s \in C^* \} \subset \partial E_\eps$ there exists a sequence $(w_m)_{m=1}^\infty \subset \R^2 \setminus E_\eps$, where $w_m = (s_m, 1/2) \in V_m$ for some $s_m \in I_{n(m)}$, and $s_m  \to s$ as $m \to \infty$. We can also assume that $I_{n(m)} \neq I_{n(m')}$ whenever $m \neq m'$. For the connected components $W_m \supset V_m$ of the complement $\R^2 \setminus E_\eps$, this implies $W_m \neq W_{m'}$ whenever $m \neq m'$. It is easy to see that condition (ii) in Proposition~\ref{Prop_Topological_Characterisation_of_Chain_Singularities} thus holds true for all $x \in A$ so that $A \subset \cC(\partial E_\eps)$. Hence, $\cH^1(\cC(\partial E_\eps)) \geq \cH^1(A) = \cH^1(C^*) > 0$ due to the translation invariance of Hausdorff measure. 
\end{example}

\subsection{Proof of Theorem~\ref{Thm_Main_3}}
The proof of our third main result builds on many of the results presented in the previous sections. To show that the set of chain singularities is closed, we combine Proposition~\ref{Prop_regular_and_wedge_complement_cone} regarding the connectedness of the complement $\R^2 \setminus E_\eps$ near wedges and $x \in \Unp{E}$ with the characterisation of chain singularities provided by Proposition~\ref{Prop_Topological_Characterisation_of_Chain_Singularities}. The second part of the proof also makes use of the basic results established in Section~\ref{Subsec_Properties_of_Outward_Dir}.

\begin{theorem_without_2}[{The set of chain singularities is closed and totally disconnected}]
For any compact set $E \subset \R^2$ and $\eps > 0$, the set $\cC(\partial E_\eps)$ of chain singularities is closed and totally disconnected.
\end{theorem_without_2}

\begin{proof}
We begin by showing that the complement $\partial E_\eps \setminus \cC(\partial E_\eps)$ is open. To this end, consider some $x \in \partial E_\eps \setminus \cC(\partial E_\eps)$. If $x$ is a wedge (type S1) or if $x \in \Unp{E}$, Proposition~\ref{Prop_regular_and_wedge_complement_cone} implies that there exists some neighbourhood $B_r(x)$ and a connected subset $V_x \subset \R^2 \setminus E_\eps$ for which
\begin{equation} \label{Eq_Connected_Surroundings}
B_r(x) \setminus E_\eps = B_r(x) \cap V_x.
\end{equation}
One the other hand, Proposition~\ref{Prop_Topological_Characterisation_of_Chain_Singularities} states that each chain singularity $x \in \cC(\partial E_\eps)$ is associated with a sequence $(V_n)_{n=1}^\infty \subset \R^2 \setminus E_\eps$ of disjoint connected components of the complement $\R^2 \setminus E_\eps$, for which $\mathrm{dist}_H(x, V_n) \to 0$ as $n \to \infty$. Equation~\eqref{Eq_Connected_Surroundings} hence implies that $B_r(x) \cap \cC(\partial E_\eps) = \varnothing$. Similarly, for a sharp singularity (type S2) or a sharp-sharp singularity (type S3), Proposition~\ref{Prop_Sharp_Singularity_Types} implies the existence of a neighbourhood $B_r(x)$ for which $B_r(x) \cap \cC(\partial E_\eps) = \varnothing$. Hence $\partial E_\eps \setminus \cC(\partial E_\eps)$ is open on the boundary.

To demonstrate that $\cC(\partial E_\eps)$ is totally disconnected, we show that for any two chain singularities $x, z \in \cC(\partial E_\eps)$ there exist disjoint open sets $A_x, A_z \subset \R^2$ for which $x \in A_x, z \in A_z$ and $\cC(\partial E_\eps) \subset (A_x \cup A_z) \cap \partial E_\eps$. More specifically, we will consider for each $x \in \cC(\partial E_\eps)$ and $s_1 \leq 0 \leq s_2$ the sets
\begin{equation} \label{Eq_Sets_A_x(s_1_s_2)}
A_{x, \xi}(s_1, s_2) := \Big \{x + s\xi + t \frac{x - y_1}{\eps} \, : \, s_1 < s < s_2, \,\, -\eps/2 < t < \eps/2 \Big \}
\end{equation}
and show that for each $z \in \cC(\partial E_\eps) \setminus \{x\}$ there exist $\xi \in \Xext{x}$ and $s_1 < 0 < s_2$ for which $\partial A_{x, \xi}(s_1, s_2) \cap \cC(\partial E_\eps) = \varnothing$ and $z \notin A_{x, \xi}(s_1, s_2)$.

Given that $x$ is a chain singularity, Proposition~\ref{Prop_Sharp_Singularity_Types} implies that for $r > 0$ the boundary region $\partial E_\eps \cap U_r(x, \xi)$ exhibits chain-type geometry near $x$ for at least one extremal outward direction $\xi \in \Xext{x}$. We begin by assuming that $\xi$ is such a direction, and consider the corresponding sets $A_{x, \xi}(0, s)$ for $s > 0$. Writing $R(z) := \norm{z - x}$, our aim is to find some $s < R(z)/2$ for which $\partial A_{x, \xi}(0, s) \cap \cC(\partial E_\eps) = \{ x \}$.
To this end, let $\cG(x)$ be a local boundary representation at $x$ with radius $r > 0$ and let $f^{\xi,y_1}, f^{\xi, y_2} :  [0, r] \to \R$ be the continuous functions for which
\[
g_{\xi, y_i}(s) = x + s\xi + f^{\xi, y_i}(s)\frac{x - y_i}{\eps}
\]
for every $g_{\xi, y_i} \in \cG(x)$, $i \in \{1,2\}$ and all $s \in [0, r]$. As argued in the proof of Proposition~\ref{Prop_Sharp_Singularity_Types} (ii), there exist sequences $(s_n)_{n=1}^\infty \subset \R_+$ and $(p_n)_{n=1}^\infty \subset \R_+$ for which
\begin{itemize}
\item $s_n \to 0$ and $p_n \to 0$, as $n \to \infty$,
\item $p_n < s_n \leq p_{n-1} < s_{n-1}$ for all $n \in \N$,
\item $f^{\xi, y_1}(s) + f^{\xi, y_2}(s) = 0$ for all $s \in (s_n)_{n=1}^\infty \cup (p_n)_{n=1}^\infty$, and
\item $f^{\xi, y_1}(s) + f^{\xi, y_2}(s) < 0$ for all $s \in (p_n, s_n)$ and $n \in \N$.
\end{itemize}
It follows that the open set
\[
V_n := \big \{ \tau g_{\xi, y_1}(s) +  (1 - \tau) g_{\xi, y_2}(s) \, : \, \tau \in (0,1), \, s \in (p_n, s_n) \big \}
\]
is a connected component of the complement $\R^2 \setminus E_\eps$ for all $n \in \N$. Consequently there exists some $N \in \N$ for which $\mathrm{dist}_H(V_n, x) < R(z) / 2$ whenever $n \geq N$. The definition of the sets $V_n$ implies that for each $n \geq N$, the boundary point $x_n^{(1)} := g_{\xi, y_1}\big (\frac{p_n + s_n}{2} \big ) \in \partial V_n \subset \partial E_\eps$ has an outward direction aligned with the vector
\begin{align*}
\eta_n^{(1)} &:= g_{\xi, y_2}\left( \frac{p_n + s_n}{2} \right) - g_{\xi, y_1} \left( \frac{p_n + s_n}{2} \right) \\
&= -\left( f^{\xi, y_1}\left( \frac{p_n + s_n}{2}\right) + f^{\xi, y_2}\left( \frac{p_n + s_n}{2}\right) \right) (x - y_1)
\end{align*}
An expression analogous to the above, obtained by replacing the roles of $y_1$ and $y_2$, holds true for $x_n^{(2)}$ and $\eta_n^{(2)}$, defined similarly.
We claim that there exists some $n \geq N$, for which $x_n^{(i)}  \notin \cC(\partial E_\eps)$ for $i \in \{1,2\}$. Assume this were not the case. Then it follows from the definition of chain-singularities and Proposition~\ref{Prop_structure_of_set_of_outward_directions} that for at least one $i \in \{1,2\}$ we have $\eta_n^{(i)} / \big \Vert \eta_n^{(i)} \big \Vert \in \Xext{x_n^{(i)}}$ for infinitely many $n \geq N$. By virtue of the definition of the sets $V_n$ as regions between the graphs $g_{\xi, y_i}$, and since $x_n^{(i)} \in \partial V_n \subset \partial E_\eps$ for all $n \in \N$, it follows from Proposition~\ref{Prop_Tangents_are_Defined} that
\[
\xi = \lim_{n \to \infty} \frac{x_n^{(i)} - x}{\big \Vert x_n^{(i)} - x \big \Vert}
\]
for $i \in \{1,2\}$. But then, due to Lemma~\ref{Lemma_Convergence_of_Contributing_Points_of_Sequences_of_Boundary_Points} (ii)(b), we have
\[
0 = \left \langle \frac{x - y_i}{\eps}, \, \xi \right \rangle = \lim_{n \to \infty} \left \langle \frac{\eta_n^{(i)}}{\big \Vert \eta_n^{(i)} \big \Vert}, \xi \right \rangle = 1,
\]
which is impossible. Thus, there exists some $n \in \N$ for which $x_n^{(i)}  \notin \cC(\partial E_\eps)$ for $i \in \{1,2\}$, which in turn implies that $\partial A_{x, \xi} \big(0, (p_n + s_n) / 2 \big) \cap \cC(\partial E_\eps) = \{ x \}$, since
\[
\partial A_{x, \xi} \left(0, \frac{p_n + s_n}{2}\right) \cap \partial E_\eps = \big\{x, x_n^{(1)}, x_n^{(2)}\big\}.
\]
To conclude the proof we consider one by one the cases of one-sided chain (type S6), chain-chain (type S7) and sharp-chain (type S8) singularities, and identify the sets $A_x$ and $A_z$ mentioned in the beginning of the proof.

{(i)} Assume $x$ is a one-sided chain singularity (type S6), so that $\Xext{x} = \{\xi\}$ for some $\xi \in S^1$. By the argument presented above, there exists some $0 < s_2 < R(z) / 2$ for which $\partial A_{x, \xi} \big(0, s_2 \big) \cap \cC(\partial E_\eps) = \{ x \}$. On the other hand, according to the reasoning presented in the proof of Lemma~\ref{Lemma_countably_many_S6_singularities}, the set
\begin{equation} \label{Eq_quadrangle_in_the_interior_of_E_sub_eps}
Q_p(x) := \Big \{x + s\xi + t\frac{x - y_1}{\eps} \, : \, p < s < 0, \, -\eps/2 < t < \eps/2 \Big \}
\end{equation}
satisfies $Q_p(x) \subset \mathrm{int} \, E_\eps$ for some $p < 0$ (see equation~\eqref{Eq_One_Sided_Chain_Interior}). By setting $s_1 := -\mathrm{min} \big\{p, R(z) / 2 \big\}$ it follows then that $\partial A_{x,\xi}(s_1, s_2) \cap  \cC(\partial E_\eps) = \varnothing$ and we may define $A_x := A_{x,\xi}(s_1, s_2)$ and $A_z := \big( \R^2 \setminus \overline{A_x} \big )$.

{(ii)} Assume then that $x$ is a chain-chain singularity (type S7) with $\Xext{x} = \{\xi, -\xi\}$ for some $\xi \in S^1$. Since now both of the extremal outward directions $\xi$ and $-\xi$ are associated with chain-type geometry, one can again utilise the argument above in order to choose for $\xi_1 := \xi$ and $\xi_2 := -\xi$ the corresponding $s_1, s_2 > 0$ for which $\partial A_{x, \xi_i} \big(0, s_i \big) \cap \cC(\partial E_\eps) = \{ x \}$ and $s_i < R(z) / 2$ for $i \in \{1,2\}$. By setting $s_3 = \mathrm{min} \{s_1, s_2\}$ we may define $A_x :=  A_{x,\xi}(-s_3, s_3)$ and $A_z := \big( \R^2 \setminus \overline{A_x} \big )$.

{(iii)} Finally, assume that $x$ is a sharp-chain singularity (type S8) with $\Xext{x} = \{\xi, -\xi\}$ for some $\xi \in S^1$. We may assume that $\xi$ is associated with chain-type geometry, so that once again we have $\partial A_{x, \xi} \big(0, s_2 \big) \cap \cC(\partial E_\eps) = \{ x \}$ for some $0 < s_2 < R(z) / 2$ due to the arguments presented above. For the direction $-\xi$, Proposition~\ref{Prop_Sharp_Singularity_Types}(i) implies that there exists some $r > 0$ for which
\[
U_{r}(x, -\xi) \setminus E_\eps = V \cap U_{r}(x, -\xi) = \bigcup_{0 < s < r} x + s \big(\alpha(s), \beta(s) \big)_{S^1},
\]
where $V$ is the unique connected component of $\R^2 \setminus E_\eps$ intersecting $U_{r}(x, -\xi)$, $\alpha(s), \beta(s) \in S^1$ for all $s \in (0, r)$ and $\alpha(s), \beta(s) \to -\xi$ as $s \to 0$. It follows then from the definition of chain singularity that $U_r(x, -\xi) \cap \cC(\partial E_\eps)  = \varnothing$. Finally, by setting $s_1 = -\mathrm{min}   \{r/2, \eps/2, R(z)/2 \}$ we may thus define $A_x :=  A_{x,\xi}(s_1, s_2)$ and $A_z := \big( \R^2 \setminus \overline{A_x} \big )$, which completes the proof.
\end{proof}

The set $\cI$ of inaccessible singularities (types S6 and S7) inherits the properties of being totally disconnected and nowhere dense, but it may generally fail to be closed. Since the set $\cC(\partial E_\eps)$ on the other hand is compact and separable as a subset of $\R^2$, it follows from the Cantor-Bendixson Theorem (see~\cite[Thm.\ 6.4]{Kechris_Classical_Descriptive}) that whenever the cardinality of the chain-chain singularities (type S7) is uncountable, the set $\cC(\partial E_\eps)$ can be written as a disjoint union $\cC(\partial E_\eps) = C \cup P$, where $C$ is homeomorphic to the Cantor set and $P$ is countable. For further information on totally disconnected spaces, see \cite{Kechris_Classical_Descriptive}.
\chapter{Complements of {\larger{\textepsilon}}-neighbourhoods as Sets with Positive Reach} \label{Sec_Pos_Reach}
In this chapter we characterise, in terms of local topology around boundary singularities, those $\eps$-neighbourhoods $E_\eps$ whose complement $\overline{\R^2 \setminus E_\eps}$ is a set with positive reach, see Definition~\ref{Def_Positive_Reach}. To reach this, we make use of~\cite[Corollary 3.4]{Fu_Tubular_neighborhoods}, which implies that the complement $\overline{\R^2 \setminus E_\eps} = d_E^{-1}([\eps, \infty))$ has positive reach at the regular values $\eps$ of the distance function
\[
d_E(x) := \inf_{y \in E}\norm{x - y}.
\]
We combine this with Ferry's geometric characterisation~\eqref{Eq_Def_Critical_Points_Ferry} of the critical points of $d_E$ and then interpret the situation in terms of our classification of boundary points, given in Theorem~\ref{Thm_Main_1}.

It follows from Theorem~\ref{Thm_Main_1} that condition~\eqref{Eq_Def_Critical_Points_Ferry} is satisfied at a boundary point $x \in \partial E_\eps$ if and only if there exist extremal contributors $y_1, y_2 \in \Cext{x}$ for which
\begin{equation} \label{Eq_regularity_condition_again}
y_1 - x = -(y_2 - x).
\end{equation}
This corresponds to the local geometry type ({\footnotesize C}) in Figure~\ref{Figure_Three_Basic_Cases}, and means that $x$ is either a sharp singularity (types S2 and S3) or a chain singularity (types S6--S8). It follows from this that if $E \subset \R^2$ has diameter less than $2\eps$, then the boundary $\partial E_\eps$ contains no critical points of the distance function $d_E$. The set $\overline{\R^2 \setminus E_\eps}$ then has positive reach according to~\cite[Corollary 3.4]{Fu_Tubular_neighborhoods}. We use this observation, in conjunction with the local boundary representations given by Proposition~\ref{Prop_local_representation_exists}, to show that for any compact $E \subset \R^2$, the local reach~\eqref{Eq_Def_Reach} of the set $\overline{\R^2 \setminus E_\eps}$ is positive everywhere, with the possible exception of sharp and chain singularities, where the local reach may be zero.

\begin{lemma} \label{Lemma_Positive_Local_Reach_at_UNP_and_wedge_points}
Let $E \subset \R^2$ be compact, $\eps > 0$ and let $x \in \partial E_\eps$ be either a smooth point, a wedge (type S1) or a shallow singularity (types S4 and S5). Then $\mathrm{reach}\big(\overline{\R^2 \setminus E_\eps}, x \big) > 0$.
\end{lemma}

\begin{proof}
Let $\cG(x)$ be a local boundary representation at $x$ with radius $r > 0$. We first argue that for each extremal contributor $y \in \Cext{x}$, there exists some $\delta(y) > 0$ for which the set $A(y) := E \cap B_{\eps / 2}(y)$ satisfies
\begin{equation} \label{Eq_local_reach_for_non_chain_point_01}
\mathrm{reach}\big(\overline{\R^2 \setminus B_\eps (A(y))}, x \big) = \delta(y).
\end{equation}
Since $\mathrm{diam}\big( A(y) \big) < 2\eps$, equation~\eqref{Eq_regularity_condition_again} cannot be satisfied for any boundary point $x \in \partial B_\eps (A_y)).$ Condition~\eqref{Eq_Def_Critical_Points_Ferry} thus guarantees that the boundary $\partial B_\eps (A(y))$ contains no critical points of the distance function $d_{A(y)}$. Thus, \cite[Corollary 3.4]{Fu_Tubular_neighborhoods} implies that the set $\overline{\R^2 \setminus B_\eps (A(y))}$ has positive reach, which in particular implies~\eqref{Eq_local_reach_for_non_chain_point_01} for some $\delta(y) > 0$.

If $x \in \Unp{E}$ (i.e.~if $x$ is a smooth point or a shallow singularity), there is a unique extremal contributor $y$ so that $\Cext{x} = \{y\}$. It then follows from Proposition~\ref{Prop_local_contribution} that in the neighbourhood $B_r(x)$ corresponding to the local boundary representation $\cG(x)$ we have
\[
B_r(x) \cap \overline{\R^2 \setminus E_\eps} = B_r(x) \cap \overline{\R^2 \setminus B_\eps(A(y))}.
\]
Due to~\eqref{Eq_local_reach_for_non_chain_point_01} we know that $B_{\delta(y)}(x) \subset \mathrm{Unp} \big(\overline{\R^2 \setminus B_\eps (A(y))} \big)$, see Definition~\ref{Def_Positive_Reach}. Reducing the radius of this neighbourhood around $x$ further to $\rho(x) := \textrm{min}\{r, \delta(y)\} / 3$ guarantees that for all $z \in B_{\rho(x)}(x)$ we in fact have
\[
\Pi_{\overline{\R^2 \setminus E_\eps}}(z) = \Pi_{\overline{\R^2 \setminus B_\eps(A(y))}}(z),
\]
where $\Pi$ is the metric projection~\eqref{Def_metric_projection}. Hence, $\mathrm{reach}\big(\overline{\R^2 \setminus E_\eps}, x \big) \geq \rho(x)$ whenever $x \in \Unp{E}$.

Assume then that $x$ is a wedge singularity. The set of extremal pairs at $x$ is then of the form $\Pext{x} = \{(\xi_1, y_1), (\xi_2, y_2)\}$, with $\xi_1 \neq \pm\xi_2$. Proposition~\ref{Prop_local_contribution} implies that
\[
B_r(x) \cap \overline{\R^2 \setminus E_\eps} = B_r(x) \cap \bigcap_{i = 1,2} \overline{\R^2 \setminus B_\eps(A(y_i))},
\]
where the radius $r > 0$ corresponds to the local boundary representation $\cG(x)$. Moreover, it follows from Proposition~\ref{Prop_local_contribution} that the set $B_r(x) \cap E_\eps$ can be decomposed into the disjoint union
\begin{align*}
B_r(x) \cap E_\eps &= \Big( U_r(x, \xi_1) \cap B_\eps(A(y_1)) \Big) \cup
						\Big( U_r(x, \xi_2) \cap B_\eps(A(y_2)) \Big) \\
						&\phantom{=} \qquad \qquad
						\cup \Big( B_r(x) \setminus \big(U_r(x, \xi_1) \cup U_r(x, \xi_2) \big) \Big).
\end{align*}
Let $\rho(x) := \textrm{min}\{r, \delta(y_1), \delta(y_2)\} / 3$, where $\delta(y_i)$ is given by~\eqref{Eq_local_reach_for_non_chain_point_01} for $i \in \{1,2\}$. Similarly to the case $x \in \Unp{E}$ described above, restricting to the smaller neighbourhood $B_{\rho(x)}(x)$ allows us to infer that
\[
\Pi_{\overline{\R^2 \setminus E_\eps}}(z) = \Pi_{\overline{\R^2 \setminus B_\eps(A(y_i))}}(z)
\]
for $i \in \{1,2\}$ and all $z \in U_{\rho(x)}(x, \xi_i) \cap B_\eps(A(y_i))$. For these $z$, we thus have $z \in \mathrm{Unp} \big( \overline{\R^2 \setminus E_\eps} \big)$, due to~\eqref{Eq_local_reach_for_non_chain_point_01}. In case $z \in B_{\rho(x)}(x) \setminus \big(U_{\rho(x)}(x, \xi_1) \cup U_{\rho(x)}(x, \xi_2) \big)$ we have $\Pi_{\overline{\R^2 \setminus E_\eps}}(z) = \{x\}$, which again implies $z \in \mathrm{Unp} \big( \overline{\R^2 \setminus E_\eps} \big)$. It thus follows that $\mathrm{reach}\big(\overline{\R^2 \setminus E_\eps}, x \big) \geq \rho(x) > 0$.
\end{proof}

We now proceed to characterise those boundary points $x \in \partial E_\eps$ which satisfy $\mathrm{reach} \big(\overline{\R^2 \setminus E_\eps}, x \big) = 0$.

\begin{prop} \label{Prop_Positive_local_reach_means_locally_connected_complement}
Let $E \subset \R^2$ be compact, $\eps > 0$ and let $x \in \partial E_\eps$. Then $\mathrm{reach}\big(\overline{\R^2 \setminus E_\eps}, x \big) = 0$ if and only if the set $B_r(x) \cap \overline{\R^2 \setminus E_\eps}$ is disconnected for all $r > 0$.
\end{prop}

\begin{proof}
According to Theorem~\ref{Thm_Main_1}, each boundary point belongs to exactly one of the following categories in terms of its local geometry (see Figure~\ref{Figure_Three_Basic_Cases} for an illustration):
\begin{itemize}
\item[(a)] $x \in \Unp{E}$ (smooth points and shallow singularities),
\item[(b)] $\Xext{x} = \{\xi_1, \xi_2\}$ with $\xi_1 \notin \{\xi_2, -\xi_2\}$ (wedges), or
\item[(c)] there exist $y_1, y_2 \in \Cext{x}$ for which $y_1 - x = -(y_2 - x)$ (sharp singularities and chain singularities).
\end{itemize}
For $x \in \partial E_\eps$ in categories (a) and (b), Lemma~\ref{Lemma_Positive_Local_Reach_at_UNP_and_wedge_points} implies $\mathrm{reach}\big(\overline{\R^2 \setminus E_\eps}, x \big) > 0$, and Proposition~\ref{Prop_regular_and_wedge_complement_cone} guarantees the existence of some $r > 0$ for which $B_r(x) \cap \overline{\R^2 \setminus E_\eps}$ is connected.
Thus, we need to verify the claim for boundary points in category (c). Note that in the case of chain singularities, the set $B_r(x) \cap \big( \R^2 \setminus E_\eps \big)$ is by definition disconnected for all $r > 0$, but that this is not necessarily true for the set $B_r(x) \cap \overline{\R^2 \setminus E_\eps}$. The proof below hinges on this fact, which intuitively expresses the observation that there may or may not be a non-zero distance between the disjoint connected components $V_n \subset \R^2 \setminus E_\eps$, defined in Proposition~\ref{Prop_Topological_Characterisation_of_Chain_Singularities}, which constitute the complement of the set $E_\eps$ near the point $x$.

Let $\xi \in \Xext{x}$. Then, if $x$ belongs to category (c) above, the half-ball $U_r(x, \xi)$ exhibits either sharp-type or chain-type geometry as defined in Proposition~\ref{Prop_Sharp_Singularity_Types}(i)-(ii). The same holds true for $U_r(x, -\xi)$, unless $x$ is a one-sided sharp singularity or a one-sided chain singularity. In this latter case, $U_r(x, -\xi) \subset \R^2 \setminus E_\eps$. This implies $\Pi_{\overline{\R^2 \setminus E_\eps}}(z) = \{x\}$ for $z \in U_{r/3}(x, -\xi)$, so that $U_{r/3}(x, -\xi) \subset \mathrm{Unp}(\R^2 \setminus E_\eps)$. To prove the claim, it thus suffices to analyse the geometry of $E_\eps$ inside the half-balls $U_r(x, \xi)$ for $\xi \in \Xext{x}$.

Assume now that for some $\xi \in \Xext{x}$, the set $U_r(x, \xi) \cap \overline{\R^2 \setminus E_\eps}$ is disconnected for all $r > 0$. The same then holds true also for the set $U_r(x, \xi) \cap \big( \R^2 \setminus E_\eps \big)$, which consequently exhibits chain-type geometry as defined in Proposition~\ref{Prop_Sharp_Singularity_Types}(ii).
Let $\cG(x)$ be a local boundary representation at $x$ with radius $r > 0$, let $\xi \in \Xext{x}$ and let $g_{\xi, y_1}, g_{\xi, y_2} \in \cG(x)$, see Proposition~\ref{Prop_local_representation_exists}.
For $i \in \{1,2\}$, consider the continuous functions $f^{\xi, y_i}: [0, r] \to \R$ for which
\begin{equation} \label{Eq_LBR_for_Pos_Reach}
g_{\xi, y_i}(s) = x + s\xi + f^{\xi, y_i}(s)\frac{x - y_i}{\eps}.
\end{equation}
Since $y_1 - x = -(y_2 - x)$, it follows from the definition of the local boundary representation that $z \in U_r(x, \xi) \cap \big( \R^2 \setminus E_\eps \big)$ if and only if $z = x + s\xi + t(x - y_1)/\eps$,
where $s > 0$, $s^2 + t^2 < r^2$ and
\begin{equation} \label{Eq_complement_inequality}
f^{\xi, y_1}(s) < t < -f^{\xi, y_2}(s).
\end{equation}
As argued in the proof of Proposition~\ref{Prop_Sharp_Singularity_Types}(ii), the above characterisation in terms of~\eqref{Eq_LBR_for_Pos_Reach} and~\eqref{Eq_complement_inequality}, together with the disconnectedness of $U_r(x, \xi) \cap \big( \R^2 \setminus E_\eps \big)$ for all $r > 0$, implies that there exist sequences $(s_n)_{n=1}^\infty \subset \R_+$ and $(p_n)_{n=1}^\infty \subset \R_+$ for which
\begin{itemize}
\item $s_n \to 0$ and $p_n \to 0$, as $n \to \infty$,
\item $p_n < s_n \leq p_{n-1} < s_{n-1}$ for all $n \in \N$,
\item $f^{\xi, y_1}(s) + f^{\xi, y_2}(s) = 0$ for all $s \in (s_n)_{n=1}^\infty \cup (p_n)_{n=1}^\infty$, and
\item $f^{\xi, y_1}(s) + f^{\xi, y_2}(s) < 0$ for all $s \in (p_n, s_n)$ and $n \in \N$.
\end{itemize}
It follows then for all $n \in \N$ that $g_{\xi, y_1}(s_n), g_{\xi, y_1}(p_n) \in \partial E_\eps$ and the open set
\begin{equation} \label{Eq_Sets_V_n_in_the_complement}
V_n := \big \{ \tau g_{\xi, y_1}(s) +  (1 - \tau) g_{\xi, y_2}(s) \, : \, \tau \in (0,1), \, s \in (p_n, s_n) \big \}
\end{equation}
is a connected component of the complement $\R^2 \setminus E_\eps$. Crucially, for each $n \in \N$, the union $\overline{V_n} \cup \overline{V_{n+1}}$ is a connected set if and only if $\overline{V_n} \cap \overline{V_{n+1}} \neq \varnothing$, which is equivalent to $s_{n+1} = p_n$. Geometrically, this means that the closures of the consecutive sets $V_n$ and $V_{n+1}$ touch at the point $g_{\xi, y_1}(s_{n+1}) = g_{\xi, y_1}(p_n)$.

The above reasoning implies that the union $\bigcup_{n \geq M} \overline{V_n}$ is disconnected for all $M \in \N$. This in turn implies that $0$ is an accumulation point of the set
\begin{equation} \label{Eq_indices_of_interior_points}
S := \big\{s \in (0, r) \, : \, f^{\xi, y_1}(s) + f^{\xi, y_2}(s) > 0 \big\}.
\end{equation}
Consequently, there exist subsequences $(s_{n_k})_{k = 1}^\infty$ and $(p_{n_k})_{k=1}^\infty$ for which the pairs $(s_{n_k}, p_{n_k - 1})$ satisfy $s_{n_k} < p_{n_k - 1}$ for all $k \in \N$. Geometrically, the proper inequality here indicates a gap of positive length between the consecutive connected components $V_{n_k}$ and $V_{n_k - 1}$. Now, for each $k \in \N$, consider the midpoint
\[
x_k := \frac{g_{\xi, y_1}(s_{n_k}) + g_{\xi, y_1}(p_{n_k - 1})}{2} \in \mathrm{int} \, E_\eps.
\]
Let $\rho := \norm{g_{\xi, y_1}(s_{n_k}) - g_{\xi, y_1}(p_{n_k - 1})} / 2$. Then
\[
B_\rho(x_k) \cap \overline{\R^2 \setminus E_\eps} = \{g_{\xi, y_1}(s_{n_k}), g_{\xi, y_1}(p_{n_k - 1})\}.
\]
It follows that
\[
\Pi_{\overline{\R^2 \setminus E_\eps}}(x_k) = \{g_{\xi, y_1}(s_{n_k}), g_{\xi, y_1}(p_{n_k - 1})\},
\]
so that $x_k \notin \mathrm{Unp} \big(\overline{\R^2 \setminus E_\eps} \big)$.
Since $x_k \to x$ as $k \to \infty$, this implies $\mathrm{reach}\big(\overline{\R^2 \setminus E_\eps}, x \big) = 0$ as claimed.

To prove the implication in the other direction, assume there exists some $r > 0$ for which the sets $U_r(x, \xi) \cap \overline{\R^2 \setminus E_\eps}$ are connected for each $\xi \in \Xext{x}$. According to Proposition~\ref{Prop_Sharp_Singularity_Types}, these sets exhibit either sharp-type or chain-type geometry. For sharp-type geometry, the set $U_r(x, \xi) \cap \overline{\R^2 \setminus E_\eps}$ consists of a single connected set, which implies that the graphs $g_{\xi, y_i}((0, r])$, $i \in \{1,2\}$ do not touch. In the case of chain-type geometry, these sets are composed of a (countable) collection of sets of type~\eqref{Eq_Sets_V_n_in_the_complement} which thus must form an unbroken chain. Hence, the set of indices~\eqref{Eq_indices_of_interior_points} contains only isolated points. Geometrically this means that the graphs $g_{\xi, y_i}([0, r])$, $i \in \{1,2\}$ do not cross transversally.

Analogously to the proof of Lemma~\ref{Lemma_Positive_Local_Reach_at_UNP_and_wedge_points}, Proposition~\ref{Prop_local_contribution} implies for each $\xi \in \Xext{x}$ the decomposition
\[
U_r(x, \xi) \cap E_\eps = \Big( U_r(x, \xi) \cap B_\eps(A(y_1)) \Big) \cup
						\Big( U_r(x, \xi) \cap B_\eps(A(y_2)) \Big).
\]
Now, as argued in the proof of Lemma~
\ref{Lemma_Positive_Local_Reach_at_UNP_and_wedge_points}, it follows from~\cite[Corollary 3.4]{Fu_Tubular_neighborhoods} that for $i \in \{1,2\}$ the set $A(y_i) := E \cap B_{\eps / 2}(y_i)$ satisfies
\begin{equation} \label{Eq_local_reach_for_non_chain_point_01_again}
\mathrm{reach}\big(\overline{\R^2 \setminus B_\eps (A(y_i))}, x \big) > 0. 
\end{equation}
Since the graphs $g_{\xi, y_i}([0, r])$ do not cross, we furthermore have
\[
U_r(x, \xi) \cap \partial B_\eps(A(y_i)) = \{ g_{\xi, y_i}(s) \, : \, s \in [0,r]\}
\]
for $i \in \{1,2\}$. Hence,
\[
\Pi_{\overline{\R^2 \setminus E_\eps}}(z) = \Pi_{\overline{\R^2 \setminus B_\eps (A(y_i))}}(z)
\]
whenever $z \in U_{r/3}(x, \xi) \cap B_\eps(A(y_i))$ for $i \in \{1,2\}$. It then follows from~\eqref{Eq_local_reach_for_non_chain_point_01_again} that these projections are singletons, so that 
$\textrm{reach} \big( \overline{\R^2 \setminus E_\eps} \big) > 0$.
\end{proof}

The proof of Proposition~\ref{Prop_Positive_local_reach_means_locally_connected_complement} shows that for all sharp singularities (types S2 and S3) and even for certain chain singularities (types S6--S8), the local $\textrm{reach}\big( \overline{\R^2 \setminus E_\eps}, x \big)$ is strictly positive. Combining this with Lemma~\ref{Lemma_Positive_Local_Reach_at_UNP_and_wedge_points} we obtain the following corollary.

\begin{cor}[{Non-chain singularities have positive reach}] \label{Cor_Positive_Reach_everywhere_except_Chain_Sing}
Let $E \subset \R^2$ be compact, $\eps > 0$ and let $x \in \partial E_\eps$. If $x$ is not a chain singularity (types S6--S8), then $\mathrm{reach}\big(\overline{\R^2 \setminus E_\eps}, x \big) > 0$, and there exists some $r > 0$ for which the set $B_r(x) \cap \overline{\R^2 \setminus E_\eps}$ is connected.
\end{cor}

Around sharp singularities, and those chain singularities that have positive local reach, the geometry of the set $\overline{\R^2 \setminus E_\eps}$ corresponds to the geometric types $T^2$ (for sharp-sharp (type S3) and chain-chain (type S7) singularities) and $T^3$ (for sharp (type S2) and chain (type S6) singularities) presented in~\cite[Definition 6.3]{Rataj_Zajicek_On_the_structure_of_sets_with_PR}. In particular, it follows that the existence of critical points of the distance function $d_E$ on the boundary $\partial E_\eps$ does not directly imply that the set $\overline{\R^2 \setminus E_\eps}$ fails to have positive reach.

\subsection{Proof of Theorem~\ref{Thm_Main_4}} We now provide the proof of our fourth main result, Theorem~\ref{Thm_Main_4}, which characterises, in terms of a local connectedness property, those $\eps$-neighbourhoods whose complement $\overline{\R^2 \setminus E_\eps}$ is a set with positive reach.

\begin{theorem_without_2}[{Local geometry of $\eps$-neighbourhoods whose complements have positive reach}] Let $E \subset \R^2$ be compact and $\eps > 0$. Then the following are equivalent:
\begin{itemize}
\item[(i)] the complement $\overline{\R^2 \setminus E_\eps}$ is a set with positive reach,
\item[(ii)] for each $x \in \partial E_\eps$, there exists some $r := r(x) > 0$ for which the set $B_r(x) \cap \overline{\R^2 \setminus E_\eps}$ is connected.
\item[(iii)] for each chain singularity $x \in \cC(\partial E_\eps)$, there exists some $r := r(x) > 0$ for which the set $B_r(x) \cap \overline{\R^2 \setminus E_\eps}$ is connected.
\end{itemize}
\end{theorem_without_2}

\begin{proof}
(i) $\Rightarrow$ (ii): Assume the set $\overline{\R^2 \setminus E_\eps}$ has positive reach. Then, by definition,
\[
\inf_{x \in \partial E_\eps} \mathrm{reach}\big(\overline{\R^2 \setminus E_\eps}, x \big) > 0.
\]
In particular, there exist no points $x \in \partial E_\eps$ for which $\mathrm{reach}\big(\overline{\R^2 \setminus E_\eps}, x \big) = 0$. Then, (ii) follows from Proposition~\ref{Prop_Positive_local_reach_means_locally_connected_complement} .

(ii) $\Rightarrow$ (iii): Trivial.

(iii) $\Rightarrow$ (ii): Follows directly from Corollary~\ref{Cor_Positive_Reach_everywhere_except_Chain_Sing}.

(ii) $\Rightarrow$ (i): Assume that (i) fails. This implies that there exists a sequence $(x_n)_{n=1}^\infty \subset \partial E_\eps$ with
\begin{equation} \label{Eq_limit_of_reach}
\lim_{n\to\infty} \mathrm{reach}\big(\overline{\R^2 \setminus E_\eps}, x_n \big) = 0.
\end{equation}
Since $\partial E_\eps$ is compact, there exists a convergent subsequence $(x_{n_k})_{k=1}^\infty$ with $x_0 := \lim_{k\to\infty} x_{n_k} \in \partial E_\eps$.
Now, combining (ii) with Proposition~\ref{Prop_Positive_local_reach_means_locally_connected_complement} implies that there exists some $r := r(x_0) > 0$ for which
\begin{equation} \label{Eq_reach_at_limit_point_x}
\mathrm{reach}\big(\overline{\R^2 \setminus E_\eps}, x_0 \big) > r.
\end{equation}
According to~\eqref{Eq_limit_of_reach} there exists some $k_0 \in \N$ such that for all $k > k_0$ we have simultaneously $\norm{x_{n_k} - x_0} < r/2$ and
\begin{equation} \label{Eq_reach_upper_bound}
\mathrm{reach}\big(\overline{\R^2 \setminus E_\eps}, x_{n_k} \big) < r/4.
\end{equation}
On the other hand, for a fixed $K > k_0$, it follows from $\norm{x_{n_K} - x} < r/2$ and~\eqref{Eq_reach_at_limit_point_x} that
\[
\mathrm{reach}\big(\overline{\R^2 \setminus E_\eps}, x_{k_N} \big) \geq r/2,
\]
since $B_{r/2}(x_{k_N}) \subset B_r(x_0)$. This contradicts~\eqref{Eq_reach_upper_bound} and finishes the proof.
\end{proof}
\chapter{Jordan Curves on the Boundary} \label{Sec_Boundaries_as_Jordan_Curves}
In this chapter we show that the boundary $\partial E_\eps$ is the disjoint union of a possibly uncountable set $\cI$ of inaccessible singularities and an at most countably infinite collection $J$ of Jordan curves. By definition, inaccessible singularities do not lie on the boundary of any connected component of the complement. These correspond to \emph{chain} (type S6) and \emph{chain-chain singularities} (type S7), see Figure~\ref{Figure_Theorem_1}({\footnotesize B}). To prove Theorem~\ref{Thm_global_structure}, it thus suffices to obtain a representation for the boundaries of connected components of the complement as a union of Jordan curves. 
Intuitively, these Jordan curves can be constructed by connecting adjacent simple curves that represent local boundary segments. There is an essentially unique way of connecting such curve representations around all types of boundary points, except sharp-sharp singularities, where the boundary curves 'split', see Figure~\ref{Figure_Topologically_Different_Sharps}. However, it turns out that for each connected component $U$ of the complement $\R^2 \setminus E_\eps$, there exists a unique way of connecting the curves around sharp-sharp singularities that results in a representation of the boundary $\partial U$ as a finite union of Jordan curves.

\section{The Role of Sharp-sharp Singularities in Boundary Topology}
Let $U$ be a connected component of the complement $\R^2 \setminus E_\eps$ and let $Q$ denote the set of sharp-sharp singularities on $\partial U$. According to Lemma~\ref{Lemma_Finitely_Many_Sharp_Singularities_per_Component}, the set $Q$ is finite, and Proposition~\ref{Prop_Sharp_Singularity_Types} implies that each $x \in Q$ lies on the boundary of at most two connected components of the complement $\R^2 \setminus E_\eps$.
Hence, the set $Q$ can be written as the disjoint union
\begin{equation} \label{Eq_Two_Sharp_Sing_Types}
Q = Q_1 \cup Q_2,
\end{equation}
where $Q_1$ contains those sharp-sharp singularities that lie on the boundary of a unique connected component $U$ of the complement $\R^2 \setminus E_\eps$, and $Q_2$ contains 
the sharp-sharp singularities for which there exist two disjoint connected components $U, V \in \R^2 \setminus E_\eps$ with $x \in \partial U \cap \partial V$. The fact that both $Q_1$ and $Q_2$ may be non-empty is the reason sharp-sharp singularities play a central role for the topological structure of the boundary $\partial U$.

\begin{figure}[h]
      \centering
      \captionsetup{margin=0.75cm}
                \includegraphics[width = \textwidth]{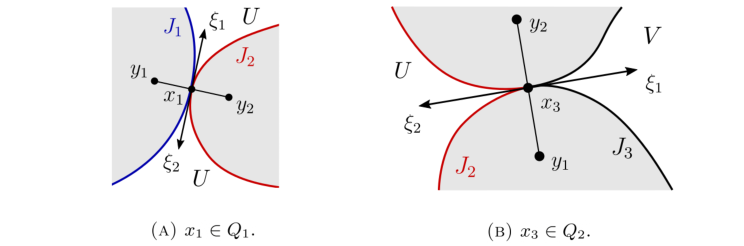} \\[0mm]   
                \caption{Topology of the complement $\R^2 \setminus E_\eps$ around sharp-sharp singularities. The figures here are close-ups of the points $x_1$ and $x_3$ in Figure~\ref{Figure_Theorem_1}({\footnotesize A}). ({\footnotesize A}) The sharp-sharp singularity $x_1 \in Q_1$ lies on the boundary of a single connected component $U$ of the complement $\R^2 \setminus E_\eps$. Here the curves $\Gamma_{x_1}^{1}$ (blue) and $\Gamma_{x_1}^{2}$ (red) pass through $x_1$ along the boundary subsets $J_1$ and $J_2$. ({\footnotesize B}) The point $x_3 \in Q_2$ lies on the boundary of two different connected components $U, V \subset \R^2 \setminus E_\eps$. The curve $\Gamma_{x_3}$ (red) bounces back at $x_3$ along the boundary subset $J_2$.}
                \label{Figure_Topologically_Different_Sharps}
\end{figure}

The boundary $\partial U$ of a single connected component $U$ may thus consist of more than one Jordan curve for two reasons. First, $U$ need not be simply connected, and can thus surround several connected components $A \subset E_\eps$. The boundary of each such $A$ contains at least one Jordan curve. Second, if $\partial U$ contains sharp-sharp singularities of type $Q_1$, more than one Jordan curve is needed to represent the boundary. Any single curve containing the boundary subset $B_r(x) \cap \partial U$ for any $x \in Q_1$ would intersect itself at $x$, and would thus not be a Jordan curve.

\section{Boundaries of Connected Components of the Complement}
Let the sets $U$ and $Q = Q_1 \cup Q_2$ be as above. According to Proposition~\ref{Prop_local_representation_exists}, each $x \in \partial U$ has a neighbourhood $B_x := B_r(x)$,
within which the local boundary subset $\partial U \cap \overline{B_x}$ can be represented as a union of either two or four graphs of continuous functions. We show in Lemma~\ref{Lemma_Finite_Repr_for_Boundary} below that it is possible to use these functions to represent the boundary subset $\partial U \cap \overline{B_x}$ with either one (for $x \in \partial U \setminus Q_1$) or two (for $x \in Q_1$) simple curves.

Before stating the lemma, we define the basic terminology regarding curves. We give the definition in a general setting, but we will only be dealing with plane curves, i.e.~curves in $\R^2$.

\begin{definition}[{Curve, simple curve, Jordan curve}] \label{Def_Jordan_Curve}
A \emph{curve} is the image $\Gamma = \gamma([a,b])$ of a closed interval $[a,b]$ under some continuous map $\gamma : [a,b] \to \R^d$. If $\gamma$ is injective, we call $\Gamma$ a \emph{simple curve}, and if injectivity is violated only at the endpoints, i.e. $\gamma(a) = \gamma(b)$, then we say that $\Gamma$ is a \emph{simple closed curve} or a \emph{Jordan curve}.
\end{definition}

\begin{lemma}[{Boundaries of connected components of the complement as unions of simple curves}] \label{Lemma_Finite_Repr_for_Boundary}
Let $E \subset \R^2$ be compact, let $\eps > 0$ and let $U$ be a connected component of the complement $\R^2 \setminus E_\eps$. For each $x \in \partial U$, let $B_x := B_r(x)$ be the neighbourhood in~\eqref{Eq_local_boundary_repr_condition}, corresponding to the local boundary representation $\cG(x)$ given by Proposition~\ref{Prop_local_representation_exists}. In addition, denote by $Q$ the set of sharp-sharp singularities on $\partial U$, and let the subsets $Q_1, Q_2 \subset Q$ be as in~\eqref{Eq_Two_Sharp_Sing_Types}.
Then
\begin{itemize}
\item[(i)] for each $x \in Q_1$ there exist continuous functions $\gamma_x^{(1)}, \gamma_x^{(2)} : [0,1] \to \partial U$ for which the images $\Gamma_x^{(i)} := \gamma_x^{(i)}([0,1])$ with $i \in \{1,2\}$ are simple curves, $\partial U \cap \overline{B_x} = \Gamma_x^{(1)} \cup \Gamma_x^{(2)}$ and $\Gamma_x^{(1)} \cap \Gamma_x^{(2)} = \{x\}$;
\item[(ii)] for each $x \in \partial U \setminus Q_1$ there exists a continuous function $\gamma_x : [0,1] \to \partial U$, for which the image $\Gamma_x := \gamma_x([0,1])$ is a simple curve and $\partial U \cap \overline{B_x}= \Gamma_x$.
\end{itemize}
\end{lemma}

\begin{proof}

\begin{figure}[h]
      \centering  \vspace{-1mm}
      \captionsetup{margin=0.75cm}
                \includegraphics[width = \textwidth]{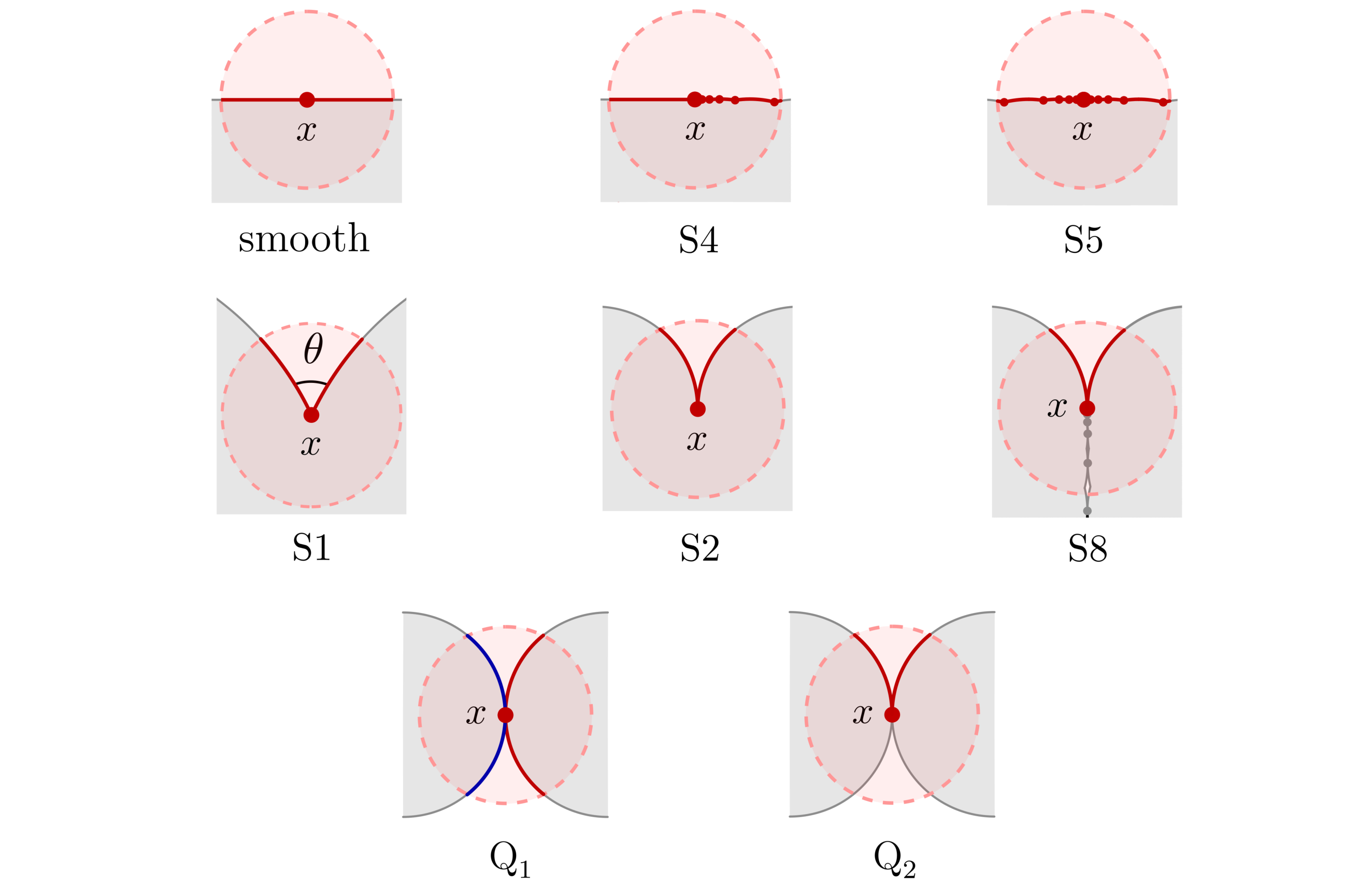} \\[0mm]   
                \caption{Local curve representations. In each picture, $x$ represents the boundary point in question, the grey region represents the set $E_\eps$, and white regions the complement $\R^2 \setminus E_\eps$. The red dashed balls correspond to the neighbourhoods $B_r(x)$ associated with the local boundary representations $\cG(x)$, see \eqref{Eq_local_boundary_repr_condition}. The dark red curves (and the dark blue curve for $x \in Q_1$) represent the simple curve representations $\Gamma_x = \gamma_x([0,1])$ of $\partial U \cap \overline{B_r(x)}$. Chain (type S6) and chain-chain (type S7) singularities are absent as these do not appear on the boundaries of connected components of the complement, see Corollary~\ref{Cor_Inaccessible_Singularities}. The types $Q_1$ and $Q_2$ together represent sharp-sharp (type S3) singularities. Note that points of type $Q_1$ are the only ones for which two simple curves are needed in a local representation of the boundary of a fixed connected component $U$ of the complement $\R^2 \setminus E_\eps$, see also Figure~\ref{Figure_Topologically_Different_Sharps}.
                 }
                \label{Figure_Local_Curve_Representations}
\end{figure}

We divide the proof into two steps.
In the first step, we define the local curve representations of the boundary $\partial U$ around sharp-sharp singularities $x \in Q$. We also show that the images $\Gamma_x$ and $\Gamma_x^{(i)}$ are simple (i.e. non-self-intersecting) curves, and that $\Gamma_x^{(1)} \cap \Gamma_x^{(2)} \cap B_r(x) = \{x\}$ for sufficiently small radii $r > 0$.
In the second step, we define curve representations, analogous to those defined in Step~1, for the other types of boundary points $x \in \partial U \setminus Q$.

\emph{Step 1.} Consider a boundary point $x \in \partial U$, and let $\cG(x) = \{g_{\xi,y} \, : \, (\xi,y) \in \Pext{x} \}$ be a local boundary representation at $x$, with the functions $g_{\xi,y}$ defined as in~\eqref{Eq_Canonical_LBR}.
By the definition of sharp-sharp singularities, the set of extremal outward directions for each $x \in Q$ satisfies 
$\Xext{x} = \{\xi, -\xi\}$ for some $\xi \in S^1 \subset \R^2$, see Definition~\ref{Def_classification_of_singularities}. Proposition~\ref{Prop_Sharp_Singularity_Types} furthermore states that there exist connected components $V_\xi, V_{-\xi}$ of the complement $\R^2 \setminus E_\eps$, for which 
\begin{equation} \label{Eq_The_Two_Touching_Components}
\partial U \cap \overline{B_x} \subset \big(\partial V_\xi \cup \partial V_{-\xi} \big) \cap \overline{B_x}.
\end{equation}
Since $Q \subset \partial U$, we have $U \in \{V_\xi, V_{-\xi}\}$ and we may define $\xi$ so that $U = V_\xi$. The correct way of defining the local curve representation near $x$ now depends on whether $x \in Q_1$ or $x \in Q_2$, where $Q = Q_1 \cup Q_2$ as in~\eqref{Eq_Two_Sharp_Sing_Types}. We deal with these cases separately as follows:
\begin{itemize}
  \item[(i)] Let $x \in Q_1$. This implies that $U$ touches the point $x$ from both directions, so that $U =  V_\xi = V_{-\xi}$. This corresponds to the situation in Figure~\ref{Figure_Topologically_Different_Sharps}({\footnotesize A}). Then
\begin{equation} \label{Eq_Curve_Representation_for_Sharp-Sharp_Q1}
\partial U \cap \overline{B_x} = \bigcup_{i \in \{1,2\}} \Big( \,g_{\xi, y_i}\big([0, s_{\xi,y_i}]\big) \cup g_{-\xi, y_i}\big([0, s_{-\xi_i,y_i}]\big) \hspace{-0.5mm}\Big).
\end{equation}
In this case, we associate a curve segment with each extremal contributor $y_1, y_2 \in \Cext{x}$ by
defining for each $i \in \{1,2\}$ the continuous maps $\gamma_x^{(i)} : \left[0, 1\right] \to \partial U \cap \overline{B_x}$ through
\begin{equation} \label{Eq_Sharp_Path_Definition}
\gamma_x^{(i)}(s) := \left\{\begin{array}{ll} \vspace{-0mm}
        g_{-\xi, y_i}\big(s_{-\xi, y_i} - sL^{(i)}\big),  & \text{if } s \in \Big[0, \frac{s_{-\xi, y_i}}{L^{(i)}}\Big),\vspace{1.5mm} \\
        g_{\xi, y_i}\big(sL^{(i)} - s_{-\xi, y_i}\big),  & \text{if } s \in \Big[\frac{s_{-\xi, y_i}}{L^{(i)}}, 1\Big], \vspace{1mm}\\
        \end{array} \right.
\end{equation}
where $g_{-\xi, y_i}, g_{\xi, y_i} \in \cG (x)$, and $L^{(i)} := s_{\xi, y_i} + s_{-\xi, y_i}$ with $s_{\xi, y_i}$ and $s_{-\xi, y_i}$ as in~\eqref{Eq_Curve_Representation_for_Sharp-Sharp_Q1}. We thus obtain $\partial U \cap \overline{B_x} = \Gamma_x^{(1)} \cup \Gamma_x^{(2)}$, where $\Gamma_x^{(i)} := \gamma_x^{(i)}([0,1])$ for $i \in \{1,2\}$.


\item[(ii)] Let $x \in Q_2$. Then $U = V_\xi \neq V_{-\xi}$ so that the graphs $g([0, s_{-\xi, y_1}])$ and $g([0, s_{-\xi, y_2}])$ 
intersect the local boundary subset $\partial U \cap B_x$ only at $x$. They are thus not relevant for a local representation of $\partial U$ at $x$. This corresponds to the situation in Figure~\ref{Figure_Topologically_Different_Sharps}({\footnotesize B}). Consequently, we have
\begin{equation} \label{Eq_Curve_Representation_for_Sharp-Sharp_Q2}
\partial U \cap \overline{B_x} = g_{\xi, y_1} \big([0, s_{\xi,y_1}] \big) \cup g_{\xi, y_2} \big([0, s_{\xi,y_2}] \big)
\end{equation}
for $g_{\xi, y_1}, g_{\xi, y_2} \in \cG (x)$. We thus define
\begin{equation} \label{Eq_Sharp_Path_Definition_Q2}
\gamma_x(s) := \left\{\begin{array}{ll} \vspace{-0mm}
        g_{\xi, y_1}\big(s_{\xi, y_1} - sL\big),  & \text{if } s \in \Big[0, \frac{s_{\xi, y_1}}{L}\Big),\vspace{1.5mm} \\
        g_{\xi, y_2}\big(sL - s_{\xi, y_1}\big),  & \text{if } s \in \Big[\frac{s_{\xi, y_1}}{L}, 1\Big], \vspace{1mm}\\
        \end{array} \right.
\end{equation}
where $g_{\xi, y_1}, g_{\xi, y_2} \in \cG (x)$, and $L := s_{\xi, y_1} + s_{\xi, y_2}$ with $s_{\xi, y_1}$ and $s_{\xi, y_2}$ as in~\eqref{Eq_Curve_Representation_for_Sharp-Sharp_Q2}. In this case, we obtain a representation using only one curve, since $\partial U \cap \overline{B_x} = \Gamma_x :=  \gamma_x([0,1])$.
\end{itemize}

For $x \in Q_1$, we define $\Gamma_x^{(i)} := \gamma_x^{(i)}([0,1])$ for $i \in \{1,2\}$, where the functions $\gamma_x^{(i)}$ are as in~\eqref{Eq_Sharp_Path_Definition}. Then $\partial U \cap \overline{B_x} = \Gamma_x^{(1)} \cup \Gamma_x^{(2)}$, and it follows directly from the definition of the functions $g_{\xi,y} \in \cG(x)$ that the image $\Gamma_x^{(i)}$ is a simple curve for each $i \in \{1,2\}$. Note furthermore that sharp-sharp singularities by definition exhibit sharp-type geometry in the direction of both extremal outward directions $\xi_1, \xi_2 \in \Xext{x}$, see Proposition~\ref{Prop_Sharp_Singularity_Types}(i). Proposition~\ref{Prop_Sharp_Singularity_Types} thus implies the existence of some $r_1, r_2 > 0$ for which $g_{\xi_i, y_1}(s) \neq g_{\xi_i, y_2}(s)$ for all $s \in (0, r_i)$ and $i \in \{1,2\}$. Hence, $\Gamma_x^{(1)} \cap \Gamma_x^{(2)} \cap B_r(x) = \{x\}$ for all $r < \min\{r_1, r_2\}$. By choosing $r < \min\{r_1, r_2\}$ for the local boundary representation in the first place, we obtain the equivalent property $\Gamma_x^{(1)} \cap \Gamma_x^{(2)}  = \{x\}$.

For $x \in Q_2$, we define $\Gamma_x := \gamma_x([0,1])$, where the function $\gamma_x$ is as in~\eqref{Eq_Sharp_Path_Definition_Q2}. The fact that $\partial U  \cap \overline{B_x} = \Gamma_x$  follows from the definition of the curve $\gamma_x$.
Similarly to the above, Proposition~\ref{Prop_Sharp_Singularity_Types} implies that $g_{\xi, y_1}(s) \neq g_{\xi, y_2}(s)$ for sufficiently small $s \in (0, r)$. It follows that the curve $\Gamma_x$ is non-self-intersecting in every sufficiently small neighbourhood $B_x$.

\emph{Step 2.}
%
We divide $\partial U \setminus Q$ into the following two subsets:
\begin{align}
R &:= \big\{x \in \partial U \, : \, x \,\, \textrm{is a wedge or } x \in \Unp{x} \big\}, \label{Eq_Sets_R_and_P_1} \\
P &:= \big\{x \in \partial U \, : \, x \,\, \textrm{is a sharp or a sharp-chain singularity} \big\}.\label{Eq_Sets_R_and_P_2}
\end{align}
According to Corollary~\ref{Cor_Inaccessible_Singularities}, chain and chain-chain singularities (types S6 and S7) do not appear on the boundaries of connected components of the complement $\R^2 \setminus E_\eps$, which implies $\partial U = R \cup P \cup Q$. Furthermore, it follows from Theorem~\ref{Thm_Main_1} that the sets $R$, $P$ and $Q$ are disjoint.

Assume first that $x \in R$, so that either $x \in \Unp{x}$ or $x$ is a wedge. Then $\Pext{x} = \{(\xi_1, y_1), (\xi_2, y_2)\}$, where for $y_1 = y_2$ in case $x \in \Unp{x}$. It follows immediately from Lemma~\ref{Lemma_Unique_Connected_Component} and Proposition~\ref{Prop_regular_and_wedge_complement_cone} that $\overline{B_x} \setminus E_\eps = \overline{B_x} \cap U$, so that
\begin{equation}  \label{Eq_wedge_Unp_expression}
\partial U \cap \overline{B_x} = g_{\xi_1, y_1}\big([0, s_{\xi_1,y_1}]\big) \cup g_{\xi_2, y_2}\big([0, s_{\xi_2,y_2}]\big),
\end{equation}
where $g_{\xi_1, y_1}, g_{\xi_2, y_2} \in \cG (x)$.
By normalising the arguments with $L := s_{\xi_1,y_1} + s_{\xi_2,y_2}$, one can thus define a function $\gamma_x : [0,1] \to \partial U$ analogously to~\eqref{Eq_Sharp_Path_Definition_Q2}, so that $\partial U \cap \overline{B_x} = \Gamma_x := \gamma_x([0,1])$. The fact that $\Gamma_x$ defines a simple curve in some neighbourhood $B_r(x)$ follows directly from Proposition~\ref{Prop_regular_and_wedge_complement_cone}.

Let then $x \in P$, so that $\Xext{x} = \{\xi, -\xi\}$ for some $\xi \in S^1$. In the following, we denote by $U_r(x, v)$ an open $x$-centered half-ball of radius $r$, oriented in the direction of $v \in S^1$, see~\eqref{Def_oriented_half_ball}. For both sharp and sharp-chain singularities, there exists a unique extremal outward direction $v \in \{\xi, -\xi\}$, towards which the singularity exhibits sharp-type geometry, as defined in Proposition~\ref{Prop_Sharp_Singularity_Types}(i). This means that there exists a unique connected component $V_\xi$ of the complement $\R^2 \setminus E_\eps$, and $s_\xi > 0$, for which $U_{s_\xi}(x, \xi) \setminus E_\eps = U_{s_\xi}(x, \xi) \cap V_\xi$.

Moreover, in the case of sharp-chain singularities, one can use the local boundary representation~\eqref{Eq_Canonical_LBR} and Proposition~\ref{Prop_Sharp_Singularity_Types} to show that there exist no connected components $V$ of the complement $\R^2 \setminus E_\eps$,
%
for which 
$x \in \partial (V \cap U_{r}(x, -\xi))$.
The detailed argument can be found in the proof of Corollary~\ref{Cor_Inaccessible_Singularities}. For one-sided sharp singularities, this follows immediately from their definition, which states that for some $\delta > 0$, the set $B_\delta(x) \setminus E_\eps$ is a connected set.

Hence, for both sharp and sharp-chain singularities there exists some $r > 0$ for which
\begin{equation} \label{Eq_Sharp_Singularity_Expression}
\partial U \cap \overline{B_r(x)} = g_{\xi, y_1}\big([0, s_{\xi, y_1}]\big) \cup g_{\xi, y_2}\big([0, s_{\xi, y_2}]\big),
\end{equation}
where $g_{\xi, y_1}, g_{\xi, y_2} \in \cG (x)$ are as in Proposition~\ref{Prop_local_representation_exists}. One may thus once more define the function $\gamma_x : [0,1] \to \partial U$ analogously to~\eqref{Eq_Sharp_Path_Definition_Q2} by concatenating the images $g_{\xi, y_i}\big([0, s_{\xi, y_i}]\big)$ in~\eqref{Eq_Sharp_Singularity_Expression} and normalising the argument with $L := s_{\xi_1,y_1} + s_{\xi_2,y_2}$, so that $\partial U \cap \overline{B_x} = \Gamma_x := \gamma_x([0,1])$.
\end{proof}

It follows from Lemma~\ref{Lemma_Finite_Repr_for_Boundary} that the boundary $\partial U$ of every connected component $U$ of the complement $\R^2 \setminus E_\eps$ can be covered by the (infinite) collection
\[
\cM := \big\{\Gamma_x \, : \, x \in \partial U \setminus Q_1\big\} \cup  \big\{\Gamma_x^{(i)} \, : \, x \in Q_1, \, i \in \{1,2\}\big\}
\]
of simple curves. The compactness of $E$ implies that $\partial U$ is compact as well, and $\cM$ thus always has a finite subcover. We show next that $\cM$ moreover has a finite, order two\footnotemark~ subcover $\cM^*$ that contains all the curves $\Gamma_x^{(i)}$, with $i \in \{1,2\}$, corresponding to sharp-sharp singularities $x \in Q_1$.

\begin{lemma}[{Finite subcover of order two}] \label{Lemma_Order_Two_Cover}
Let $E \subset \R^2$ be compact, let $\eps > 0$ and let $U$ be a connected component of the complement $\R^2 \setminus E_\eps$. In addition, let the functions $\gamma_x$, $\gamma_x^{(1)}, \gamma_x^{(2)}  : [0,1] \to \partial U$ and the corresponding simple curves $\Gamma_x := \gamma_x([0,1])$ and $\Gamma_x^{(i)} := \gamma_x^{(i)}([0,1])$ with $i \in \{1,2\}$, be as in Lemma~\ref{Lemma_Finite_Repr_for_Boundary}.
Then there exists a finite subset $X^* \subset \partial U \setminus Q_1$ for which the collection
\begin{equation} \label{Eq_Cover_M_of_partial_U}
\cM^* := \big\{\Gamma_x \, : \, x \in X^*\big\} \cup  \big\{\Gamma_x^{(i)} \, : \, x \in Q_1, \, i \in \{1,2\}\big\}
\end{equation}
of simple curves is a finite, minimal, order two cover of $\partial U$.\footnotemark[\value{footnote}]
\end{lemma}

\footnotetext{A cover $\cU = \{ U_\alpha \, : \, \alpha \in A \}$ of a set $X \subset \bigcup_{\alpha \in A} U_\alpha$, indexed by the set $A$, is said to be of \emph{order $n$}, if the set $A(x) := \{ \alpha \in A \, : \, x \in U_\alpha \}$ contains at most $n$ elements for all $x \in X$, and it is said to be \emph{minimal}, if no $U \in \cU$ may be removed so that $\cU \setminus \{U\}$ is still a cover for $X$.}

\begin{proof}
Let the sets $R, P \subset \partial U$ be as in~\eqref{Eq_Sets_R_and_P_1} and~\eqref{Eq_Sets_R_and_P_2}. We now use the curve representations $\Gamma_x$ for $x \in R \cup P \cup Q_2$ and $\Gamma_x^{(i)}$ for $x \in Q_1$ given by Lemma~\ref{Lemma_Finite_Repr_for_Boundary} in order to construct an order two cover for the boundary $\partial U$. To begin with, consider the open cover $\cU_0 := \{B_x \, : \, x \in \partial U\}$. Since $\partial U$ is compact, there exists a finite subcover $\cU_1 = \{B_x \, : \, x \in X_0\} \subset \cU_0$, corresponding to some finite subset $X_0 \subset \partial U$. We argue that in fact there exists a further subcover $\cU \subset \cU_1$ of order two. To see this, consider some boundary point
\begin{equation} \label{Eq_Order_Three_Cover}
x \in \partial U \cap B_{x_1} \cap B_{x_2} \cap B_{x_3},
\end{equation}
where $B_{x_k} \in \cU_1$ for $k \in \{1,2,3\}$. We claim that in this situation one of the sets $B_{x_k}$ can always be removed from the collection so that $\cU_1 \setminus B_{x_k}$ is still a cover for $\partial U$. As presented above, for each $k \in \{1,2,3\}$ the set $X \cap \overline{B_{x_k}}$ has a parametrisation as the curve $\Gamma_{x_k} = \gamma_{x_k}([0,1])$. Hence, there exists a continuous bijection $h : [0,1] \to \partial U \cap \bigcup_{k \in \{1,2,3\}} \overline{B_{x_k}}$, for which $\partial U \cap B_{x_k} = h\left((a_k, b_k)\right)$ for some open intervals $\left(a_k, b_k\right) \subset [0,1]$ and $k \in \{1,2,3\}$. Equation \eqref{Eq_Order_Three_Cover} is then equivalent to
\[
h^{-1}(x) \in \bigcap_{k \in \{1,2,3\}} (a_k, b_k).
\]
On the other hand, regardless of the order in which the points $a_k, b_k$ lie on $[0,1]$---as long as $a_k < b_k$ for all $k \in \{1,2,3\}$---one of the intervals $(a_k, b_k)$ is always contained in the union of the other two, and can therefore be removed without affecting the cover. One can thus remove any redundant balls $B_x$ from the cover $\cU_1$ and obtain a minimal, order two subcover $\cU_2$.

To complete the proof, we still need to ensure that none of the balls $B_x$ corresponding to sharp-sharp singularities $x \in Q_1$ were removed from the initial cover $\cU_0$ along the pruning process. To this end, note that according to Lemma~\ref{Lemma_Finitely_Many_Sharp_Singularities_per_Component} the set $Q_1$ is finite. This implies that we can assume the radii of the balls $B_x$ to have been initially chosen sufficiently small such that $B_x \cap Q_1 = \varnothing$ for all $x \in \partial U \setminus Q_1$ and $B_x \cap Q_1 = \{x\}$ for $x \in Q_1$. This way, the inclusion of every ball $B_x$ with $x \in Q_1$ is necessary in any subcover of $\cU_0$.

Hence, we have arrived at the desired finite, minimal subcover $\cU_2$ of order two, in the form
\begin{equation} \label{Eq_Order_Two_Cover}
\cM^* := \big\{\Gamma_x \, : \, x \in X^*\big\} \cup \big\{\Gamma_x^{(i)} \, : \, x \in Q_1, \, i \in \{1,2\}\big\},
\end{equation}
where the set $X^*$ is defined by $X^* = \{x \in \partial U \setminus Q_1  \, : \, B_x \in \cU_2\}$.
\end{proof}

The significance of the order two property of the cover obtained in Lemma~\ref{Lemma_Order_Two_Cover}, as well as the requirement that all the curves corresponding to $x \in  Q_1$ are included in the cover, both stem from the need to ensure that every boundary segment in the cover has a unique successor and predecessor segment. This makes it is possible to construct Jordan curves on the boundary by following the boundary along adjacent curve segments.

\begin{figure}[h]
      \centering
      \captionsetup{margin=0.75cm}
      \vspace{0mm}
                \includegraphics[width = \textwidth]{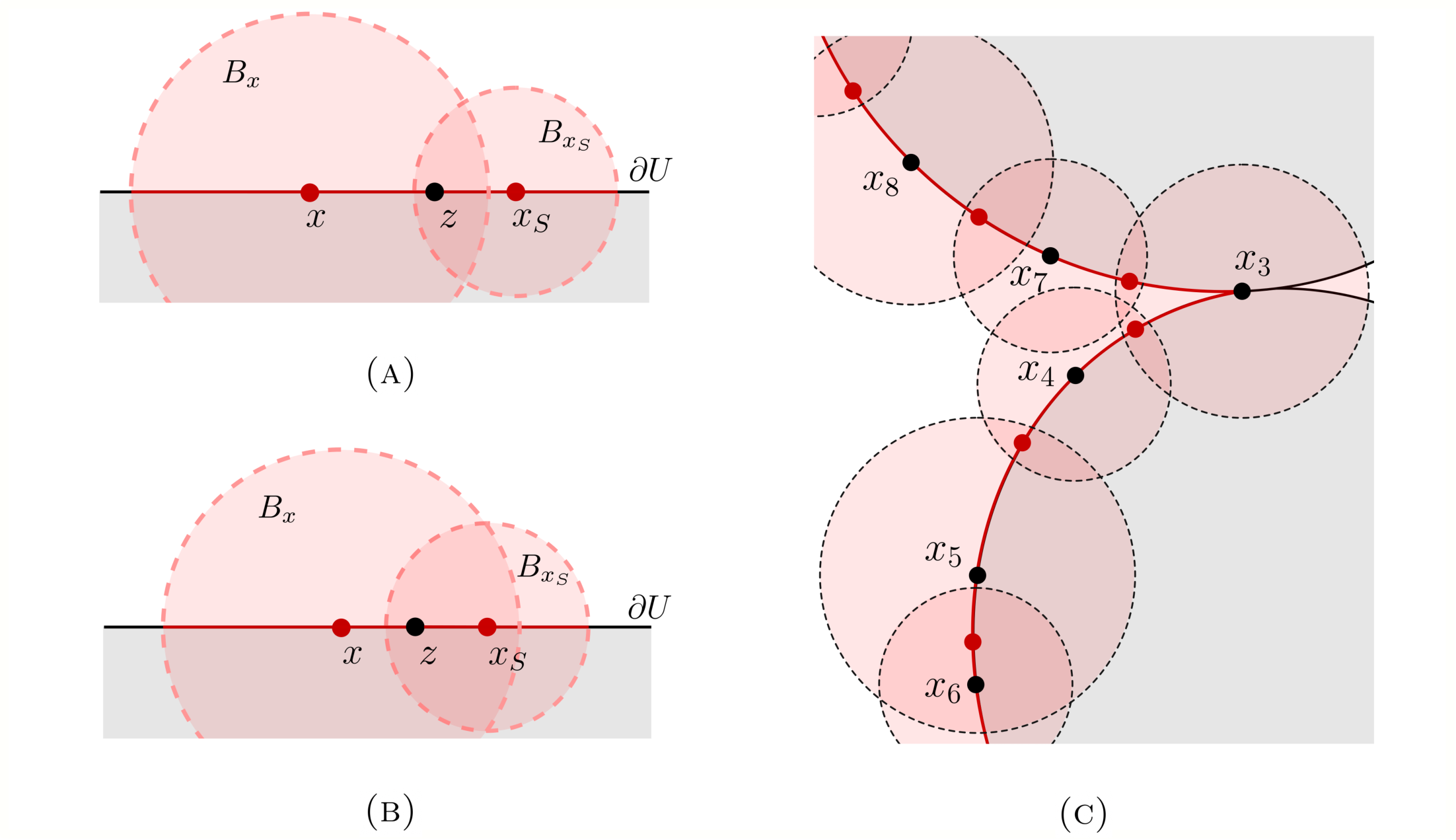} \\[0mm]
       \caption{Tracking the boundary $\partial U$ along a finite collection of adjacent balls and corresponding simple curves. In ({\footnotesize A}) and ({\footnotesize B}), the points $x$ and $x_{S}$ correspond to simple curves $\Gamma_x, \Gamma_{x_S} \in \cM^*$. Here $\Gamma_{x_S}$ is the successor of $\Gamma_x$, and due to minimality of $\cM^*$, no point $z \in \Gamma_x \cap \Gamma_{x_S}$ belongs to any other curve $\Gamma \in \cM^*$. It is not ruled out that $x_{S} \in \Gamma_x$, as in ({\footnotesize B}).
                In ({\footnotesize C}), which is a close-up of Figure~\ref{Figure_Theorem_1}~({\footnotesize A}), a longer curve is formed by concatenating such overlapping local representations.}
                \label{Figure_Connecting_the_Dots}
\end{figure}

\begin{prop}[{Boundaries of connected components of the complement as finite unions of Jordan curves}] \label{Prop_Jordan_curve_boundaries}
Let $E \subset \R^2$ be compact, let $\eps > 0$ and let $U$ be a connected component of the complement $\R^2 \setminus E_\eps$. Then $\partial U = \bigcup_{i = 1}^M J_i$, where $M \in \N$ and each $J_i$ is a Jordan curve. The representation is unique up to parametrisation, and for any $i \neq j$, the intersection $J_i \cap J_j$ contains at most one point.
\end{prop}

\begin{proof}
We divide the proof into three steps. First, we use the finite, order two cover given by Lemma~\ref{Lemma_Order_Two_Cover} to construct the Jordan curves on the boundary. We then argue that this representation is unique up to parametrisation, and finish by showing that the intersection of any two Jordan curves in the representation is either a singleton or the empty set. The last two facts are essentially implied by the Jordan Curve Theorem and the connectedness of the set $U$.\footnote{The Jordan Curve Theorem states that for any set $J \subset \R^2$ that is homeomorphic to the unit circle $S^1 \subset \R^2$, the complement $\R^2 \setminus J$ has precisely two connected components $A_1$ and $A_2$, one of which is bounded and the other unbounded, with the common boundary $\partial A_1 = \partial A_2 = J$.}

\emph{Step 1: Construction of Jordan Curves.} Let
\begin{equation} \label{Eq_Order_Two_Cover_again}
\cM^* := \big\{\Gamma_x \, : \, x \in X^*\big\} \cup \big\{\Gamma_x^{(i)} \, : \, x \in Q_1, \, i \in \{1,2\}\big\}
\end{equation}
be the finite, minimal, order two cover of $\partial U$ given by Lemma~\ref{Lemma_Order_Two_Cover}. The sets $X^*$ and $Q_1$ are as in~\eqref{Eq_Order_Two_Cover}, and $\Gamma_x$ and $\Gamma_x^{(i)}$ are simple curves. Since $\cM^*$ is minimal and order two, each curve $\Gamma$ in the collection
\[
\cA := \big\{\Gamma_x \, : \, x \in X^*\big\}
\]
intersects exactly two other simple curves
\[
\Gamma_P, \Gamma_S \in \cM^* \setminus \{\Gamma\},
\]
which we call the \emph{predecessor} and \emph{successor}, respectively. It is possible that $\Gamma_P = \Gamma_S$.
The parametrisations of individual curves $\Gamma \in \cA$ can thus be combined to form longer curves, see Figure~\ref{Figure_Connecting_the_Dots}. Some of these eventually loop back onto themselves, forming Jordan curves, while others connect to curves
\[
\Gamma \in \cB  := \big\{\Gamma_x^{(i)} \, : \, x \in Q_1, \, i \in \{1,2\}\big\}.
\]
By construction, $\Gamma_x^{(1)} \cap \Gamma_x^{(2)} = \{x\}$ for each $x \in Q_1$, and in addition, each $\Gamma_x^{(i)} \in \cB$ intersects precisely two other curves
\[
\Gamma_P, \Gamma_S \in \cM^*  \setminus \big\{\Gamma_x^{(1)}, \Gamma_x^{(2)} \big\}.
\]
One can thus start from any $\Gamma \in \cM^*$ and follow the boundary along uniquely defined successor and predecessor curves. The order two property and the finiteness of the collection $\cM^*$ ensure that every such extended curve ultimately returns to $\Gamma$ without self-intersections, thus forming a Jordan curve. Furthermore, there are only finitely many such curves.

\vspace{1mm}

\begin{figure}[h]
      \centering
      \captionsetup{margin=0.75cm}
      \vspace{0mm}
                \includegraphics[width = \textwidth]{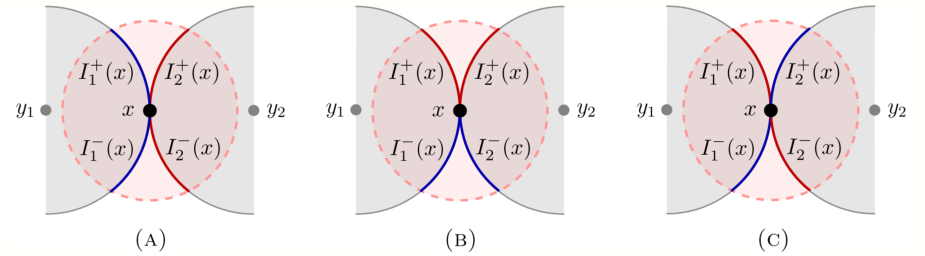} \\[0mm]
      \caption{Schematic illustration of the three ways of representing the boundary $\partial U$ near a sharp-sharp singularity $x \in Q_1$ as the union of two simple curves. 
Since $I_i^\pm(x) \subset \Gamma_x^{(i)} \subset \partial U$ for $i \in \{1,2\}$, the point $x$ must lie at the intersection of two different Jordan curves. In ({\footnotesize A}) the red and blue curves are subsets of the curves $\Gamma_x^{(1)}$ and $\Gamma_x^{(2)}$ defined in Lemma~\ref{Lemma_Finite_Repr_for_Boundary} for $x \in Q_1$. In ({\footnotesize B}) 
the coloured curves bounce back at $x$ and in ({\footnotesize C}) they cross at $x$. 
It turns out that only case ({\footnotesize A}) admits the continuation of such local curves into a representation of $\partial U$ as Jordan curves.}
                \label{Figure_Jordan_curve_options}
\end{figure}

\emph{Step 2: Uniqueness of Representation.} Assume there exists another representation of $\partial U$ as a union of Jordan curves. Since there is only one way to concatenate the curves $\Gamma \in \cA$, the only way to obtain a representation that differs from the one constructed in Step 1 is to define the Jordan curves differently at the junction points $x \in Q_1$, where four simple curves meet at the same point. For each $x \in Q_1$ and $i \in \{1,2\}$ we write $\Gamma_x^{(i)} = I_i^+(x) \cup I_i^-(x)$ where the $I_i^\pm(x)$ are simple curves that intersect each other pairwise only at $x$. The Jordan curves constructed in Step 1 are obtained by arriving to each $x \in Q_1$ via either $I_1^-(x)$ or $I_2^-(x)$ and leaving along $I_1^+(x)$ or $I_2^+(x)$, respectively. This corresponds to following either the blue or the red curve in Figure~\ref{Figure_Jordan_curve_options}({\footnotesize A}). Assume then that for some $x \in Q_1$, a Jordan curve $J$ arrives at $x$ via $I_1^-(x)$ and leaves via either $I_2^-(x)$ or $I_2^+(x)$. This corresponds to following the blue curve either in Figure~\ref{Figure_Jordan_curve_options}({\footnotesize B}) or in Figure~\ref{Figure_Jordan_curve_options}({\footnotesize C}), respectively. The resulting curve would then not coincide with any of the Jordan curves constructed in Step 1. We show that both possibilities lead to a contradiction.

Assume first that $J$ bounces back at $x$ along $I_2^-(x)$, as in Figure~\ref{Figure_Jordan_curve_options}({\footnotesize B}).
Let $A_1, A_2$ be the connected components of $\R^2 \setminus J$, for which $\partial A_1 = \partial A_2 = J$.
By definition, $J$ has no self-intersections, which implies that it intersects the curves $I_1^+(x)$ and $I_2^+(x)$ only at $x$.
It follows that $J \cap B_r(x) = I_1^-(x) \cup I_2^-(x)$, where $r > 0$ is the radius of the local boundary representation at $x$, see Proposition~\ref{Prop_local_representation_exists}.
Consequently, $J$ divides the ball neighbourhood $B_r(x)$ into two connected components $C_1, C_2 \subset B_r(x)$, which satisfy $C_i \subset A_i$ for $i \in \{1,2\}$. Let $\xi \in \Xext{x}$ be the extremal outward direction for which $\partial U \cap U_r(x, \xi) = I_1^-(x) \cup I_2^-(x)$ for some $r > 0$. Choose some $x(\xi) \in U \cap U_r(x, \xi)$ and $x(-\xi) \in U \cap U_r(x, -\xi)$. Now, since $U$ is connected and open, there exists a path $\gamma : [0,1] \to U$ connecting $x(\xi)$ and $x(-\xi)$. But this is impossible, because it would imply that $A_1 \cup A_2$ is connected.

Assume then that $J$ crosses $x$ along $I_2^+(x)$, as in Figure~\ref{Figure_Jordan_curve_options}({\footnotesize C}). Since $J$ cannot cross itself, it follows that another Jordan curve $J_1 \subset \partial U$ contains the curves $I_2^-(x)$ and $I_1^+(x)$. As before, let $A_1, A_2$ be the connected components of $\R^2 \setminus J$, for which $\partial A_1 = \partial A_2 = J$. Now $J_1 \cap A_i \neq \varnothing$ for both $i \in \{1,2\}$. But this contradicts the fact that the connected set $U$ must be a subset of either $A_1$ or $A_2$.

\emph{Step 3: Pairwise Intersections.} Assume, contrary to the claim, that there exist two Jordan curves $J_1, J_2  \subset \partial U$ and some $x_1, x_2 \in J_1 \cap J_2$ with $x_1 \neq x_2$. We first note that if it were the case that $J_1 \cap B_r(x) = J_2 \cap B_r(x)$ for all $x \in Q_1$ and some $r = r(x) > 0$, this would imply $J_1 = J_2$. Therefore, and because $J_1 \cap J_2 \neq \varnothing$, there must exist some $x \in Q_1 \cap (J_1 \cap J_2)$ for which $J_1 \cap B_x \neq J_2 \cap B_x$. We may assume $x = x_1$.
One may then construct a new Jordan curve $J^* \subset \partial U$ by following $J_1$ from $x_1$ to $x_2$ and returning back to $x_1$ via $J_2$.
Note that, because of this rerouting of the curve, the local geometry of $J^*$ around $x_1$ corresponds to either case ({\footnotesize B}) or ({\footnotesize C}) in Figure~\ref{Figure_Jordan_curve_options}.
Let $A_1, A_2$ be the connected components of $\R^2 \setminus J^*$, for which $\partial A_1 = \partial A_2 = J^*$. Since $U$ is connected, either $U \subset A_1$ or $U \subset A_2$. But this contradicts the fact that $J^*$ divides the neighbourhood $B_r(x_1)$ into two disjoint connected components $C_1 \subset A_1$ and $C_2 \subset A_2$, for which $C_i \cap U \neq \varnothing$ for both $i \in \{1,2\}$.
\end{proof}

We now prove the main result of this chapter, Theorem~\ref{Thm_global_structure}, by combining Proposition~\ref{Prop_Jordan_curve_boundaries} and Corollary~\ref{Cor_Inaccessible_Singularities}.

\begin{theorem_without_2} \label{Thm_global_structure_text}
For a compact set $E \subset \R^2$ and $\eps > 0$, the boundary $\partial E_\eps$ is a disjoint union
\[
\partial E_\eps = \cI \cup J,
\]
where $\cI$ is the set of inaccessible singularities and $J = \bigcup_{i \in I} J_i$ is a countable (possibly finite) union of Jordan curves $J_i$. Furthermore, there is a unique representation with the property that each Jordan curve $J_i$ satisfies $J_i \subset \partial U$ for some connected component $U$ of the complement $\R^2 \setminus E_\eps$.
\end{theorem_without_2}

\begin{proof}
Denote by $\cU$ the collection of connected components of the complement $\R^2 \setminus E_\eps$. Corollary~\ref{Cor_Inaccessible_Singularities} asserts that each boundary point $x \in \partial E_\eps$ lies on the boundary $\partial U$ of some $U \in \cU$ if and only if it is not a chain (type S6) or chain-chain (type S7) singularity. Proposition~\ref{Prop_Jordan_curve_boundaries} then implies that
\begin{equation} \label{Eq_Boundary_expression}
\partial E_\eps = \cI \cup \bigcup_{U \in \cU} \bigg( \bigcup_{n=1}^{N_U} J_n(U) \bigg),
\end{equation}
where $N_U \in \N$ for each $U \in \cU$ and $J_n(U) \subset \partial U$ is a Jordan curve for all $1 \leq n \leq N_U$. For each $U \in \cU$, the collection $\{J_n(U) \, : \, n \in N_U\}$ is furthermore unique up to parametrisation.

Due to the compactness of $E_\eps$, the complement $\R^2 \setminus E_\eps$ contains only one unbounded connected component. All other connected components of the complement $\R^2 \setminus E_\eps$ are contained in some ball $B_M(0)$, centered at the origin, with radius $M < \infty$. Since they are open, each of them contains an open ball. Hence, the Lebesgue measure of each such component $U$ satisfies $m(U) > 0$. However, for each $n \in \N$, there can be only finitely many $U$ with $m(U) > 1/n$. It follows from this that there are at most countably many connected components of the complement, which also implies that the number of Jordan curves $J_n(U)$ in \eqref{Eq_Boundary_expression} is countable.
\end{proof}
\chapter{Uniform Rectifiability and Ahlfors Regularity}
\label{Sec_Ahlfors_regularity}
In this chapter we recall the definitions of rectifiability and Ahlfors regularity, and show that the Jordan curve subsets of the boundary are
uniformly rectifiable.
We also give an example of an $\eps$-neighbourhood whose boundary is not an Ahlfors-regular set.

Recall that a \emph{curve} in $\R^2$ is the image of a closed interval under some continuous map $\gamma: [a,b] \to \R^2$, see Definition~\ref{Def_Jordan_Curve}.
The \emph{length} of a curve $\Gamma$ is given by 
\[
\cL(\Gamma) := \sup \sum_{k=1}^m |\gamma(t_k) - \gamma(t_{k-1}) |,
\]
where the supremum is taken over all finite subsets $\{t_0, \ldots, t_m\}$ of the interval $[a,b]$. We say that $\Gamma$ is a \emph{rectifiable curve}, if $\cL(\Gamma) < \infty$. Equivalently, $\Gamma$ is a rectifiable curve if there exists some Lipschitz map $f : [a,b] \to \R^2$ for which $\Gamma = f([a,b])$.
The notion of a \emph{rectifiable set}~\cite{David_Semmes_Analysis_of,Mattila_Geometry_of_Sets_and_Measures} generalises the above definition.


\begin{definition}[{$1$-rectifiable set}] \label{Def_Rectifiability}
A set $A \subset \R^d$ is \emph{$1$-rectifiable} (or simply \emph{rectifiable}), if $\cH^1(A) < \infty$ and there exists a countable family of Lipschitz functions $f_i : \R \to \R^d$ for which $\cH^1 \big( A \setminus \bigcup_{i\in\N}f_i(\R) \big) = 0$.
\end{definition}
%
%
It was shown in~\cite[Proposition 2.3]{Rataj_Winter_On_Volume} that for any bounded set $E \subset \R^d$ and $d \geq 1$, both $\partial E_\eps$ and $\partial E_{<\eps}$ are $(d-1)$-rectifiable sets.

Let $A \subset \R^2$ and $x \in A$. The limit
\begin{equation} \label{Eq_Def_Density}
\Theta(A, x) := \lim_{r \to 0} \frac{\cH^1(A \cap B_r(x))}{2r},
\end{equation}
when it exists, is called the ($1$-dimensional) $\emph{density}$ of $A$ at $x$. A basic result in geometric measure theory is that if $\cH^1(A) < \infty$, then $A$ is rectifiable if and only if the density~\eqref{Eq_Def_Density} exists at almost all $x \in A$, and is equal to $1$. Some authors 
call sets with this property regular sets. The slightly different notion of Ahlfors-regularity quantifies in a scale-free fashion the Hausdorff-dimensionality of the set $A$.

\begin{definition}[{Ahlfors regularity}] \label{Def_Ahlfors_Regularity}
A set $A \subset \R^d$ is \emph{Ahlfors-regular} (or simply \emph{regular}) with dimension $n$, if $A$ is closed and there exist constants $a,b > 0$ so that
\begin{equation} \label{Eq_Ahlfors_Regularity_Condition}
a r^n \leq \cH^n \big( A \cap B_r(x)  \big) \leq b r^n
\end{equation}
for all $x \in A$ and $0 < r < \textrm{diam}(A)$.\footnote{The requirement $r < \textrm{diam}(A)$ allows for the lower bound in~\eqref{Eq_Ahlfors_Regularity_Condition} to be satisfied with some constant $a > 0$ also for sets $A$ that have finite $n$-dimensional measure.}
\end{definition}
%
%
The notion of \emph{uniform rectifiability}~\cite{David_Semmes_Analysis_of} refers to a family of closely related geometric properties that, when satisfied by some set $A$, guarantee the boundedness of certain integral operators defined on the space $L^2(A)$ of square-integrable functions.
A thorough review of uniform rectifiability is provided in the introductory chapter of~\cite{David_Semmes_Analysis_of}. We take as the definition the following characterisation, whcih is given in~\cite[Section 15.23]{Mattila_Geometry_of_Sets_and_Measures}.

\begin{definition}[{Uniform rectifiability}] \label{Def_uniform_rect}
A set $A$ is \emph{uniformly rectifiable}, if (i) it is closed, (ii) it is Ahlfors regular and (iii) there exists some curve $\Gamma$ and some $C > 0$ for which $A \subset \Gamma$ and $\Gamma$ satisfies for all $x \in \Gamma$ and all $r > 0$ the inequality
\begin{equation} \label{Eq_Local_Measure_Upper_Bound}
\cH^1\big(\Gamma \cap B_r(x)\big) \leq Cr.
\end{equation}
\end{definition}

We begin by showing that for any compact $E \subset \R^2$, the Jordan curve subsets of the boundary $\partial E_\eps$ are Ahlfors regular.


\begin{prop}[{Ahlfors regularity of Jordan curve boundary subsets}] \label{Prop_Jordan_curves_are_Ahlfors_regular}
Let $E \subset \R^2$ be compact, $\eps > 0$ and let $U$ be a connected component of the complement $\R^2 \setminus E_\eps$. Let $J$ be a Jordan curve subset of the boundary $\partial U$. Then $J$ is Ahlfors regular.
\end{prop}

\begin{proof}
The idea is to first identify a suitable 'anchor' radius $r_0 > 0$ which will depend on the particular set $E$ and radius $\eps$. After this, we establish the required constants $a,b > 0$ for the lower and upper bounds in~\eqref{Eq_Ahlfors_Regularity_Condition} by considering separately the cases $r < r_0$ and $r \geq r_0$.


Let $J$ be a Jordan curve subset of the boundary $\partial U$, where $U$ is a connected component of the complement $\R^2 \setminus E_\eps$. The appropriate anchor radius $r_0$ can be defined on the basis of the finite, minimal, order two cover of $J$, whose existence was shown in Lemma~\ref{Lemma_Order_Two_Cover}. In~\eqref{Eq_Order_Two_Cover_again_and_again} below, the sets $X^*$ and $Q_1$ and the simple curves $\Gamma_x$ and $\Gamma_x^{(i)}$ are as in~\eqref{Eq_Order_Two_Cover}. According to Lemma~\ref{Lemma_Order_Two_Cover}, the collection
\begin{equation} \label{Eq_Order_Two_Cover_again_and_again}
\cM^* := \big\{\Gamma_x \, : \, x \in X^*\big\} \cup \big\{\Gamma_x^{(i)} \, : \, x \in Q_1, \, i \in \{1,2\}\big\}
\end{equation}
is a finite, minimal, order two cover of the Jordan curve $J$.  Since $\cM^*$ is minimal and order two, each curve $\Gamma$ in the collection $\cA := \big\{\Gamma_x \, : \, x \in X^*\big\}$
intersects exactly two other simple curves in $\cM^* \setminus \{\Gamma\}$.
The key observation is that since the union $X^* \cup Q_1$ is finite, there exists some radius $r_0 > 0$ with the following property: for any boundary point $x \in J$, there exists a corresponding curve $\Gamma_x \in \cA$ for which $J \cap B_r(x) = \Gamma_x \cap B_r(x)$ whenever $r \leq r_0$. We now identify the constants $a,b > 0$ for the lower and upper bounds in~\eqref{Eq_Ahlfors_Regularity_Condition}

\textbf{Lower bound.} Let $r_0$ be as defined above. Let $x \in J$, and consider first some radius $r \leq r_0$. Then, as argued above, there exists some curve $\Gamma_x \in \cA$ for which $J \cap B_r(x) = \Gamma_x \cap B_r(x)$.
Note that each $\Gamma \in \cA$ is constructed from the graphs in a local boundary representation around some $z \in J$, see Lemma~\ref{Lemma_Finite_Repr_for_Boundary} and Figure~\ref{Figure_Local_Curve_Representations}. 
The set $\Gamma_x \cap B_{r}(x)$ thus consists of two disjoint, continuous curves that connect the center point $x$ of the $r$-radius ball $B_r(x)$ to its circumference $\partial B_r(x)$. From this we obtain the lower bound 
\begin{equation} \label{Eq_curve_length_lower_bound_1}
\cH^1(J \cap B_r(x)) = \cH^1(\Gamma_x \cap B_r(x)) \geq 2r.
\end{equation}

Assume then that $r_0 \leq r \leq \textrm{diam}(J)$. In this case we have
\begin{equation} \label{Eq_curve_length_lower_bound_2}
\frac{\cH^1(J \cap B_r(x))}{r} \geq \frac{\cH^1(J \cap B_{r_0}(x))}{\textrm{diam}(J)} \geq \frac{2r_0}{\textrm{diam}(J)}.
\end{equation}
Setting $a := \textrm{min}\big(2, 2r_0 / \textrm{diam}(J)\big)$, the lower bounds~\eqref{Eq_curve_length_lower_bound_1} and~\eqref{Eq_curve_length_lower_bound_2}
imply
\[
\cH^1(J \cap B_r(x)) \geq ar
\]
for all $r \in \big(0, \textrm{diam}(J)\big)$.

\textbf{Upper bound.}
Let $r_0$ be as defined above. Let $x \in J$ and consider first the case $r < r_0$. Then, as argued for the lower bound above, $J \cap B_r(x) = \Gamma_x \cap B_r(x)$ for some $\Gamma_x \in \cA$. Let $\cG(x)$ be a local boundary representation at $x$. By construction, the curve $\Gamma_x$ is the union of either two or four curves that are obtained as images $g_{\xi, y}([0, s_{\xi,s}])$ under Lipschitz-continuous functions $g_{\xi, y} \in \cG(x)$, where $(\xi, y) \in \Pext{x}$ are extremal pairs and $0 < s_{\xi,s} \leq r_0$ for each $(\xi, y)$. 
According to Proposition~\ref{Prop_LBR_Lipschitz}, the functions $g_{\xi, y}$ are $2/\sqrt{3}$ Lipschitz, so that
\begin{equation} \label{Eq_Lipschitz_upper_bound_for_J}
\cH^1(J \cap B_r(x)) = \cH^1(\Gamma_x \cap B_r(x)) \leq \sum_{j=1}^4 \frac{2r}{\sqrt{3}} = \frac{8r}{\sqrt{3}}.
\end{equation}
Assume then that $r \geq r_0$. In this case we have
\begin{equation} \label{Eq_Ahlfors_upper_bound_for_J}
\frac{\cH^1(J \cap B_r(x))}{r} \leq \frac{\cH^1(J)}{r_0}.
\end{equation}
Note that $\cH^1(J) \leq \cH^1(\partial E_\eps) <  \infty$ and define $b := \max\big\{8/\sqrt{3}, \cH^1(J) / r_0\big\}$.\footnote{
The finiteness of the length of the boundary is one of the very first properties established about $\eps$-neighbourhoods, see~\cite[Section 6]{Erdos_Some_remarks}.} Thus, it follows from the estimates~\eqref{Eq_Lipschitz_upper_bound_for_J} and~\eqref{Eq_Ahlfors_upper_bound_for_J} that
\[
\cH^1(J \cap B_r(x)) \leq br
\]
for all $r \in \big(0, \textrm{diam}(J)\big)$.
%
%
%
\end{proof}

We next leverage Proposition~\ref{Prop_Jordan_curves_are_Ahlfors_regular} in order to show that for the boundary $\partial E_\eps$ of the $\eps$-neighbourhood of any compact $E \subset \R^2$
there exists some curve $\Gamma$ that
satisfies the upper bound~\eqref{Eq_Local_Measure_Upper_Bound} for some constant $C > 0$.

\begin{lemma}[{Curve covering of the boundary}] \label{Lemma_Curve_covering_of_the_boundary}
Let $E \subset \R^2$ be compact and $\eps > 0$. Then there exists a curve $\Gamma \supset \partial E_\eps$ and some constant $C > 0$ for which the upper bound~\eqref{Eq_Local_Measure_Upper_Bound} is satisfied for all $r > 0$ and all $x \in \Gamma$.
\end{lemma}

\begin{proof}
Since $E$ is compact, there exists a finite collection of compact sets $A_k$ with diameter at most $\eps/2$ and for which
$E = \bigcup_k A_k$. According to~\cite[Lemma 1]{Brown_Sets_of_constant} the boundary of each $A_k$ is a Jordan curve $J_k$. It then follows from Proposition~\ref{Prop_Jordan_curves_are_Ahlfors_regular} that there exist constants $C_k$ for which
\begin{equation} \label{Eq_Curve_covering_upper_bound}
\cH^1(J_k \cap B_r(x)) \leq C_k r
\end{equation}
for all $x \in J_k$ and $0 < r \leq \textrm{diam}(J_k)$. In fact, we may assume that~\eqref{Eq_Curve_covering_upper_bound} is valid for $0 < r \leq \textrm{diam}(\partial E_\eps)$ for all $k$, see~\eqref{Eq_Ahlfors_upper_bound_for_J}.
The individual Jordan curves $J_k$ can be arranged into a sequence in which consecutive Jordan curves are connected to each other by a straight line segment $L_k$. The union $\bigcup_k \big(J_k \cup L_k\big)$ thus forms a curve $\Gamma$ for which $\bigcup_k J_k \subset \Gamma$. According to~\cite[Theorem 1]{Brown_Sets_of_constant}, $\partial E_\eps \subset \bigcup_k J_k$ so that $\Gamma$ covers the boundary. Since only one connecting line segment is needed for each Jordan curve, it follows that a ball neighbourhood $B_r(x)$ with $r > 0$ around any $x \in \partial E_\eps$ intersects at most $N = \textrm{card}\big(\{J_k\}\big)$ line segments. The same holds for the Jordan curves $J_k$, so that
\[
\cH^1(\partial E_\eps \cap B_r(x)) \leq rN\big(\max_k\{C_k\} + 2\big). \qedhere
\]
\end{proof}

Note that Lemma~\ref{Lemma_Curve_covering_of_the_boundary} makes no assumptions regarding the types of singularities that may appear on the boundary $\partial E_\eps$. From this, we obtain the following result.

\begin{theorem_without_2}
[{Sufficient condition for uniform rectifiability}] \label{Thm_suff_cond_for_uni_rect_text}
Let $E \subset \R^2$ be compact, let $\eps > 0$, and assume that the boundary $\partial E_\eps$ contains no chain or chain-chain singularities. Then $\partial E_\eps$ is uniformly rectifiable.
\end{theorem_without_2}

\begin{proof}
Since $E$ is compact and the boundary $\partial E_\eps$ is assumed to contain no chain singularities, there must exist only finitely many Jordan curve subsets $J_k \subset \partial E_\eps$. In addition, every boundary point lies on some $J_k$. According to Proposition~\ref{Prop_Jordan_curves_are_Ahlfors_regular}, each $J_k$ is Ahlfors regular with its associated lower and upper bound constants $a_k$ and $b_k$ in~\eqref{Eq_Ahlfors_Regularity_Condition}. As argued in the proof of the previous lemma, the constants $b_k$ can be assumed to hold for all $0 < r \leq \textrm{diam}(\partial E_\eps)$ for all $k$, see~\eqref{Eq_Ahlfors_upper_bound_for_J}.
Since there are only finitely many Jordan curves, we may define $a := \min_k\{a_k\}$ and $b := \max_k\{b_k\}$. It follows that $\partial E_\eps$ is Ahlfors regular. Since it is closed, the claim now follows from Lemma~\ref{Lemma_Curve_covering_of_the_boundary}.
\end{proof}

\section{Counterexample to Uniform Rectifiability}
We now construct an example of an $\eps$-neighbourhood $E_\eps \subset \R^2$ whose boundary $\partial E_\eps$ fails to satisfy the lower bound in the Ahlfors regularity condition~\eqref{Eq_Ahlfors_Regularity_Condition} at a chain singularity. We obtain the underlying set $E$ as the union of two copies of a point sequence on the positive real line, embedded in the plane at a distance of $2\eps$ from each other.

\begin{example} \label{Ex_counterexample_to_Ahlfors}
Define the sequences $(a_n)_{n=1}^\infty$ and $(b_n)_{n=1}^\infty$ by setting $a_n := 2^{-n^2}$ and $b_n := 2a_n$ 
for all $n \in \N$. Let $A := \bigcup_{n=1}^\infty [b_n, a_{n-1}]$. Fix $\eps > 0$ and define
\begin{align*}
E^+ &:= \big\{ (x, \eps) \in \R^2 \, : \, x \in A \cup [-1, 0] \big\}, \\
E^- &:= \big\{ (x, -\eps) \in \R^2 \, : \, x \in A \cup [-1, 0] \big\}.
\end{align*}
Finally, let $E = E^+ \cup E^-$ and consider the $\eps$-neighbourhood $E_\eps$. The interval $[-1,0]$ is included in the definition of the sets $E^{\pm}$ above for simplicity, so that the part of the boundary $\partial E_\eps$ that intersects small neighbourhoods of the origin $(0,0)$ lies entirely on the right half plane.

We now show that the origin is a chain singularity on the boundary $\partial E_\eps$, and that the lower bound in the Ahlfors regularity condition~\eqref{Eq_Ahlfors_Regularity_Condition} fails at this point. To this end, consider a ball neighbourhood $B(r) := B_r\big((0,0)\big)$ of radius $r>0$ around the origin, and let $x \in \partial E_\eps \cap B(r)$ be some boundary point inside this neighbourhood. Then $x \in \partial B_\eps(z)$ for some $z = (z_1, z_2) \in E$, in which $z_1 \in [0,r]$ and $z_2 \in \{\eps, -\eps\}$. Hence, as $r$ decreases, the tangential direction of the boundary $\partial E_\eps$ at $x$ approaches horizontal, which implies in particular that the absolute values of the (one-sided) derivatives of the local boundary representation are bounded from above by some $K(r) > 0$ with $\lim_{r \to 0} K(r) = 0$. It follows that the Hausdorff measure of the boundary subset $\partial E_\eps \cap B(r)$ satisfies the upper bound
\begin{equation} \label{Eq_Boundary_Length_Upper_Bound}
\cH^1 \left( \partial E_\eps \cap B(r) \right) \leq 2 K(r) \ell \big( [0,r] \setminus A \big),
\end{equation}
in which $\ell \big( [0,r] \setminus A \big)$ is the length of the one-dimensional set $[0,r] \setminus A \subset \R_+$. We now show that
\[
\liminf_{r \to 0} \frac{\cH^1 \left( \partial E_\eps \cap B(r) \right)}{r} = 0,
\]
which implies that $\partial E_\eps$ is not Ahlfors regular. Due to~\eqref{Eq_Boundary_Length_Upper_Bound}, it suffices to show that
\[
\lim_{n \to \infty} \frac{\ell \big( [0,a_n] \setminus A \big)}{a_n} = 0.
\]
Note first that for each $n \in \N$,
\begin{align}
\ell \big( [0,a_n] \setminus A \big) &= a_n - \sum_{k=n}^\infty \ell \big([b_{k+1}, a_{k}] \big)
= \sum_{k=n+1}^\infty \ell \big([a_{k}, b_{k}]\big)
= \sum_{k=n+1}^\infty a_k.
\end{align}
For each $n \in \N$ the ratio of consecutive terms satisfies $R_n := a_{n+1} / a_n = 2^{-1 - 2n}$, so that $\lim_{n \to \infty} R_n = 0$ and
\[
\sum_{k=n+1}^\infty a_k = 2^{-(n+1)^2}\big(1 + R_{n+1} + R_{n+1}R_{n+2} + \ldots \big) < 2^{-(n+1)^2} \frac{R_{n+1}}{1 - R_{n+1}}.
\]
This implies
\[
\frac{\ell \big( [0,a_n] \setminus A \big)}{a_n} = \frac{2^{-(n+1)^2}}{2^{-n^2}} \frac{R_{n+1}}{1 - R_{n+1}}
= \frac{R_{n+1}^2}{1 - R_{n+1}} \longrightarrow 0
\]
as $n \to \infty$, which completes the argument.
\end{example}

Example~\ref{Ex_counterexample_to_Ahlfors} shows that boundaries of $\eps$-neighbourhoods that contain chain singularities are generally not uniformly rectifiable. However, it is also easy to construct examples of boundaries of $\eps$-neighbourhoods that do satisfy the lower bound in~\eqref{Eq_Ahlfors_Regularity_Condition} even though they contain chain singularities. The mere existence of chain singularities therefore does not in itself rule out the possibility that the boundary of an $\eps$-neighbourhood is uniformly rectifiable. Thus, in light of Theorem~\ref{Thm_suff_cond_for_uni_rect_text}, the uniform rectifiability of the boundary hinges on whether the lower bound in the Ahlfors regularity condition~\eqref{Eq_Ahlfors_Regularity_Condition} is satisfied at the chain singularities appearing on the boundary.

\chapter{Curvature} \label{Sec_Existence_of_Curvature}
In this final chapter we show that curvature is defined almost everywhere on Jordan curves $J \subset \partial U \subset  \partial E_\eps$, where $U$ is some connected component of the complement $\R^2 \setminus E_\eps$. It is not clear from the outset that curvature can be defined on these sets even almost everywhere. It was shown in Example~\ref{Ex_Shallow_can_be_dense} that shallow singularities (types S4 and S5), which are defined as accumulation points of increasingly shallow wedge-type singularities, may lie densely on boundary segments that have positive one-dimensional Hausdorff measure. It follows that even though Propositions~\ref{Prop_local_representation_exists} and~\ref{Prop_LBR_Lipschitz} provide a local representation of the boundary as Lipschitz-continuous functions, the first derivatives of these function generally exist only almost everywhere.


However, since there are no cusp singularities on the boundary, the tangent function on the boundary $\partial U$ may only have jumps in one direction. In the opposite direction, the rate of change of the tangential direction is bounded from below by the curvature of an $\eps$-radius ball, since every boundary point $x \in \partial U$ lies on the boundary of such a ball. We use these properties to show that the one-sided tangent functions are locally of bounded variation, and therefore have a derivative at almost every point. One can then use these to obtain the (signed) curvature of the boundary.

\section{Existence of Curvature via Bounded Variation} \label{Sec_Existence of Curvature via Bounded Variation}
Let $U$ be a connected component of the complement $\R^2 \setminus E_\eps$ and let $J \subset \partial U$ be one of its Jordan curve subsets, see Theorem~\ref{Thm_global_structure}. In general, $J$ has a well-defined tangent only at almost every point. However, Proposition~\ref{Prop_Tangents_are_Defined} asserts that the directional (left and right) tangents coincide with extremal outward directions, which exist at every $x \in J$ according to Proposition~\ref{Prop_structure_of_set_of_outward_directions}. Lemma~\ref{Lemma_Order_Two_Cover} asserts that $J$ can be covered by a finite collection of simple curves that are constructed from the graphs of local boundary representations, given by Proposition~\ref{Prop_local_representation_exists}. Hence, in order to establish the existence of curvature on the Jordan curve $J$, it suffices to work with these local representations.

In this section, we prove the final main result of the paper, Theorem~\ref{Thm_existence_of_curvature}, which states that curvature is well-defined almost everywhere on the Jordan curve subsets $J$ of the boundary $\partial E_\eps$. To prove the theorem, we apply a criterion for bounded variation to the local boundary representations of $J$. We present here the proof of the main theorem, and postpone the more technical proof of the criterion to Section~\ref{Sec_A Lower Bound for Differences}.

\subsection{Boundary Points in a Local Coordinate System} \label{Sec_Local_Coordinate_System}
A local boundary representation at $x \in \partial E_\eps$ is a finite collection $\cG(x)$ of continuous functions $g_{\xi, y} : [0, r] \to \R^2$, for some $0 < r < \eps / 2$, one for each extremal pair\footnote{For each boundary point $x \in \partial E_\eps$, the set of \emph{extremal pairs} $\Pext{x}$ consists of all the pairs $(\xi, y)$ of extremal outward directions $\xi \in \Xext{x}$ and extremal contributors $y \in \Cext{x}$ for which $\langle x - y, \xi \rangle = 0$.} $(\xi, y) \in \Pext{x}$, with the property that
\begin{equation} \label{Eq_local_boundary_repr_condition_02}
  \partial E_\eps \cap \overline{B_r(x)} = \bigcup_{(\xi, y) \in \Pext{x}} g_{\xi, y}\left(A_{\xi,y}\right)
\end{equation}
for some closed sets $A_{\xi, y} \subset [0, r]$, see Proposition~\ref{Prop_local_representation_exists}. More precisely, for each extremal pair $(\xi,y) \in \Pext{x}$ there exists a continuous function $f^{\xi, y} : [0, r] \to \R$ for which
\begin{equation} \label{Eq_Canonical_LBR_02}
g_{\xi, y}(s) = x + s \xi + f^{\xi, y}(s)\frac{x - y}{\eps}.
\end{equation}
According to Corollary~\ref{Cor_Inaccessible_Singularities}, chain singularities do not appear on the boundary $\partial U$ for connected components $U$ of the complement $\R^2 \setminus E_\eps$. It follows then from Proposition~\ref{Prop_local_representation_exists} that for each $x \in \partial U$, the sets $A_{\xi, y}$ in~\eqref{Eq_local_boundary_repr_condition_02} are intervals $A_{\xi, y} = [0, s_{\xi, y}]$ for some $s_{\xi,y} \in [0, r]$.

\begin{figure}[h]
      \centering
      \captionsetup{margin=0.75cm}
      \vspace{0mm}
                \includegraphics[width = \textwidth]{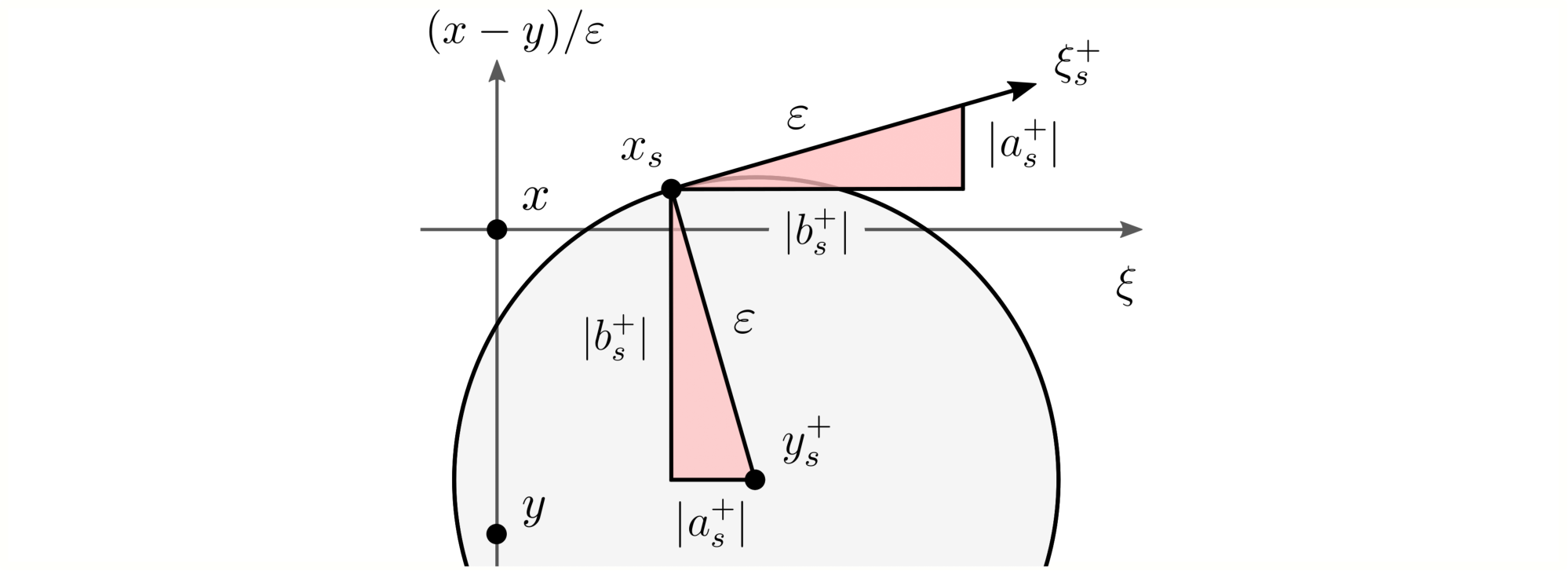} \\[0mm]   
                \caption{The relationship between the tangential direction $\xi_s^+$, the slope $D^+ f^{\xi, y}(s)$ and the extremal contributor $y_s^+$ at $x_s = g_{\xi,y}(s) = \big(s, f^{\xi,y}(s)\big)$ for $s \in [0, r]$. 
                 Here $a_s^+ < 0$, and the slope at $x_s$ satisfies $ D^+ f^{\xi, y}(s) = -a_s^+/b_s^+ = -a_s^+ / \sqrt{\eps^2 - (a_s^+)^2} =: p\big(a_s^+\big)$.} 
                \label{Figure_Local_Coordinate_System}
\end{figure}

Consider a fixed $x \in J$ and an extremal pair $(\xi,y) \in \Pext{x}$. To facilitate the subsequent analysis, we choose local coordinates given by the orthogonal unit vectors $\xi$ and $(x - y)/\eps$, and place the origin of this coordinate system at $x$. However, to simplify notation, we assume without loss of generality that this coordinate system coincides with the standard Euclidean coordinate system, so that $x = (0, 0)$, $\xi = (1,0)$ and $(x - y)/\eps = (0,1)$. For each $s \in [0, s_{\xi, y}]$, we define
\[
x_s := g_{\xi,y}(s) = \big(s, f^{\xi,y}(s)\big) \in J,
\]
where $f^{\xi,y} : [0, s_{\xi,y}] \to \R$ is as in~\eqref{Eq_Canonical_LBR_02}. The boundary subset $J \cap \overline{B_r(x)}$ can thus be expressed as the union of graphs 
\[
J \cap \overline{B_r(x)} = \bigcup_{(\xi, y) \in \Pext{x}} \big\{\big(s, f^{\xi,y}(s)\big) \, : \, s \in [0, s_{\xi,y}] \big\},
\]
where each extremal pair $(\xi, y) \in \Pext{x}$ defines its corresponding local coordinates.

According to Proposition~\ref{Prop_local_contribution}, for each $s \in [0, s_{\xi, y}]$ the extremal contributors $y_s \in \Cext{x_s}$ satisfy $y_s \in B_{\eps/2}(y)$, where $y \in \Cext{x}$ is the extremal contributor corresponding to the extremal pair $(\xi, y) \in \Pext{x}$, see Proposition~\ref{Prop_local_representation_exists}. This implies that for each $s \in [0, s_{\xi, y}]$ the extremal outward directions $\xi_s \in \Xext{x_s}$ deviate only slightly from the $\xi$-axis in the local coordinate system. We call them the right and left extremal outward direction at $x_s$, and denote them by $\xi_s^+$ and $\xi_s^-$. According to Proposition~\ref{Prop_Tangents_are_Defined}, these coincide with the tangential directions at $x_s \in J$. In the local coordinates, the slopes of the one-sided tangents of the graph $\big\{\big(s, f^{\xi,y}(s)\big) \, : \, s \in [0, s_{\xi,y}] \big\}$ are given by the the left and right derivatives
\begin{equation} \label{Eq_Def_first_derivatives}
D^\pm f^{\xi, y}(s) := \lim_{h \to \pm 0} \frac{f^{\xi, y}(s + h) - f^{\xi, y}(s)}{h}.
\end{equation}
These are related to the extremal outward directions $\xi_s^\pm = \big(\xi_1^\pm(s), \xi_2^\pm(s)\big) \in \Xext{x_s}$ through
\begin{equation} \label{Eq_Derivative_vs_Outward_Direction_Correspondence}
D^\pm  f^{\xi, y}(s) = \frac{\xi_2^\pm(s)}{\xi_1^\pm(s)}, 
\end{equation}
see Figure~\ref{Figure_Local_Coordinate_System}.

\subsection{A Condition for Bounded Variation} \label{Sec_A_Condition_for_BV}
In order to prove the existence of curvature almost everywhere on $J$, we consider a local boundary representation $\cG(x)$ around an arbitrary boundary point $x \in J$ and show that for each extremal pair $(\xi,y) \in \Pext{x}$, the second derivative of the function $f^{\xi,y}$ in~\eqref{Eq_Canonical_LBR_02} exists almost everywhere on the interval $[0, s_{\xi,y}]$. We begin by stating the following general criterion for bounded variation that applies to bounded functions on a closed interval. 

\begin{lemma}[{A criterion for bounded variation}] \label{Lemma_bounded_variation}
Let let $f: [a,b] \to \R$ be a bounded function and assume there exists some $q > 0$ for which
\begin{equation} \label{curvature_inequality}
f(s + h) - f(s) \geq -qh
\end{equation}
for all $s \in [a,b)$ and all $h > 0$ that satisfy $s + h \in [a, b]$. Then $f$ is of bounded variation on $[a,b]$.
\end{lemma}

\begin{proof}
Since $f$ is bounded, there exists some $m > 0$ for which $f(s) \in [-m, m]$ for all $s \in [a,b]$. Assume now contrary to the claim that $f$ is not of bounded variation.
Then there exists a partition $a = s_0 < s_1 < \ldots < s_K = b$ of the interval $[a,b]$ for which
\begin{equation} \label{inequality_variation_sum}
S := \sum_{j=1}^K |f(s_j) - f(s_{j-1})| > 2(p + m),
\end{equation}
where $p := \max \{q(b-a), 2m \}$.
Consider the index sets
\begin{align*}
J^+ &:= \big\{j \in \{1, \ldots, K\} \, : \, f(s_j) - f(s_{j-1}) > 0 \big\}, \\
J^- &:= \{1, \ldots, K\} \setminus J^+,
\end{align*}
that correspond to contributions from positive and negative differences in~\eqref{inequality_variation_sum}, respectively.
Then $S = S^+ - S^-$, where
\begin{equation} \label{Eq_decomposed_sum}
S^+ :=  \sum_{j \in J_+} \big( f(s_j) + f(s_{j-1}) \big) \quad \textrm{and} \quad  S^- := \sum_{j \in J^-} \big( f(s_j) - f(s_{j-1}) \big).
\end{equation}
It follows from~\eqref{curvature_inequality} that $S^- \geq -q(b-a) \geq -p$, which together with~\eqref{inequality_variation_sum} implies the lower bound $S^+ > p + 2m$. Combining these bounds 
leads to the contradiction
\begin{align*}
f(b) 
	  &= f(a) + S^+ + S^-
	  > -m + (p + 2m) - p
	  = m. \qedhere
\end{align*}
\end{proof}

The main step towards proving Theorem~\ref{Thm_existence_of_curvature} is to show that the one-sided derivatives $D^\pm f^{\xi, y}$ satisfy condition~\eqref{curvature_inequality} of Lemma~\ref{Lemma_bounded_variation} for a certain $q > 0$ and are thus of bounded variation on the interval $[0, s_{\xi,y}]$.

\begin{prop}[{A lower bound for differences of right derivatives}] \label{Prop_Lower_Bound_for_Curvature}
Let $E \in \R^2$ be a compact set and let $x \in J \subset \partial E_\eps$ where $J$ is a Jordan curve. Let $\cG(x)$ be a local boundary representation at $x$, where each $g_{\xi, y} \in \cG(x)$ satisfies
\[
g_{\xi, y}(s) = x + s\xi + f^{\xi, y}(s)\eps^{-1}(x - y)
\]
for some continuous function $f^{\xi, y} : [0, s_{\xi, y}] \to \R$. Then the left and right derivatives $D^\pm f^{\xi, y}$ satisfy the inequality 
\begin{equation} \label{Eq_Lower_bound_for_dir_derivatives}
D^\pm f^{\xi, y}(s + h) - D^\pm f^{\xi, y}(s) \geq - \frac{8h}{3\sqrt{3}\eps}
\end{equation}
for all $s \in [0, s_{\xi,y}]$ and $s, h > 0$ with $0 \leq s + h \leq s_{\xi, y}$.
\end{prop}

The proof of Proposition~\ref{Prop_Lower_Bound_for_Curvature} is the most technical part of the proof of Theorem~\ref{Thm_existence_of_curvature}, and is presented separately in Section~\ref{Sec_A Lower Bound for Differences}. We now proceed to combine Proposition~\ref{Prop_Lower_Bound_for_Curvature} and Lemma~\ref{Lemma_bounded_variation} into a general statement about the existence of curvature on the boundary $\partial E_\eps$.
We denote by $\cH^1$ the one-dimensional Hausdorff measure, and by $\cI$ the set of inaccessible singularities, see Corollary~\ref{Cor_Inaccessible_Singularities}.

\setcounter{theorem_without}{1}

\begin{theorem_without_2}[{Existence of curvature}]
Let $E \subset \R^2$ be compact and let $\eps > 0$. Then for $\mathcal{H}^1$-almost all $x \in \partial E_\eps \setminus \cI$, the (signed) curvature $\kappa$ exists and is given by the formula
\begin{equation} \label{Eq_curvature_of_function_graph}
\kappa(s_x) = \frac{\frac{d^2}{ds^2} f^{\xi, y}(s_x)}{\left(1 + \left(\frac{d}{ds}f^{\xi,y}(s_x)\right)^2 \right)^{3/2}},
\end{equation}
where the coordinates $s_x$ and $f^{\xi, y}(s_x)$ are associated with a local boundary representation $\cG(z)$ at some $z \in \partial E_\eps \setminus \cI$, such that $x = z + s_x \xi + f^{\xi, y}(s_x)(z - y)/\eps$ for some $(\xi,y) \in \Pext{z}$.
\end{theorem_without_2}

\begin{proof}
According to Theorem \ref{Thm_global_structure}, $\partial E_\eps \setminus \cI$ is a countable union $\cJ$ of Jordan curves. Since each $J \in \cJ$ is compact, there exists a finite collection $Z$ of points $z \in J$ and corresponding boundary representations $\cG(z)$, for which
\[
J = \bigcup_{z \in Z} \left(\bigcup_{(\xi, y) \in \Pext{z}} g_{\xi, y}([0, s_{\xi,y}])\right).
\]
The closed intervals $[0, s_{\xi, y}]$ are as in Proposition~\ref{Prop_local_representation_exists}. It suffices to show that for each $z \in Z$, curvature exists outside a $\cH^1$-negligible set on each curve $g_{\xi,y}([0, s_{\xi,y}])$, where $(\xi,y) \in \Pext{z}$.

Let $z \in Z$ and let $g_{\xi,y} \in \cG(z)$ for some $(\xi,y) \in \Pext{z}$. According to Proposition~\ref{Prop_local_representation_exists} there exists a continuous function $f^{\xi,y} : [0, s_{\xi,y}] \to \R$ for which
\[
g_{\xi, y}(s) = z + s \xi + f^{\xi,y}(s) \frac{z - y}{\eps}.
\]
Since the one-sided derivatives $D^\pm f^{\xi, y}(s)$ are related to the extremal outward directions $\xi_s^\pm$ via~\eqref{Eq_Derivative_vs_Outward_Direction_Correspondence}, it follows that $D^+ f^{\xi, y}(s) = D^- f^{\xi, y}(s)$ whenever $\xi_s^+ = -\xi_s^-$. This implies that the one-sided derivatives agree on $[0, s_{\xi,y}]$ apart from an at most countably infinite set, since it follows from Theorem~\ref{Thm_Main_2} and Corollary~\ref{Cor_Inaccessible_Singularities} that the set
\begin{align*}
J \setminus \big\{x \in J \, : \, \Xext{x} &= \{\xi, -\xi\} \,\, \textrm{for some} \,\, \xi \in S^1 \big\} \\ &= J \setminus \Unp{E},
\end{align*}
where $\Unp{E}$ is as in Definition~\ref{Def_Contributor}, is at most countably infinite.

We next confirm that the assumptions of Lemma~\ref{Lemma_bounded_variation} are satisfied for the left and right derivatives $D^\pm f^{\xi, y}(s)$ on the interval $[0, s_{\xi,y}]$. By the definition of a local boundary representation, the extremal contributors $y_s^\pm \in \Cext{x_s}$ satisfy $y_s^\pm \in B_{\eps/2}(y)$, where $y \in \Cext{x}$ corresponds to the extremal pair $(\xi, y)$. This imposes an upper bound on the angle between the $\xi$-axis and the corresponding extremal outward directions $\xi_s^\pm \in \Xext{x_s}$, which in turn implies a bound on the left and right derivatives $D^\pm f^{\xi, y}$ on $[0, s_{\xi,y}]$ through the relationship~\eqref{Eq_Derivative_vs_Outward_Direction_Correspondence}. Due to inequality~\eqref{Eq_Lower_bound_for_dir_derivatives} in Proposition~\ref{Prop_Lower_Bound_for_Curvature}, one can thus apply Lemma~\ref{Lemma_bounded_variation}, which implies that the one-sided derivatives $D^\pm f^{\xi, y}$ are of bounded variation on $[0, s_{\xi,y}]$.

Thus, each of the functions $D^\pm f^{\xi, y}$ has a (two-sided) derivative $\frac{d}{ds}D^\pm f^{\xi, y}$ almost everywhere on $[0, s_{\xi,y}]$.\footnote{Every function of bounded variation can be written as the difference of two non-decreasing functions, and consequently has a finite derivative at almost every point. For details, see for instance~\cite[p. 331]{Kolmogorov_Fomin_Introductory_Real_Anal}} Since $D^+ f^{\xi, y} = D^- f^{\xi, y}$ outside a countable set, this furthermore implies $\frac{d}{ds}D^+ f^{\xi, y} = \frac{d}{ds}D^- f^{\xi, y}$ almost everywhere. Consequently, apart from a set $W \subset [0, s_{\xi,y}]$ of zero Hausdorff measure, both the first and second (two-sided) derivatives of $f^{\xi, y}$ exist, and define curvature on
the graph $\big\{ \big(s, f^{\xi,y}(s) \big) \, : \, s \in [0, s_{\xi,s}] \big\}$ via~\eqref{Eq_curvature_of_function_graph}.

According to Proposition~\ref{Prop_LBR_Lipschitz}, the function $s \mapsto \big(s, f^{\xi,y}(s)\big)$ is $2/\sqrt{3}$-Lipschitz on $[0,s_{\xi, y}]$. This implies $\cH^1\left(f^{\xi,y}(W)\right) \leq 2 \cH^1(W) /\sqrt{3} = 0$, since the Hausdorff measure of a Lipschitz transformation is bounded from above by the corresponding Lipschitz constant, see~\cite[Prop.\ 2.49]{Ambrosio_et_al_Functions_of_Bounded_Variation}. It follows that curvature exists outside a $\cH^1$-negligible set on each curve $g_{\xi,y}([0, s_{\xi,y}])$.
\end{proof}

\section{A Lower Bound for Differences of Tangential Directions} \label{Sec_A Lower Bound for Differences}
In this section we prove Proposition~\ref{Prop_Lower_Bound_for_Curvature}, which represents the key step in the proof of Theorem~\ref{Thm_existence_of_curvature}. Proposition~\ref{Prop_Lower_Bound_for_Curvature} expresses the geometric observation that it is impossible for the boundary $\partial E_\eps$ to curve inwards more than a certain threshold, implied by the radius $\eps > 0$.

Before proceeding with the proof, 
we establish some notation. We consider the local boundary representation $\cG(x)$ around $x \in J$, where $J \subset \partial E_\eps$ is a Jordan curve component of the boundary. We work in the local coordinates corresponding to an extremal pair $(\xi,y) \in \Pext{x}$, as defined in Section~\ref{Sec_Local_Coordinate_System}. For each $s \in [0, s_{\xi, y}]$, define
\[
x_s := g_{\xi,y}(s) = x + s \xi + f^{\xi,y}(s) \frac{x - y}{\eps} = \big(s, f^{\xi,y}(s)\big) \in J,
\]
where $f^{\xi,y} : [0, s_{\xi,y}] \to \R$ is as in~\eqref{Eq_Canonical_LBR_02}. Each of the extremal contributors 
\[
y_s^\pm = \big(y_1^\pm(s), y_2^\pm(s) \big) \in \Cext{x_s}
\]
lies at the center of an $\eps$-radius circle\footnote{Due to the orientation of the extremal contributor $y \in \Cext{x}$ relative to $x$, these circles lie below the graph $g_{\xi,y}([0,s_{\xi,y}])$ in the $\big(\xi, (x-y)/\eps\big)$-coordinates, and are thus uniquely defined for all $s \in [0, s_{\xi,y}]$.} $B_\eps \big(y_s^\pm \big)$, whose tangent at $x_s \in \partial B_\eps \big(y_s^\pm \big)$ coincides with the respective extremal outward direction
\[
\xi_s^\pm = \big(\xi_1^\pm(s), \xi_2^\pm(s)\big) \in \Xext{x_s},
\]
see Definition~\ref{Def_Extremal_Outward_Directions_and_Contributors} and Proposition~\ref{Prop_Tangents_are_Defined}. Hence, for each $s \in [0, s_{\xi,y}]$,
\begin{equation} \label{Eq_definition_of_a_and_b}
x_s - y_s^\pm = \big( {-\eps} \xi_2^\pm(s), \eps \xi_1^\pm(s) \big) =: \big(a_s^\pm, b_s^\pm \big).
\end{equation}
On the other hand, as indicated in~\eqref{Eq_Derivative_vs_Outward_Direction_Correspondence}, the slopes of the one-sided tangents at $x_s$ satisfy
\begin{equation} \label{Eq_Definition_of_eps_curvature_function}
D^\pm f^{\xi, y}(s) = \frac{\xi_2^\pm(s)}{\xi_1^\pm(s)} = \frac{\eps \xi_2^\pm(s)}{\sqrt{\eps^2 - \big[ \eps \xi_2^\pm(s) \big]^2}} = \frac{-a_s^\pm}{\sqrt{\eps^2 - \big[a_s^\pm\big]^2}} =: p\big(a_s^\pm \big).
\end{equation}
We call the function $p : (-\eps, \eps) \to \R$ in~\eqref{Eq_Definition_of_eps_curvature_function}, given by $p(s) := -s / \sqrt{\eps^2 - s^2}$, the \emph{slope function}.

In order to prove Proposition~\ref{Prop_Lower_Bound_for_Curvature}, we establish for all $s \in [0, s_{\xi,y}]$ and $h > 0$ with $0 \leq s + h \leq s_{\xi, y}$ the double inequality
\begin{equation} \label{Eq_Lower_bound_for_dir_derivatives_again}
D^\pm f^{\xi, y}(s + h) - D^\pm f^{\xi, y}(s) \geq p\left(a_s^\pm + h\right) - p\left(a_s^\pm \right) \geq - \frac{8h}{3\sqrt{3}\eps},
\end{equation}
where the coordinates $a_s^\pm$ 
are defined in~\eqref{Eq_definition_of_a_and_b}. 
We initially prove \eqref{Eq_Lower_bound_for_dir_derivatives_again} for the right derivative $D^+f^{\xi,y}$, and deduce from this the analogous inequality also for the left derivative $D^-f^{\xi,y}$. Since $D^+ f^{\xi, y}(s) = p\big(a_s^+\big)$, the key to showing inequality~\eqref{Eq_Lower_bound_for_dir_derivatives_again} is to demonstrate that $D^+ f^{\xi, y}(s + h) \geq p(a_s^+ + h)$ whenever $s, h \geq 0$ and $0 \leq s + h \leq s_{\xi, y}$. We divide the proof into the following steps, of which the first three correspond to Lemmas~\ref{Lemma_Lower_Bound_I}--\ref{Lemma_Lower_Bound_III} and step (iv) is included in the proof of Proposition~\ref{Prop_Lower_Bound_for_Curvature} below:
\begin{enumerate}
\item[(i)] identify a lower bound $k(T,h)$ for $D^+ f^{\xi, y}(s+h)$ in terms of $h$ and the difference
\[
T = T(s,h) := f^{\xi,y}(s + h) - f^{\xi,y}(s);\] 
\item[(ii)] show that the bound $k_h(T) := k(T, h)$ obtained in step (i) is increasing in $T$ for a fixed $h$;
\item[(iii)] show that there exists $\widehat{T} := \widehat{T}(s,h) \leq T(s,h)$ for which $k\big(\widehat{T}, h\big) =  p(a_s^+ + h)$;
\item[(iv)] combine steps (i)--(iii) to obtain the inequality
\[D^+ f^{\xi, y}(s + h) \geq k(T,h) \geq k\big(\widehat{T}, h\big) = p(a_s^+ + h).\]
\end{enumerate}

We begin by establishing a lower bound $k(T,h)$ for $D^+ f^{\xi, y}(s+h)$ in terms of $T := T(s,h)$ and the $T$-dependent distances
\begin{equation} \label{Eq_Def_of_P_and_A}
P(T) := \frac{\sqrt{h^2 + T^2}}{2} \quad \textrm{and} \quad A(T) := \sqrt{\eps^2 - P^2(T)} = \sqrt{\eps^2 - \frac{h^2 + T^2}{4}}.
\end{equation}

\begin{lemma}[{The functional form of the lower bound for the right derivative}] \label{Lemma_Lower_Bound_I}
Let $E \subset \R^2$ be a compact set, $\eps > 0$ and let $x \in J \subset \partial E_\eps$ where $J$ is a Jordan curve. Let $\cG(x)$ be a local boundary representation at $x$, where each $g_{\xi, y} \in \cG(x)$ satisfies
\[
g_{\xi, y}(s) = x + s\xi + f^{\xi, y}(s)\eps^{-1}(x - y)
\]
for some continuous function $f^{\xi, y} : [0, s_{\xi, y}] \to \R$. Then for all $s, h \geq 0$ with $0 \leq s + h \leq s_{\xi, y}$, the right derivative $D^+ f^{\xi,y}$ satisfies the inequality
\begin{equation} \label{Eq_Lower_estimate_for_derivative}
D^+f^{\xi,y}(s+h) \geq k(T,h) := \frac{T A(T) - h P(T)}{h A(T) + T P(T)},
\end{equation}
where $T := f^{\xi,y}(s + h) - f^{\xi,y}(s)$ and the distances $P(T), A(T)$ are as in~\eqref{Eq_Def_of_P_and_A}.
\end{lemma}

\begin{proof}
For each $s \in [0, s_{\xi,y}]$ we write $a_s^+ := -\eps\xi_2^+(s)$ and $b_s^+ := \eps\xi_1^+(s)$, where $\xi_s^+ = \big(\xi_1^+(s), \xi_2^+(s) \big)$ is the right extremal outward direction at $x_s$.
Fix then some $s, h \geq 0$ with $0 \leq s + h \leq s_{\xi, y}$ and consider the corresponding boundary points $x_s, x_{s+h} \in J$ whose local coordinates are given by
\begin{equation} \label{Eq_points_x_s_and_x_s+h}
x_s := \left(s, f^{\xi, y}(s)\right), \quad x_{s+h} := \left(s+h, f^{\xi, y}(s+h)\right).
\end{equation}
It follows from~\eqref{Eq_definition_of_a_and_b} that the right extremal contributors $y_s^+ \in \Cext{x_s}$ and $y_{s+h}^+ \in \Cext{x_{s+h}}$ satisfy
\begin{align} \label{Eq_extremal_contributors_y_s^+_and_y_s+h^+_ver_2}
y_s^+ &= \left(s - a_s^+, f^{\xi, y}(s) - b_s^+ \right), \\ \nonumber
y_{s+h}^+ &= \left(s + h - a_{s+h}^+, f^{\xi, y}(s+h) - b_{s+h}^+ \right).
\end{align}

In local coordinates, the right derivative $D^+ f^{\xi,y}(s+h)$ is the slope of the extremal outward direction $\xi_{s+h}^+$,
which is by definition perpendicular to the vector $x_{s+h} - y_{s+h}^+$. Since $y_{s+h}^+$ is a contributor and thus in $E$, it must by definition lie outside the $\eps$-radius ball $B_\eps(x_s)$ around the boundary point $x_s \in J$. One can therefore obtain a lower bound for the right derivative $D^+ f^{\xi,y}(s+h)$ by considering how far one could rotate the contributor $y_{s+h}^+$ clockwise around the point $x_{s+h}$ before it enters the ball $B_\eps(x_s)$.
This would happen at the point $y_{s+h}^*$, whose distance from both $x_s$ and $x_{s+h}$ equals $\eps$, see Figure~\ref{fig:minimal_tangent_at_x_(s+h)}. These three points thus form an isosceles triangle whose base is the line segment
\begin{equation} \label{Eq_line_segment_S}
S := \big\{ (1-\varphi)x_s + \varphi x_{s+h} \, : \, \varphi \in [0,1] \big\}.
\end{equation}
Consequently, the orthogonal projection of the apex $y_{s+h}^*$ of this triangle onto the line segment $S$ lands on the mid-point $x_{s+h}^* := (x_s + x_{s+h})/2$ of $S$. Together, the points $x_{s+h}^*$, $y_{s+h}^*$ and $x_{s+h}$ in turn form a right triangle $\Omega$, whose legs $x_{s+h} - x_{s+h}^*$ and $x_{s+h}^* - y_{s+h}^*$ satisfy
\[
\norm{x_{s+h} - x_{s+h}^*} = P(T) \quad \textrm{and} \quad \norm{x_{s+h}^* - y_{s+h}^*} = A(T).
\]

\vspace{2mm}
\begin{figure}[h]
      \centering
      \captionsetup{margin=0.75cm}
      \vspace{0mm}
                \includegraphics[width = \textwidth]{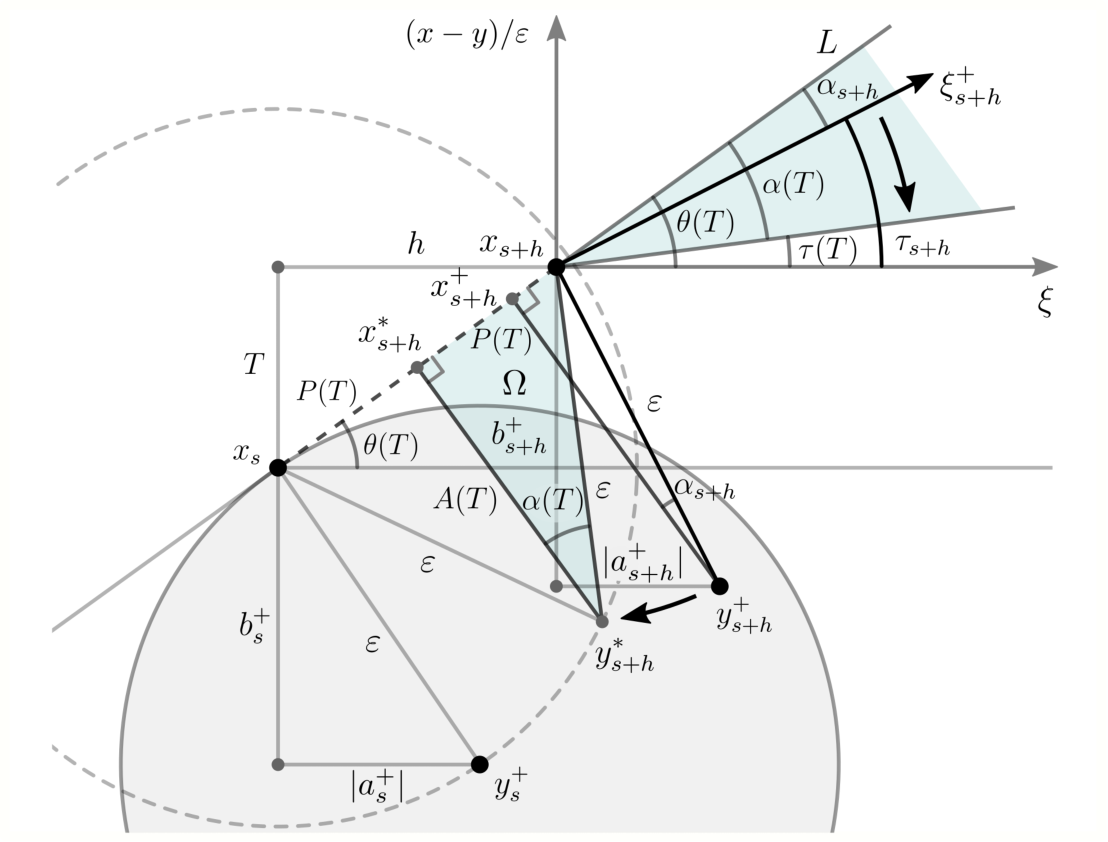} \\[3mm]   
                \caption{
                An illustration of the geometric components in the proof of Lemma~\ref{Lemma_Lower_Bound_I}. One can obtain a lower bound $k(T,h) := \tan \tau(T)$ for the slope $D^+ f^{\xi,y}(s+h) = \tan \tau_{s+h} = \xi_2^+(s+h) / \xi_1^+(s+h)$ by considering how far one could rotate the right extremal contributor $y_{s+h}^+$ clockwise around the point $x_{s+h}$ before it enters the ball $B_\eps(x_s)$. The line segment $S := \big\{ (1-\varphi)x_s + \varphi x_{s+h} \, : \, \varphi \in [0,1] \big\}$ is marked as a dashed line. Note that the illustration here does not strictly speaking apply in the situation of Lemma~\ref{Lemma_Lower_Bound_I} because the $\xi$-coordinate of $x_{s+h}$ differs from that of $x_s$ by more than $\eps/2$. This artistic liberty was taken in order to improve readability through the increase of the relevant angles.}
                \label{fig:minimal_tangent_at_x_(s+h)}
\end{figure}

We use the above geometric relationships to obtain an explicit lower bound for $D^+ f^{\xi,y}(s+h)$ in terms of the lengths $P(T)$ and $A(T)$.
As noted above, $y_{s+h}^+ \notin B_\eps\left(x_s\right)$. This implies the inequality
\begin{align*}
0 &\leq \norm{y_{s + h}^+ - x_s}^2 - \eps^2 \\
         &= \left(\left(s + h - a_{s+h}^+\right) - s\right)^2 + \left(\left(f^{\xi,y}(s + h) - b_{s + h}^+\right) - f^{\xi,y}(s) \right)^2 - \eps^2 \\
         &= h^2 + \left(f^{\xi,y}(s + h) - f^{\xi,y}(s) \right)^2 - 2 \left(h a_{s+h}^+ + \left(f^{\xi,y}(s + h) - f^{\xi,y}(s) \right) b_{s+h}^+ \right).
\end{align*}
Writing $T := f^{\xi,y}(s + h) - f^{\xi,y}(s)$, the above inequality can be expressed more concisely as
\begin{equation} \label{Eq_contributor_inequality}
ha_{s+h}^+ + Tb_{s+h}^+ \leq (h^2 + T^2) / 2.
\end{equation}
Since $x_{s+h} - y_{s+h}^+ = \big(a_{s+h}^+, b_{s+h}^+ \big)$ and $x_{s+h} - x_s = (h, T)$, \eqref{Eq_contributor_inequality} is furthermore equivalent to
\begin{equation} \label{Eq_Minimal_scalar_product_ineq}
\left\langle x_{s+h} - y_{s+h}^+, \frac{x_{s+h} - x_s}{\norm{x_{s+h} - x_s}} \right\rangle \leq \frac{\norm{x_{s+h} - x_s}}{2} = P(T).
\end{equation}

Geometrically, inequality~\eqref{Eq_Minimal_scalar_product_ineq} expresses the fact that the length of the orthogonal projection of the vector $x_{s+h} - y_{s+h}^+$ onto the line
\[
L := x_s + \mathrm{span}\{x_{s+h} - x_s\}
\]
is at most half the distance $\norm{x_{s+h} - x_s}$. Note that when $\xi_{s+h}^+$ is parallel to $L$, the scalar product on the left-hand side of~\eqref{Eq_Minimal_scalar_product_ineq} vanishes, and for steeper slopes it becomes negative. However, since we seek to obtain a lower bound for $D^+ f^{\xi,y}(s+h) = \tan \tau_{s+h}$, where $\tau_{s+h}$ is the angle between $\xi_{s+h}^+$ and the $\xi$-axis, it is sufficient to restrict the analysis to angles $\tau_{s+h}$ that are smaller than the angle $\theta(T)$ between the line $L$ and the $\xi$-axis.\footnote{Additionally, as pointed out also in the proof of Theorem~\ref{Thm_existence_of_curvature}, the definition of a local boundary representation imposes an upper bound on the angle $\tau_{s+h}$ between the extremal outward direction $\xi_{s+h}^+$ and the $\xi$-axis, since the extremal contributor $y_{s+h}^+ \in \Cext{x_{s+h}}$ lies for all $s,h$ within the ball $B_{\eps/2}(y)$, where $y \in \Cext{x}$.}

Consider now the vector $x_{s+h}^+ - y_{s+h}^+$, where $x_{s+h}^+ := \mathrm{proj}_L\left(y_{s+h}^+\right)$ is the orthogonal projection of $y_{s+h}^+$ onto the line $L$. It follows from~\eqref{Eq_Minimal_scalar_product_ineq} that
\[
\norm{x_{s+h} - x_{s+h}^+} \leq P(T),
\]
and consequently
\[
\norm{x_{s+h}^+ - y_{s+h}^+}^2 = \eps^2 - \norm{x_{s+h} - x_{s+h}^+}^2 \geq \eps^2 - \norm{x_{s+h} - x_{s+h}^*}^2 = A^2(T).
\]
This implies that the angle $\alpha_{s+h}$ at $y_{s+h}^+$ between the vectors $x_{s+h} - y_{s+h}^+$ and $x_{s+h}^+ - y_{s+h}^+$ satisfies
\begin{equation} \label{Eq_tangent_inequality}
\tan \alpha_{s+h} = \frac{\norm{x_{s+h} - x_{s+h}^+}}{\norm{x_{s+h}^+ - y_{s+h}^+}} \leq \frac{P(T)}{A(T)} = \tan \alpha(T),
\end{equation}
where $\alpha(T)$ is the angle at $y_{s+h}^*$  between the vectors $x_{s+h} - y_{s+h}^*$ and $x_{s+h}^* - y_{s+h}^*$. Since the vectors $x_{s+h} - y_{s+h}^+$ and $\xi_{s+h}^+$ are perpendicular, it furthermore follows that
\begin{equation} \label{Eq_tangent_angle}
\tau_{s+h} = \theta(T) - \alpha_{s+h}.
\end{equation}
This, together with~\eqref{Eq_tangent_inequality}, leads to the lower bound
\begin{align} \label{Eq_Lower_Bound_k(T,h)}
D^+ f^{\xi, y}(s+h) &= \tan \tau_{s+h} = \tan \big(\theta(T) - \alpha_{s+h}\big) \\ \nonumber
                &\geq \tan \big(\theta(T) - \alpha(T)\big)
                = \frac{\tan \theta(T) - \tan \alpha(T)}{1 + \tan \theta(T) \tan \alpha(T)} \\ \nonumber
                &= \frac{T A(T) - h P(T)}{h A(T) + T P(T)} = k(T, h),
\end{align}
where the third last equality is due to the standard formula for the tangent of a sum of angles.
\end{proof}

Intuitively, Lemma~\ref{Lemma_Lower_Bound_I} describes how the $\eps$-neighbourhood geometry imposes a lower bound $k(T, h)$ on the tangential direction $D^+ f^{\xi,y}(s + h)$, and how this bound depends on the increment $h$ and the difference $T := f^{\xi,y}(s+h) - f^{\xi,y}(s)$. We show next that if the point $x_s$ and the increment $h > 0$ are fixed, $k(T, h)$ depends monotonically on $T$.

\begin{lemma}[{Monotonicity of the lower bound for the right derivative}] \label{Lemma_Lower_Bound_II}
For a fixed $h \in [0, 2\eps)$ and the corresponding functions
\[
P(t) := \frac{\sqrt{h^2 + t^2}}{2} \quad \textrm{and} \quad A(t) := \sqrt{\eps^2 - P^2(t)} = \sqrt{\eps^2 - \frac{h^2 + t^2}{4}},
\]
the lower bound function
\[
t \mapsto k(t, h) := \frac{t A(t) - h P(t)}{h A(t) + t P(t)}\]
in~\eqref{Eq_Lower_estimate_for_derivative} is increasing on the interval $\big( \hspace{-1mm}-\hspace{-0.8mm}\sqrt{4\eps^2 - h^2}, \sqrt{\eps^2 - (\eps - h)^2}\hspace{0.6mm}\big)$.
\end{lemma}

\begin{proof}
Note first that $A(t)$ is defined when $P(t) \leq \eps$, which is equivalent to $|t| \leq \sqrt{4\eps^2 - h^2}$.
We compute the sign of the derivative $\frac{d}{dt} k(t,h)$ for a fixed $h \geq 0$. Write
\[
k_1(t) := t A(t) - h P(t) \quad \textrm{and} \quad k_2(t) := h A(t) + t P(t),
\]
so that $k(t, h) = k_1(t) / k_2(t)$. Then
\begin{align*}
\frac{d}{dt} k(t,h) &= \frac{k_1'(t) k_2(t) - k_1(t) k_2'(t)}{k_2^2(t)} \\
           &= \frac{h\left(A^2(t) + P^2(t) \right) + \left(h^2 + t^2 \right) \left( A'(t) P(t) - P'(t) A(t) \right)}{ \left( h A(t) + t P(t)\right)^2}, 
\end{align*}
where
\[
P'(t) = t \left(2\sqrt{h^2 + t^2}\right)^{-1} \quad \textrm{and} \quad A'(t) = -t \left(4\sqrt{\eps^2 - \frac{h^2 + t^2}{4}}\right)^{-1}.
\]
Thus, $\frac{d}{dt} k(t,h) > 0$ if and only if
\begin{align*}
0 &< h\left(A^2(t) + P^2(t) \right) + \left(h^2 + t^2 \right) \big( A'(t) P(t) - P'(t) A(t) \big) \\[2mm]
   &= h\eps^2 + \left(h^2 + t^2 \right) \frac{-t \big(h^2 + t^2 \big) - 4t\left(\eps^2 - \frac{h^2 + t^2}{4}\right)}{8\sqrt{\left(h^2 + t^2\right)\left(\eps^2 - \frac{h^2 + t^2}{4}\right)}} \\[1mm]
   &= \eps^2 \left( h - t \frac{\sqrt{\frac{h^2 + t^2}{4}}}{\sqrt{\eps^2 - \frac{h^2 + t^2}{4}}} \right)
   = \eps^2 \left( h - \frac{t P(t)}{\sqrt{\eps^2 - P^2(t)}} \right).
\end{align*}
A direct computation shows that this inequality is satisfied if and only if $t < \sqrt{\eps^2 - (\eps - h)^2}$, where the right hand side is defined for $h \in [0, 2\eps]$.
\end{proof}

The next lemma provides a lower bound $\widehat{T}(s,h)$ for the difference $T(s,h) = f^{\xi,y}(s+h) - f^{\xi,y}(s)$ for each $s$ and $h$.
In the proof of Proposition~\ref{Prop_Lower_Bound_for_Curvature} below, we combine this lower bound with Lemma~\ref{Lemma_Lower_Bound_II} and show that $k\big(\widehat{T}, h\big)$ is in fact a lower bound for the right derivative $D^+ f^{\xi,y}(s+h)$.

\begin{lemma}[{An explicit lower bound}] \label{Lemma_Lower_Bound_III}
Let $E \subset \R^2$ be a compact set, $\eps > 0$ and let $x \in J \subset \partial E_\eps$ where $J$ is a Jordan curve. Let $\cG(x)$ be a local boundary representation at $x$, where each $g_{\xi, y} \in \cG(x)$ satisfies $g_{\xi, y}(s) = x + s\xi + f^{\xi, y}(s)\eps^{-1}(x - y)$ for some continuous function $f^{\xi, y} : [0, s_{\xi, y}] \to \R$. In addition, let $T = T(s, h) := f^{\xi,y}(s+h) - f^{\xi,y}(s)$, and let
\[
k(T,h) := \frac{T A(T) - h P(T)}{h A(T) + T P(T)}
\]
as in~\eqref{Eq_Lower_estimate_for_derivative}. For each $s \in [0, s_{\xi,y}]$, write $(a_s^+, b_s^+) := \big(-\eps \xi_2^+(s), \eps \xi_1^+(s)\big)$, where $\xi_s^+ = \big(\xi_1^+(s), \xi_2^+(s) \big)$ is the right extremal outward direction at the boundary point $x_s = \left(s, f^{\xi, y}(s)\right)$, see~\eqref{Eq_points_x_s_and_x_s+h}.
Then
\[
\widehat{T}(s,h) := -b_s^+ + \sqrt{\eps^2 - \big(h + a_s^+\big)^2} \leq T(s,h)
\]
for all $s, h \geq 0$ with $0 \leq s + h \leq s_{\xi, y}$. Furthermore, $\widehat{T} = \widehat{T}(s,h)$ satisfies 
\begin{equation} \label{Eq_pah}
k\big(\widehat{T}, h\big) = p(a_s^+ + h) := \frac{-\left( a_s^+ + h \right)}{\sqrt{\eps^2 - \left( a_s^+ + h \right)^2}},
\end{equation}
where $p$ is the slope function in~\eqref{Eq_Definition_of_eps_curvature_function}.
\end{lemma}

\begin{proof}
Let $s, h \geq 0$ with $0 \leq s + h \leq s_{\xi, y}$, and consider the corresponding boundary points $x_s, x_{s+h} \in J$ whose local coordinates are given by
\begin{equation} \label{Eq_points_x_s_and_x_s+h_ver_2}
x_s := \left(s, f^{\xi, y}(s)\right), \quad x_{s+h} := \left(s+h, f^{\xi, y}(s+h)\right).
\end{equation}
It follows from~\eqref{Eq_definition_of_a_and_b} that the right extremal contributors $y_s^+ \in \Cext{x_s}$ and $y_{s+h}^+ \in \Cext{x_{s+h}}$ satisfy
\begin{align} \label{Eq_extremal_contributors_y_s^+_and_y_s+h^+}
y_s^+ &= \left(s - a_s^+, f^{\xi, y}(s) - b_s^+ \right), \\ \nonumber
y_{s+h}^+ &= \left(s + h - a_{s+h}^+, f^{\xi, y}(s+h) - b_{s+h}^+ \right).
\end{align}
In particular, the boundary point $x_{s+h}$ and contributor $y_s^+$ satisfy
\begin{align*} 
x_{s+h} - y_s^+ &= \big(s + h - (s - a_s^+), f^{\xi, y}(s,h) - \big(f^{\xi,y}(s) - b_s^+ \big) \big) \\
 &= \big(h + a_s^+, T + b_s^+ \big).
\end{align*}
This implies $\big(h + a_s^+\big)^2 + \big(T + b_s^+\big)^2 = \norm{x_{s+h} - y_s^+}^2 \geq \eps^2$, since $x_{s + h} \notin B_\eps(y_s^+)$.
Rearranging the terms in this inequality gives the desired estimate
\begin{equation} \label{Eq_vertical_inequality}
\widehat{T} := -b_s^+ + \sqrt{\eps^2 - \left(a_s^+ + h \right)^2} \leq T. 
\end{equation}



\begin{figure}[h]
      \centering
      \captionsetup{margin=0.75cm}
      \vspace{0mm}
                \includegraphics[width = \textwidth]{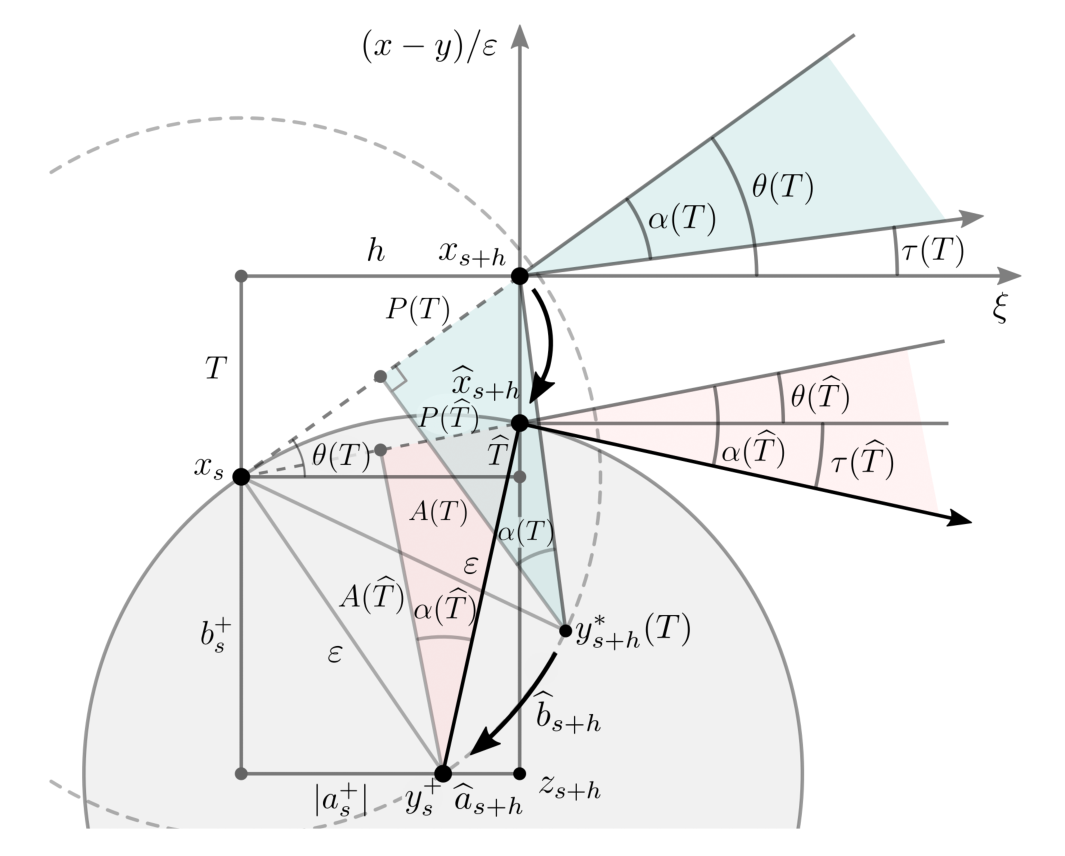} \\[0mm]
                \caption{Dependence of the function $k(T,h) = \tan \tau(T)$ on $T = f^{\xi, y} (s + h) - f^{\xi,y}(s)$. As $T$ decreases, the corresponding point $y_{s+h}^*(T)$ on the circumference $\partial B_\eps(x_s)$ moves clockwise towards $y_s^+$. This results in a decrease in the slope $\tan \tau(T)$, where $\tau(T) := \theta(T) - \alpha(T)$. At the smallest possible value $\widehat{T}$, one obtains the lower bound $k(\widehat{T}, h) = \tan \tau (\widehat{T}) = p(a_s^+ + h)$, which geometrically corresponds to the point $y_{s+h}^*(\widehat{T}) = y_s^+$.}
                \label{fig:tangent_for_widehat(T)}
\end{figure}

We now show that $k\big(\widehat{T}, h\big) = p(a_s^+ + h)$, where $p: (-\eps, \eps) \to \R$ is the slope function in~\eqref{Eq_Definition_of_eps_curvature_function}.
It follows from the definition of $\widehat{T}$ that the point $\widehat{x}_{s+h} := \big(s + h, f^{\xi, y}(s) + \widehat{T}\big)$ lies on the boundary $\partial B_\eps\left(y_s^+\right)$. Furthermore, $\widehat{x}_{s+h} - y_s^+ = \big( \widehat{a}_{s+h}, \widehat{b}_{s+h} \big)$, where\footnote{By construction, $b_s^+$ is always positive, while $a_s^+$ may either be positive or negative depending on the slope $D^+ f^{\xi, y}(s)$, see~\eqref{Eq_definition_of_a_and_b}.}
\[
\widehat{a}_{s+h} := a_s^+ + h, \quad \textrm{and} \quad  \widehat{b}_{s+h} := \sqrt{\eps^2 - \left( a_s^+ + h \right)^2}.
\]
Now, define $z_{s+h} := \big( s + h, f^{\xi, y}(s) - b_s^+ \big)$ so that $z_{s+h}$ shares its $\xi$--coordinate with $\widehat{x}_{s+h}$ and its $\eps^{-1}(x-y)$--coordinate with $y_s^+$. Recall that for each $h$, the function $T \mapsto k\big(T,h\big)$ is defined by $k\big(T,h\big) = \tan \big( \theta(T) - \alpha(T) \big)$ where the $T$-dependent angles $\theta(T)$ and $\alpha(T)$ are defined as in the proof of Lemma~\ref{Lemma_Lower_Bound_I}, see~\eqref{Eq_Lower_Bound_k(T,h)}. Then 
\begin{align*} \label{Eq_tangent_of_tau_widehat(T)}
k\big(\widehat{T}, h\big) &= \tan \big(\theta(\widehat{T}) - \alpha(\widehat{T}) \big) = \tan \tau ( \widehat{T} ) \\ \nonumber 
  &= \frac{-\widehat{a}_{s+h}}{\widehat{b}_{s+h}} 
  = \frac{-\big( a_s^+ + h \big)}{\sqrt{\eps^2 - \big( a_s^+ + h \big)^2}} \\
  &= p \big( a_s^+ + h \big),
\end{align*}
where $\tau(\widehat{T})$ is the angle at $\widehat{x}_{s+h}$ between the vectors $y_s^+ - \widehat{x}_{s+h}$ and $z_{s+h} - \widehat{x}_{s+h}$, see Figure~\ref{fig:tangent_for_widehat(T)}.
\end{proof}

Combining Lemmas~\ref{Lemma_Lower_Bound_I}--\ref{Lemma_Lower_Bound_III}, we are now ready to prove our second main result. 

\begin{proof}[Proof of Proposition~\ref{Prop_Lower_Bound_for_Curvature}]
Let $s, h > 0$ with $0 \leq s + h \leq s_{\xi,y}$ and let $T := f^{\xi,y}(s+h) - f^{\xi,y}(s)$. Define
\[
k(T,h) := \frac{T A(T) - h P(T)}{h A(T) + T P(T)}
\]
as in~\eqref{Eq_Lower_estimate_for_derivative} and let $\widehat{T} \leq T$ be the lower bound, given by Lemma~\ref{Lemma_Lower_Bound_III}, for which $k\big(\widehat{T},h \big) = p(a_s^+ + h)$. The slope function $p : (-\eps, \eps) \to \R$ is given by $p(s) := -s / \sqrt{\eps^2 - s^2}$ as in~\eqref{Eq_Definition_of_eps_curvature_function}.

We want to use the monotonicity of the map $T \mapsto k(T,h)$ in order to establish for each $h$ the inequality $k(T, h) \geq k\big(\widehat{T},h \big)$. For this, we need to show that $T$ satisfies for all $h$ the assumption
\[
-\sqrt{4\eps^2 - h^2} < T < \sqrt{\eps^2 - (\eps - h)^2}
\]
in Lemma~\ref{Lemma_Lower_Bound_II}. This is obtained immediately by combining the Lipschitz property\footnote{Note that the function $f^{\xi,y}$ here corresponds to the orthonormal coordinate system $\big(\xi, \eps^{-1}(x-y) \big)$, and is thus scaled by a factor of $\eps$ compared to the corresponding function $f^{\xi,y}$ in the statement of Proposition~\ref{Prop_LBR_Lipschitz}, see equations~\eqref{Eq_Canonical_LBR_02} and~\eqref{Eq_Canonical_LBR}. Hence, the Lipschitz constant here is $1/\sqrt{3}$ rather than $1/\sqrt{3}\eps$.}
\[
|T| = |f^{\xi,y}(s+h) - f^{\xi,y}(s)| \leq h/\sqrt{3},
\]
given by Proposition~\ref{Prop_LBR_Lipschitz}, with the inequalities
\[
\frac{h}{\sqrt{3}} \leq \sqrt{\eps^2 - (\eps - h)^2} < \sqrt{4\eps^2 - h^2},
\]
which follow from $0 \leq h \leq s_{\xi,y} \leq \eps/2$. The results in Lemmas~\ref{Lemma_Lower_Bound_I}--\ref{Lemma_Lower_Bound_III} thus imply that
\begin{equation} \label{Eq_inequality_for_right_derivatives}
D^+ f^{\xi, y}(s+h) \geq k(T, h) \geq k\big(\widehat{T},h \big) = p(a_s^+ + h)
\end{equation}
whenever $s, h \geq 0$ and $0 \leq s + h \leq s_{\xi, y}$. Furthermore, according to~\eqref{Eq_Definition_of_eps_curvature_function}, the slope of $f^{\xi,y}$ at $s$ satisfies $D^+ f^{\xi, y}(s) =  p\left( a_s^+ \right)$, which together with~\eqref{Eq_inequality_for_right_derivatives} implies
\begin{equation} \label{Eq_Derivative_Inequality}
D^+ f^{\xi, y}(s+h) - D^+ f^{\xi, y}(s) \geq p(a_s^+ + h) - p(a_s^+).
\end{equation}

We now establish this inequality also for the left derivatives $D^- f^{\xi, y}$.
Recall from~\eqref{Eq_Definition_of_eps_curvature_function} that
\[
D^- f^{\xi, y}(s) = \frac{\xi_2^-(s)}{\xi_1^-(s)}
\]
for all $s \in [0, s_{\xi,y}]$, where $\xi_s^- = \big(\xi_1^-(s), \xi_2^-(s) \big) \in \Xext{x_s}$ is the left extremal outward direction at $x_s$. Consider a sequence $(x_{s_n})_{n=1}^\infty \subset J$ where $s_n \to s$ from below, so that $x_{s_n} \to x_s$ and
\begin{equation} \label{Eq_below_approaching_sequence}
\frac{x_{s_n} - x_s}{\norm{x_{s_n} - x_s}} \to \xi_s^-.
\end{equation}
It follows from Lemma~\ref{Lemma_Convergence_of_Contributing_Points_of_Sequences_of_Boundary_Points} (ii)(b) that the right extremal outward directions $\xi_{s_n}^+ \in \Xext{x_{s_n}}$ satisfy $\xi_{s_n}^+ \to -\xi_s^-$ as $n \to \infty$. Since $D^+ f^{\xi, y}(s_n) =  \xi_2^+(s_n) / \xi_1^+(s_n)$ for all $n \in \N$, this implies
%
%
%
\begin{equation} \label{Eq_left_der_as_a_limit_of_right_ders}
\lim_{n\to\infty} D^+ f^{\xi, y}(s_n) = \lim_{n\to\infty} \frac{\xi_2^+(s_n)}{\xi_1^+(s_n)}  = \frac{-\xi_2^-(s)}{-\xi_1^-(s)} = D^- f^{\xi, y}(s).
\end{equation}
This limit is independent of the choice of the sequence $(s_n)_{n = 1}^\infty$ in~\eqref{Eq_below_approaching_sequence}. Since the slope function $p$ is continuous on $[0, \eps/2]$, it follows from~\eqref{Eq_left_der_as_a_limit_of_right_ders} and~\eqref{Eq_inequality_for_right_derivatives} that for all $s, h \geq 0$ and $0 \leq s + h \leq s_{\xi, y}$,
\begin{align} \label{Eq_inequality_for_incr_of_left_der}
D^- f^{\xi, y}(s + h) &= \lim_{\varphi \to 0-} D^+ f^{\xi, y}(s + h + \varphi) \\ \nonumber
  &\geq \lim_{\varphi \to 0-} p(a_s^+ + h + \varphi) \\ \nonumber
  &= p(a_s^+ + h).
\end{align}
Due to the characterisation of the sets of extremal outward directions given in Proposition~\ref{Prop_structure_of_set_of_outward_directions}, there can be no cusp singularities on the boundary $J$. Given the relationship~\eqref{Eq_Definition_of_eps_curvature_function} between extremal outward directions and the one-sided derivatives, this implies that
\[
D^- f^{\xi, y}(s) \leq D^+ f^{\xi, y}(s) = p(a_s^+)
\]
for all $s \in [0, s_{\xi,y}]$. Combining this inequality with~\eqref{Eq_inequality_for_right_derivatives} and~\eqref{Eq_inequality_for_incr_of_left_der} yields the analogue of~\eqref{Eq_Derivative_Inequality} for the left derivatives,
\begin{equation} \label{Eq_Left_Derivative_Inequality}
D^- f^{\xi, y}(s+h) - D^- f^{\xi, y}(s) 
\geq p(a_s^+ + h) - p(a_s^+).
\end{equation}

We conclude the proof by obtaining a lower bound for the difference $p(a_s^+ + h) - p(a_s^+)$. A direct calculation shows that the derivative of $p$ is given by
\[
p'(s) = \frac{p(s) + p^3(s)}{s}.
\]
Since $p$ is an odd function, it follows that $p'$ is even and non-positive. Then
\[
p'_\mathrm{min} := \min_{|s| \leq s_{\xi, y}} p'(s) \geq \min_{|s| \leq \eps / 2} p'(s) = p'\left(\frac{\eps}{2}\right) = -\frac{8}{3\sqrt{3}\eps},
\]
which implies
\begin{align*}
p(a_s^+ + h) - p(a_s^+) &= \int_{a_s^+}^{a_s^+ + h} p'(s) ds
    \geq \int_{a_s^+}^{a_s^+ + h} p'_\mathrm{min} ds
    \geq -\frac{8h}{3\sqrt{3}\eps}. 
\end{align*}
The result now follows from~\eqref{Eq_Derivative_Inequality} and~\eqref{Eq_Left_Derivative_Inequality}.
\end{proof}


\backmatter
\bibliographystyle{amsalpha}
\bibliography{epsneighbourhoods}
\printindex

\end{document}